\newtheorem{theorem}{Theorem}
  \numberwithin{equation}{section}
\newtheorem{lemma}{Lemma}[section]
\newtheorem{proposition}{Proposition}[section]
\newcommand \mw[1]{  {\mathrm{#1}}  }
\renewcommand{\div}{\operatorname{div}}
\renewcommand{\AA}{ \mathbf{A} }
 \newcommand{\bseq}{\begin{subequations}}
 \newcommand{\eseq}{\end{subequations}}
  \renewcommand{\sin}{s_{in}}
  \newcommand{\xc}{\zeta}
\title{Vorticity blowup in compressible Euler equations in $\mathbb{R}^d, d \geq 3$}
\author[J.~Chen]{Jiajie Chen}
\address{Courant Institute of Mathematical Sciences, New York University, New York, NY 10012.}
\email{\href{jiajie.chen@cims.nyu.edu}{jiajie.chen@cims.nyu.edu}}
\date{\today}
 \theoremstyle{plain}
 \newtheorem{prop}{Proposition}[section]
 \theoremstyle{definition}
 \theoremstyle{remark}
 \newtheorem{remark}[prop]{Remark}
 \let\pa=\partial
 \let\al=\alpha
 \let\b=\beta
 \let\d=\delta
 \let\g=\gamma
 \let\e=\varepsilon
 \let \kp = \kappa
 \let\lam=\lambda
 \let\s=\sigma
 \let\f=\frac
 \let \les = \lesssim
  \let \gtr = \gtrsim
 \let\om=\omega
 \let \th = \theta
 \let \pr = \prime
 \let \vp = \varphi
 \let\G= \Gamma
\let\B = \Big
 \let\D=\Delta
 \let\S=\Sigma
 \let\Om=\Omega
 \let\td = \tilde
 \let\wh=\widehat
 \let \mr = \mathring
 \let\teq \triangleq
 \let\pa=\partial
 \def\cA{{\mathcal A}}
 \def\cD{{\mathcal D}}
 \def\cE{{\mathcal E}}
 \def\cK{{\mathcal K}}
 \def\cL{{\mathcal L}}
 \def\cN{{\mathcal N}}
 \def\cR{{\mathcal R}}
 \def\cS{{\mathcal S}}
 \def\cT{{\mathcal T}}
 \def\cW{{\mathcal W}}
 \def\cX{{\mathcal X}}
 \def\cN{{\mathcal N}}
 \def\na{\nabla}
 \def\la{\langle}
 \def\ra{\rangle}
\def\one{\mathbf{1}}
  \def\Re{\mathrm{Re}}
\def\rsarrow{ \rightsquigarrow}
 \newcommand{\beq}{\begin{equation}}
 \newcommand{\eeq}{\end{equation}}
  \newcommand{\bal}{\begin{aligned} }
  \newcommand{\eal}{\end{aligned}}
  \newcommand{\bga}{\begin{gathered} }
  \newcommand{\ega}{\end{gathered}}
 \newcommand{\ben}{\begin{eqnarray}}
 \newcommand{\een}{\end{eqnarray}}
 \newcommand{\beno}{\begin{eqnarray*}}
 \newcommand{\eeno}{\end{eqnarray*}}
 \newcommand{\ee}{\mathbf{e}}
 \newcommand{\uu}{\mathbf{u}}
 \newcommand{\vv}{\mathbf{v}}
\renewcommand{\AA}{ \mathbf{A} }
 \newcommand{\FF}{\mathbf{F}}
 \newcommand{\R}{\mathbb{R}}
 \newcommand{\UU}{\mathbf{U}} 
\newcommand{\VV}{\mathbf{V}}
 \newcommand{\supp}{\mathrm{supp}}
\begin{document}

\begin{abstract}
We prove finite-time vorticity blowup in the compressible Euler equations in $\mathbb{R}^d$ for any $d \geq 3$, starting from smooth, localized, and non-vacuous initial data. This is achieved by lifting the vorticity blowup result from \cite{chen2024Euler} in $\mathbb{R}^2$ to $\mathbb{R}^d$ and utilizing the axisymmetry in $\mathbb{R}^d$. At the time of the first singularity, both vorticity blowup and implosion occur on a sphere $S^{d-2}$. Additionally, the solution exhibits a non-radial implosion, accompanied by a stable swirl velocity that is sufficiently strong to 
initially dominate the non-radial components and to generate the vorticity blowup.
\end{abstract}

\maketitle

\section{Introduction}

We consider the isentropic compressible Euler equations on $\R^d$ with $d \geq 2$ 
\bseq\label{euler0}
\begin{align}
 \pa_t (\rho \uu) + \operatorname{div}(\rho \uu \otimes \uu) + \na p& = 0,  \\
\pa_t \rho + \nabla \cdot (\rho \uu) & = 0 , 
\end{align}
where
$\uu \colon \R^d \times [0,\infty) \to \R^d$ is the velocity vector field and  $\rho \colon \R^d \times [0,\infty) \to \R_+$  is the strictly positive density function.  The pressure  $p\colon \R^d \times [0,\infty) \to \R_+$ is determined from the density via the ideal gas law
\begin{equation}
p = \tfrac{1}{\gamma} \rho^\gamma, 
\end{equation}
\eseq
where  $\gamma>1$ is the adiabatic exponent. We consider the Euler equations \eqref{euler0} with smooth initial data $\uu_0, \s_0 > 0$ with $(\uu_0, \s_0)$ converging to $(0, c)$ with $c>0$
sufficiently fast as $|x| \to \infty$. 

It is known that the compressible Euler equations can develop finite time singularity.
In \cite{lax1964development}, Lax used Riemann invariants to show that shocks may develop for Euler in 1D. 
Later, John \cite{Jo1974} and Liu \cite{Liu1979} generalized and improved Lax's result, and Sideris \cite{Si1985} proved singularity formation in 2D and 3D using a proof by contradiction.
In more recent years, researchers have developed more precise descriptions of the first singularity from smooth initial data, which can be classified into two classes. 
The first class is the shock wave \cite{Ch2007,ChMi2014,BuShVi2022,BuDrShVi2022,BuShVi2023a,BuShVi2023b,LuSp2018,LuSp2024}, where $(\uu, \rho)$ remain bounded but their gradients blow up. The second class is a smooth implosion singularity ~\cite{merle2022implosion1,merle2022implosion2,cao2023non,buckmaster2022smooth}, 
which was first established in the groundbreaking works of Merle-Raphael-Rodnianski-Szeftel ~\cite{merle2022implosion1,merle2022implosion2},
where both $|| \uu ||_{L^{\infty}}$ and $||\rho||_{L^{\infty}}$ blow up. We defer the detailed discussions on these results to Section \ref{sec:work}. Nevertheless, it is an outstanding open problem to determine whether $\limsup_{t \to T} \| \omega(\cdot,t)\|_{L^\infty(\R^d)} = \infty$, where $\omega = \nabla \times \uu$ denotes the fluid vorticity, a physically important quantity. 

For the shock singularity established in \cite{BuShVi2022,BuDrShVi2022,BuShVi2023a,BuShVi2023b}, the authors show that the vorticity $\om(\cdot, t)$ remains uniformly bounded in $C^{1/3}$. 
The implosion singularity constructed in \cite{merle2022implosion1,merle2022implosion2,buckmaster2022smooth} is confined to the radial symmetry, where 
vorticity remains $0$. In \cite{cao2023non}, the authors established implosion for a finite co-dimension set of smooth initial data, where the implosion is a non-radial perturbation of the radial implosion \cite{merle2022implosion1,merle2022implosion2,buckmaster2022smooth}. Yet, since the imploding solution \cite{merle2022implosion1,buckmaster2022smooth,cao2023non}  is potentially highly unstable, the unstable modes can potentially either lead to trivial vorticity or destroy the smooth implosion mechanism. This presents a major challenge in extending the implosion to prove vorticity blowup. We refer to Sections \ref{sec:implod} and \ref{sec:intro_idea} for more discussions of this difficulty and \cite[Section 2.7]{chen2024Euler} for a \textit{quantitative} estimate.  





The first vorticity blowup result for compressible Euler was established recently in \cite{chen2024Euler} in $\R^2$. The authors introduced the 2D axisymmetry solution and reduced \eqref{euler0} to a system of three variables: the radial velocity, the swirl 
velocity, and the rescaled sound speed (see \eqref{eq:sig}) which only depend on the radial 
and temporal variables. Moreover, the authors made use of the properties that the specific vorticity $\f{\om}{\rho}$ with $\om = \na \times \uu$ is transported along the fluid-velocity
and \textit{preserved} at the origin: $(\f{\om}{\rho})(0, t) = (\f{\om}{\rho})(0, 0)$, where 
the density blows up: $\lim_{t\to\infty}\rho(0,t) =\infty$. Moreover, the vorticity is \textit{only} generated from the swirl velocity, which is a small perturbation of the radial implosion. 

Compared to \cite{chen2024Euler}, we face the following new difficulties in $\R^d, d \geq 3$. 
Firstly, there is no direct generalization of 2D axisymmetry in higher dimension that allows us to reduce \eqref{euler0} to a 1D PDE system of a few variables. Thus, we need to study a generically \textit{non-radial} problem. 
Secondly, due to the breakdown of the 2D symmetry, there is no preservation of the specific vorticity. Moreover, there are much more directions in the non-radial setting that can affect the vorticity.

In this paper, our goal is to construct vorticity blowup in $\R^d$ for any $d \geq 3$ by lifting the result in \cite{chen2024Euler} from $\R^2$ to $\R^d$ using the axisymmetry in $\R^d$.  We analyze the implosion with non-radial perturbation that calibrates the difference between the $\R^2$ and $\R^d$ settings. On top of that, we construct an analog of swirl velocity in 2D \cite{chen2024Euler} that enjoys full stability and is much stronger than other non-radial perturbation to drive the vorticity blowup. Note that this lifting from $\R^2$ to $\R^d$ is nontrivial. The error introduced by the lifting could potentially destroy the highly unstable implosion mechanism. See Section \ref{sec:compare} for more discussion.

\subsection{Axisymmetry in $\R^d$}\label{sec:intro_axi}

To introduce our setting, we first define the rescaled sound speed $\s$ 
\beq\label{eq:sig}
\s = \f{1}{\al} \rho^{\al},\quad \al = \f{\g-1}{2},
\eeq
and reformulate the isentropic Euler equations \eqref{euler0} equivalently as a system 
of the velocity $\uu$ and the rescaled sound speed $\s$
\bseq\label{euler} 
\begin{align}
	\pa_t \s +  \uu \cdot \na \s + \al \s \cdot \mathrm{ div_d} \uu  &= 0 , \label{euler_s} \\
	\pa_t \uu + \uu \cdot \na \uu + \al \s \cdot \na \s & = 0 , \label{euler_u} 
\end{align}
\eseq
where we use $\div_d$ to denote the divergence in $\R^d$.

We introduce the following coordinates for $x \in \R^d$ 
\beq\label{eq:axi_rz}
\bal
r & =  (x_1^2 + x_2^2 + ... + x_{d-1}^2)^{1/2}, 
\quad z = x_n, \quad 
\ee_r  = \f{1}{r} (x_1, x_2, .., x_{d-1}, 0), \quad \ee_z = (0, 0, .., 0, 1) ,
\eal
\eeq
and consider axisymmetric solution in $\R^d$ without swirl 
\footnote{
It appears that the terminology of axisymmetry and swirl is mostly used for 3D incompressible Euler / Navier-Stokes equations, where one has the decomposition $\uu = u^r \ee_r + u^{\th} \ee_{\th} + u^z \ee_z$ with $u^r, u^{\th}, u^z$ depending only on $r, z$, see e.g, \cite[Section 2.3]{majda2002vorticity}. The component $u^{\th} \ee_{\th}$ with $\ee_{\th} = (-x_2 , x_1, 0)/r$ is called the swirl velocity. 
We generalize this terminology to denote solution with $\uu, \s$ admitting the decomposition \eqref{eq:axi_sym}.
}
\beq\label{eq:axi_sym}
\bal
\uu(x, t)  & = u^r(r, z, t) \ee_r + u^z(r, z,t) \ee_z, \quad 
\s = \s(r, z, t). 
\eal
\eeq
The solution with this symmetry is \textit{radially symmetric} in the first $d-1$ coordinates. 
We define the angular vorticity generalizing the notion for 3D Euler equations 
\footnote{
For 3D Euler, the angular vorticity is defined as $\om^{\th} = \pa_z u^r - \pa_r u^z$ see e.g. \cite[Section 2.3]{majda2002vorticity}, \cite{ChenHou2023a,luo2013potentially-2}. 
}
\beq\label{eq:vor_angle}
\om^{\th} = \sum_{i=1}^{d-1} \f{x_i}{r} ( \pa_z u_i - \pa_i u^z  )
 = \pa_z \sum_{i=1}^{d-1} \f{x_i}{r} \cdot u_i 
- \pa_r u^z  = \pa_z u^r - \pa_r u^z.
\eeq
If $\om^{\th}$ blows up, using triangle inequality, we obtain that the vorticity matrix $ \na \uu - (\na \uu)^{\perp}$ blows up.

To show that the above symmetries are preserved by \eqref{euler}, we can use the property that \eqref{euler} enjoys the rotational symmetry. Alternatively, we can verify this by deriving the evolution equations for  $(u^r, u^z, \s)(r, z, t)$. We first recall the following identities for the axisymmetric velocity $\uu$ and a smooth function $f = f(r, z)$
\[
\bga
  \ee_r \cdot \na f = \pa_r f ,\quad \ee_z \cdot \na f = \pa_z f ,
\quad  \uu \cdot \na f = ( u^r \pa_r + u^z \pa_z ) f ,
\quad \na f(r, z) =  \pa_r f \ee_r + \pa_z f   \ee_z, \\
  \div_d( \uu) = \pa_r u^r + \f{d-2}{r} u^r + \pa_z u^z 
 = \div_{2D}( \uu ) + \f{d-2}{r} u^r , \\
\ega
\]
where we use $\div_{2D}$ to denote the divergence in $(r, z)$
\[
  \div_{2D}( \uu ) = \pa_r u^r + \pa_z u^z .
\]
Using the above identities, we rewrite \eqref{euler} as a closed system of 
$(u^r, u^z, \s)(r, z, t)$
\beq\label{eq:euler_axi}
\bal
 \pa_t \s +  (u^r \pa_r + u^z \pa_z) \s + \al \s \cdot \div_{2D}(\uu) & = - \al(d-2) \f{ u^r \s}{r}, \\
 \pa_t u^r + (u^r \pa_r + u^z \pa_z ) u^r + \al \s \pa_r \s& = 0,  \\
  \pa_t u^z + (u^r \pa_r + u^z \pa_z ) u^z + \al \s \pa_z \s& = 0.
\eal
\eeq

Since in the rest of the paper, 
we do not use the operator $\div_{d}$,  
we simplify $\div_{2D}$ as $\div$ to simplify the notation. To draw connection between \eqref{eq:euler_ssb} and 2D Euler equations, we consider a change of variable
\beq\label{eq:axi_recenter}
 r = R_c + \xc_1, \quad z = \xc_2,
 \quad 
(u_{\xc, 1}, u_{\xc, 2}, \s_{\xc})(\xc)
= (u^r, u^z, \s)( \xc_1 + R_c, \xc_2 ) 
\eeq
 with $R_c \gg 1$. We treat $(r, z)$ as $(\xc_1, \xc_2)$ in $\R^2$, $(\pa_r, \pa_z)$ as $(\pa_{\xc_1}, \pa_{\xc_2})$. In the rest of the paper, we will focus on the system \eqref{eq:euler_axi} and do not use \eqref{euler} for the analysis. By abusing the notation, we simplify $\pa_{\xc_i}$ as $\pa_i$, $u_{\xc, i}$ as $u_i$, and write $\uu =(u_1, u_2)$. Then we can rewrite the axisymmetric Euler \eqref{eq:euler_axi} as 
\bseq\label{eq:euler_2D}
\begin{align}
 \pa_t \s + \uu \cdot \na \s + \al \s \div(\uu) &= \e_d, \quad \e_d = -  \al (d-2) \f{ u_1 \s}{R_c + \xc_1}, \label{eq:euler_2Da} \\
  \pa_t \uu + \uu \cdot \na \uu + \al \s \na \s  & =  0 .
\end{align}
\eseq
The angular vorticity $\om^{\th}$ \eqref{eq:vor_angle} becomes 
\beq\label{eq:vor_id}
\om(\uu) = \pa_z u^r - \pa_r u^z = \pa_2 u_1 - \pa_1 u_2.
\eeq

We will consider a blowup solution with $\uu , \na \s$ supported near $(r, z) = (R_c, 0)$ with $R_c$ sufficiently large. Then \eqref{eq:euler_2D} can be seen as the 2D isentropic compressible Euler equations with a small perturbation $\e_d$, and we can perturb the setting in \cite{chen2024Euler} to construct vorticity blowup. We remark that the idea of lifting a blowup solution from $\R^2$ to higher dimension $\R^d$ using axisymmetry in $\R^d$ and localizing the solution has been used to establish singularity formation for 3D incompressible Euler equations \cite{ChenHou2023a,ChenHou2023b,chen2019finite2}.

\subsection{The main result}


Similar to \cite{chen2024Euler}, we will construct blowup by perturbing a smoothly imploding self-similar solution to 2D compressible Euler, as was recently constructed in~\cite{merle2022implosion1}. The authors \cite{merle2022implosion1} constructed the self-similar profiles for  Euler in $\R^d$ with any $d \geq 2$ and adiabatic exponents $\g > 1$ not in an exceptional countable sequence $\G$. For each $\g \notin \G$, there exists a discrete sequence of admissible blowup speeds $\{ r_k \}_{k\geq 1} \subset (1,r_{\mathsf{eye}}(\alpha))$, accumulating at the value $r_{\mathsf{eye}}(\alpha)$ (see \eqref{r-range} for the formula of the 2D case), such that the Euler equations~\eqref{euler} exhibit smooth, radially symmetric, globally self-similar, imploding solutions with similarity exponent $r_k$ and profile $(\bar U, \bar \Sigma)$. See Theorem~\ref{thm:merle:implosion} below for a more precise statement.

With these notations, we state our main result.



\begin{theorem}[\bf Main result]\label{thm:blowup}
Fix a non-exceptional adiabatic exponent $\gamma \not \in \Gamma$, let $\alpha = (\gamma-1)/2$, and $r>1$ be an admissible blowup speed satisfying~\eqref{r-range}, as in~\cite{merle2022implosion1}. There exists absolute constants 
\footnote{
After we fix the adiabatic exponent $\g$ and blowup speed $r$, and its associated blowup profile in Theorem \ref{thm:merle:implosion}, we treat any constants only depending on these parameters and profiles as absolute constants.  
}
$C_s$ and $R_c$ large enough, and initial data $(\uu_0, \s_0) \in C^{\infty}$ with 
$ \uu_0 \in C_c^{\infty}$ and $\s_0 \geq c >0$, 
such that the solution $\uu, \s$ to \eqref{eq:euler_2D} (equivalent to \eqref{eq:euler_axi} via \eqref{eq:axi_recenter}) and the angular vorticity $\om^{\th}$ blow up at finite time $T$. The blowup is asymptotically self-similar in the following sense 
\beq\label{eq:blow_asym}
\bal
\lim_{t \to T_-} r (T-t)^{1-1/r} \uu( (T-t)^{1/r} y, t) & = \bar \UU(y), \\
\lim_{t \to T_-} r (T-t)^{1-1/r} \s( (T-t)^{1/r} y, t) &= \bar \S(y) , \\
\eal
\eeq
for any $y \in \R^2$. There exists absolute constants $c_2 > c_1 > 0$ such that the angular vorticity $\om^{\th}$ blows up with
\beq\label{eq:vor_blowup}
    c_1 (T-t)^{ - \f{r-1}{r \al}} \leq |\om^{\th}(0, t)| \leq     c_2 (T-t)^{ - \f{r-1}{r \al}} .
\eeq
for any $t \in [0, T)$. 
Due to the axisymmetry and the relation \eqref{eq:axi_recenter}, the solution implodes and the vorticity blows up on the $S^{d-2}$ sphere $\mathbf{S} = \{ (r, z) = (R_c, 0) \} $. 
Furthermore, for any $t \in [0, T)$, 
the solution $(u^r, u^z,\s)$ to \eqref{eq:euler_axi} in the original $(r, z)$ coordinate is uniformly supported near $\mathbf{S}$
\footnote{
Due to the axisymmetry, the set of points $x \in \R^d$ with coordinates $(r, z) = (R_c, 0)$ 
is the sphere $ |(x_1, .., x_{d-1})| = R_c, x_d = 0$. By scaling symmetry of the equations, the radius of the support can be made arbitrary small compared to the center $R_c$. 
}
in the following sense:
\beq\label{eq:thm_supp}
\supp( (u^r, u^z)(t) ) \cup \supp( \s(t) - \s_{\infty}) \subset  \{ (r, z) : |(r, z) - (R_c, 0) | < C_s R_c^{1/2} < R_c / 2 \}.
\eeq
In particular, the velocity is $0$ and the density is constant away from the sphere  $\mathbf{S}$ uniformly up to $T$. 
\end{theorem}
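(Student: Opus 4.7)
Since the axisymmetric change of coordinates \eqref{eq:axi_recenter} already converts \eqref{euler} into the planar system \eqref{eq:euler_2D} with a lower-order axisymmetric forcing $\e_d = -\al(d-2)u_1\s/(R_c+\xc_1)$, my plan is to treat \eqref{eq:euler_2D} as a perturbation of the 2D compressible Euler system studied in \cite{chen2024Euler}, where $\e_d$ is a small error of order $R_c^{-1}$ on the support of the solution. I then import the modulated self-similar blowup framework of \cite{chen2024Euler} built around the Merle--Rapha\"el--Rodnianski--Szeftel imploding profile supplied by Theorem~\ref{thm:merle:implosion}, and verify that the additional forcing $\e_d$ does not destroy the delicate finite-codimension stability argument. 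The blowup on the sphere $\mathbf{S}$ and the support statement \eqref{eq:thm_supp} are then obtained by unfolding the planar result via \eqref{eq:axi_recenter} and invoking finite speed of propagation.

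Concretely, the first step is to pass to self-similar variables $y = \xc/(T-t)^{1/r}$, $\tau = -\log(T-t)$, and decompose
\[
\uu(\xc,t) = (T-t)^{1/r-1}\bigl(\bar\UU(y) + \WW(y,\tau)\bigr), \qquad
\s(\xc,t) = (T-t)^{1/r-1}\bigl(\bar\S(y) + \Xi(y,\tau)\bigr),
\]
together with a finite-dimensional set of modulation parameters (blowup time, spatial shift, linear tilts) absorbing the finitely many unstable directions of the linearized implosion operator, exactly as in \cite{cao2023non,chen2024Euler}. The only genuinely new term in the self-similar equations for $(\WW,\Xi)$ is the rescaled forcing from $\e_d$, which carries a prefactor $(T-t)^{1/r}/R_c$ and can therefore be made uniformly small in $\tau$ by choosing $R_c$ sufficiently large.

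Following the key idea of \cite{chen2024Euler}, I next add an explicit ``swirl-like'' perturbation $\WW_s$ aligned with $\pa_{\xc_2}$ that is (i) linearly stable under the self-similar flow, (ii) produces a non-trivial angular vorticity $\om^{\th} = \pa_2 u_1 - \pa_1 u_2$ of self-similar size comparable to one at $y=0$, and (iii) dominates every other non-radial component of $(\WW,\Xi)$ in a neighborhood of the origin throughout the evolution. Unscaling via \eqref{eq:blow_asym} converts (ii) into the two-sided rate $|\om^{\th}(0,t)|\sim (T-t)^{-(r-1)/(r\al)}$ claimed in \eqref{eq:vor_blowup}, with the lower bound coming from (iii) and the upper bound from the bootstrap control of $(\WW,\Xi)$.

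The remaining work is to close a nonlinear bootstrap in weighted high-order Sobolev norms. The linearized spectral estimates around $(\bar\UU,\bar\S)$ and the modulation argument tracking the unstable directions are inherited essentially verbatim from \cite{chen2024Euler,cao2023non}; what is new is (a) absorbing the $\e_d$ source of size $O(R_c^{-1})$ without touching the spectral gap on the stable subspace or the solvability of the modulation ODEs, and (b) maintaining uniform compact support of the perturbation in the original $(r,z)$ coordinates. For (b) I use finite speed of propagation for \eqref{eq:euler_2D} with characteristic speed $|\uu|+\al\s$: starting from data supported in a ball of radius $O(R_c^{1/2})$ around $\xc=0$, the support grows by at most a bounded amount over the finite lifespan, yielding \eqref{eq:thm_supp} and, via \eqref{eq:axi_recenter}, the claimed localization near the sphere $\mathbf{S}$. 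The main obstacle, as the authors themselves flag, will be (a): the imploding profile is only finitely stable, so I must verify that the axisymmetric forcing $\e_d$ \emph{together with} the swirl perturbation do not push the solution off the finite-codimension stable manifold, and that the swirl contribution to $\om^{\th}$ continues to dominate the non-radial residue uniformly up to the blowup time $T$.
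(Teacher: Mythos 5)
Your high-level strategy --- treat \eqref{eq:euler_2D} as a perturbation of the 2D problem of \cite{chen2024Euler} with the $O(R_c^{-1})$ forcing $\e_d$, seed the vorticity with a stable swirl, and unfold via \eqref{eq:axi_recenter} --- is the paper's strategy, but the proposal has genuine gaps. The most serious one is the mechanism for \eqref{eq:vor_blowup}. If the self-similar angular vorticity were of size one at $y=0$ and you "unscaled" via \eqref{ss-var:b}, you would get $|\om^{\th}(0,t)| \sim (T-t)^{-1}$, not $(T-t)^{-\f{r-1}{r\al}}$; these exponents differ for generic $(\g,r)$ (e.g.\ $\al=1$ forces $\f{r-1}{r\al}<\f14$). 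In fact the perturbation decays in self-similar time, so the self-similar vorticity at any fixed $y$ tends to zero and the vorticity blowup is \emph{not} self-similar. The paper's argument is different: the specific vorticity $\om/\rho$ obeys the transport equation \eqref{eq:vor} whose forcing has uniformly bounded time integral (using $\|u_1\|_\infty \les (T-t)^{1/r-1}$ and the support bound), so $\om/\rho$ is comparable to its initial value along trajectories; Lemma \ref{lem:traj} (built on the outgoing property \eqref{eq:rep22}) confines the backward characteristic $\Phi^{-1}(0,t)$ to the unit ball where the initial specific vorticity is bounded below; and then $\om(0,t)=(\om/\rho)\cdot\rho(0,t)\asymp\rho(0,t)\asymp (T-t)^{(1/r-1)/\al}$. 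None of this is in your proposal, and without it you obtain neither the lower bound nor the correct exponent.

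Two further gaps. The linear stability cannot be "inherited essentially verbatim": \cite{chen2024Euler} treats only radial perturbations, and the non-radial argument of \cite{cao2023non} needs the repulsivity \eqref{eq:rep1} on all of $[0,\infty)$, which may fail for the 2D profile (Remark \ref{rem:no:repulsive}); the paper must build a new weighted estimate on $\D^m(\UU,\S)$ (Theorem \ref{thm:coer_est}) using the angular condition \eqref{eq:rep_ag} only on $[0,\xi_1]$ together with the commutator Lemmas \ref{lem:norm_equiv_coe} and \ref{lem:non_IBP}. Also, absorbing the unstable directions by modulating blowup time, shifts and tilts is not what either cited paper does and would not work in general, since the unstable subspace need not be spanned by symmetry generators; the paper instead represents the unstable component by the backward-in-time integral \eqref{eq:V2_form2} and closes a Banach fixed point, which implicitly places the data on a finite co-dimension set. (Your finite-speed-of-propagation support argument is acceptable in spirit --- Lemma \ref{lem:supp} is a rigorous energy version of it --- but as stated it is circular: the $L^\infty$ bound on $|\uu|+\al\s$ is itself part of the bootstrap that needs the support bound to control $\e_d$.)
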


\begin{remark}[\bf The set of initial data] 
\label{rem:intro_init_data}
As explained in Remark~\ref{rem:data}, there exists an open set $Z_2$ (a ball in a weighted Sobolev space) with radial symmetry and a finite co-dimension set $Z_1$ without symmetry assumption such that the initial data to \eqref{eq:euler_2D} in Theorem~\ref{thm:blowup} may be taken as $(\uu_0, \s_0) = (\vv_0, \s_0 ) + (\uu_0^{\th}, 0)$ 
with $ (\vv_0, \s_0 ) \in Z_1$ and 
$\uu_0^{\th} =  u_0^{\th}(|\xc|)  (-\f{\xc_2}{|\xc|}, \f{ \xc_1}{|\xc|} ) \in Z_2$. 
Since $\uu_0^{\th}$ belongs to an open set with radial symmetry, by choosing $\uu_0^{\th}$ much larger than $\vv_0$, we ensure that the initial vorticity $\omega_0$ does not vanish near $\xc = 0$. 
\end{remark}

\begin{remark}[\bf Upper bound of the blowup rates]

Using the nonlinear estimates established in Theorem \ref{thm:non} and the estimates in \cite[Section 4.5]{chen2024Euler}, we can establish the sharp spatial decay estimates of 
$\UU, \na \UU, \na \S$, where  $\UU, \S$ are the self-similar variables of $\uu, \s$ defined in \eqref{ss-var:b}. Using these estimates, the relation \eqref{ss-var:b}, and the estimates of specific vorticity (see Section \ref{sec:intro_idea} and \eqref{eq:vor_equi}, \eqref{eq:blow_pf2} in Section \ref{sec:thm1_pf}), we can deduce 
\[
 || (\uu, \s)(t)||_{L^{\infty}} \les (T-t)^{1/r-1}, \quad \| \om^{\th}(t) ||_{L^{\infty}} 
 \les C(\rho_0) (T-t)^{- \f{r-1}{r\al}}.
\]
The rates in the upper bounds are consistent with the rates in the asymptotics \eqref{eq:blow_asym} and \eqref{eq:vor_blowup}. 
\end{remark}

\subsection{Recent results on singularity formation for Euler}\label{sec:work}



The literature on singularity formation in compressible Euler equations is too vast to review here. In addition to the classical results  discussed earlier in the paper, 
we will focus on more recent developments in the multi-dimensional problem.

\subsubsection{Shock formation}

The typical singularity for the Euler equations is a shock wave. For irrotational solutions to the Euler equations, shock formation based on ideas developed for second-order quasilinear wave equations and general relativity were established in by Alinhac \cite{Al1999a,Al1999b}, 
Christodoulou \cite{Ch2007}, and Christodoulou-Miao \cite{ChMi2014}. 
In \cite{LuSp2018,LuSp2024}, Luk-Speck extended the works \cite{Ch2007,ChMi2014} to allow for non-trivial vorticity and entropy. 
Using the self-similar method, Buckmaster, Iyer, Shkoller, and Vicol \cite{BuShVi2022,BuIy2022} established the shock formation for 2D Euler with azimuthal symmetry; Buckmaster-Shkoller-Vicol 
\cite{BuShVi2023a,BuShVi2023b} established the shock formation for 3D Euler without symmetry assumption. Under symmetry assumptions, shock development was established in various settings \cite{Yi2004,ChLi2016,BuDrShVi2022}. Very recently, shock formation without symmetry assumption past the time of the first singularity was analyzed in~\cite{ShVi2023,AbSp2022}.




\subsubsection{Smooth Implosions}\label{sec:implod}
Shock formation is not the only possible singularity developed in Euler equations. 
Guderley ~\cite{Gu1942} first constructed radial, non-smooth, imploding self-similar singularities 
along with a converging shock wave. 
 Although Guderley's setting has been studied extensive in the literature, the existence of a finite-time {\em smooth implosion} (without a shock wave) was established only recently in  
 \cite{merle2022implosion1,merle2022implosion2}, and further developed in~\cite{buckmaster2022smooth,cao2023non}. While these results are inspired by the ansatz in~\cite{Gu1942}, establishing the existence of $C^\infty$ self-similar implosion profiles is very challenging due to the degeneracy of the ODE at the sonic point, and requires a great deal of mathematical sophistication \cite{merle2022implosion1} or computer-assistance \cite{buckmaster2022smooth}. This degeneracy renders the imploding solutions potentially highly unstable, and their stability has only been established for initial perturbations from a finite co-dimensional set, which are not precisely quantified. 
See \cite{biasi2021self} for the numerical study of these instabilities.




\subsubsection{Other related singularities}

In contrast to compressible Euler equations, relatively few results have been established for 
singularity formation of the 3D incompressible Euler equations. It is well-known that the solution blows up \textit{if and only if} the vorticity blows up \cite{beale1984remarks}. In the case of smooth data, singularity formation has only been established recently in the domain with smooth boundary by the author and Hou \cite{ChenHou2023a,ChenHou2023b}. Singularity formation has also been established for low regularity data in various settings \cite{chen2019finite2,elgindi2019finite,elgindi2019stability,elgindi2023instability,cordoba2023finite}. 



Using the imploding blowup profile constructed in \cite{merle2022implosion1} and 
introducing 
a front compression mechanism, the authors \cite{merle2022blow} established blowup for defocusing nonlinear Schrodinger equation. 
Building on the front compression mechanism \cite{merle2022implosion1,merle2022blow}, 
recently, Shao-Wei-Zhang \cite{shao2024self,shao2024blow} constructed the imploding self-similar singularity for relativistic Euler equations 
and proved blowup for the defocusing nonlinear wave equation. 





\vspace{0.1in}
\paragraph{\bf{Organization of the paper}}

The rest of the paper is organized as follows. In Section \ref{sec:setup}, we reformulate the equations using the self-similar transform and discuss the ideas of the proof. We perform the linear stability analysis in Section \ref{sec:lin}, and decay estimates of the semigroup in Section \ref{sec:decay}. In Section \ref{sec:non}, we close the nonlinear estimates and prove Theorem \ref{thm:blowup}.

\section{Setup in the self-similar variables and ideas of the proof}\label{sec:setup}

We first reformulate the equations using the self-similar coordinates and variables and then perform linearization around the self-similar profiles in Sections \ref{sec:ss_coor}-\ref{sec:ss_lin}. In Section \ref{sec:intro_idea}, we discuss the ideas of the proof, which will crucially use the axisymmetry in $\R^d$ and the structure of the equations.

\subsection{Self-similar coordinates}\label{sec:ss_coor}
For $r > 1$ satisfying \eqref{r-range}, we introduce the self-similar coordinates and variables for the shifted coordinate \eqref{eq:axi_recenter}
\bseq\label{eq:var_ss}
\beq\label{ss-var:a}
\bal
 s  & = - r^{-1}  \log(T-t) , \quad y = \frac{\xc}{(T-t)^{1/r}}  ,
\quad \xi = |y| =  \frac{ |\xc|}{(T-t)^{1/r}}   , 
  \\
\eal
\eeq

Throughout the paper we will use $y \in \R^2$ to denote the self-similar vectorial space coordinate, and $\xi = |y|$ to denote the self-similar radial variable. 
The self-similar velocity components and rescaled sound speed are then given by 
\begin{equation}
\bigl( \uu, \sigma\bigr)(\xc,t) 
= \tfrac{1}{r}  (T-t)^{\frac{1}{r}-1} ( \UU, \Sigma )(y ,s).
 \label{ss-var:b}
\end{equation}
\eseq

We introduce the  polar coordinate for $(y_1, y_2) \in \R^2$ 
\beq\label{eq:polar}
\xi = |y|,\quad \th = \arctan(y_2 / y_1), \quad  \ee_R = (\cos \th, \mathrm{sin} \th), \quad \ee_{\th} = (-\mw{sin} \th, \cos \th). 
\eeq

Throughout the paper we will assume that the parameter $r$ appearing in \eqref{eq:var_ss} satisfies the following inequalities\footnote{The definition of $r_{\mathsf{eye}}$ is directly adapted from ~\cite{merle2022implosion1} upon letting $d=2$ and $\gamma = 1 + 2\alpha $; \eqref{r-range} is the same as \cite[equation (1.15)]{merle2022implosion1}.}
\begin{subequations}  \label{r-range}
\begin{align}
1<r<r_{\mathsf{eye}}(\alpha),
\qquad
&r_{\mathsf{eye}}(\alpha) = \tfrac{1 + 2\alpha}{1+\alpha \sqrt{2}}, 
& \alpha > \tfrac{1}{2}, \label{r-rangea} \\
\tfrac{1 + 2\alpha}{1+\alpha \sqrt{2}}< r<r_{\mathsf{eye}}(\alpha), 
\qquad
&
r_{\mathsf{eye}}(\alpha) =1+ \tfrac{\alpha}{(\sqrt{\alpha}+1)^2},
&
0 < \alpha < \tfrac{1}{2} \label{r-rangeb}.
\end{align}
\end{subequations}


Using the self-similar coordinates and variables, we can rewrite the system \eqref{eq:euler_2D} as follows 
\bseq\label{eq:euler_ss}
\beq\label{eq:euler_ssa}
\bal
& \pa_s \UU + (r-1) \UU + (y + \UU) \cdot \na \UU + \alpha \S  \na \S  = 0,  \\
& \pa_s \S + (r-1)\S + (y + \UU) \cdot \na \S + \alpha \S \cdot \div(\UU)   = \cE_d,
\eal
\eeq
where  $\cE_d$ denotes the lower order term and is defined below 
\beq\label{eq:euler_ssb}
 \cE_d = - \al (d-2) \f{ e^{-s} U_1 \S }{  R_c +  e^{-s} y_1 } .
\eeq
\eseq

The initial time $t=0$ in \eqref{ss-var:a} corresponds to $s_{in}$  at the self-similar time 
\begin{equation}\label{eq:s_in}
\sin \teq - \tfrac{1}{r} \log T. 
\end{equation}

\subsection{Self-similar blowup profiles}\label{sec:profile}

Recall the polar coordinate $\ee_R, \ee_{\th}$ defined in \eqref{eq:polar}. We summarize the results of \cite[Theorem 1.2 and Corollary 1.3]{merle2022implosion1}, which establish the existence of self-similar profiles in $\R^2$, i.e. smooth solution 
$\bar \UU = \bar U \ee_R, \bar \S $ to \eqref{eq:euler_ss} in $\R^2$ with $\pa_s \UU, \pa_s \S = 0$ and without the term $\cE_d$.

\begin{theorem}[\bf Existence of smooth similarity profiles~\cite{merle2022implosion1}]
\label{thm:merle:implosion}
There exists a (possibility empty) exceptional countable sequence $\Gamma = \{ \g_n\}_{n \geq 1}$ whose accumulation points can only be $\{1, 2, \infty \} $ such that the following holds. For each $\g \notin \Gamma$, 
there exists a discrete sequence of blow up speeds $\{ r_k \}_{k\geq 1}$  in the range \eqref{r-range}, accumulating at $r_{\mathsf{eye}}(\frac{\gamma-1}{2})$, such that the system \eqref{eq:euler_ss} admits a $C^\infty$-smooth radially symmetric stationary solution $\bar \UU(y) = \bar U(\xi) \ee_R,  \bar \S(\xi)$.
\end{theorem}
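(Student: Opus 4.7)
The plan is to reduce the stationary, radially symmetric form of \eqref{eq:euler_ssa} (with $\pa_s = 0$ and $\cE_d = 0$) to a planar autonomous ODE system in $\xi$, and to construct the profile by a shooting argument that must navigate a degenerate \emph{sonic} point.

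Substituting $\bar \UU(y) = \bar U(\xi) \ee_R$ and $\bar \S(y) = \bar \Sigma(\xi)$ into the stationary system, using $(y+\bar \UU) \cdot \na = (\xi + \bar U)\pa_\xi$ on radial objects and $\div(\bar U \ee_R) = \bar U' + \bar U/\xi$ in $\R^2$, one obtains
\begin{align*}
(\xi + \bar U)\, \bar U' + \alpha \bar \Sigma\, \bar \Sigma' &= -(r-1)\bar U, \\
\alpha \bar \Sigma\, \bar U' + (\xi + \bar U)\, \bar \Sigma' &= -(r-1)\bar \Sigma - \alpha \bar \Sigma\, \bar U/\xi.
\end{align*}
Cramer's rule yields $(\bar U', \bar \Sigma')$ with common denominator $D(\xi) = (\xi + \bar U)^2 - \alpha^2 \bar \Sigma^2$, which vanishes on the sonic curve; a global trajectory must cross this curve transversally at some $\xi = \xi_*$. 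At $\xi = 0$, smoothness of $\bar \UU$ forces $\bar U(0) = 0$ and $\bar \Sigma$ even, giving a one-parameter analytic family parametrized by $\bar \Sigma(0) > 0$; at $\xi = \infty$ one demands the algebraic decay that makes $\bar \UU$ and $\bar \S - \s_\infty$ localized after undoing the self-similar scaling \eqref{ss-var:b}.

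The heart of the proof is the local analysis at $\xi_*$. After a linear change of coordinates centered at $\xi_*$, the vanishing of $D$ turns $\xi_*$ into a degenerate equilibrium whose linearization has a ``fast'' (smooth) and a ``slow'' (generically non-smooth) eigendirection. Selecting the smooth direction, one seeks a formal power series solution; its Taylor coefficients satisfy a recursion schematically of the form $(n \lambda_1 - \lambda_2)\,(a_n,b_n)^\top = F_n(a_1,\ldots,a_{n-1}; r, \alpha)$. A global $C^\infty$ trajectory is then produced by tuning the shooting parameter $\bar \Sigma(0)$ so that the orbit emanating from the origin meets the smooth manifold of $\xi_*$ and extends through it; a continuity/monotonicity argument in $r$ singles out a discrete sequence $\{r_k\}$, and tracking the location of $\xi_*$ as $r \uparrow r_{\mathsf{eye}}(\alpha)$ identifies the accumulation point.

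The main obstacle, and the technical heart of \cite{merle2022implosion1}, is achieving $C^\infty$ (rather than merely finite) regularity at the sonic point. The formal series can be solved to all orders only if each resonance coefficient $n\lambda_1 - \lambda_2$ is nonzero; since $\lambda_1, \lambda_2$ depend analytically on $\gamma$, vanishing occurs only for a countable set of $\gamma$ whose accumulation points must coincide with degenerations of the underlying structure, namely $\gamma \in \{1, 2, \infty\}$ (equivalently $\alpha \in \{0, 1/2, \infty\}$). This is precisely the exceptional set $\Gamma$. For $\gamma \notin \Gamma$, combining the convergent formal series at $\xi_*$ with quantitative ODE estimates along the shooting trajectory upgrades the local construction to a genuine smooth, globally defined profile, completing the construction.
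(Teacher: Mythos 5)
First, a point of framing: the paper does not prove Theorem \ref{thm:merle:implosion}. It is imported verbatim (with $d=2$) from \cite[Theorem 1.2 and Corollary 1.3]{merle2022implosion1}, so there is no internal proof to compare against; what you are proposing is a reconstruction of the main theorem of that long paper. Your sketch does identify the correct architecture — reduction to a planar autonomous system, a degenerate sonic point, and Taylor-coefficient non-resonance conditions as the source of both the exceptional set $\Gamma$ and the discreteness in $r$ — and this matches the Emden formulation \eqref{eq:ODE_nota}--\eqref{eq:ODE_prop} that the present paper actually uses in Appendix \ref{sec:angle_rep }.

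As a proof, however, the proposal has genuine gaps. (i) The shooting picture ``tune $\bar\Sigma(0)$ so that the orbit from the origin meets the smooth manifold at $\xi_*$'' does not work as stated: the stationary profile system is invariant under $(\bar U,\bar\Sigma)(\xi)\mapsto \mu^{-1}(\bar U,\bar\Sigma)(\mu\xi)$, so varying $\bar\Sigma(0)$ merely relocates the sonic point and provides no genuine degree of freedom. This is precisely why \cite{merle2022implosion1} pass to the scale-invariant variables $W=-\bar U/\xi$, $S=\alpha\bar\Sigma/\xi$, $x=\log\xi$ of \eqref{eq:ODE_nota}, in which the construction becomes: select the correct branch of the solution curve through the sonic point $P_2$ and propagate it in both directions, to the far-field rest point as $x\to+\infty$ and to $(W,S)\to(W_e,\infty)$ as $x\to-\infty$; the only effective parameters are $r$ and the branch choice. (ii) The two hardest steps are asserted rather than proved: that the formal series at the sonic point along the chosen branch yields a genuine $C^\infty$ solution under the non-resonance conditions, and that this local solution extends globally without re-crossing the sonic line $\Delta=0$ — the latter requires the barrier/monotonicity analysis that occupies most of \cite{merle2022implosion1}, and it is this global connection problem, coupled with the non-resonance conditions in $(\gamma,r)$, that produces the discrete sequence $r_k\to r_{\mathsf{eye}}$, not a soft continuity argument in $r$. (iii) The identification of the accumulation points of $\Gamma$ with $\{1,2,\infty\}$ is stated without justification. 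For the purposes of this paper none of this needs to be redone: the theorem should simply be cited, as the authors do.
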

The requirements on $\gamma$ and $r_k$ stated above are the same as in~\cite[Theorem 1.2]{merle2022implosion1}, with $d=2$. 
Throughout the paper we will fix a non-exceptional adiabatic exponent $\gamma \not \in \Gamma$, a similarity exponent $r \in \{r_k\}_{k\geq 1}$, a similarity profile $(\bar U,0,\bar{\S})$, the blowup time $T$, and $\sin$ \eqref{eq:s_in} in our analysis. We treat any constants related to the profile $(\bar U,0,\bar{\S})$, to $\alpha = \frac{\gamma-1}{2}$, to $r \in(1,r_{\mathsf{eye}}(\alpha))$, and to $T, \sin$ as absolute constants. 

In the next lemma we collect some useful properties of the profiles $(\bar{U}, \bar{\S})$ constructed in~\cite{merle2022implosion1}.

\begin{lemma}[\bf Properties of the similarity profiles]
\label{lem:profile}
The radial vector field $\bar U(\xi) \ee_R$  and the radially symmetric function $\bar{\Sigma}(\xi) > 0$ are smooth. For every integer $k \geq 0$ we have the following decay\footnote{Here and throughout the paper we denote by $\langle \cdot \rangle$ the quantity $\sqrt{1+|\cdot|^2}$.} as $\xi \to \infty$:
\begin{subequations}
\label{eq:profile:properties}
\begin{equation}
|\partial^k_\xi  \bar{U}(\xi)| \les_k  \la \xi \ra^{1-r-k} , \quad
|\partial^k_\xi  \bar{\S}(\xi)| \les_k  \la \xi \ra^{1-r-k} .
\label{eq:dec_U}   
\end{equation}
There exists $\kp>0$ and $\xi_1 > \xi_s$\footnote{The value $\xi_s$ corresponds to the point $P_2$ in the phase portrait of~\cite{merle2022implosion1}.} such that 
\begin{align}
1 + \partial_\xi \bar{U}(\xi) - \alpha |\pa_{\xi} \bar{\S}(\xi)| &> \kp,  \quad \xi \in [0, \xi_1] \label{eq:rep1}, \\
\xi + \bar U(\xi) - \al \bar{\S}(\xi)&> 0, \quad \xi > \xi_s, \label{eq:rep2} \\
1 + \xi^{-1} \bar U(\xi)  & > \kp, \quad \xi \geq 0 ,  \label{eq:rep22} \\
1 + \f{\bar U}{\xi} - |\alpha \pa_{\xi} \S| & > \kp , 
 \quad \xi \in [0, \xi_1] \label{eq:rep_ag}. 
\end{align}
Lastly, since $r$ lies in the range \eqref{r-range}, we have
\begin{equation}
{\lim}_{\xi \to 0^+} \xi^{-1} \bar{U}(\xi)  = \partial_\xi \bar{U}(0) =- \tfrac{r-1}{2 \alpha} > -\tfrac{r}{2}. 
\label{eq:rep3}
\end{equation}
\end{subequations}
\end{lemma}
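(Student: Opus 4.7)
My plan is to work directly with the stationary self-similar ODE system satisfied by $(\bar U, \bar \S)$ and to invoke the existence and smoothness from Theorem~\ref{thm:merle:implosion}. Substituting the radial ansatz $\bar{\UU} = \bar U(\xi)\ee_R$, $\bar\S=\bar\S(\xi)$ into \eqref{eq:euler_ss} with $\pa_s(\cdot)=0$ and $\cE_d=0$, and using $\div(\bar U \ee_R) = \pa_\xi \bar U + \xi^{-1}\bar U$ together with $(y+\bar\UU)\cdot\na f = (\xi+\bar U)\pa_\xi f$ for radial $f$, I obtain
\begin{align*}
(r-1)\bar U + (\xi + \bar U)\pa_\xi \bar U + \alpha \bar \S\,\pa_\xi \bar \S &= 0, \\
(r-1)\bar \S + (\xi + \bar U)\pa_\xi \bar \S + \alpha \bar \S\bigl(\pa_\xi \bar U + \xi^{-1}\bar U\bigr) &= 0.
\end{align*}
All the claims will be extracted from this system together with the global phase-portrait analysis of \cite{merle2022implosion1}.

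First I would treat the behavior at the origin. Smoothness and radial symmetry (given by \cite{merle2022implosion1}) force $\bar U(0)=0$ and $\pa_\xi \bar \S(0)=0$; the latter is also confirmed by evaluating the first equation at $\xi=0$ and using $\bar\S(0)>0$. Dividing the second equation by $\xi$ and passing to the limit $\xi\to 0^+$ with $\bar U/\xi\to\pa_\xi \bar U(0)$ yields $(r-1)\bar\S(0)+2\alpha \bar\S(0)\pa_\xi\bar U(0)=0$, giving $\pa_\xi \bar U(0)=-\tfrac{r-1}{2\alpha}$ and thus \eqref{eq:rep3}. The inequality $-\tfrac{r-1}{2\alpha}>-\tfrac{r}{2}$ is equivalent to $r(1-\alpha)<1$, which I would verify in each of the ranges \eqref{r-rangea}, \eqref{r-rangeb} (trivially for $\alpha\geq 1$, and by comparing $r_{\mathsf{eye}}(\alpha)$ with $(1-\alpha)^{-1}$ for $\alpha<1$). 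The pointwise bounds \eqref{eq:rep1}, \eqref{eq:rep_ag} reduce at $\xi=0$ to $1+\pa_\xi \bar U(0)=1-\tfrac{r-1}{2\alpha}>0$, equivalently $r<1+2\alpha=\gamma$, which again follows from \eqref{r-range}. By continuity, each of \eqref{eq:rep1}, \eqref{eq:rep_ag} holds on a right neighborhood of $0$; \eqref{eq:rep22} likewise holds near $0$ from $\pa_\xi \bar U(0)>-r/2>-1$ and extends to all $\xi$ using the far-field decay \eqref{eq:dec_U}.

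The decay \eqref{eq:dec_U} will follow from the leading asymptotic balance $\xi\,\pa_\xi f+(r-1)f\approx 0$ for $f\in\{\bar U,\bar \S\}$ at infinity, whose solutions decay like $\xi^{1-r}$; propagating this to every derivative is already carried out in \cite{merle2022implosion1}. The main obstacle will be the remaining pair of inequalities: the supersonic estimate \eqref{eq:rep2}, whose left-hand side vanishes precisely at $\xi=\xi_s$, and the extension of \eqref{eq:rep1}, \eqref{eq:rep_ag} to an interval $[0,\xi_1]$ with $\xi_1>\xi_s$. These are nontrivial structural features of the profile and are exactly what allows \cite{merle2022implosion1} to smoothly continue $(\bar U,\bar\S)$ through the degeneracy of the ODE at the sonic point; I would therefore import them directly from \cite[Section~3]{merle2022implosion1}, taking $\kappa>0$ as a uniform lower bound of the minimum of the relevant positive quantities on $[0,\xi_1]$. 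Combining the origin analysis, the asymptotic decay, and these imported properties then yields the full lemma.
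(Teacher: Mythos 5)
Your treatment of the origin asymptotics is sound: evaluating the stationary system at $\xi=0$ does give $\pa_\xi\bar U(0)=-\tfrac{r-1}{2\al}$, the parameter inequality $r(1-\al)<1$ does follow from \eqref{r-range} in both regimes, and citing \cite{merle2022implosion1,chen2024Euler} for \eqref{eq:dec_U}, \eqref{eq:rep1}, \eqref{eq:rep2}, \eqref{eq:rep22} matches what the paper does (these are imported from \cite{chen2024Euler}). A small caveat on \eqref{eq:rep22}: "holds near $0$ and extends via the far-field decay" leaves the intermediate range of $\xi$ uncovered; the uniform bound there amounts to the global phase-portrait fact $W(\xi)=-\bar U/\xi<1$, not to a continuity-plus-decay argument.

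The genuine gap is \eqref{eq:rep_ag}. This is the one property that is \emph{not} available as a citation: the paper states it has only "essentially" been proved in \cite{merle2022implosion1} and therefore supplies a full proof in Appendix~\ref{sec:angle_rep }. Your proposal handles \eqref{eq:rep_ag} only at $\xi=0$ plus "import from \cite[Section~3]{merle2022implosion1}", which is not a proof of the inequality on all of $[0,\xi_1]$ with $\xi_1>\xi_s$. The missing idea is the reduction
\[
1+\tfrac{\bar U}{\xi}-\al|\pa_\xi\bar\S|
=\bigl(1+\pa_\xi\bar U-\al|\pa_\xi\bar\S|\bigr)+\bigl(\tfrac{\bar U}{\xi}-\pa_\xi\bar U\bigr),
\]
so that, given \eqref{eq:rep1}, it suffices to prove $\tfrac{\bar U}{\xi}-\pa_\xi\bar U\ge 0$ on $[0,\xi_s]$ (then extend to $[0,\xi_1]$ by continuity). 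In the Emden variables $W=-\bar U/\xi$, $S=\al\bar\S/\xi$, $x=\log\xi$ this sign condition is exactly $\tfrac{dW}{dx}=-\Delta_1/\Delta\ge 0$, and establishing it requires showing $\Delta<0$ and $\Delta_1>0$ along the solution curve for $S>P_{2,S}$; the latter rests on the root ordering $W_1\le 0<W_e<W_2\le 1<r\le W_3$ and a barrier/contradiction argument (via the mean value theorem) showing $W(S)>W_e$ there. Without this argument, or an explicit statement in \cite{merle2022implosion1} that can be quoted verbatim (which the paper indicates does not exist in this form), the proof of \eqref{eq:rep_ag} is incomplete.
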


Properties~\eqref{eq:profile:properties} except for \eqref{eq:rep_ag} have been proved in \cite{chen2024Euler} based on the estimates established in \cite{merle2022implosion1}. In \cite{cao2023non}, the authors established that the new repulsive condition \eqref{eq:rep_ag} and the repulsive condition \eqref{eq:rep1} for all $\xi \in [0, \infty)$ are sufficient for the stability analysis of the non-radial perturbation of the radial implosion. Here, we only need the angular repulsive condition \eqref{eq:rep_ag} for $\xi \in [0, \xi_s]$, which has essentially been proved in \cite{merle2022implosion1} and follows from the sign of the phase portrait in \cite{merle2022implosion1}. By continuity, we can extend it to a region $[0, \xi_1]$ slightly larger than $[0, \xi_s]$. For completeness, we present the proof of \eqref{eq:rep_ag} to Appendix \ref{sec:angle_rep }.

\begin{remark}[No exterior repulsive condition]\label{rem:no:repulsive}
The repulsive condition~\eqref{eq:rep1} may not hold true in the entire exterior domain $\xi > \xi_s$ for the 2D compressible Euler equations, which is related to \cite[Lemma 1.7]{merle2022implosion1}. This issue has been discussed in detail in \cite[Remark 2.3]{chen2024Euler}, and we refer there for more discussion.
\end{remark}

\subsection{Linearization around the profile}\label{sec:ss_lin}

We consider a solution to the Euler equations \eqref{eq:euler_ss} in the form $ (U, \S) =  ( \bar \UU,  \bar{\S} ) + (\td \UU,  \td \S) $. Our goal is to analyze the evolution of the perturbation
\bseq\label{eq:lin}
\begin{align}
\pa_s \td \UU & = \cL_U( \td \UU,  \td \S) + {\cN}_{U}, \label{eq:lin_U}  \\
  \pa_s \td \S & = \cL_{\S}( \td \UU, \td \S) + \cN_{\S} , \label{eq:lin_S} 
\end{align}
where the linearized operators are defined as 
\begin{align}
 \cL_U &=  -  (r-1) \td \UU -  (y +\bar{ \UU}) \cdot \na \td \UU  -  \td \UU \cdot \na \bar{\UU} 
 - \alpha \td \S \na \bar{\S}  - \alpha \bar{\S} \na \td \S, \label{eq:linc}  \\
 \cL_{\S} &= -  (r-1) \td \S - (y +\bar{ \UU}) \cdot \na \td \S
 -  \td \UU  \cdot \na \bar{\S}     -  \alpha \div(\bar \UU)
 \td \S - \alpha \div(\td \UU) \bar{\S}  , \label{eq:lind}
\end{align}
\eseq
and the nonlinear terms are defined as
\beq\label{eq:non}
\bal
 {\cN}_{  U} &=  -  \td \UU \cdot \na \td \UU - \alpha \td \S \na \td \S  ,  \\
 {\cN}_{ \S} & =  - \td \UU \cdot \na \td \S -  \alpha  \div(\td \UU) \td \S 
 + \cE_d .
\eal
\eeq

Note that we do not linearize the lower order term $\cE_d$ \eqref{eq:euler_ssb}. 
The linearized operators $\cL =(\cL_U, \cL_S)$ are the same as \cite{chen2024Euler}. Yet, we analyze $\cL$ for non-radial perturbation, while \cite{chen2024Euler} analyzes radial perturbation. 


\subsection{Key observations and ideas of the proof}\label{sec:intro_idea}


Recall the 2D axisymmetry and the difficulties discussed before Section \ref{sec:intro_axi}. Since the 2D axisymmetry used in \cite{chen2024Euler} no longer holds in our setting, we face new difficulties in estimating the vorticity, the trajectory, and constructing nontrivial initial vorticity. To overcome these difficulties, in addition to lifting the compressible Euler equations from $\R^2$ to $\R^d, d \geq 3$ using the setup in Section \ref{sec:intro_axi}, we have three important observations listed in steps (a),(b),(c) below.

\vspace{0.1in}
\noindent \textbf{(a) Vorticity estimates.}
Firstly, using \eqref{eq:euler_2D}, we can derive the equation of a specific vorticity $\Om =\f{\om}{\rho}$ as 
\beq\label{eq:intro_vor}
 \pa_t ( \f{\om}{\rho} ) + \uu \cdot \na (\f{\om}{\rho}) =  (d-2) \f{u_1}{R_c + \xc_1} \cdot (\f{\om}{\rho} ),
 \eeq
where $\om$ is defined in \eqref{eq:vor_id}. See the derivation in Section \ref{sec:traj}. 
We observe that if $\uu$ blows up according to the rate 
in \eqref{eq:blow_asym} and it is supported uniformly away from $r = R_c + \xc_1 = 0$, e.g. $\supp(\uu) < R_c/2 $, then the forcing only amplifies $\Om$ in a bounded manner. Concretely, denote by $\Phi(x, t)$ the characteristics associated with $\uu$ starting from $\xc  = x$. Under this assumption on $\uu$, solving $\Om$ along the characteristics
, we obtain
\beq\label{eq:vor_tran}
   \Om( \Phi(x, t), t) = \Om(x, 0) e^{ R(x, t)}, 
\eeq
where $R(x, t)$ satisfies 
\[
 R(x, t) = \int_0^t  (\f{ u_1}{ R_c + \xc_1} )(\Phi(x, s), s) d s,
 \quad  |R(x, t)| \les \int_0^t  ( T-s)^{1/r-1} d s \les T^{1/r}
\]
uniformly in $t< T$. Thus, we obtain that $\Om(\Phi(x, t), t)$ and $\Om( x, 0)$ are comparable. 

Thus, one would construct vorticity blowup by (1) establishing implosion of $\rho( t)$, (2)  constructing nontrivial initial data for $\Om$, and (3) controlling the trajectory and confining $x$ to $\{ y: \Om(y, 0)\neq 0 \}$ 
and $\Phi(x, t) \in \{ y: \rho( y, t) \to \infty, \mathrm{\ as  \ } t \to T \}$.

\vspace{0.1in}
\noindent \textbf{(b) Non-trivial initial vorticity.}  
To prove implosion, we develop a finite co-dimension stability estimates of the profile with  non-radial perturbation. We refer to Section \ref{sec:idea_NR} for the estimates. 
In general, one needs to choose initial data in some finite co-dimension set defined implicitly, which does not guarantee that the initial vorticity is non-trivial at some point. See \cite[Section 2.7, Lemma 2.4]{chen2024Euler} for more discussions. 

We observe that the error term $\cE_d$ \eqref{eq:euler_ssb} (or $\e_d$ \eqref{eq:euler_2D}) is of lower order, and the linearized equations \eqref{eq:lin_U}, \eqref{eq:lin_S} with $\cN = 0$ is the same as that in \cite{chen2024Euler}. In such equation, it has been proved in 
\cite{chen2024Euler} that the swirl velocity with radial symmetry enjoys \textit{full stability}. 
Based on the above observations, we construct the solution to \eqref{eq:euler_ss} using the following ansatz 
\[
 (\UU, \S)(s) = (\bar \UU, \bar \S) + \td \VV^{\mw{R}} + \td \VV^{ \mw{NR}},
 \quad \td \VV^{\mw{R}} =   e^{\cL( s-\sin)}(\AA, 0) ,\quad 
 \td \VV^{\mw{NR}} =  (\td \UU^{\mw{NR}}, \td \S^{\mw{NR}}), \quad \AA = A(\xi) \ee_{\th},
\]
where $\cL = (\cL_U, \cL_{\S})$ is the linear operator defined in \eqref{eq:linc}, \eqref{eq:lind}, and \textit{R, NR} are short for radial, non-radial, respectively. At the linear level, both 
$ \td \VV^{\mw{NR}}, \td \VV^{\mw{R}} $ solves the linear equations \eqref{eq:lin_U}, \eqref{eq:lin_S} with $\cN = 0$. We will use the full stability of $\td \VV^{\mw{R}}$ to construct initial data with  $\td \VV^{\mw{R}} $ much larger than $\td \VV^{\mw{NR}}$: 
\beq\label{eq:intro_size}
|| \td \VV^{\mw{NR}}(\sin) ||_Y \ll || \td \VV^{\mw{R}}(\sin) ||_Y,
\quad 
|\na \times \td \UU^{\mw{NR}}(\sin) | \ll |\na \times \AA| \neq 0,
\eeq
in some norm $Y$. 
Since $\na \times \bar \UU = 0$, using the triangle inequality, we construct non-trivial initial vorticity.

Due to the potentially unstable modes, we cannot prescribe $\td \VV^{\mw{NR}}(\sin)$ explicitly, and it also depends on $\td \VV^{\mw{R}}(s)$ for all $s \geq \sin$. 
We show that $  \td \VV^{\mw{NR}}(\sin)$ only weakly depends on 
$\td \VV^{\mw{R}}(\sin)$, and thus we can achieve \eqref{eq:intro_size}.  We refer to Sections \ref{sec:decom_init} and \ref{sec:non_stab} for a more detailed decomposition and estimates.

\vspace{0.1in}
\noindent  \textbf{(c) Estimate of the trajectory.}   
Based on \eqref{eq:vor_tran}, we first obtain $|\Om( 0, t) | \gtr |\Om( \Phi^{-1}( 0, t), 0) |$. Using the self-similar variable and the outgoing property \eqref{eq:rep22} related to the self-similar velocity, we show that the self-similar coordinate of $\Phi^{-1}(0, t)$ is within the unit ball $B(0, 1)$ for all $t < T$. Then by constructing $|\Om( y, 0)| \geq c > 0$ for all $ |y|\leq 1$ using step (b), and combining the above steps, we obtain
\[
 || \om(t) ||_{L^{\infty}} \gtr \min_{|y| \leq 1} |\Om(y, 0)| \cdot |\rho(0, t)| 
 \geq c |\rho(0, t)| .
\]
By taking $t \to T$, we prove vorticity blowup.

\vspace{0.1in}
\noindent  \textbf{Control $\cE_d$ and support.}  
To control the lower order term $\cE_d$ \eqref{eq:euler_ss} in the self-similar variable, we bound the support of the solution $\UU$ so that in the support of $\cE_d$, the denominator of $\cE_d$ does not vanish. Note that the estimates are nontrivial since the solution is not propagated along some characteristics, in which case one can just control the support by estimating the ODE for characteristics, see e.g., \cite{ChenHou2023a,chen2019finite2}.
Instead, we control the support by estimating a suitable time-dependent energy in section \ref{sec:supp}. See Section \ref{sec:supp} for more details. 

\subsubsection{A new proof of implosion with non-radial perturbation}\label{sec:idea_NR}

In this section, we develop a new proof of implosion for the compressible Euler equations with non-radial perturbation $(\td \UU, \td \S)$ based on the framework developed in \cite{chen2024Euler}. Note that we cannot adopt the stability proof in \cite{cao2023non} for non-radial perturbation to our setting since it requires the exterior repulsive property of the profile \eqref{eq:rep1}, but may not hold for the similarity profile of 2D Euler equations (see Remark~\ref{rem:no:repulsive}). 
Compared to the analysis of the radial perturbation, which reduces the dimension of the problem significantly due to the symmetry,  the analysis of the non-radial setting is much more 
challenging,  which has been discussed in detail in \cite[Section 1.2]{cao2023non}.   

Note that the proof of Lemma \ref{lem:profile} does not use $d=2$ specially, and the self-similar profiles constructed in \cite{merle2022implosion1} satisfy the same properties as those in Lemma \ref{lem:profile} (with different constants). The stability proof of Euler equations in $\R^d$ with non-radial perturbation would be the same. We outline the stability proof below.


\subsubsection*{Step 1: Weighted energy estimates}

Similar to \cite{chen2024Euler}, in the linear stability analysis of Section~\ref{sec:coer_Hm} we perform weighted $H^{2m}$ estimates on the energy $E_{2m} = \int (|\D^m  \td \UU|^2 + |\D^m \td \S|^2) \td \vp_{2m}$ with sufficiently large $m$ and a carefully designed weight $\td \vp_{2m}$, to obtain coercive estimates for the ``top order'' terms. The interior repulsive property \eqref{eq:rep1} allows us to obtain a damping term proportional to $m$ in the region $|y| = \xi  \leq \xi_1 $,  slightly beyond the sonic point $\xi_s < \xi_1$. For $|y| = \xi > \xi_s$, we use the outgoing property related to the transport term \eqref{eq:rep2}. In the weighted $H^{2m}$ estimates of the non-radial 
perturbation, we need to control mixed  derivatives of $(\td \UU, \td \S)$, which, however, cannot be bounded directly by the energy norm of $\D^{m} f$.

We have two key ideas to overcome this difficulty in the weighted $H^{2m}$ estimates.

\textbf{(1) Almost equivalence between $\D $ and $\na^2$.}
By performing integration by parts, 
we show that for any function $f$ smooth enough with fast decay, the derivatives $\D f $ and $\na^2f $, on average, are essentially the same: 
\beq\label{eq:intro_IBP1}
\int |\D^m f|^2 \td \vp_{2m} - \int |\na^{2 i} \D^{m-i} f |^2 \td \vp_{2m}
 = \cE_{i, m}
\eeq
for any $ i \in \{1,..,m\}$, where $\cE_{i, m}$ is some lower order term. By polarization, we can generalize the above estimates to inner products between two functions $f, g$. See generalizations of \eqref{eq:intro_IBP1} in Lemma \ref{lem:norm_equiv_coe}.

\textbf{(2) Commute $\na$ and $\pa_{\xi}$.}
One of the difficult terms in the weighted $H^{2m}$ estimate of the energy $E_{2m}$ is
\[
   J = \int I \cdot \D^m G \td \vp_{2m}, \quad I  =- 2 m  \xi \pa_{\xi} ( \f{\bar F }{\xi} ) \pa_{\xi \xi} \D^{m-1} G , \quad \bar F = \xi + \bar U, 
\]
for $G = \td \UU, \td \S$. The term $J$ is a main term in deriving the dissipative term for $E_{2m}$ (see $\cD_U, \cD_{\S}$ in \eqref{eq:lin_Hk}), and its estimate 
is trivial in the radial setting in \cite{chen2024Euler}
\footnote{	
For radial perturbation $G$, we get
$I= - 2 m  \xi \pa_{\xi} ( \f{\bar F }{\xi} )\D^{m} G + l.o.t.$ and yield the estimate of $J$ trivially. See \cite[Section 3.2.1]{chen2024Euler}. 
}.
Denoting $G_m = \D^{m-1} G, \  B(y) = \td \vp_{2m} \xi \pa_{\xi} ( \f{\bar F }{\xi} )$, we write 
$I = - B \pa_{\xi}^2 G_{m}$. To estimate $J$, performing integration by parts in $\pa_{\xi}$ and $\na$, formally, we get 
\[
J = -2m \int  B  \pa_{\xi \xi} G_{m} \D G_{m} d y 
= - 2m  \int B  |\pa_{\xi} \na G_{m}|^2 d y 
- 2m  \int B [ \pa_{\xi}, \na] \pa_{\xi} G_{m} \cdot \na G_{m} d y + l.o.t.
\]
The commutator $[\pa_{\xi}, \na]$ is similar to $\f{1}{\xi} \na$. To compensate the singularity $\f{1}{\xi}$, we require $|B(y)| \les_m \min(1, |y|)$, which holds in our setting. Then we  treat the second term on the right side (RS)
as a lower order term when compared to $|\na^{2} G_m|^2$. We generalize these estimates in Lemma \ref{lem:non_IBP}. The first term on RS and other main terms along with the angular repulsive condition \eqref{eq:rep_ag} lead to a good damping term in the energy estimates. 

The above two ideas allow us to commute the derivatives $\pa_{\xi}$ and $\na$, 
$\na^2$ and $\D$ up to some lower order terms. The upshot of the linear stability analysis is that we obtain weighted $H^{2m}$ estimates of the kind
\begin{equation}
\label{eq:intro_coer}
 \la \cL (\UU, \S), (\UU, \S) \ra_{\cX^m} 
 \leq - \lam \| (\UU, \S)\|_{\cX^m}^2 + \int_{|y| \leq R_4}  \cR_m d y ,
\end{equation}
for some inner product $\cX^m, m \geq m_0$ with $m_0$ sufficiently large, a suitably chosen $R_4>0$ and a coercive parameter $\lam > 0$ (uniform in $m \geq m_0$), up to some lower order terms $\cR_m$, which is made explicit in~\eqref{eq:coer_est}. 

Since we obtain the same type of coercive estimates \eqref{eq:intro_coer} as that in \cite{chen2024Euler}, in the next Steps 2-4, we can essentially follow the framework in \cite{chen2024Euler} with minor changes.


\subsubsection*{Step 2: Compact perturbation.} To handle the lower order remainder term $\cR_m$, 
we use the Riesz representation theory to construct a linear operator $\cK_m$ from some bilinear 
form similar to 
\[
\la \cK_m f , g \ra_{\cX^m} =  \int \chi  f \cdot  g d y,
\] 
for some cutoff function $\chi$ supported near $0$. Then we obtain that $\cL - \bar C \cK_m$ is dissipative for large $\bar C>0$.

\subsubsection*{Step 3: Construction of semigroup}

To handle the compact perturbation, we estimate the growth bound for the semigroup $e^{\cL t}$ and $e^{ (\cL - \cK_m) t}$ based on the dissipative estimates of $\cL -\cK_m$. We use the local well-posedness result of the symmetric hyperbolic system to construct the semigroup $e^{\cL t}$ and $e^{ (\cL - \cK_m) t}$.

\subsubsection*{Step 4: Smoothness of unstable direction}

To construct a smooth initial data, in Section~\ref{sec:smooth_unstab} we show that the potentially unstable directions of the linearized operators are spanned by smooth functions using an abstract functional lemma.


\subsubsection*{Step 5: Nonlinear estimates} %
To close the  nonlinear estimates, in Section~\ref{sec:non} we decompose the perturbation $\td \VV = (\td \UU, \td \S)$ into a stable part $\td \VV_1$ and an unstable part $\td \VV_2$, following the splitting method of~\cite{ChenHou2023a}, and estimate $\td \VV_2$ using the semigroup $e^{\cL s}$. We construct a global in self-similar time solution using a fixed point argument following \cite{chen2024Euler}, which implicitly determines the 
initial data in the correct finite co-dimension set. 
In addition, we need to control $\cE_d$ and the support of the solution. See the discussion before Section \ref{sec:idea_NR}.





\subsection{Comparison with existing works}\label{sec:compare}

In \cite{chen2024Euler}, the authors established the first vorticity blowup result in  compressible Euler equations, where the equations are studied in $\R^2$. A natural idea to generalize the result in \cite{chen2024Euler} to $\R^d, d \geq 3$ is to consider the trivial extension: $\uu(x , y )  
= \uu^{(2D)}(x), \s( x, y) =\s^{(2D)}(x)$ for $x \in \R^2, y \in \R^{d-2}$, or the extension with 
a suitable truncation in the far-field. A construction based on the extension can either have a solution with infinite-energy, or potentially destroy the implosion mechanism, which has many potentially unstable directions \cite{merle2022implosion2,biasi2021self,buckmaster2022smooth}.

In \cite{cao2023non}, the authors developed a stability proof of implosion with non-radial perturbation. For the dissipative estimates, the authors performed $H^{2m}$ estimates on the full mixed derivatives $\na^{2m} \UU, \na^{2m} \S$ to control the non-radial perturbation. 
We perform weighted estimates on $\D^m \UU, \D^m \S$ directly, which is different from \cite{cao2023non}. To obtain the dissipative estimates, we need to use the extra repulsive condition \eqref{eq:rep_ag}, which was first used in \cite{cao2023non}. Yet, we only require \eqref{eq:rep_ag} for $\xi \in [0, \xi_s]$, as opposed to $\xi \in [0, R)$ with $R \gg \xi_s$ required in \cite{cao2023non}. Establishing \eqref{eq:rep_ag} for $\xi \in [0, \xi_s]$ (and for all the profiles in $\R^d$ constructed in \cite{merle2022implosion1}) is much easier than for the whole range $ \xi \in \R_+$.
For the rest of steps, we follow the framework in \cite{chen2024Euler}. 
In particular, we do not need to localize the linearized operator nor solve a specific PDE for maximality of the linearized operator like \cite{cao2023non}.

There appears to be essential difficulties in using the non-radial imploding solutions built in \cite{cao2023non} to construct vorticity blowup for the compressible Euler equations, since one needs to choose initial data in some finite co-dimension set defined implicitly,  which does not guarantee that the initial vorticity is non-trivial at some point. We refer to \cite[Lemma 2.4 and Section 2.7]{chen2024Euler} for more discussions.

In the weighted energy estimates in Step 1 in Section \ref{sec:idea_NR}, we extract some stability properties of the linearized operators by designing weights depending on the profiles. 
Such an idea has been used successfully in a series of works by the author and collaborators~\cite{chen2019finite,chen2021HL,chen2020singularity,ChenHou2023a}. 
Moreover, by designing weights depending on the decay rates of the profile, in the stability analysis, we can obtain tight spatial decay estimates of the solutions using embedding inequalities. See Lemma \ref{lem:prod}. This method has also been used recently in \cite{chen2024stability}.

\subsection{Notation}
We use $A\les B$ to mean that there exists a constant $C = C(\gamma, r, \bar U,\bar \Sigma,T, \sin)\geq 1$ such that $A \leq C B$. We use $A \asymp B$ to mean that both $A \les B$ and $B \les A$. 
We use $A \les_a B$ to mean that $A \leq C_a B$ for a constant $C_a$ depending on $a$ and 
use $A \asymp_a B$ similarly.

We use $A\teq B$ to say that quantity $A$ is {\em defined} to be equal to quantity/expression $B$. 

We will use $\eta, \eta_s, \lam, \lam_1$ to denote constants related to the decay rates; see~\eqref{eq:decay_stab}, \eqref{eq:decay_unstab}, and~\eqref{eq:decay_para}.
We use $\kp_1, \kp_2$ to denote exponents for the weights; see~\eqref{norm:Xkb} and~\eqref{eq:kappa3}. We use $\vp_1, \vp_m,\vp_g, \vp_A$ to denote various weights; see~Lemma \ref{cor:wg}, \eqref{eq:wg_vp}, \eqref{norm:Xkb}, Theorem \ref{thm:coerA0}, and use $E_{k}$ for energy estimates~\eqref{eq:non_E}. We use $\cX^m, \cX^m_A, \cW^m , Y_1, Y_2 $ to denote various functional spaces; see~\eqref{norm:Xk}, \eqref{norm:Xk_A}, \eqref{norm:Wk}, and~\eqref{norm:fix}. 
We will use $c, C$ to denote absolute constants that can vary from line to line, and use $\mu, \nu, \e_m, \bar C, \mu_m, \d, \d_Y, \lam$ to denote some absolute constants that do not change from line to line. 

In Sections~\ref{sec:lin}, \ref{sec:decay}, we use $(\UU, \S)$ to denote the perturbation $(\td \UU, \td \S)$ mentioned earlier in~\eqref{eq:lin}, \eqref{eq:non}, since for {\em linear stability analysis} there is no ambiguity. In contrast, in the nonlinear stability of Section~\ref{sec:non}, $(\UU, \S)$ is reserved for the full velocity and rescaled sound speed, consistently with \eqref{eq:lin}, \eqref{eq:non} and~\eqref{eq:euler_ss}.

\section{Linear stability analysis}\label{sec:lin}

In this section, we perform weighted $H^{2m}$ estimates on \eqref{eq:lin} and study the semigroups associated with the linear operators in \eqref{eq:lin}. We design the weights for the energy estimates in Section \ref{sec:wg}, and perform weighted Sobolev estimates in Section \ref{sec:coer_Hm}. 


\subsection{Choice of weights}\label{sec:wg}

In this section, we design the weight $\vp_{2 m}$ for weighted $H^{2 m}$ estimates. We will generalize the construction of weights in \cite[Lemma 3.1]{chen2024Euler}, which we recall below.
\begin{lemma}[Lemma 3.1 \cite{chen2024Euler}]\label{lem:wg0}
There exists a radially symmetric weight $\wh \vp_1(y)$ and an absolute constant $\mu_2 > 0$ such that
\bseq 
\begin{align}
& \wh \vp_1 \asymp \la y \ra, \quad |\na \wh \vp_1| \les 1 , 
\quad  \xi = |y| , \label{eq:wg_asym}  \\
& 
D_\mw{wg, l}( \wh \vp_1)(y)  \leq - \mu_2 \la  y \ra^{-1} ,
 \label{eq:repul_wg}
\end{align}
for all $y \in \R^2 $ and $l \in \{ 0, 1\}$, where $D_\mw{wg, l}$ is defined as 
\beq\label{eq:repul_wg_D}
D_\mw{wg, l}( \wh \vp_1)  \teq \f{  (\xi + \bar U)\pa_{\xi}  \wh \vp_1 }{ \wh \vp_1} 
+ l \al \bar \S  \B| \f{ \pa_{\xi} \wh \vp_1 }{\wh  \vp_1} \B|  - 
(1 + \pa_{\xi} \bar U - l  \al |\pa_{\xi} \bar \S|  ) .
\eeq
\eseq
\end{lemma}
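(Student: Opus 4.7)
The plan is to construct a smooth, radially symmetric weight $\wh \vp_1(y) = \vp(|y|)$ whose log-derivative $\vp'/\vp$ is tuned to the two structural conditions of the profile: the repulsive condition \eqref{eq:rep1} in the interior and the outgoing condition \eqref{eq:rep2} together with the far-field decay \eqref{eq:dec_U} in the exterior. A flexible ansatz is
$$\vp(\xi) = c_0 + \int_0^\xi g(s)\,ds,$$
where $c_0$ is a positive absolute constant and $g \in C^\infty([0,\infty))$ satisfies $|g| \leq 1$, $g(0)=0$, and $g \equiv 1$ outside a compact set. The size bounds $\vp \asymp \la \xi \ra$ and $|\na \wh \vp_1| = |g| \leq 1$ are then immediate, provided $c_0$ is chosen large enough that $\vp$ stays uniformly positive. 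The differential inequality $D_\mw{wg, l}(\wh \vp_1) \leq -\mu_2 \la \xi \ra^{-1}$ is verified zone by zone, uniformly in $l \in \{0,1\}$.

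In the interior zone $[0,\xi_1]$, one takes $g$ very small (say $|g|\leq \delta$ for small $\delta$). The repulsive condition \eqref{eq:rep1} supplies the coercive contribution $-(1 + \pa_\xi \bar U - l\al|\pa_\xi \bar \S|) \leq -\kp$, while the transport contribution $(\xi + \bar U + l\al \bar \S)g/\vp$ is bounded by $C\delta/c_0$ using the boundedness of the profile on this compact set. This yields $D_\mw{wg, l} \leq -\kp/2$, which is stronger than the target on a bounded interval.

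In the far zone $\xi \geq \xi_\ast$ with $\xi_\ast$ large, $g \equiv 1$ so that $\vp(\xi) = c_1 + \xi$ for a constant $c_1$. The key identity
$$(\xi + \bar U + l\al\bar \S) \vp'/\vp - 1 = \tfrac{\bar U + l\al\bar \S - c_1}{c_1 + \xi} = -\tfrac{c_1}{\xi} + O(\xi^{-r}),$$
together with the bounds $|\pa_\xi \bar U|, |\pa_\xi \bar \S| = O(\xi^{-r})$ from \eqref{eq:dec_U}, yields $D_\mw{wg, l} = -c_1/\xi + O(\xi^{-r})$. Since $r > 1$, the $-c_1/\xi$ contribution dominates, giving $D_\mw{wg, l} \leq -c_1/(2\xi) \leq -\mu_2 \la \xi\ra^{-1}$ for $\xi_\ast$ large.

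The main obstacle is the transition zone $[\xi_1, \xi_\ast]$, where the sharp form of \eqref{eq:rep1} may fail (cf.\ Remark \ref{rem:no:repulsive}) and the far-field expansion is not yet valid. Here one locally modifies $g$ to be (possibly) negative, producing $\vp' < 0$ so that $|\vp'/\vp| = -\vp'/\vp$; the $D$-expression then simplifies to $(\xi + \bar U - l\al\bar \S)\vp'/\vp - (1 + \pa_\xi \bar U - l\al|\pa_\xi \bar \S|)$, whose first term is strictly \emph{negative} by the outgoing condition \eqref{eq:rep2}. A compactness argument on this bounded interval then balances the negative transport contribution against whatever sign the term $1 + \bar U' - l\al|\bar \S'|$ may take. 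The delicate step is coordinating the three constructions into a single smooth positive function $\vp$: one fixes the qualitative shape of $g$ on the transition window first, then picks $c_0$ large enough to keep $\vp$ positive and to preserve $\vp \asymp \la \xi \ra$, and finally shrinks $\delta$ (the interior amplitude of $g$) to secure the interior bound, yielding a single uniform $\mu_2 > 0$ valid on all of $[0,\infty)$.
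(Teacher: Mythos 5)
First, a point of comparison: this paper does not actually prove Lemma \ref{lem:wg0} — it imports it from \cite{chen2024Euler} — and the closest in-house analogue is the proof of Lemma \ref{cor:wg}, which corrects a given weight \emph{multiplicatively} by a factor $g^{\b}$ with $\b$ large. Your zone decomposition (interior coercivity from \eqref{eq:rep1}; a decreasing weight driven by the outgoing condition \eqref{eq:rep2} across the transition region where the repulsive condition may fail, cf.\ Remark \ref{rem:no:repulsive}; the decay \eqref{eq:dec_U} in the far field) is the right strategy and is exactly the mechanism used there. However, as written the argument has a genuine circularity in the parameter choices, concentrated in the transition zone and its interface with the far field.

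The quantity $D_{\mw{wg},l}$ sees $\wh\vp_1$ only through the logarithmic derivative $\pa_\xi \vp/\vp$. With your additive ansatz $\vp = c_0 + \int_0^\xi g$, the good (negative) transport term on $[\xi_1,\xi_*]$ is $(\xi+\bar U - l\al\bar\S)\,g/\vp$, whose magnitude is at most $\sup|g|\cdot C/\inf\vp \les \sup|g|/c_0$. Your stated order of choices — fix the shape of $g$ on the transition window, \emph{then} take $c_0$ large to keep $\vp$ positive, then shrink $\d$ — therefore defeats itself: enlarging $c_0$ shrinks $|g|/\vp$, and the transport term can no longer absorb an order-one positive value of $-(1+\pa_\xi\bar U - l\al|\pa_\xi\bar\S|)$, which is the whole point of this zone. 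To compensate, the amplitude of $g$ there must scale like $c_0$, which also breaks your side constraint $|g|\leq 1$ (only $|\na\wh\vp_1|\les 1$ is required, and all three conclusions are invariant under $\vp\mapsto c\vp$, so that normalization buys nothing). A second, related gap sits in the far field: your identity gives $D_{\mw{wg},l} = -c_1/\xi + O(\xi^{-r})$ with $c_1 = \vp(\xi_*)-\xi_*$, and the sign of the leading term is favorable only if $c_1$ has a definite positive lower bound; after the multiplicative dip $e^{-K}$ that the transition zone forces on $\vp$, the inequality $c_1>0$ is an extra constraint coupling $c_0$, $K$, and $\xi_*$ that you never impose — if it fails, the far-field estimate has the wrong sign. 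Both issues disappear if you prescribe $\pa_\xi\vp/\vp$ directly, e.g.\ multiply a fixed linear-growth weight by $h^{\b}$ with $h$ decreasing on the transition window and $\b$ large, as in the proof of Lemma \ref{cor:wg}: positivity of $\vp$ is then automatic, the logarithmic derivative can be made as negative as the profile demands independently of the overall normalization, and $\vp\asymp\la y\ra$ survives because the correction factor is bounded above and below.
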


Based on the above result, we construct the weights $\vp_1, \vp_m$ with the following properties.

\begin{lemma}\label{cor:wg}
There exists a radially symmetric weight $\vp_1(y)$ and an absolute constant $\mu_1 > 0$ such that 
\bseq\label{eq:repul_wg_ag}
\begin{align}
&  \vp_1 \asymp \la y \ra, \quad |\na  \vp_1| \les 1 , 
\quad  \xi = |y| , \label{eq:repul_wg_aga}  \\
&  D_\mw{wg, l}(  \vp_1)(y)  \leq - \mu_1 \la  y \ra^{-1} ,
\label{eq:repul_wg_agb} \\
& D_\mw{ag, l}(\vp_1)(y)  \leq - \mu_1 \la  y \ra^{-1} ,
\label{eq:repul_wg_agc} 
\end{align}
for any $y$ and  $l = 0, 1$,  where $D_\mw{wg,l}$ is defined in \eqref{eq:repul_wg_D} and $D_\mw{ag, l}$ is defined as 
\beq\label{eq:repul_wg_agd} 
D_\mw{ag, l} \teq \f{  (\xi + \bar U)\pa_{\xi} \vp_1 }{\vp_1} 
+ l \al \bar \S  \B| \f{ \pa_{\xi} \vp_1 }{\vp_1} \B|  - 
(1 +  \f{ \bar U}{\xi} - l  \al |\pa_{\xi} \bar \S|  )  .
\eeq
\eseq
For $m \geq 1$ and $\vp_1$ satisfying \eqref{eq:repul_wg_ag}, we define 
\beq\label{eq:wg_vp}
\vp_m(y) =  \vp_1(y)^m.
\eeq
\end{lemma}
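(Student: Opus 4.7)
The plan is to construct $\vp_1$ by adapting the proof of Lemma \ref{lem:wg0}, satisfying not only the radial damping condition \eqref{eq:repul_wg_agb} but also the new angular damping condition \eqref{eq:repul_wg_agc}. The key algebraic identity is
\[
D_\mw{ag, l}(\vp)(y) - D_\mw{wg, l}(\vp)(y) = \pa_\xi \bar U(\xi) - \bar U(\xi)/\xi,
\]
which depends only on the profile. Thus the two operators differ only through a profile-dependent error that decays fast at infinity: by \eqref{eq:dec_U}, it is $O(\la \xi \ra^{-r})$, hence asymptotically much smaller than $\la y \ra^{-1}$.

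In the interior region $\xi \in [0, \xi_1]$, I take $\vp_1$ to be essentially constant, so the weight-derivative contributions $\f{(\xi+\bar U)\pa_\xi \vp_1}{\vp_1}$ and $l \al \bar{\S}\,|\pa_\xi \vp_1/\vp_1|$ can be made arbitrarily small. The remaining subtraction terms are then bounded below by the constant $\kp$ of Lemma \ref{lem:profile}: for $D_\mw{wg, l}$, by the interior radial repulsive condition \eqref{eq:rep1} (with $l = 0$ handled by the same inequality since $\al|\pa_\xi \bar \S|\geq 0$); and for $D_\mw{ag, l}$, by the new angular repulsive condition \eqref{eq:rep_ag} when $l = 1$, and by \eqref{eq:rep22} when $l = 0$. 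Therefore both \eqref{eq:repul_wg_agb} and \eqref{eq:repul_wg_agc} reduce to a damping of size comparable to $\kp$ on this bounded region, which is stronger than $\mu_1 \la y \ra^{-1}$ once $\mu_1$ is chosen small.

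In the exterior region $\xi > \xi_1 > \xi_s$, I follow the construction of Lemma \ref{lem:wg0} verbatim, taking $\vp_1$ to grow so that $\vp_1 \asymp \la y \ra$ with $|\na \vp_1| \les 1$. The outgoing property \eqref{eq:rep2} controls the transport term $\f{(\xi+\bar U)\pa_\xi \vp_1}{\vp_1}$, and the existing argument yields \eqref{eq:repul_wg_agb} with some margin $\mu_2 \la y \ra^{-1}$ as in \eqref{eq:repul_wg}. The angular condition \eqref{eq:repul_wg_agc} then follows from the displayed identity by absorbing the $O(\la \xi \ra^{-r})$ profile error into this damping margin, because $r > 1$ forces $\la \xi \ra^{-r} \ll \la \xi \ra^{-1}$ for large $\xi$, and the exterior region can be taken with $\xi$ bounded away from $0$ so that this asymptotic comparison holds uniformly.

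The main obstacle is patching the nearly-constant interior profile and the linearly-growing exterior profile into a single smooth $\vp_1$ on an intermediate annulus contained in $[0, \xi_1]$, while preserving both damping bounds with the same constant $\mu_1$. The remedy is that on this intermediate annulus both \eqref{eq:rep1} and \eqref{eq:rep_ag} (respectively \eqref{eq:rep22} for $l = 0$) hold simultaneously with a uniform lower bound $\kp$, so by designing the transition so that $|\pa_\xi \vp_1 / \vp_1|$ remains small compared to $\kp$ (and with $\pa_\xi \vp_1$ of a definite sign so the absolute value in the second term of $D$ does not flip), both $D_\mw{wg, l}$ and $D_\mw{ag, l}$ retain a uniform negative bound of order $\kp$ throughout the transition. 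Combined with the interior and exterior estimates above, this yields \eqref{eq:repul_wg_aga}--\eqref{eq:repul_wg_agc} for a single absolute constant $\mu_1 > 0$ and for both $l \in \{0, 1\}$.
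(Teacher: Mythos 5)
Your reduction identity $D_\mw{ag,l}(\vp)-D_\mw{wg,l}(\vp)=\pa_\xi \bar U-\bar U/\xi$ is correct, but the two mechanisms you propose do not cover the whole half-line, and the uncovered region is precisely where the work lies. The interior repulsivity conditions \eqref{eq:rep1}, \eqref{eq:rep_ag}, \eqref{eq:rep22} are only available on $[0,\xi_1]$ with $\xi_1$ just beyond the sonic point, and Remark \ref{rem:no:repulsive} warns that \eqref{eq:rep1} may genuinely fail for larger $\xi$ in 2D, so a nearly-constant weight cannot be continued past $\xi_1$. On the other side, absorbing the profile error $|\pa_\xi \bar U-\bar U/\xi|\les \la \xi\ra^{-r}$ into the radial margin $\mu_2\la y\ra^{-1}$ of Lemma \ref{lem:wg0} requires $C\la \xi\ra^{1-r}\leq \mu_2/2$, i.e.\ $\xi\geq q$ for a threshold $q$ determined by the implicit constant in \eqref{eq:dec_U}, by $\mu_2$, and by $r-1$ (which may be small); ``$\xi$ bounded away from $0$'' does not give this uniformly, and there is no reason that $q\leq \xi_1$. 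Hence on the annulus $\xi\in[\xi_1,q]$ you have neither repulsivity nor smallness of the error, and no mechanism at all for the angular bound \eqref{eq:repul_wg_agc}. A secondary inconsistency: once $\vp_1$ grows like $\la y\ra$, the transport term $(\xi+\bar U)\pa_\xi\vp_1/\vp_1$ is of order $1+\bar U/\xi$, which by \eqref{eq:rep22} is bounded \emph{below} by $\kp$ rather than small compared to it, so the patching prescription ``keep $|\pa_\xi\vp_1/\vp_1|$ small compared to $\kp$'' cannot coexist with the required growth $\vp_1\asymp\la y\ra$.

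The paper closes exactly this gap with a different device. It keeps $\wh\vp_1$ from Lemma \ref{lem:wg0} as a black box (so the radial bound \eqref{eq:repul_wg_agb} holds globally), notes that $\bar U/\xi-\pa_\xi\bar U\geq 0$ on $[0,\xi_s]$ by \eqref{eq:dwdx} (so the angular damping follows from the radial one there, and by continuity on $[0,p]$ for some $p>\xi_s$) and that the error is absorbable for $\xi\geq q$, and then sets $\vp_1=\wh\vp_1\, g^{\b}$ with $g$ radially decreasing, equal to $1$ on $[0,\xi_s]$, equal to $\tfrac12$ for $\xi\geq 2q$, and with $\pa_\xi g\leq -c<0$ on $[p,q]$. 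The outgoing property \eqref{eq:rep2} gives $(\xi+\bar U-\al\bar\S)\pa_\xi g\leq 0$ everywhere, so the correction never worsens either damping inequality, while on $[p,q]$ it contributes an extra damping $-c_2\b$ which, for $\b$ chosen large, overwhelms the merely bounded error $\bar U/\xi-\pa_\xi\bar U$ there. This large-$\b$ multiplicative correction on the intermediate annulus, powered by \eqref{eq:rep2}, is the idea your proposal is missing; you invoke \eqref{eq:rep2} only to ``control the transport term,'' not to manufacture damping where neither repulsivity nor decay is available.
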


\begin{proof}

We will introduce several parameters and determine them in the following order 
\beq\label{eq:wg_para_ord}
p , q \rightsquigarrow c,c_2, c_3 \rightsquigarrow  \b \rightsquigarrow  \mu_1 ,
\eeq
with parameters appearing later being allowed to depend on the previous ones.

Let $\wh \vp_1$ be the weight constructed in Lemma \ref{lem:wg0}. From the definitions \eqref{eq:repul_wg_D} and \eqref{eq:repul_wg_agd}, we have 
\beq\label{eq:rela_Damp}
D_\mw{ag,l } = D_\mw{wg,l} + f,  \quad f = \f{\bar U}{\xi} - \pa_{\xi} \bar U.
\eeq
Thus we can use the estimate of $D_\mw{wg,l}$ from Lemma \ref{lem:wg0} to estimate 
$D_\mw{ag,l}$. Using \eqref{eq:dwdx},\eqref{eq:dec_U}, we get 
\[
 f(y) \geq 0, \mathrm{ \ for \ }  |y| \leq \xi_s, \quad |f| \leq \f{\mu_2}{2} \la y \ra^{-1}, 
 \mathrm{ \ for  \  } |y| \geq q, 
\]
for $q > \max(p, 2 \xi_s)$ large enough. Using \eqref{eq:rela_Damp}, \eqref{eq:repul_wg} for $D_\mw{wg,l}$, and the above estimate for $f$, we obtain 
\beq\label{eq:rep_wg_pf1}
D_\mw{ag, l}(\wh \vp_1)(y) \leq - \f{\mu_2}{2} \la y \ra^{-1}, \quad |y| \leq p, \quad |y| \geq q
\eeq
for some $p > \xi_s$. Below, we will further estimate $D_\mw{ag, l}$ in the region $|y| \in [p, q]$.

With $p, q$ chosen above, we can construct a radially symmetric function $g(y)$ satisfying 
\bseq\label{eq:wg_g}
\begin{align}
 g(y) &= 1, \quad  |y| \leq \xi_s, \quad g(y)  = \f{1}{2},\quad  |y| \geq 2 q, \label{eq:wg_ga} \\
   \pa_{\xi} g(y) & \leq 0, \ \forall \xi \in (0,\infty),  \quad \pa_{\xi} g \leq -c , \ \xi \in [p, q], \label{eq:wg_gb}
\end{align}
\eseq
for some $c = c(p, q)> 0$. For some $\b > 0$ to be chosen, we construct $\vp_1$ as follows 
\beq\label{eq:wg_vp1}
 \vp_1 = \wh  \vp_1 g^{\b}  .
\eeq
Since $\f{\pa_{\xi} \vp}{\vp} = \f{\pa_{\xi} \wh \vp_1}{\wh \vp_1} + \b \f{ \pa_{\xi} g}{g}$, using the definitions $D_\mw{wg, l}$  \eqref{eq:repul_wg_D}, $D_\mw{ ag, l}$ \eqref{eq:repul_wg_agd} and the triangle inequality yields
\bseq\label{eq:rep_wg_pf2}
\beq
D_{\al}( \wh \vp_1 g^{\b}) \leq D_{\al}( \wh \vp_1) + \b \f{ (\xi + \bar U) \pa_{\xi} g} {g}
+ \b \B| \al \bar \S \f{ \pa_{\xi} g} {g} \B|
=  D_{\al}( \wh \vp_1) + \b \f{ (\xi + \bar U - \al \bar \S) \pa_{\xi} g} {g} ,
\eeq
for $D_{\al} = D_\mw{wg, l}$ or $D_\mw{ ag, l}$. Using $\xi + \bar U - \al \bar \S \geq 0$ for $\xi \geq \xi_s$  \eqref{eq:rep2} and the estimates of $\pa_{\xi} g$ \eqref{eq:wg_gb}, we get
\beq
(\xi + \bar U - \al \bar \S) \pa_{\xi} g \leq 0, \ \forall y, \quad  D_{\al}( \wh \vp_1 g^{\b}) \leq  D_{\al}( \wh \vp_1 ).
\eeq
\eseq

Using \eqref{eq:repul_wg} for $D_\mw{wg, l}$, \eqref{eq:rep_wg_pf1} for $D_\mw{ag, l}$, and \eqref{eq:rep_wg_pf2}, we prove 
\bseq\label{eq:rep_wg_pf3}
\begin{align}
& D_\mw{wg,l}(\vp_1) = 
D_\mw{wg,l}( \wh \vp_1 g^{\b})  \leq 
D_\mw{wg,l}( \wh \vp_1)  \leq - \mu_2 \la y \ra^{-1},
\label{eq:rep_wg_pf3a}
 \\
& D_\mw{ag,l}(\vp_1) \leq 
 D_\mw{ag,l}(\wh \vp_1)\leq - \f{1}{2}\mu_2 \la y \ra^{-1}, \mathrm{\quad for \ } |y| \notin [p,  q],  \label{eq:rep_wg_pf3b}
\end{align}
and obtain \eqref{eq:repul_wg_agb} by requiring $\mu_1 < \mu_2$. 

We further estimate $D_\mw{ag,l}(\vp_1)$ for $|y|\in [p, q]$. 
Using \eqref{eq:rep2}, we get 
\beq\label{eq:repl_wg_mid}
      (\xi + \bar U - \al \bar \S) \pa_{\xi} g \leq - c_2<0,
 \eeq
on $[p, q]$ for some $c_2= c_2(p, q)>0$. Using \eqref{eq:rep_wg_pf2},  \eqref{eq:rep_wg_pf3a}, \eqref{eq:repl_wg_mid}, and  \eqref{eq:rela_Damp}, 
for $|y| \in [p, q]$, we obtain 
\[
D_\mw{ag,l }( \vp_1) \leq D_\mw{ag, l}( \wh \vp_1) 
-  \one_{[p, q]}(|y|) \cdot  c_2 \b 
=  D_\mw{wg, l}( \wh \vp_1)  + f -  \one_{[p, q]}(|y|)(  c_2 \b).
\]
Since $ |\bar U / \xi |, |\pa_{\xi} \bar U|,  |f| \leq c_3$ for all $\xi$ and some absolute constant $c_3$ \eqref{eq:dec_U}, using \eqref{eq:rep_wg_pf3a} and then choosing $\b = \b(c_2, c_3)$ large enough, for $|y| \in [p, q]$, we prove 
\beq\label{eq:rep_wg_pf3c}
D_\mw{ag,l }( \vp_1) 
\leq -\mu_2 \la y \ra^{-1} + 2 c_3 - c_2 \b 
\leq -\mu_2 \la y \ra^{-1}.
\eeq
\eseq

Combining \eqref{eq:rep_wg_pf3b}, \eqref{eq:rep_wg_pf3c} and choosing $\mu_1 = \f{1}{2} \mu_2 $, we prove \eqref{eq:repul_wg_agc}. 

Since $g \asymp 1, |\na g| \les \la y \ra^{-1}$ 
from \eqref{eq:wg_g} and $ \wh \vp_1$ satisfies \eqref{eq:wg_asym}, we obtain that $\vp_1$ satisfies \eqref{eq:repul_wg_aga}. 
\end{proof}

\subsection{Weighted $H^{2m}$ coercive estimates}\label{sec:coer_Hm}

Throughout this section, since we are only concerned with {\em linear} stability analysis, we will use $(\UU, \S)$ to denote the perturbation $(\td \UU, \td \S)$; that is, we drop the $\td \cdot $ in \eqref{eq:lin}. Recall the weights $\vp_m$ defined in Lemma \ref{cor:wg} and the linearized operator from \eqref{eq:lin}. We have the following coercive estimates.
\begin{theorem}\label{thm:coer_est}
Denote $\cL = (\cL_U, \cL_{\S})$ with $\cL_U, \cL_{\S}$ defined in \eqref{eq:lin}. 
There exists $m_0 \geq 6,  R_4, \bar C>0$ large enough and $\lam \in (0, \f{1}{2} )$  small enough \footnote{The parameters $m_0,R_4, \bar C$ and $\lambda$  depend only on the weight $\vp_1$ from Lemma~\ref{cor:wg}, on $\gamma>1$, $r>0$, and on the profiles $(\bar \UU,\bar \Sigma)$.}, such that the following statements hold true. For any $m \geq m_0$, there exists $\e_m = \e_m( m_0, R_4,\bar C, \lam)$ such that 
\begin{align}
 \la ( \cL(\UU, \S), (\UU, \S) \ra_{\cX^m} 
& \leq - \lam || (\UU, \S)||_{\cX^m}^2 
+ \bar C \int_{ |y |\leq R_4} (|\UU|^2 + |\S|^2 ) \vp_0^2 \vp_g d y ,
\label{eq:coer_est}
\end{align}
for any $(\UU, \S) \in \{ (\UU, \S) \in \cX^m, \cL(\UU, \S) \in \cX^m \} $. Here, the Hilbert spaces $\cX^m$ \footnote{It is also convenient to denote $\cX^{\infty} = \cap_{m \geq 0} \cX^{m}$.} 
are defined as the completion of the space of $C^\infty_c$ scalar/vector functions, with respect to the norm induced by the inner products
\bseq\label{norm:Xk}
\beq
 \la f , g \ra_{\cX^m} \teq \int \e_m \D^m f \cdot \D^m g  \vp_{2m}^ 2 \vp_g + f \cdot g  \vp_g  d y, \ m \geq 1, \quad  \la f , g \ra_{\cX^0} = \int f \cdot g \vp_g . 
\eeq
The inner products $\la\cdot,\cdot\ra_{\cX^m}$ and the associated norms 
are defined in terms of the constants $\e_m$, the weight $\vp_{2m} = \vp_1^{2m}$ defined in Lemma \ref{cor:wg}, and $\vp_g$ defined below 
\beq\label{norm:Xkb}
 \vp_g(y) = \la y \ra^{-\kp_1- 2}, \quad \kp_1 =  \f{1}{4} . 
\eeq
\eseq
\end{theorem}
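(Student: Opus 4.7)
The plan is to perform weighted $H^{2m}$-type energy estimates on the linearized system \eqref{eq:lin}, decomposed into the top-order contribution from the $\e_m \int \D^m(\UU,\S)\cdot \D^m(\UU,\S)\,\vp_{2m}^2 \vp_g$ part of the $\cX^m$ inner product and the $L^2$ contribution from the $\int(\UU,\S)\cdot(\UU,\S)\,\vp_g$ part. For the $L^2$ piece, I would integrate by parts on the transport term $(y+\bar\UU)\cdot\na$, decompose $\UU\cdot\na\bar\UU$ into its radial and angular components using $\bar\UU = \bar U \ee_R$, and combine with the symmetric pressure coupling $\alpha(\S\na\bar\S + \bar\S\na\S)$ and the linear damping $-(r-1)$. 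The resulting coefficient in front of $|\UU|^2+|\S|^2$ is controlled by $D_\mw{wg,1}(\vp_g)$ on the radial component and $D_\mw{ag,1}(\vp_g)$ on the angular component; both are bounded above by $-\mu_1 \la y\ra^{-1} + O(1)$ outside a compact set by computations parallel to Lemma~\ref{cor:wg} (since $\vp_g \asymp \la y\ra^{-2-\kp_1}$ has the required decay).

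For the top-order piece, applying $\D^m$ yields the principal part
\[
\D^m \cL_U = -(r-1)\D^m\UU - (y+\bar\UU)\cdot\na\D^m\UU - \D^m\UU\cdot\na\bar\UU - \alpha\bar\S\na\D^m\S - \alpha \D^m\S\,\na\bar\S + \cR_m^U,
\]
with $\cR_m^U$ collecting the commutators $[\D^m,(y+\bar\UU)\cdot\na]$, $[\D^m,\bar\S\na]$, and $[\D^m,\na\bar\S]$, and symmetrically for $\cL_\S$. Pairing against $\D^m(\UU,\S)\,\vp_{2m}^2\vp_g$ and integrating by parts in the transport term, the principal damping coefficient is $2m D_\mw{wg,1}(\vp_1)+O(1)$ on the radial component and $2m D_\mw{ag,1}(\vp_1)+O(1)$ on the angular component, both bounded above by $-2m\mu_1\la y\ra^{-1}+O(1)$ by \eqref{eq:repul_wg_agb}--\eqref{eq:repul_wg_agc}, producing top-order damping of size $\sim m$.

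The hard part is controlling the commutator contribution
\[
J = -2m \int B(y)\,\pa_{\xi\xi} G_m \cdot \D G_m \, dy, \qquad G_m = \D^{m-1} G, \qquad B(y)\propto \vp_{2m}^2 \vp_g\, \xi \pa_\xi(\bar F/\xi),
\]
which in the radial case of \cite{chen2024Euler} trivially becomes $-2m\int B|\D G_m|^2$ but here involves the mixed derivative $\pa_{\xi\xi} G_m$ only. Following the roadmap in Section~\ref{sec:intro_idea}, I would invoke two customized integration-by-parts identities: first, an $L^2$ identity exchanging $\D^m f$ with $\na^{2m} f$ modulo lower-order terms (as referenced in Lemma~\ref{lem:norm_equiv_coe}); second, a two-step integration by parts rewriting $J = -2m\int B|\pa_\xi\na G_m|^2\,dy - 2m\int B[\pa_\xi,\na]\pa_\xi G_m \cdot \na G_m\,dy + \text{l.o.t.}$, where the commutator singularity $[\pa_\xi,\na]\sim\xi^{-1}\na$ is absorbed by the bound $|B(y)|\les \min(1,|y|)$ (which holds because $\bar F/\xi$ is smooth at the origin by \eqref{eq:rep3}, as used in Lemma~\ref{lem:non_IBP}). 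The first output combines with the angular component of the principal term and the angular repulsive condition \eqref{eq:rep_ag} on $|y|\leq \xi_1$ to produce damping of the mixed derivative, while on $|y|>\xi_s$ the outgoing condition \eqref{eq:rep2} guarantees that the weight transport term already dominates.

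The closing argument is then standard. For $m\geq m_0$ sufficiently large, the damping of size $m\mu_1\la y\ra^{-1}$ absorbs all commutator errors in $\cR_m^U, \cR_m^\S$ and all interaction with the $L^2$ piece uniformly outside a compact set $\{|y|\leq R_4\}$; on $\{|y|\leq R_4\}$ the residual terms are pointwise bounded by $(|\UU|^2+|\S|^2)\vp_0^2\vp_g$ times an absolute constant $\bar C$ depending on the $C^{4m}$ bounds of the profile on the compact set and on $\e_m$. The coefficient $\e_m$ is then chosen small enough to suppress cross-terms between the top-order and $L^2$ pieces of $\la\cdot,\cdot\ra_{\cX^m}$ when the Cauchy--Schwarz inequality is applied. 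The main obstacle throughout is the rigorous execution of the mixed-derivative commutator estimate in $J$: one must preserve the $m$-scaling of the damping while simultaneously handling the $\pa_\xi/\na$ singularity at the origin, the limited regularity of the decomposition $\bar\UU = \bar U \ee_R$, and the sign information extracted from \eqref{eq:rep_ag}. Once those integration-by-parts identities are in hand, the rest of the argument is an $m$-large absorption scheme that mirrors the radial analysis of \cite{chen2024Euler}.
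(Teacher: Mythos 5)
Your proposal follows essentially the same route as the paper's proof: the same decomposition into transport, stretching, dissipative, and remainder terms after applying $\D^m$, the same two key integration-by-parts lemmas (exchanging $\D^m$ with $\na^{2m}$ up to lower order, and commuting $\pa_\xi$ with $\na$ using the vanishing of $B(y)\propto\xi\pa_\xi(\bar F/\xi)$ at the origin), the same split of the top-order energy into $\G_{m,(\xi)}$ and $\G_m-\G_{m,(\xi)}$ paired with the conditions \eqref{eq:repul_wg_agb}--\eqref{eq:repul_wg_agc}, and the same $m$-large absorption followed by choosing $\e_m$ small. The only cosmetic difference is in the weighted $L^2$ piece, where the paper relies directly on the $-(r-1)$ damping and the $\la\xi\ra^{-\kp_2}$ decay of the coefficients rather than on repulsive conditions for $\vp_g$, but this does not affect the argument.
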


Note that the linearized operator $\cL$ in \eqref{eq:coer_est} is the same as that in 
\cite[Theorem 3.2]{chen2024Euler}, 
but we do not assume radial symmetry for the perturbation. We emphasize that $\lam, \bar C $ in \eqref{eq:coer_est} are independent of $m$. 

We have the following simple \textit{nestedness} property of the spaces $\cX^m$, which follows from Lemmas \ref{lem:interp_wg}, \ref{lem:norm_equiv} with $\d_1 = 1, \d_2 = -2 - \kp_1$. 
\begin{lemma}\label{lem:Xm_chain}
For $n > m$, we have $||f ||_{\cX^m} \les_n ||f ||_{\cX^n}$ and $\cX^n \subset \cX^n$.
\end{lemma}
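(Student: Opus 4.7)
The plan is to establish the norm bound $\|f\|_{\cX^m} \les_n \|f\|_{\cX^n}$ for $n > m$ via a weighted interpolation argument, from which the inclusion $\cX^n \subset \cX^m$ is immediate, since $\cX^n$ is the completion of $C_c^\infty$ in its norm, and any Cauchy sequence in $\cX^n$ is Cauchy in $\cX^m$ by this bound.

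First, I would observe that the $\cX^k$-norm from \eqref{norm:Xk} splits into a top-order piece $\e_k \int |\D^k f|^2 \vp_{2k}^2 \vp_g \, dy$ and a base piece $\int |f|^2 \vp_g \, dy$. The base piece is already controlled by $\|f\|_{\cX^n}^2$, so it suffices to estimate the top-order term at level $m$ by the top-order term at level $n$ plus the base. Since $\vp_1 \asymp \la y\ra$, the combined weight scales as $\vp_{2k}^2 \vp_g \asymp \la y\ra^{4k - 2 - \kp_1}$, so the exponent grows linearly in $k$; this is precisely the setting $\d_1 = 1,\ \d_2 = -2-\kp_1$ referenced in the hint.

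The main step is to invoke Lemma \ref{lem:interp_wg} (weighted interpolation) together with Lemma \ref{lem:norm_equiv} (equivalence between $\D^k$ and $\na^{2k}$ in the weighted $L^2$ sense, of the same spirit as \eqref{eq:intro_IBP1}). Combined, they should yield a Gagliardo--Nirenberg style estimate of the form
\[
\int |\D^m f|^2 \vp_{2m}^2 \vp_g \, dy \ \les_n \ \Big( \int |\D^n f|^2 \vp_{2n}^2 \vp_g \, dy \Big)^{m/n} \Big( \int |f|^2 \vp_g\, dy \Big)^{1-m/n} + \int |f|^2 \vp_g \, dy .
\]
Norm equivalence is needed because interpolation works naturally with full mixed derivatives $\na^{2k}$, while $\cX^k$ is defined with the Laplacian power $\D^k$; integration by parts against the weights produces only lower order remainders that are themselves absorbed into the base term. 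An application of Young's inequality then turns the geometric mean into a sum, at the cost of a constant depending on $n$ and on the ratio $\e_m/\e_n$, which finishes the proof after multiplying by $\e_m$ and comparing with $\e_n$ times the top-order piece of $\|f\|_{\cX^n}^2$.

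The step I expect to be the most delicate is the norm equivalence between $\D^k$ and $\na^{2k}$ in the weighted setting, because the weight $\vp_{2k}^2 \vp_g$ carries derivatives of its own when one integrates by parts. The polynomial growth of $\vp_1$ controlled by \eqref{eq:repul_wg_aga} and the fact that $|\na \vp_1|\les 1$ ensure that every commutator contribution lowers the effective derivative order by one while picking up at most a factor $\la y\ra^{-1}$, which is harmless for the interpolation scale. Once this is verified (and it is stated as Lemma \ref{lem:norm_equiv}), the remainder of the proof is a short combination of the two lemmas and Young's inequality, with the constant in $\les_n$ depending on $n$ through the interpolation exponent $m/n$ and the ratios of the $\e_k$'s.
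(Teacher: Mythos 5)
Your proposal is correct and follows essentially the same route as the paper, which justifies the lemma precisely by combining the weighted interpolation inequality (Lemma \ref{lem:interp_wg}) with the $\D^k$ versus $\na^{2k}$ norm equivalence (Lemma \ref{lem:norm_equiv}) at the weight scale $\d_1=1$, $\d_2=-2-\kp_1$, exactly as you describe. Your use of the multiplicative (Gagliardo--Nirenberg) form plus Young's inequality is interchangeable with the additive $\e$-form \eqref{eq:interp_convex:a} that the paper invokes, so there is no substantive difference.
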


\subsubsection{Functional inequalities for non-radial perturbation}
Before proving Theorem \ref{thm:coer_est}, we establish a few 
functional inequalities in Lemmas \ref{lem:norm_equiv_coe}, \ref{lem:non_IBP} to handle the non-radial perturbation. It allows us to control mixed derivative terms, e.g. $\na^2 \D^{m-1} F, \pa_{\xi} \na \D^{m-1} F$, using  the energy based on $\D^m \UU, \D^m \S$.

In the following lemma, we show that $\D^i$ and $\na^{2i}$ can be exchanged up to some lower order term. 
\begin{lemma}\label{lem:norm_equiv_coe}
Let $\d_1 \in (0, 1], \d_2 > 0$, and define $\b_j \teq 2 j \d_1 + \d_2,  \la x \ra = (1 + |x|^2)^{1/2} $. Suppose that the weights $\psi_i, i\geq 0$ satisfies the pointwise bounds  $\psi_{2n} \asymp_n \la x \ra^{\b_{2n}},  |\na \psi_{2n}| \les_n \la x \ra^{\b_{2n}-1}$,
and a function $B(y)$ satisfies $|B(y) | \les C(b) , |\na B| \les C(b) \la y \ra^{-1} $ for some constant $C(b) > 0$ depending on a parameter $b$. Then for any $\e >0$ and $ i, n$ with $0 \leq i \leq n$, there exists a constant $C(\e, n, \d)>0$ such that 
\begin{align}
    \B| \int B(y) & (| \D^n f|^2 - |\na^{2 i} \D^{n-i} f|^2 ) \psi_{2n}  \B|    \leq \e \int |\D^n f | \psi_{2n} + C(\e, n, \d, b) \int f^2 \psi_0 , \label{eq:norm_dif_ff}  \\
    \B| \int B(y) & (  \D^n f \cdot \D^n g  - \na^{2i} \D^{n-i} f \cdot \na^{ 2 i } \D^{n-i} g ) \psi_{2n}  \B| \notag \\
  & \leq \e \int ( |\D^n f |^2 + |\D^n g|^2) \psi_{2n} + C(\e, n, \d, b) \int ( f^2 + g^2) \psi_0  .  \label{eq:norm_dif_fg} 
\end{align}
for any functions $f, g$ sufficiently smooth with suitable decay at infinity, where the k-tensor $\na^k f$ has entry $ \pa_1^{\al_1} \pa_2^{\al_2} .. \pa_{d}^{\al_d} f$ with $\sum \al_i = k$ 
and $\na^k f \cdot \na^k g = \sum_{|\al| = k} \pa^{\al}f \pa^{\al} g$ denotes the standard inner product.
\end{lemma}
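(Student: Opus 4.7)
My strategy is to prove \eqref{eq:norm_dif_ff} by induction on $i$ using a single ``swap'' identity at each step, and then deduce \eqref{eq:norm_dif_fg} from \eqref{eq:norm_dif_ff} by polarization applied componentwise on the tensors $\D^n f$, $\na^{2i}\D^{n-i}f$: namely $2\,F\cdot G = |F+G|^2 - |F|^2 - |G|^2$, applied to $f$, $g$ and $f+g$ separately.

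The core algebraic identity is obtained from two integrations by parts: for any smooth $g$ and smooth weight $w$ with sufficient decay at infinity,
\[
\int \pa_j^2 g\,\pa_k^2 g\,w\,dy
= \int (\pa_j \pa_k g)^2\,w\,dy + \int \cR_{jk}(\na g,\na^2 g)\cdot \na w\,dy,
\]
where $\cR_{jk}$ is bilinear in $(\na g,\na^2 g)$ and pointwise of size $|\na g|\,|\na^2 g|$. Applying this with $w = B\psi_{2n}$ and $g=\D^{n-1}f$, summing over $j,k$, and using $\D=\sum_j \pa_j^2$ shows
\[
\int B\bigl(|\D^n f|^2 - |\na^2 \D^{n-1} f|^2\bigr)\psi_{2n}\,dy
= \int \cR(\na^{\leq 2n-1} f)\cdot \na(B\psi_{2n})\,dy,
\]
whose integrand is a bilinear form in $(\na^{2n-1}f,\na^{2n-2}f)$ and $|\na(B\psi_{2n})|\les_n \la y\ra^{\b_{2n}-1}$ thanks to $|\na B|\les C(b)\la y\ra^{-1}$ and $|\na \psi_{2n}|\les_n \la y\ra^{\b_{2n}-1}$. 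Iterating the same identity to swap one more $\D$ against $\na^2$ inside $\D^{n-1},\D^{n-2},\ldots,\D^{n-i}$ yields, by induction on $i$,
\[
\int B\bigl(|\D^n f|^2 - |\na^{2i}\D^{n-i} f|^2\bigr)\psi_{2n}\,dy
= \sum_{0\leq a,b\leq 2n-1} \int \cR_{a,b}(y)\,\na^a f\cdot \na^b f\,dy,
\]
with $|\cR_{a,b}(y)|\les_{n,b}\la y\ra^{\b_{2n}-(2n-a-b)}$: each IBP trades exactly one derivative off $f$ for one power of $\la y\ra^{-1}$ on the weight.

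To close, I invoke the weighted Gagliardo--Nirenberg-type interpolation
\[
\int |\na^k f|^2 \la y\ra^{2k\d_1+\d_2}\,dy
\leq \eta \int |\D^n f|^2 \la y\ra^{\b_{2n}}\,dy + C(\eta,n,\d)\int f^2 \la y\ra^{\d_2}\,dy,
\qquad 0\leq k\leq 2n-1,\ \eta>0,
\]
which is essentially the content of the weighted interpolation results used earlier in the paper (Lemma \ref{lem:interp_wg}, Lemma \ref{lem:norm_equiv}) and whose exponents match the scaling $\b_{2k}=2k\d_1+\d_2$ built into the statement. Combined with Young's inequality applied to each cross product $|\na^a f|\,|\na^b f|$, every remainder $\cR_{a,b}$ is absorbed into the two terms on the right-hand side of \eqref{eq:norm_dif_ff}, provided $\eta$ is chosen small enough in terms of $\e$.

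The main technical obstacle is bookkeeping: one must check that after each IBP the number of derivatives lost on $f$ exactly matches the number of powers of $\la y\ra^{-1}$ gained on the weight, so that every remainder $\cR_{a,b}$ lies in the admissible interpolation range where $\b_{2k}=2k\d_1+\d_2$. The hypotheses $|B|\les C(b)$, $|\na B|\les C(b)\la y\ra^{-1}$, and $|\na \psi_{2n}|\les_n \la y\ra^{\b_{2n}-1}$ provide precisely the one-power-per-derivative exchange needed for this balance to hold at every step, uniformly in $i\leq n$, which is what makes the induction close.
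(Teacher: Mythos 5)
Your proposal follows essentially the same route as the paper's proof: iterated integration by parts to trade $\Delta$ for $\nabla^2$, with remainders arising only when a derivative lands on $B\psi_{2n}$, controlled by the one-derivative-for-one-power-of-$\langle y\rangle^{-1}$ exchange, closed via the weighted interpolation of Lemmas \ref{lem:interp_wg} and \ref{lem:norm_equiv}, and polarization for \eqref{eq:norm_dif_fg}. The only slip is in your remainder bookkeeping: since $\nabla^2 B$ is not controlled one can never let a second derivative fall on the weight, so the remainder is of the form $|\nabla^{2n}f|\,|\nabla^{2n-1}f|\,|\nabla(B\psi_{2n})|$ (the top-order factor appears linearly and is absorbed by Cauchy--Schwarz plus interpolation), rather than a sum over $a,b\leq 2n-1$ with loss $2n-a-b$ as you wrote; this does not affect the validity of the argument.
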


In Lemma \ref{lem:non_IBP}, if the coefficient $B(y)$ vanishes near $0$, which compensates 
a singularity $\f{1}{\xi}$ from the commutator $[\pa_{\xi}, \pa_i]$, we can perform integration by parts for derivatives $\pa_{\xi}, \pa_i$ up to some lower order terms.
\begin{lemma}\label{lem:non_IBP}

Let $\d_2 \in \R$, and define $\b_j = 2 j + \d_2, \xi = |y|$. Suppose that the weights $\psi_j$ satisfy the pointwise bounds $\psi_{j} \asymp_j \la y \ra^{\b_{j}}, |\na \psi_{j} | \les \la y \ra^{\b_{j} - 1}$, and a function $B(y)$ satisfies $|B(y) | \les C(b) \min(1, |y|), |\na B| \les C(b) \la y \ra^{-1} $ for some constant $C(b) > 0$ depending on a parameter $b$. For any $\e > 0, n \geq 0$ and any $i, j \in \{1,2,.., n\}$, we have 
\bseq
\begin{align}
\B| \int B(y)  \pa_{\xi}^2  \D^{n-1} f \pa_{jj} \D^{n-1} g  \psi_{2n} - 
 \int B(y) \pa_{\xi} \pa_j \D^{n-1} f \pa_{\xi} \pa_j \D^{n-1} g \psi_{2n} \B| & \leq 
\cE_{\e, n, \d,b}(f, g) , \label{eq:non_IBP_RR} \\  
 \B|\int B(y) \pa_{\xi} \pa_i f \pa_{jj } g \psi_{2n} - \int B(y) \pa_i \pa_j f \pa_{\xi} \pa_{j} g \psi_{2n} \B| & \leq \cE_{\e, n, \d,b}(f, g), 
\label{eq:non_IBP_Ria} \\
\B|\int B(y)  \pa_{\xi} \pa_i f \pa_{jj } g \psi_{2n} - \int B(y)  \pa_{\xi} \pa_j f \pa_{ij} g \psi_{2n} \B|
& \leq \cE_{\e, n, \d,b}(f, g) ,  \label{eq:non_IBP_Rib} 
\end{align}
\eseq
 for any functions $f, g$ sufficiently smooth with suitable decay at infinity, where $\cE$ is defined as follows
\beq\label{eq:non_IBP_err}
\cE_{\e, n, \d, b}(f, g) = \e \int ( |\D^n f |^2 + |\D^n g|^2) \psi_{2n} + C(\e, n, \d, b) \int ( f^2 + g^2) \psi_0   .
\eeq
 with some constant $C(\e, n, \d, b)>0$. Note that the repeated indices on $j$ in \eqref{eq:non_IBP} \textit{does not} denote summation over $j$.
\end{lemma}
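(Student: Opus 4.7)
The three inequalities all reduce to the same two operations: integration by parts in $\partial_\xi = \hat y \cdot \nabla$ (where $\hat y = y/|y|$) and commutation of $\partial_\xi$ with the Cartesian derivatives $\partial_i, \partial_j$. The basic algebraic identity is
\[
[\partial_\xi,\partial_i] \;=\; \frac{\hat y_i \partial_\xi - \partial_i}{\xi},
\]
and an IBP against $\partial_\xi$ in $\mathbb{R}^2$ produces an extra divergence term $\nabla\cdot\hat y = 1/\xi$. Both operations yield a $1/\xi$ singularity, and the hypothesis $|B(y)|\lesssim C(b)\min(1,|y|)$ is tailored precisely to absorb it, since then $|B(y)|/\xi \lesssim C(b)$ uniformly, while $|\nabla B|\lesssim C(b)\langle y\rangle^{-1}$ keeps any coefficient term of strictly lower order in the far field.

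For inequality \eqref{eq:non_IBP_Rib}, I would start with $\int B\psi_{2n}(\partial_\xi\partial_i f)(\partial_{jj}g)\,dy$, integrate by parts in $\partial_j$ to move one $j$-derivative onto the $f$-factor, commute $\partial_j\partial_\xi = \partial_\xi\partial_j + [\partial_j,\partial_\xi]$, and then integrate by parts in $\partial_i$ to move the $i$-derivative off $f$ and onto $g$. The main term is exactly $\int B\psi_{2n}(\partial_\xi\partial_j f)(\partial_{ij}g)$; everything else is either a derivative hitting $B$ or $\psi_{2n}$ (bounded, one order lower in weight) or involves the commutator $[\partial_j,\partial_\xi]$ paired with $B/\xi$ (also bounded). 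The proof of \eqref{eq:non_IBP_Ria} is identical except that the final IBP is performed in $\partial_\xi$ instead of $\partial_i$, producing the desired $\int B\psi_{2n}(\partial_i\partial_j f)(\partial_\xi\partial_j g)$; the extra $1/\xi$ factor from $\nabla\cdot\hat y$ is again absorbed by $B$. For \eqref{eq:non_IBP_RR}, with $F=\Delta^{n-1}f$ and $G = \Delta^{n-1}g$, I would apply the same two-step procedure (IBP in $\partial_j$ followed by IBP in $\partial_\xi$) together with the commutator identity applied twice, to convert $\int B\psi_{2n}(\partial_\xi^2 F)(\partial_{jj}G)$ into $\int B\psi_{2n}(\partial_\xi\partial_j F)(\partial_\xi\partial_j G)$ up to errors.

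The error terms produced are all of schematic form $\int (\text{bounded coefficient})\cdot |\nabla^{k_1} u_1|\,|\nabla^{k_2} u_2|\,\psi_{2n}\,dy$ with $k_1+k_2 \leq 4n-1$ and $u_1,u_2\in\{f,g\}$. These are controlled by Young's inequality together with the weighted Gagliardo--Nirenberg interpolation
\[
\int |\nabla^k u|^2\langle y\rangle^{2k+\delta_2}\,dy \;\leq\; \varepsilon \int |\nabla^{2n} u|^2\langle y\rangle^{4n+\delta_2}\,dy + C(\varepsilon,n,\delta)\int u^2\langle y\rangle^{\delta_2}\,dy, \qquad 0\leq k \leq 2n-1,
\]
which is compatible with the scaling $\psi_j\asymp_j\langle y\rangle^{2j+\delta_2}$ and should follow by standard interpolation in weighted Sobolev spaces. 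Finally, the top-order $\nabla^{2n}$ norm is replaced by $\Delta^n$ via Lemma~\ref{lem:norm_equiv_coe}, yielding the claimed error bound $\mathcal{E}_{\varepsilon,n,\delta,b}(f,g)$.

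The main obstacle I anticipate is the bookkeeping: every IBP generates boundary and coefficient terms coming from derivatives of $B$, of $\psi_{2n}$, and of $\hat y$; every commutator $[\partial_j,\partial_\xi]$ introduces a $1/\xi$ factor. One must verify that every such $1/\xi$ is paired with a factor of $B$ (so that $B/\xi$ is bounded by $C(b)$) or is otherwise absorbed into a lower-order Sobolev norm. A secondary subtlety in \eqref{eq:non_IBP_RR} is that the intermediate expressions involve the mixed derivative $\partial_\xi\partial_j\Delta^{n-1}f$, which is not directly controlled by the top-order norm $\int|\Delta^n f|^2\psi_{2n}$; here Lemma~\ref{lem:norm_equiv_coe} is essential to convert freely between $\Delta^n$ and $\nabla^{2n}$ norms during the reduction.
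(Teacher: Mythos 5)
Your proposal is correct and follows essentially the same route as the paper's proof: integration by parts in $\pa_\xi$ (in polar coordinates, with the $\xi^{d-1}$ Jacobian) and in $\pa_j$, the commutator identity $[\pa_i,\pa_\xi]q = \tfrac{1}{\xi}(\pa_i q - \tfrac{y_i}{\xi}\pa_\xi q)$ whose $1/\xi$ singularity is absorbed by the hypothesis $|B|\les C(b)\min(1,|y|)$, and control of the resulting lower-order terms via Cauchy--Schwarz, the weighted interpolation of Lemma~\ref{lem:interp_wg}, and the $\na^{2n}$-versus-$\D^n$ equivalence. The only differences are cosmetic (the paper performs the $\pa_\xi$ integration by parts first and then $\pa_j$, and proves \eqref{eq:non_IBP_RR} in detail via an auxiliary pointwise-coefficient estimate before substituting $f=\D^{n-1}F$, $g=\D^{n-1}G$, $\phi=\psi_{2n}$), so no further comparison is needed.
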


We defer the proofs of the above Lemmas to Appendix \ref{sec:norm_dif}.

\subsubsection{Proof of Theorem \ref{thm:coer_est}}\label{sec:thm_coer_pf}

With Lemmas \ref{lem:norm_equiv_coe} and \ref{lem:non_IBP}, we are now ready to prove Theorem~
\ref{thm:coer_est}.

\begin{proof}

Firstly, we introduce the following notations. Denote 
\bseq\label{eq:nota_coer}
\beq\label{eq:nota_coera}
  \xi =   |y|,\quad  D_{\xi} = \xi \pa_{\xi}, \quad  \bar F = \xi + \bar U(\xi),  \quad  \bar \FF = y + \bar \UU = \bar F e_R . \\
\eeq
We introduce $\G_m, \G_{m, (\xi)}$ for the top order terms 
\beq\label{eq:nota_coer_topE}
 \G_m =  |\na^2 \D^{m-1} \UU|^2 + |\na^2 \D^{m-1} \S|^2 ,
\quad \G_{m, (\xi)} = |\pa_{\xi} \na \D^{m-1} \UU|^2 + | \pa_{\xi} \na \D^{m-1} \S|^2 ,
\eeq
and introduce $\cE_{m, \e}$ to track lower order terms
\beq\label{eq:nota_coer_err}
 \cE_{m, \e} = \e \int (|\D^m \UU|^2 + |\D^m \S|^2) \vp_{2m}^2 \vp_g  + C(m , \e) \int ( |\UU|^2 + |\S|^2 ) \vp_g , 
\eeq
where $C(m, \e)$ depending on $\e, m$ can change from line to line.
\eseq

Applying $\D^m$ to the linearized operators $\cL_U, \cL_{\S}$ defined in \eqref{eq:linc}, \eqref{eq:lind}, and using Lemma \ref{lem:leib} to extract the leading order parts from the terms containing $\na \UU, \na \S$,  we get 
\beq\label{eq:lin_Hk}
\bal
\D^m \cL_U & = \underbrace{- (y + \bar \UU) \cdot \na \D^m \UU
- \al \bar \S \na \D^m \S }_{\cT_{U}}
\underbrace{- \D^m  \UU \cdot \na \bar \UU 
- \al \D^m \S  \na \bar \S}_{ \cS_U}
+ \cR_{U, m} \\
& \quad   \underbrace{ - (r -1) \D^m \UU
- 2 m \f{\bar F}{\xi} \D^m \UU - 2 m D_{\xi} (\f{\bar F}{ \xi } ) \pa_{\xi \xi} \D^{m-1} \UU
-  2 m \al \pa_{\xi} \bar \S \pa_{\xi} \na \D^{m-1} \S  }_{ \cD_{U} } , \\
\D^m \cL_{\S} & = \underbrace{  - (y + \bar \UU) \cdot \na \D^m \S
- \al \div(\D^m \UU) \bar \S }_{\cT_{\S}} \underbrace{ - \D^m \UU \cdot \na \bar \S 
- \al \div(\bar \UU) \D^m \S }_{\cS_{\S}} + \cR_{\S, m} \\
& \quad  \underbrace{- (r-1) \D^m \S 
- 2 m \f{ \bar F}{ \xi } \D^m \S 
- 2 m D_{\xi} ( \f{\bar F}{\xi } ) \pa_{\xi \xi} \D^{m-1} \S
- 2 m \al \pa_{\xi} \bar \S  \pa_{\xi} \div( \D^{m-1} \UU )   }_{ \cD_{\S}}, \\
\eal
\eeq
where $\cR_{U, m}, \cR_{\S, m}$ denote the lower order terms. We have used the notations $\cT, \cD, \cS$ to single out {\em transport}, {\em dissipative}, and {\em stretching} terms.

Using Lemma \ref{lem:leib} and the decay estimates \eqref{eq:dec_U}, we obtain 
\beq\label{eq:lin_lower}
\bal
 |\cR_{U, m}| 
& \les_m \sum_{ 1 \leq i \leq 2 m-1}  | \na^{ 2 m + 1- i}  \bar \UU | \cdot | \na^i \UU |  
+ |\na^{2m + 1 - i} \bar \S| \cdot |\na^i \S|  \\
& \les_m \sum_{1 \leq i \leq 2m - 1} \la y \ra^{- r - 2m + i} 
( | \na^i \UU |   +  |\na^i \S|  ) , \\
 |\cR_{\S, m}| 
& \les_m \sum_{ 1 \leq i \leq 2 m-1}  | \na^{ 2 m + 1- i}  \bar \UU | \cdot | \na^i \S |  
+ |\na^{2m + 1 - i} \bar \S| \cdot |\na^i \UU|  \\
& \les_m \sum_{1 \leq i \leq 2m - 1} \la y \ra^{- r - 2m + i} 
( | \na^i \UU |   +  |\na^i \S|  ) . \\
 \eal
\eeq

Next, to bound the left hand side of \eqref{eq:coer_est}, we perform weighted $H^{2m}$ estimates with weight $\vp_{2m}^2 \vp_g$. To this end, we estimate the term 
\beq\label{eq:lin_Hk_EE}
 I_{\cL} 
  = 
  \int ( \D^m \cL_U \cdot \D^m \UU 
 + \D^m \cL_{\S} \cdot \D^m \S   ) 
 \vp_{2m}^2 \vp_g . 
\eeq
Our goal is to derive the main terms as integrals of $ |\na^2 \D^{m-1} G|^2, |\pa_{\xi} \na \D^{m-1} G|^2$ for  $G = \UU$ or $\S $.


\vspace{0.1in}
\paragraph{\bf{Estimate of $\cT_U, \cT_{\S}$}}

Using the identity
\beq\label{eq:lin_cross1}
 \na \D^m \S \cdot \D^m \UU + \div(\D^m \UU) \D^m \S = \na \cdot ( \D^m \UU \cdot \D^m \S)
\eeq
and integration by parts, we obtain 
\[
\bal
I_{\cT} & = -\int \B( (y + \bar \UU) \cdot \na \D^m \UU \cdot \D^m \UU  
+ (y + \bar \UU) \cdot \na \D^m \S \cdot \D^m \S  \\
& \qquad + \al  \bar \S \cdot \na \D^m \S \cdot \D^m \UU
+ \al \bar \S \div( \D^m \UU) \D^m \S 
\B) \vp_{2m}^2 \vp_g \\ 
& = \int \f{1}{2}  \f{ \na \cdot ( (y + \bar \UU) \vp_{2m}^2 \vp_g) }{ \vp_{2m}^2 \vp_g }
( |\D^m \UU|^2 + |\D^m \S|^2  ) \vp_{2m}^2 \vp_g 
+ \f{ \na (\al \bar \S \vp_{2m}^2 \vp_g)}{ \vp_{2m}^2 \vp_g}\cdot \D^m \UU  \D^m \S \vp_{2m}^2 \vp_g  .
\eal
\]

Below, we further estimate the coefficients. Denote 
\beq\label{eq:coer_est_J}
J_1(y)  \teq \f{1}{2}  \f{ \na \cdot ( (y + \bar \UU) \vp_{2m}^2 \vp_g) }{ \vp_{2m}^2 \vp_g },
\quad 
J_2(y) = \f{ \na( \al \bar \S \vp_{2m}^2 \vp_g)}{ \vp_{2m}^2 \vp_g} .
\eeq

Recall $\vp_g = \la y \ra^{-2-\kp_1}, \kp_1 = \f{1}{4}$ from \eqref{norm:Xk}. Using the decay estimates in \eqref{eq:dec_U}, the outgoing property $\xi + \bar U>0$ \eqref{eq:rep2}, and $\kp_1>0, r > 1$, and denoting 
\footnote{The fact that $r<2$ follows since for $d=2$ (the case of this paper) the inequality $r<r_{\sf eye}(\alpha)$ implies $r<2$, see~\eqref{r-range}.}
\beq\label{eq:kappa3}
\kp_2 = \min(2, r) = r > 1,
\eeq
we obtain
\beq\label{eq:coer_est_vpg}
 \f{\pa_{\xi} \vp_g}{\vp_g} = (-2 - \kp_1) \f{\xi}{1 + \xi^2} , \quad 
 (\xi + \bar U)  \f{\pa_{\xi} \vp_g}{\vp_g}
 \leq \xi  \f{\pa_{\xi} \vp_g}{\vp_g} + C \la \xi \ra^{-r}
 \leq  - 2 + C \la \xi \ra^{-\kp_2 }.
\eeq
Recall $\vp_{2m} = \vp_1^{2m}$ from Lemma \ref{cor:wg}. Applying 
\eqref{eq:coer_est_vpg}, decay estimates in \eqref{eq:dec_U}, and $\na f = \pa_{\xi} f \ee_R$ for a radially symmetric function, we estimate $J_i$  in \eqref{eq:coer_est_J}

\[
\bal
 J_1(y) & 
= \f{1}{2} \B( 2 + \div(\bar \UU)
+ 4m (\xi + \bar U) \f{\pa_{\xi} \vp_1}{\vp_1}
 + (\xi + \bar U) \f{\pa_{\xi} \vp_g}{\vp_g} \B)  \\
  & \leq 1 + C \la \xi \ra^{-r}
  + 2m (\xi + \bar U) \f{ \pa_{\xi} \vp_1}{\vp_1}
  +  \f{1}{2} \cdot (-2) 
  =  C \la \xi \ra^{-r}+ 2m (\xi + \bar U) \f{ \pa_{\xi} \vp_1}{\vp_1} , \\
 |J_2(y)|  
&  \leq \al |\na \bar \S| + \al \bar \S \B( 4m \f{ |\na \vp_1|}{\vp_1}
 + \f{ |\na \vp_g| }{ \vp_g } \B) 
 \leq 4m \al \bar \S  \f{ |\pa_{\xi}\vp_1|}{\vp_1}
 + C \la \xi \ra^{-r} .
  \eal
\]
Using Lemma \ref{cor:wg} for $\vp_1$,  $\vp_g$ from \eqref{norm:Xk}, and the decay estimates \eqref{eq:dec_U}, we have
\[
|J_i| \les_m 1, \quad  |\na J_i(y)| \les_m \la y \ra^{-1} , \quad i = 1, 2.
\]

Applying Lemma \ref{lem:norm_equiv_coe} with $B = J_i, \psi_{2m} = \vp_{2m}^2 \vp_g,\b_j = 2 j - ( 2 + \kp_1)/2 , b = m$, we obtain 
\[
I_{\cT} \leq \int \B( J_1(y) (|\na^2 \D^{m-1} \UU|^2 + |\na^{2} \D^{m-1} \S|^2 ) 
+ J_2(y) \cdot \na^2 \D^{m-1} \UU   \na^{2} \D^{m-1} \S \B) \vp_{2m}^2 \vp_g  
+ \cE_{m, \e} ,
\]
where $J_2(y) \cdot \na^2 \D^{m-1} \UU  \cdot \na^{2} \D^{m-1} \S 
= \sum_{i=1}^d (J_{2}(y))_{, i} \na^2 \D^{m-1} \UU_{, i} \cdot \na^{2} \D^{m-1} \S $.

Combining the above estimates, applying $|a b| \leq \f{1}{2}(a^2 + b^2) $ to $\D^m \UU \D^m \S$, and using the notation $\G_m$ for $\UU, \S$ \eqref{eq:nota_coer}, we get
\beq\label{eq:lin_est1}
\bal
I_{\cT} & \leq \int \f{1}{2} \B(  \f{ \na \cdot ( (y + \bar \UU) \vp_{2m}^2 \vp_g) }{ \vp_{2m}^2 \vp_g }
 +  \f{ | \na (\al \bar \S \vp_{2m}^2 \vp_g) | }{ \vp_{2m}^2 \vp_g}
\B) ( | \na^2 \D^{m-1} \UU|^2 + |\na^2 \D^{m - 1} \S|^2  ) \vp_{2m}^2 \vp_g  \\
& \leq \int \B(2m \B( (\xi + \bar U) \f{\pa_{\xi} \vp_1}{\vp_1} 
+ \al \bar \S | \f{\pa_{\xi} \vp_1}{\vp_1} | \B) + C \la \xi \ra^{- \kp_2 } \B) 
\G_m\vp_{2m}^2 \vp_g .
\eal
\eeq

\vspace{0.1in}
\paragraph{\bf{Estimate of $\cD_U, \cD_{\S}$ }}

Recall $\cD_U, \cD_{\S}$ from \eqref{eq:lin_Hk} and $\bar F = \xi + \bar U$ \eqref{eq:nota_coer}. Using \eqref{eq:dec_U}, we obtain 
\beq\label{eq:coer_est_F}
\bga
|\bar F / \xi| \les 1,  \quad  
| \na (\bar F / \xi ) | \les \la y \ra^{-1}, \quad 
  | \xi \pa_{\xi} (\bar F / \xi )| \les \min(|y|, 1),
  \quad  |\na   ( \xi \pa_{\xi} (\bar F / \xi )) | \les \la y \ra^{-1}, \\
  |\pa_{\xi} \bar \S|  \les \min( |y|, 1), \quad 
  \quad |\na \pa_{\xi} \bar \S| \les \la y \ra^{-1}. 
  \ega
  \eeq

Next, we apply Lemmas \ref{lem:norm_equiv_coe}, \ref{lem:non_IBP} for integration by parts to each term in 
\[
I_{\cD} = \int ( \cD_U \D^m \UU + \cD_{\S} \D^m \S  )  \vp_{2m}^2 \vp_g ,
\]
and rewrite it as integrals involving $\na^2 \D^{m-1} G,\pa_{\xi} \na \D^{m-1} G, G = \UU, \S $. 
Applying Lemma \ref{lem:norm_equiv_coe} with $i =1, 
\psi_{2m} = \vp_{2m}^2 \vp_g,  B = - (r-1) - 2m \f{\bar F}{\xi}, b = m$, and the estimate of $B$ in \eqref{eq:coer_est_F}, we obtain 
\beq\label{eq:coer_est_ID1}
\bal
 & \int ( -(r-1) - 2 m \f{F}{ \xi } )\B( |\D^m \UU|^2 + |\D^m \S|^2 \B) \vp_{2m}^2 \vp_g \\
 & \quad \leq  \int ( -(r-1) - 2 m \f{F}{ \xi } )\B( | \na^2 \D^{m-1} \UU|^2 + |\na^2 \D^{m-1} \S|^2 \B)  \vp_{2m}^2 \vp_g + \cE_{\e, m} .\\
 \eal
 \eeq
 Applying Lemma \ref{lem:non_IBP} with $\psi_{2m}^2 = \vp_{2m}^2 \vp_g, \b_j = 2 j - \f{2 + \kp_1}{2}$, $B = 2m D_\xi(\bar F/ \xi ) $ and estimates of $B$ in \eqref{eq:coer_est_F} ($B$ vanishes near $y=0$), $b = m$, we get
 \beq\label{eq:coer_est_ID2}
 \bal
& \int ( - 2 m D_\xi (\f{F}{ \xi} )  ( \pa_{\xi\xi} \D^{m-1} \UU \cdot \D^m \UU 
+ \pa_{ \xi\xi }  \D^{m-1} \S \cdot \D^m \S )  \vp_{2m}^2 \vp_g \\
& \quad \leq \int ( - 2 m D_\xi (\f{F}{ \xi} )  ( | \pa_{ \xi} \na \D^{m-1} \UU|^2 
+ | \pa_{\xi} \na  \D^{m-1} \S|^2  )  \vp_{2m}^2 \vp_g + \cE_{\e, m}  .\\
\eal
\eeq

Similarly, we apply Lemma \ref{lem:non_IBP} \eqref{eq:non_IBP_Ria}, \eqref{eq:non_IBP_Rib} with $ B = 2 m \pa_{\xi} \bar \S$ and estimates of $B$ in \eqref{eq:coer_est_F} to obtain 
\beq\label{eq:lin_cross2}
\bal
& I_{\cD, 1} = \int (-  2m \al \pa_\xi \bar \S \pa_\xi \pa_i \D^{m-1} \S ) \D^{m} \UU_i  \vp_{2m}^2 \vp_g
\leq \int (-  2m \al \pa_\xi \bar \S \pa_i \pa_j \D^{m-1} \S )  \pa_\xi \pa_j \D^{m-1} \UU_i \vp_{2m}^2 \vp_g + \cE_{\e, m} ,\\
& I_{\cD, 2} = \int( -2m \al \pa_\xi \bar \S \pa_\xi \div(\D^{m-1} \UU)) \D^m \S \vp_{2m}^2 \vp_g 
\leq \int( -2m \al \pa_\xi \bar \S \pa_\xi \pa_j (\D^{m-1} \UU_i)) \pa_i \pa_j \D^{m-1} \S \vp_{2m}^2 \vp_g  + \cE_{\e, m}.
 \eal
\eeq
Here, if an index appears twice, it means summation over such an index. 

Recall the notations $\G_{m} = |\na^2 \D^{m-1} \UU|^2 + |\na^2 \D^{m-1} \S|^2, \G_{m, (\xi)}
= |\pa_\xi \na \D^{m-1} \UU|^2 + | \pa_\xi \na \D^{m-1} \S|^2$ from \eqref{eq:nota_coer}.
Note that $|\na f| \geq |\ee_R \cdot \na f| = |\pa_\xi f|$. Applying 
\[
\sum_{1\leq i, j \leq d} 
|\pa_i \pa_j \D^{m-1} \S  \cdot \pa_\xi \pa_j \D^{m-1} U_i|
\leq \f{1}{2} (|\na^2 \D^{m-1} \S|^2 + |\pa_\xi \na \D^{m-1} \UU|^2) 
\leq \f{1}{2} \G_m 
\]
to $I_{\cD,1}, I_{\cD,2}$ and then combining \eqref{eq:coer_est_ID1}, \eqref{eq:coer_est_ID2}, 
and the above estimates for $I_{\cD, i}$, we obtain 
\beq\label{eq:lin_est2D}
I_{\cD} 
\leq  \int \B( (- (r-1) - 2 m \f{F}{\xi} ) \G_m 
 - 2 m D_\xi (\f{F}{\xi}) \G_{m, (\xi) }
 + 2 m \al |\pa_\xi \bar \S| \G_m \B) \vp_{2m}^2 \vp_g + \cE_{\e, m} .
\eeq



\vspace{0.1in}
\paragraph{\bf{Estimate of $\ \cS_U, \cS_{\S}$}}

Recall $ \cS_U, \cS_{\S}$ from \eqref{eq:lin_Hk}. Using the decay estimates \eqref{eq:dec_U}, we get 
\[
|\cS_U|, |\cS_{\S}| \les \la \xi \ra^{-r} (|\D^m \UU| + |\D^m \S|) ,
\]
and obtain 
\beq\label{eq:lin_est2}
\bal
I_S &= \int \B(   \cS_U  \cdot   \D^m \UU +  
   \cS_{\S}   \D^m \S \B) \vp_{2m}^2 \vp_g  \leq  C \int \la \xi \ra^{-r} ( |\D^m \UU|^2 + |\D^m \S|^2) \vp_{2m}^2 \vp_g .
\eal
\eeq

\vspace{0.1in}
\paragraph{\bf{Estimate of $\cR_U, \cR_{\S}$}}

Recall the remaining terms $ \cR_U, \cR_{\S}$ from \eqref{eq:lin_Hk} with estimates \eqref{eq:lin_lower}. Using $\vp_{2m} \asymp \la y \ra^{2m}$ from Lemma \ref{cor:wg} 
and recalling $\vp_g$ from \eqref{norm:Xk}, we obtain
\[
\vp_{2m}^2 \vp_g \la y \ra^{-r-2m + i} \asymp \la \xi \ra^{2m + i - r - \kp_1} 
= \la \xi \ra^{2m + i + 2 \d_2},  \quad \d_2 =\f{ -2 - \kp_1 - r}{2} .
\]

Applying interpolation in Lemma \ref{lem:interp_wg} and Lemma \ref{lem:norm_equiv} with $\d_1 = 1$
and $\d_2$ given above, for $1 \leq i \leq 2m-1$ and any $\e > 0$, we obtain 
\[
\bal
  \int & \la y \ra^{-r - 2m + i} |\na^i F| |\D^m G | \vp_{2m}^2 \vp_g  \\
  & \leq    \e  || \D^m G \cdot \la y \ra^{2m + \d_1} ||^2_{L^2}  
 + C_{m ,\e } || \na^i F \la y \ra^{i + \d_1} ||^2_{L^2} \\
 & \leq 
 \e  || \D^m G \cdot \la y \ra^{2m + \d_1} ||_{L^2}^2
 + \e || \na^{2 m} F \cdot \la  y\ra^{2m + \d_1} ||_{L^2}^2
  + C_{m, \e} ||  F \cdot \la y \ra^{ \d_1} ||_{L^2}^2 \\
   & \leq   2 \e ( || \D^m G \cdot \la y \ra^{2m + \d_1} ||_{L^2}^2 
   + || \D^m F \cdot \la y \ra^{2m + \d_1} ||_{L^2}^2 )
  + C_{m, \e} ||  F \cdot \la y \ra^{ \d_1} ||_{L^2}^2 .
\eal
\] 
Since $\e > 0$ is arbitrary, applying the above estimates to each term in \eqref{eq:lin_lower}, 
and then using $ \la y \ra^{2( 2m + \d_1)} \les_m \la y \ra^{-r} \vp_{2m}^2 \vp_g, \vp_0 = 1$, we get 
\beq\label{eq:lin_est3}
\bal
I_{\cR} &= \int (\cR_{U, m}  \cdot \D^m \UU + \cR_{\S, m} \cdot \D^m \S) \vp_{2m}^2 \vp_g \\
& \leq \int \la \xi \ra^{-r} \B(  ( | \D^{m} \UU|^2 + | \D^{m} \S|^2 ) \vp_{2m}^2 \vp_g
+ C_{m}  (| \UU |^2 + \S^2)  \vp_g \B)  \\
 & \leq \int 4 \la \xi \ra^{-r} \B( \G_m \vp_{2m}^2 \vp_g
 + C_{m}  (| \UU |^2 + \S^2)  \vp_g \B)  ,
\eal
\eeq
where we have used the notation $\G_m$ from \eqref{eq:nota_coer} and $|\D f| \leq 4 |\na^2 f|$ in the last inequality.

\vspace{0.1in}
\paragraph{\bf{Summary of $H^{2m}$ estimates}}

Plugging \eqref{eq:lin_est1}, \eqref{eq:lin_est2}, \eqref{eq:lin_est3} in \eqref{eq:lin_Hk_EE}, 
we establish 
\beq\label{eq:lin_coerHk0}
\bal
& \int ( \D^m \cL_U \cdot \D^m \UU 
 + \D^m \cL_{\S} \cdot \D^m \S   ) 
 \vp_{2m}^2 \vp_g = I_{\cL} = I_{\cD} + I_S  + I_{\cD} +  I_{\cR}  \\
 &   \leq \int \B\{  - (r-1) \G_m
 + 2m \B( (\xi + \bar U) \f{\pa_{\xi} \vp_1}{\vp_1} 
+ \al \bar \S | \f{\pa_{\xi} \vp_1}{\vp_1} | 
-  \f{ \bar F}{\xi} + \al |\pa_\xi \bar \S| \B) \G_m  \\
& \quad   - 2 m D_\xi (\f{\bar F}{ \xi})  \G_{m, (\xi)}  + C \la \xi \ra^{- \kp_2 } \G_m 
+ \e \G_m \B\} \vp_{2m}^2 \vp_g 
+ C_{m, \e} (|\UU|^2 + |\S|^2 ) \vp_g  , \\
\eal
\eeq
where $\kp_2$ is defined in \eqref{eq:kappa3}. We need to further handle the term $\G_{m, (\xi)}$. We focus on the main terms involving $m$ in \eqref{eq:lin_coerHk0}, which are given by 
$2m H_1$ with $H_1$ defined as
\[
H_1 =  (J  -  \f{ \bar F}{\xi} + \al |\pa_\xi \bar \S| ) \G_m 
-  D_\xi (\f{\bar F}{ \xi})  \G_{m, (\xi) } ,
\quad J(y) = (\xi + \bar U) \f{\pa_{\xi} \vp_1}{\vp_1} 
+ \al \bar \S | \f{\pa_{\xi} \vp_1}{\vp_1} | .
\]

From the definition of $\bar F = \f{\bar U}{\xi}$ \eqref{eq:nota_coera} 
and $D_\xi = \xi \pa_\xi$ \eqref{eq:nota_coer}, we have 
\bseq\label{eq:coer_est_H1}
\beq
 \f{\bar F}{\xi} = 1 + \f{\bar U}{\xi}, 
 \quad  
D_\xi (\f{\bar F}{ \xi} ) +  \f{\bar F}{ \xi}   = \pa_\xi \bar F 
= 1 + \pa_\xi \bar U .
\eeq

Using  $ \G_m \geq \G_{m, (\xi) }$ (since $|\na f| \geq |\pa_\xi f|$) and \eqref{eq:repul_wg_agb}, \eqref{eq:repul_wg_agc} with $l = 1$, we can estimate $H_1(y)$ as follows 
\beq
\bal
 H_1 & = (J-  \f{ \bar F}{ \xi} + \al |\pa_\xi \bar \S| ) (\G_m - \G_{m, (\xi)} + \G_{m, (\xi) })
-  D_\xi (\f{\bar F}{\xi})  \G_{m, (\xi) } 
 \\
&= ( J - \f{\bar F}{\xi} + \al |\pa_\xi \bar \S| ) ) (\G_m -\G_{m, (\xi) })
+ ( J - \pa_\xi \bar F + \al |\pa_\xi \bar \S| ) \G_{m, (\xi) }  \\
& \leq - \mu_1 \la y \ra^{-1} (\G_m -\G_{m, (\xi) }) 
- \mu_1  \la y \ra^{-1} \G_{m, (\xi) } \leq -\mu_1 \la y \ra^{-1} \G_m.
\eal
\eeq
\eseq

Plugging the above estimates to \eqref{eq:lin_coerHk0}, we obtain
\[
I_{\cL} \leq \int ( -(r-1) -  2m \mu_1 \la y \ra^{-1} + 
a_1 \la \xi \ra^{- \kp_2 }  + \e) \G_m \vp_{2m}^2 \vp_g + C_{m, \e } (|\UU|^2 + |\S|^2 ) \vp_g
\]
for some absolute constant $a_1 > 0$ independent of $m, \e$.
We choose $\e = \f{r-1}{2}$. Since $\kp_2 > 1$ (see \eqref{eq:kappa3}),  there exists $m_0$ sufficiently large, e.g $m_0 =\lfloor \f{a_1}{2\mu_1} \rfloor +1$, such that for any $ m \geq m_0$, we get
\[
- (r-1) - 2m \mu_1 \la \xi \ra^{-1} + a_1 \la \xi \ra^{- \kp_2}  + \f{r-1}{2}
\leq - \f{1}{2}(r-1) + (a_1 -  2m_0 \mu_1) \la y \ra^{-1} 
\leq - \f{1}{2} (r-1).
\]

Combining the above two estimates and \eqref{eq:lin_Hk_EE}, and then use \eqref{eq:norm_dif_ff} in Lemma \ref{lem:norm_equiv_coe} with $i = 1, B(y) =1, n = m$ to bound the energy of $\G_m$ \eqref{eq:nota_coer_topE}, 
we prove  
\beq\label{eq:lin_coerHk}
\bal
\int ( \D^m \cL_U \cdot \D^m \UU  &+ \D^m \cL_{\S} \cdot \D^m \S   ) \vp_{2m}^2 \vp_g 
 = I_{\cL}  
  \leq 
 - \f{r-1}{2} \int  \G_m \vp_{2m}^2 \vp_g + C_{m } (|\UU|^2 + |\S|^2 ) \vp_g   \\
&  \leq - \f{r-1}{4} \int ( |\D^m \UU|^2 + |\D^m \S|^2 ) \vp_{2m}^2 \vp_g + C_{m } (|\UU|^2 + |\S|^2 ) \vp_g .
 \eal
\eeq

\vspace{0.1in}
\paragraph{\bf{Weighted $L^2$ estimates} }
For $m = 0$, we do not have the lower order terms $\cR_{U, 0}, \cR_{\S, 0}$ \eqref{eq:lin_lower} and do not need to estimate $I_{\cR}$ \eqref{eq:lin_est3}. Moreover, we do not need to apply Lemmas \ref{lem:norm_equiv_coe}, \ref{lem:non_IBP} for various integration by parts estimates, which lead to error terms $\cE_{\e, m}$ \eqref{eq:nota_coer_err}. For example, for $m=0$, we do not need to perform the estimate \eqref{eq:coer_est_ID2} using Lemma \ref{lem:non_IBP}. Thus, combining \eqref{eq:lin_est1}, \eqref{eq:lin_est2D}, \eqref{eq:lin_est2}, we obtain 
\beq\label{eq:lin_coerL2}
 \int ( \cL_U \cdot \UU + \cL_{\S} \cdot \S )    \vp_g 
\leq \int  (- (r-1) + \bar C \la \xi \ra^{-\kp_2}) ( | \UU|^2 + |\S|^2 ) \vp_g 
\eeq
for some absolute constant $\bar C > 0$ independent of $m$.

Since $r > 1, \kp_2 > 1$ (see \eqref{eq:kappa3}), there exists $R_4$ sufficiently large  such that for $ |y| \geq R_4$, we get 
\[
- (r-1) + \bar C \la \xi \ra^{-\kp_2} \leq 
- (r-1) + \bar C \la R_4 \ra^{-\kp_2} \leq - \f{r-1}{2}.
\]

Therefore, combining the above two estimates, we arrive at 
\[ 
 \int ( \cL_U \cdot \UU + \cL_{\S} \cdot \S )    \vp_g  \leq  \int ( \bar C\one_{|y| \leq R_4}  - \f{r-1}{4} )  ( | \UU|^2 + |\S|^2 ) \vp_g .
\]

\paragraph{\bf{Choosing the $\e_m$} } In order to conclude the proof of~\eqref{eq:coer_est},  we combine \eqref{eq:lin_coerHk} and \eqref{eq:lin_coerL2}. Choosing $\e_m$ sufficiently small, e.g.~$\e_m = \f{r-1}{ 8 C_m}$, where $C_m$ is as in \eqref{eq:lin_coerHk}, 
multiplying \eqref{eq:lin_coerHk} with $\e_m$ and then adding to~\eqref{eq:lin_coerL2}, we deduce \eqref{eq:coer_est} for $\lambda \in (0, \f{1}{2})$ sufficiently small which is independent of $m$, e.g.~$\lambda = \frac{r-1}{8}$ ($< \f{1}{2}$ due to \eqref{r-range}).
\end{proof}

\subsection{Compact perturbation and semigroup} 

In this section, we construct the compact perturbation $\cK_m$ to $\cL$ and the semigroups generated by $\cL, \cL - \cK_m$ following \cite{chen2024Euler}. 
We remark that the arguments in \cite{chen2024Euler} for these steps do not rely on any symmetry assumptions on the solution $(\UU, \S)$ to the linearized equation \eqref{eq:lin}.



\subsubsection{Compact perturbation via Riesz representation}

In this section, using the estimates established in Theorem~\ref{thm:coer_est}, we construct a compact operator $\cK_m$ such that $\cL - \cK_m$ is dissipative in $\cX^m$. We fix $m_0\geq 6$. 
The following result was first proved in \cite[Proposition 3.4]{chen2024Euler} for
radially symmetric functions $(\UU, \S)$.


\begin{proposition}\label{prop:compact}
For any $m \geq m_0 $, there exists a bounded linear operator $\cK_m \colon \cX^0 \to \cX^m$ with:
\begin{itemize}
\item[(a)] 
for any $f \in \cX^{0}$ we have
\[
{\rm supp}(\cK_m f) \subset B(0, 4 R_4),
\]
where $R_4$ is chosen in Theorem~\ref{thm:coer_est} (in particular, it is independent of $m$);
\item[(b)] the operator $\cK_m$ is compact from $ \cX^{m} \to \cX^m$;  
\item[(c)] the enhanced smoothing property  $\cK_m: \cX^{0} \to \cX^{m+ 3}$ holds;
\item[(d)] the operator $\cL - \cK_m$ is dissipative on $\cX^m$ and we have the estimate
\begin{equation}
 \la (\cL - \cK_m) f ,f \ra_{\cX^m} \leq - \lam \| f \|_{\cX^m}^2
 \label{eq:dissip}
\end{equation}
for all $f \in \{ (\UU,\Sigma) \in \cX^m \colon \cL(\UU,\Sigma) \in \cX^m\}$, $\cL = (\cL_U,\cL_S)$, and where $\lambda >0$ is the  parameter from~\eqref{eq:coer_est} (in particular, it is independent of $m$).
\end{itemize}
\end{proposition}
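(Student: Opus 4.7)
The plan is a standard Riesz representation argument, turning the localized lower-order term on the right-hand side of Theorem~\ref{thm:coer_est} into a compact operator. Fix a smooth cutoff $\chi \in C_c^\infty(\R^2)$ with $\chi \equiv 1$ on $\overline{B(0, R_4)}$ and $\supp(\chi) \subset B(0, 2R_4)$, and introduce the bilinear form
$$B_m(f, g) \teq \bar C \int_{\R^2} \chi(y)\, f(y)\cdot g(y)\, \vp_g(y)\, dy.$$
The compactness of $\supp(\chi)$ together with the embedding $\cX^m \hookrightarrow \cX^0$ from Lemma~\ref{lem:Xm_chain} gives $|B_m(f, g)| \les \|f\|_{\cX^0}\|g\|_{\cX^m}$, so the Riesz representation theorem produces a bounded linear operator $\cK_m\colon \cX^0 \to \cX^m$ satisfying $\la \cK_m f, g\ra_{\cX^m} = B_m(f, g)$ for every $f \in \cX^0$ and $g \in \cX^m$.

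Properties (d), (c), (b) then follow in sequence. The dissipative estimate (d) is immediate from $\chi \geq \mathbf{1}_{B(0, R_4)}$ and Theorem~\ref{thm:coer_est}:
$$\la (\cL - \cK_m) f, f\ra_{\cX^m} \leq -\lambda \|f\|_{\cX^m}^2 + \bar C \int_{|y| \leq R_4} |f|^2 \vp_g\, dy - \bar C \int \chi |f|^2 \vp_g\, dy \leq -\lambda \|f\|_{\cX^m}^2.$$
Enhanced smoothing (c) is read off from elliptic regularity for the $4m$-th order variational equation
$$\e_m\, \D^m\!\bigl(\vp_{2m}^2 \vp_g\, \D^m \cK_m f\bigr) + \vp_g\, \cK_m f = \bar C\, \chi \vp_g f,$$
whose principal part is uniformly elliptic with smooth coefficients (since $\vp_1, \vp_g > 0$ are smooth by construction in Lemma~\ref{cor:wg}) and whose source is compactly supported with $L^2$-norm controlled by $\|f\|_{\cX^0}$. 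Interior regularity places $\cK_m f \in H^{4m}_{\mathrm{loc}}$, which on the compact support from (a) translates to $\cK_m f \in \cX^{2m} \hookrightarrow \cX^{m+3}$ since $m \geq m_0 \geq 6 \geq 3$. Compactness (b) is then a direct consequence of (a), (c), and Rellich-Kondrachov: bounded families in $\cX^m$ are sent to families bounded in $\cX^{m+3}$ with fixed compact support in $\overline{B(0, 4R_4)}$, on which the weighted norms $\|\cdot\|_{\cX^m}$ and $\|\cdot\|_{\cX^{m+3}}$ reduce to the standard $H^{2m}$ and $H^{2m+6}$ norms with compact inclusion.

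The main technical obstacle is the compact support assertion (a). The raw Riesz representative $\cK_m f$ constructed in the first paragraph typically has global support, as the Green's function of the above $4m$-th order elliptic operator decays only algebraically. Following~\cite[Proposition~3.4]{chen2024Euler}, the remedy is to redefine $\cK_m f$ as the Riesz representative inside the closed subspace
$$Y \teq \overline{C_c^\infty(B(0, 4R_4))}^{\cX^m} \subset \cX^m,$$
i.e.\ find $\cK_m f \in Y$ satisfying $\la \cK_m f, g\ra_{\cX^m} = B_m(f, g)$ for all $g \in Y$. This ensures $\supp(\cK_m f) \subset \overline{B(0, 4R_4)}$ by construction. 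The delicate point is to re-prove the dissipative identity $\la \cK_m f, f\ra_{\cX^m} = B_m(f, f)$ against an arbitrary $f \in \cX^m$ (rather than merely $f \in Y$); this is handled in~\cite{chen2024Euler} by a careful cutoff decomposition and boundary-layer bookkeeping exploiting that $\supp(\chi) \subset B(0, 2R_4)$ sits strictly inside $B(0, 4R_4)$, leaving enough room for a transition annulus. Since neither this construction nor the arguments in the preceding two paragraphs invoke radial symmetry of the underlying perturbation $(\UU, \S)$, the whole proof transcribes to the present non-radial setting with no modifications beyond substituting the non-radial coercive estimate from Theorem~\ref{thm:coer_est}.
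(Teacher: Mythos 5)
Your overall architecture --- the bilinear form $B_m(f,g)=\bar C\int \chi\, f\cdot g\,\vp_g$, Riesz representation, dissipativity from Theorem~\ref{thm:coer_est}, elliptic regularity, Rellich--Kondrachov --- matches the ingredients this paper attributes to the proof in \cite{chen2024Euler}, and your derivations of (b), (c), (d) for the \emph{global} Riesz representative are fine. The gap sits exactly where you flag and then defer it: the proposed remedy for (a), taking the Riesz representative inside $Y\teq\overline{C_c^\infty(B(0,4R_4))}^{\cX^m}$, does not work, and no ``boundary-layer bookkeeping'' can save it. For a general $f\in\cX^m$ one only gets $\la \cK_m f, f\ra_{\cX^m}=\la \cK_m f, P_Y f\ra_{\cX^m}=B_m(f,P_Yf)$, where $P_Y$ is the orthogonal projection onto $Y$. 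This projection is nonlocal: $Y^\perp$ consists of $\cX^m$-solutions of the homogeneous Euler--Lagrange equation $\e_m\D^m(\vp_{2m}^2\vp_g\D^m h)+h\vp_g=0$ on $B(0,4R_4)$, and such $h$ can be large near the origin. For $f\in Y^\perp$ your construction gives $\la\cK_m f,f\ra_{\cX^m}=0$ while the remainder $\bar C\int_{|y|\leq R_4}|f|^2\vp_g$ in \eqref{eq:coer_est} is completely uncontrolled, so \eqref{eq:dissip} is not obtained. The obstruction is the nonlocality of $P_Y$, not the width of the annulus between $B(0,2R_4)$ and $B(0,4R_4)$.

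The same construction also undermines (c), which the paper uses essentially (in the construction of $\cW^{m+1}$ and in Section~\ref{sec:smooth_unstab}): the representative in $Y$ lies in the analogue of $H_0^{2m}(B(0,4R_4))$ and is smooth inside by interior regularity, but the variational problem only forces its traces up to order $2m-1$ to vanish on $\partial B(0,4R_4)$; its extension by zero therefore generically has a jump in the derivatives of order $2m$ across the boundary, hence fails to lie even in $H^{2m+1}(\R^2)$, let alone $\cX^{m+3}$. So the actual crux of the proposition --- producing a single operator that simultaneously has compactly supported range, maps $\cX^0$ into $\cX^{m+3}$, and satisfies $\la\cK_mf,f\ra_{\cX^m}\geq \bar C\int_{|y|\leq R_4}|f|^2\vp_g$ for \emph{every} $f\in\cX^m$ --- remains unproved in your write-up; you would need to reproduce the genuine construction of \cite{chen2024Euler} rather than the subspace-Riesz mechanism you guessed.
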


The proof in \cite{chen2024Euler} is based on the coercive estimates \eqref{eq:coer_est}, 
the Riesz representation theory and the Rellich-Kondrachov compact embedding theorem, and does not require any assumption on the symmetry of $(\UU, \S)$. Thus, the proof is the same.

\subsubsection{Construction of the semigroup}\label{sec:semi}
In this section, for any $m \geq m_0$, we construct strongly continuous semigroups generated by $\cL, \cL - \cK_m$, where $\cK_m$ is constructed in Proposition \ref{prop:compact}.

We define the domains of $\cL, \cD_m $ as follows 
\beq\label{eq:domain}
D_m(\cD_m) = D_m(\cL) \teq \{  (\UU, \S) \in \cX^m,  \cL (\UU, \S) \in \cX^m \}. 
\eeq

Note that the linearized operators $\cL = (\cL_U, \cL_{\S})$ \eqref{eq:lin} are the same as that in \cite{chen2024Euler}. We have the following results, which were first proved in \cite[Section 3.4]{chen2024Euler} for radially symmetric functions. 



\begin{proposition}\label{prop:semi}
Suppose that $m \geq m_0$ and $\cK_m$ is the compact operator constructed in Proposition \ref{prop:compact}. The operators $\cL, \cD_m = \cL - \cK_m : D_m \subset \cX^m \to \cX^m$ generate strongly continuous semigroup 
\[
e^{\cL s} : \cX^m \to  \cX^m ,
\quad 
e^{\cD_m s} : \cX^m \to  \cX^m . 
\]
Moreover, we have the following estimates of the semigroup 
\beq\label{eq:dissp_semi}
 ||  e^{s \cD_m} ||_{\cX^m} \leq e^{-\lam s} , \quad || e^{s \cL} ||_{\cX^m} \leq e^{C_m s}
\eeq
for some $C_m>0$, and the following spectral property of $\cD_m$
\beq\label{eq:spec}
 \{ z: \Re(z) > -\lam \} \subset \rho_{\mw{res}}(\cD_m) ,
\eeq
where $\rho_{\mw{res}}(\cA)$ denotes the resolvent set of $\cA$. 
\end{proposition}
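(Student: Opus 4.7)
The plan is to follow the semigroup construction developed for the radial setting in \cite[Section 3.4]{chen2024Euler}, whose main ingredients are local well-posedness of the linearized system \eqref{eq:lin_U}--\eqref{eq:lin_S} viewed as a symmetric hyperbolic system in $(\td\UU, \td\S)$, together with the coercive estimate \eqref{eq:coer_est} of Theorem \ref{thm:coer_est} and the dissipativity \eqref{eq:dissip} of $\cD_m = \cL - \cK_m$ from Proposition \ref{prop:compact}(d). None of these ingredients is intrinsically radial: the symmetric hyperbolic structure (after normalizing the $\S$ variable by $\bar\S$) holds regardless of symmetry, and the weights $\vp_{2m}^2 \vp_g$ constructed in Lemma \ref{cor:wg} are isotropic in $y$. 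What changes in the non-radial setting is only the form of the commutators that appear during energy estimates, and these have already been handled in full generality by Lemmas \ref{lem:norm_equiv_coe} and \ref{lem:non_IBP} in the proof of Theorem \ref{thm:coer_est}.

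Concretely, the first step is to produce, via Friedrichs mollification or viscous regularization by $+\ve\D$, a family of approximate solutions $u^{(\ve)}(s)$ to the initial value problem $\pa_s u = \cL u$ with data $u_0 \in \cX^m \cap \cX^{\infty}$. The coercive estimate \eqref{eq:coer_est}, combined with the embedding $\|\cdot\|_{\cX^0} \les_m \|\cdot\|_{\cX^m}$ from Lemma \ref{lem:Xm_chain} to absorb the localized lower-order term, yields a uniform-in-$\ve$ differential inequality $\tfrac{d}{ds}\|u^{(\ve)}\|_{\cX^m}^2 \leq 2 C_m \|u^{(\ve)}\|_{\cX^m}^2$, hence uniform bounds on any finite interval. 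Passing to the limit $\ve \to 0^+$ and using uniqueness (coming from the weighted $L^2$-estimate \eqref{eq:lin_coerL2}) produces a solution map $S(s): \cX^m \to \cX^m$, which is strongly continuous and satisfies the semigroup property by construction. Its infinitesimal generator, on the maximal domain $D_m(\cL)$, is precisely $\cL$. The same argument with $\cL$ replaced by $\cD_m$ (treating the compact, hence bounded, operator $\cK_m$ as a bounded perturbation via Proposition \ref{prop:compact}(b)) constructs $e^{s\cD_m}$ with generator $\cD_m$ on the same domain $D_m(\cD_m)=D_m(\cL)$.

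For the growth bounds, pick $u_0 \in D_m(\cD_m)$ and set $u(s) = e^{s\cD_m} u_0$. The dissipative estimate \eqref{eq:dissip} applied pointwise in $s$ yields
\begin{equation*}
\tfrac{1}{2}\tfrac{d}{ds}\|u(s)\|_{\cX^m}^2 = \la \cD_m u(s), u(s) \ra_{\cX^m} \leq -\lam \|u(s)\|_{\cX^m}^2,
\end{equation*}
and Gronwall's inequality gives $\|u(s)\|_{\cX^m} \leq e^{-\lam s}\|u_0\|_{\cX^m}$; density of $D_m(\cD_m)$ in $\cX^m$ extends the bound to all of $\cX^m$, proving the first half of \eqref{eq:dissp_semi}. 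The bound $\|e^{s\cL}\|_{\cX^m} \leq e^{C_m s}$ follows identically from \eqref{eq:coer_est}, with the localized term $\bar C \int_{|y|\leq R_4}(|\UU|^2+|\S|^2)\vp_0^2 \vp_g$ absorbed into a constant $C_m \|u\|_{\cX^m}^2$ via Lemma \ref{lem:Xm_chain}. Finally, the spectral inclusion \eqref{eq:spec} is an immediate consequence of the Hille--Yosida theorem: since the growth bound of $e^{s\cD_m}$ is at most $-\lam$, for every $z$ with $\Re(z) > -\lam$ the Laplace transform
\begin{equation*}
R(z) u \teq \int_0^\infty e^{-zs}\, e^{s\cD_m} u \, ds
\end{equation*}
converges absolutely in $\cX^m$ and coincides with $(zI - \cD_m)^{-1}$, so $z \in \rho_\mw{res}(\cD_m)$.

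The main obstacle is the local well-posedness step in the weighted space $\cX^m$: commuting $\D^m$ with the transport $(y+\bar\UU)\cdot\na$ and the pressure term $\al\bar\S\na$ generates mixed derivatives of the form $\na^{2i}\D^{m-i}$ and $\pa_\xi \na \D^{m-1}$, which a priori cannot be controlled by the top-order energy based on $\D^m$. This is precisely where Lemmas \ref{lem:norm_equiv_coe} and \ref{lem:non_IBP} are essential: they convert these mixed derivatives back into $\|\D^m\cdot\|_{L^2_w}$-quantities plus lower-order errors, allowing the standard hyperbolic energy method to close in the non-radial setting. Once the approximate solutions are constructed and uniformly bounded, all remaining arguments reduce to routine $C_0$-semigroup theory.
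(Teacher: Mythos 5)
Your proposal is correct and follows essentially the same route as the paper, which itself defers to \cite[Section 3.4]{chen2024Euler}: local well-posedness of the symmetric hyperbolic system plus the coercive estimate \eqref{eq:coer_est} yields $e^{s\cL}$, the compact (hence bounded) operator $\cK_m$ is handled as a bounded perturbation to get $e^{s\cD_m}$, the dissipative estimate \eqref{eq:dissip} with Gr\"onwall gives the decay bound, and the growth bound implies the resolvent inclusion \eqref{eq:spec} by standard semigroup theory. The only cosmetic difference is that you re-run the regularization argument for $\cD_m$ rather than invoking the Bounded Perturbation Theorem by name, which amounts to the same thing.
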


The construction of the semigroup $e^{s \cL }$ in \cite{chen2024Euler} is based on solving the linear PDE of $(\UU, \S)$ \eqref{eq:lin}, which is a symmetric hyperbolic system, and proving its well-posedness: existence, applying the coercive estimates \eqref{eq:coer_est} to obtain uniqueness of the solution and continuous dependence on the initial data. To further construct the semigroup $e^{s  (\cL -\cL_m) }$ based on $e^{s \cL }$, one used the Bounded Perturbation Theorem \cite[Theorem 1.3, Chapter III]{ElNa2000}. 
These properties do not rely on any symmetry assumptions of $( \UU, \S)$. 
In the current  setting without radially symmetry assumptions, the linear PDE with linear operators $\cL = (\cL_U, \cL_{\S})$ defined in \eqref{eq:lin} is still a symmetric hyperbolic system. Moreover, we have the same type of coercive estimate \eqref{eq:coer_est} as that in \cite{chen2024Euler}. Thus, the proof is the same as that in \cite{chen2024Euler}. After we construct the semigroup, the decay estimate of $e^{ \cD_m s}$ follows from the dissipative estimates \eqref{eq:dissip} and implies the estimates of resolvent set \eqref{eq:spec}. Note that the estimates \eqref{eq:dissp_semi}, \eqref{eq:spec} apply to all  $(\cD_m, \cX^m)$ with $m \geq m_0$ with $\lam$ independent of $m$.

\section{Decay estimates of the semigroup}\label{sec:decay}

In this section, we first study two special classes of stable perturbation, and then perform decay estimates for general perturbation. The first class of perturbation studied in Section \ref{sec:pertb_A} satisfies radial symmetry and builds on \cite{chen2024Euler}. It is used crucially to construct non-trivial initial vorticity. The second class studied in Section \ref{prop:far_decay} is supported away from $0$ and is used to construct initial data with appropriate far-field behavior.

\subsection{A stable perturbation with radial symmetry}\label{sec:pertb_A}

Recall from \eqref{eq:polar} the definitions of polar coordinates $\xi, \th, \ee_R, \ee_{\th}$ for $y \in \R^2$. We decompose the velocity in \eqref{eq:lin} into the radial part $\UU^R$ and angular part $\AA$ as 
\[
 \UU = \UU^R + \AA , \quad  \UU^R = (\UU \cdot \ee_R ) \ee_R = U^R(\xi, \th) \ee_R , \quad \AA = 
 (\UU \cdot \ee_{\th} ) \ee_{\th} =  A(\xi, \th) \ee_{\th}. 
\]

We have the identities in the polar coordinates for any sufficient smooth functions $f^R, f^{\th}, v, g$
\[
\bal
& \div( \AA) =  \f{\pa_{\th}}{\xi } A ,
\quad  \div(\UU) = \div(\UU^R) + \div(\AA) ,  \\
& \ee_R \cdot \na g = \pa_R g, \quad \ee_{\th} \cdot \na g = \f{1}{\xi} \pa_{\th} g, 
\quad \AA \cdot \na f(R) =A \pa_{\th} f(R) = 0, 
\\
&  v \ee_R \cdot \na ( f^R \ee_R + f^{\th} \ee_{\th} )
=  v \pa_R  ( f^R \ee_R + f^{\th} \ee_{\th} )
= (v \pa_R f^R ) \ee_R + (v \pa_R f^{\th}) \ee_{\th},
\eal
\]
where $\div$ is the divergence in $\R^2$. Using the above identities and projecting the linear equations 
\beq\label{eq:linlin}
\pa_s (\UU, \S ) = \cL (\UU , \S), \quad \cL = (\cL_U, \cL_{\S}),
\eeq
i.e. \eqref{eq:lin} with $\cN_U= 0, \cN_{\S }= 0$, into $\ee_R, \ee_{\th}$ directions, 
where $\cL= (\cL_U, \cL_{\S})$ are defined in \eqref{eq:linc}, \eqref{eq:lind},  we can derive the linear equations of $U^R, A, \S$ 
\bseq\label{eq:lin_axi}
\begin{align}
 \pa_s U^R + ( y + \bar \UU) \cdot \na U^R   & = -(r-1) U^R
-  U^R \pa_\xi \bar U
  - \al ( \S  \pa_\xi \bar \S  + \bar \S \pa_\xi \S)  , \\
\pa_s A + ( y + \bar \UU) \cdot \na A  & = -(r-1) A   - \f{\bar U A}{\xi}
-  \f{ \al \bar \S}{ \xi} \pa_{\th} \S , \label{eq:lin_axib} \\
 \pa_s \S + (y + \bar \UU) \cdot \na \S 
& = -(r-1) \S - U^R \pa_{\xi}  \bar\S- \al \S \div(\bar \UU)
-  \al \bar \S \B( \div( \UU^R ) + \f{1}{\xi} \pa_{\th} A \B) .
 \end{align}
\eseq
There is a special class of solution to \eqref{eq:lin_axi} 
\[
U^R , \  \S \equiv 0 , \quad  A(\xi, \th, s) = A_*(\xi, s),
\]
for some radially symmetric function $A_*$, i.e. $A, A_*$ are independent of $\th$, with $A_*(0, s) = 0$, which is preserved by \eqref{eq:lin_axi}. It has been observed crucially in \cite{chen2024Euler} that the linearized equation of $A$ 
\eqref{eq:lin_axib} is decoupled from those of $(U^R, \S)$ for radially symmetric solution. For this special class of solution, we can rewrite \eqref{eq:lin_axib} in the vector form by multiplying \eqref{eq:lin_axib} with $\ee_{\th}$, which is the equation of $\AA = A_*(\xi, s) \ee_{\th}$
\beq\label{eq:lin_A}
 \pa_t \AA = \cL_A \AA, \quad \cL_A \AA = - (y + \bar \UU) \cdot \AA - (r-1) \AA - \AA \cdot \na \bar \UU.
\eeq


The following coercive estimates for $\cL_A$ and radially symmetric $\AA$ was 
established in \cite[Theorem 3.2]{chen2024Euler}. 

\begin{theorem}[Theorem 3.2 \cite{chen2019finite2}]\label{thm:coerA0}
Let $\vp_g$ be the weight defined in \eqref{norm:Xkb}. There exists $\lam_A > 0$ and a weight $\vp_A$ satisfying $\vp_A(y) \asymp |y|^{-\b_1} \la y \ra^{\b_1 - 2 -\kp_1}$ for some $\b_1 \in (3, 4)$, and $\vp_g \les \vp_A, \vp_g \asymp \vp_A$ for $|y| \geq 1$, such that for any $ \AA = A(\xi) \ee_{\th} $ with $\AA, \cL_A \AA \in L^2(\vp_A)$, we have
\[
 \int \cL_A \AA  \cdot \AA \vp_A \leq - \lam_A  \int |\AA|^2 \vp_A  .
\]

\end{theorem}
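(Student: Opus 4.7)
The plan is to exploit the special structure of $\AA=A(\xi)\ee_\theta$ to reduce the inequality to a one-dimensional weighted estimate, and then to design the weight $\vp_A$ so that the resulting pointwise coefficient is uniformly negative.

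First I would compute $\cL_A\AA\cdot\AA$ in polar coordinates. Using $y+\bar\UU=\bar F\,\ee_R$ with $\bar F=\xi+\bar U$, together with $\partial_R(A(\xi)\ee_\theta)=A'(\xi)\ee_\theta$ and $\ee_\theta\cdot\nabla\bar\UU=\xi^{-1}\partial_\theta(\bar U\ee_R)=\tfrac{\bar U}{\xi}\ee_\theta$, one finds the clean identity
\begin{equation*}
\cL_A\AA=-\Bigl(\bar F A'(\xi)+(r-1)A+\tfrac{\bar U}{\xi}A\Bigr)\ee_\theta,
\qquad
\cL_A\AA\cdot\AA=-\bar F AA'-(r-1)A^2-\tfrac{\bar U}{\xi}A^2.
\end{equation*}
Since the integrand depends only on $\xi$, the integral $\int\cL_A\AA\cdot\AA\,\vp_A\,dy$ reduces to a 1D integral against the measure $2\pi\xi\,d\xi$.

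Next, I would integrate by parts on the transport contribution, using $\bar F AA'=\tfrac12\bar F(A^2)'$ and transferring the derivative onto $\bar F\vp_A\xi$. The boundary terms vanish because $\bar F(0)=0$ and $\vp_A$ has the stated polynomial decay. This yields
\begin{equation*}
\int\cL_A\AA\cdot\AA\,\vp_A\,dy
=\int M(\xi)\,A^2\vp_A\,dy,\qquad
M(\xi)= 2-r+\tfrac12\Bigl(\bar U'(\xi)-\tfrac{\bar U}{\xi}\Bigr)+\tfrac{\bar F}{2}\,\tfrac{\pa_\xi\vp_A}{\vp_A},
\end{equation*}
after using $\bar F'=1+\bar U'$ and $\bar F/\xi=1+\bar U/\xi$. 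The problem is now purely pointwise: choose $\vp_A\asymp|y|^{-\beta_1}\la y\ra^{\beta_1-2-\kp_1}$ with $\beta_1\in(3,4)$ so that $M(\xi)\le-\lam_A$ for all $\xi\ge 0$.

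Finally, I would verify $M\le-\lam_A$ region by region, using Lemma~\ref{lem:profile}. Near $\xi=0$, by~\eqref{eq:rep3} we have $\bar U'(0)=\lim\xi^{-1}\bar U=-\tfrac{r-1}{2\al}$, so $\bar U'-\bar U/\xi\to 0$ while $\tfrac{\pa_\xi\vp_A}{\vp_A}\sim-\beta_1/\xi$ and $\bar F/\xi\to 1-\tfrac{r-1}{2\al}\in(\tfrac12,1)$ by~\eqref{eq:rep3}; thus $\tfrac{\bar F}{2}\tfrac{\pa_\xi\vp_A}{\vp_A}\to-\tfrac{\beta_1}{2}\bigl(1-\tfrac{r-1}{2\al}\bigr)$ which, combined with $2-r<1$ and $\beta_1>3$, is strictly negative (this is precisely where the lower bound $\beta_1>3$ is used). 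In the far field, \eqref{eq:dec_U} gives $\bar U,\bar U'=O(\la\xi\ra^{1-r})$, so $\bar F/\xi\to 1$ and $\tfrac{\pa_\xi\vp_A}{\vp_A}\to-(2+\kp_1)/\xi$, hence $M\to 1-r-\tfrac{\kp_1}{2}<0$. In the intermediate region I would use the outgoing property $\bar F>0$ for $\xi>\xi_s$ from~\eqref{eq:rep2} together with $\pa_\xi\vp_A<0$ to keep the weight contribution non-positive, and the repulsive bound~\eqref{eq:rep22}, $1+\bar U/\xi>\kp$, to control $\bar U/\xi$; continuity/compactness on any bounded $\xi$-interval then yields a uniform strict upper bound $-\lam_A<0$ after a suitable choice of $\beta_1\in(3,4)$. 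The condition $\vp_g\asymp\vp_A$ for $|y|\ge 1$ and $\vp_g\les\vp_A$ globally follows automatically from the stated asymptotics of $\vp_A$ and the definition $\vp_g=\la y\ra^{-2-\kp_1}$ in~\eqref{norm:Xkb}.

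The main obstacle will be the joint calibration of $\beta_1$ with the profile-dependent quantity $\bar F/\xi$ near the origin, and ensuring the intermediate zone (including the sonic point $\xi_s$) produces no sign change. This is a one-parameter tuning and is feasible because the interior damping comes from the large factor $\beta_1>3$ while the exterior damping comes from the transport sign in~\eqref{eq:rep2} and the decay $r>1$.
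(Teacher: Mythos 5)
The paper does not actually prove Theorem~\ref{thm:coerA0}; it imports it verbatim from \cite[Theorem 3.2]{chen2024Euler} (the citation key in the theorem header is a typo), so there is no in-paper proof to compare against line by line. Your reduction is nonetheless the right skeleton and is correct as far as it goes: the identity $\cL_A\AA = -(\bar F A' + (r-1)A + \tfrac{\bar U}{\xi}A)\ee_\theta$ is right, the integration by parts is right (modulo a small point at the origin: the weight is singular like $\xi^{-\b_1}$ there, so the vanishing of the boundary term needs the hypothesis $\AA\in L^2(\vp_A)$ plus a density argument, not just $\bar F(0)=0$), and the resulting coefficient $M(\xi)=2-r+\tfrac12(\bar U'-\tfrac{\bar U}{\xi})+\tfrac{\bar F}{2}\tfrac{\pa_\xi\vp_A}{\vp_A}$ is the correct quantity to make uniformly negative. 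Your endpoint analyses at $\xi\to 0$ and $\xi\to\infty$ are essentially sound (though the claim $1+\bar U'(0)\in(\tfrac12,1)$ is not justified; one only gets $1+\bar U'(0)>1-\tfrac r2>0$ from \eqref{eq:rep3}, which is all you need).

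The genuine gap is the intermediate region, where you write that ``continuity/compactness on any bounded $\xi$-interval then yields a uniform strict upper bound.'' Compactness only tells you the supremum of $M$ is attained; it does not make it negative. With the \emph{pure} power-law weight $\xi^{-\b_1}\la\xi\ra^{\b_1-2-\kp_1}$ one computes $\tfrac{\bar F}{2}\tfrac{\pa_\xi\vp_A}{\vp_A}\le -\tfrac{2+\kp_1}{2}\bigl(1+\tfrac{\bar U}{\xi}\bigr)$, and near the sonic point $1+\tfrac{\bar U}{\xi}=1-W$ equals $P_{2,S}$, which is a profile-dependent quantity with no uniform lower bound beyond the $\kp$ of \eqref{eq:rep22}; so $2-r$ need not be dominated there, and for $\xi>\xi_s$ the term $\tfrac12(\bar U'-\bar U/\xi)$ loses its sign (\eqref{eq:dwdx} only holds on $[0,\xi_s]$) and adds another bounded positive contribution. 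This is exactly why the statement only asserts $\vp_A\asymp|y|^{-\b_1}\la y\ra^{\b_1-2-\kp_1}$ rather than equality: the weight must be corrected by a bounded, radially decreasing factor $g^\b$ supported in the intermediate zone, whose derivative produces extra damping $\b\,\tfrac{(\xi+\bar U)\pa_\xi g}{g}\le -c_2\b$ there via the outgoing property $\xi+\bar U>0$, with $\b$ then chosen large relative to the bounded offending terms. This is precisely the mechanism of Lemmas~\ref{lem:wg0} and~\ref{cor:wg} in the paper (see the parameter chain \eqref{eq:wg_para_ord} and the estimate \eqref{eq:rep_wg_pf3c}), and it is the missing ingredient in your proposal; tuning the single exponent $\b_1\in(3,4)$ cannot substitute for it.
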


In \cite[Theorem 3.2]{chen2024Euler}, weighted $H^{2m}$ coercive estimates for $\AA$ are also established. 
We do not use such estimates since we use weights $\vp_{2m}$ different from those in \cite{chen2024Euler}. Instead, we have the following results.  


\begin{theorem}\label{thm:coerA}
There exists $m_0 \geq 6, \lam >0$ such that the following statements hold true. For any $m \geq m_0$, there exists $\e_m =\e_m(m_0, \lam)$ such that for any $ \AA = A(\xi) \ee_{\th} $ with $\AA \in \cX_A^m, \cL_A \AA \in \cX_A^m$, we have
\beq\label{eq:coer_estA}
 \la  \cL_A \AA, \AA  \ra_{\cX_A^m} 
 \leq - \lam || \cA ||_{\cX_A^m}^2 ,
\eeq
 Here, the Hilbert spaces $\cX_A^m$ is defined by the inner products 
\beq\label{norm:Xk_A}
\bal 
 \la f , g \ra_{\cX_A^m} \teq \int \e_m \D^m f \cdot \D^m g \wh \vp_{2 m}^ 2 \vp_g + f \cdot g  \vp_A  d y, \ m \geq 1, \quad  \la f , g \ra_{\cX_A^0} = \int f \cdot g \vp_A , \\
\eal
\eeq
where $\vp_A$ is constructed in Theorem \ref{thm:coerA0}, 
$\kp_1$ and weight $ \vp_g$ are chosen in \eqref{norm:Xk}, and $ \vp_{2m}$ in Lemma \ref{cor:wg}.
\end{theorem}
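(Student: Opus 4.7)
The plan is to combine the $L^2$ coercive estimate of Theorem \ref{thm:coerA0} with a weighted $H^{2m}$ estimate paralleling the proof of Theorem \ref{thm:coer_est} in Section \ref{sec:thm_coer_pf}. Because the linearized equation \eqref{eq:lin_A} for $\AA$ is decoupled from the $(U^R,\S)$-system, the $L^2$ piece of $\la \cL_A \AA, \AA\ra_{\cX_A^m}$ is bounded directly by $-\lam_A\int |\AA|^2 \vp_A$ via Theorem \ref{thm:coerA0}. The top-order piece is further simplified by the polar-preserving identity
\[
\D\bigl(f(\xi)\,\ee_{\th}\bigr) \;=\; \bigl(f'' + f'/\xi - f/\xi^2\bigr)\,\ee_{\th},
\]
which shows inductively that $\D^m \AA = A_m(\xi)\,\ee_{\th}$ for a scalar radial function $A_m$, so $|\D^m \AA|^2 = A_m(\xi)^2$ and the top-order analysis reduces to a one-dimensional calculation in $\xi$.

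The next step is to apply $\D^m$ to $\cL_A \AA$ and decompose, as in \eqref{eq:lin_Hk},
\[
\D^m \cL_A \AA \;=\; \cT_A + \cD_A + \cS_A + \cR_{A,m},
\]
where $\cT_A = -(y+\bar\UU)\cdot\na\D^m\AA$ is the transport part, $\cD_A = -(r-1)\D^m\AA - 2m(\bar F/\xi)\D^m\AA - 2m\,D_\xi(\bar F/\xi)\,\pa_{\xi\xi}\D^{m-1}\AA$ collects the $2m$-commutator-dissipative contributions, and $\cS_A = -\D^m\AA\cdot\na\bar\UU$ is the stretching, which for $\ee_{\th}$-valued fields contributes $-(\bar U/\xi)\D^m\AA$ to leading order. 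Testing against $\D^m \AA$ in the weighted inner product $L^2(\vp_{2m}^2 \vp_g)$ and using Lemmas \ref{lem:norm_equiv_coe}--\ref{lem:non_IBP} to trade $\D^m$ for $\na^2\D^{m-1}$ and to commute $\pa_\xi\na$-pairs in the same manner as \eqref{eq:coer_est_ID1}--\eqref{eq:lin_cross2}, the $m$-dependent top-order coefficient collapses to
\[
2m\bigl[(\xi+\bar U)\tfrac{\pa_\xi\vp_1}{\vp_1} - \tfrac{\bar F}{\xi}\bigr] - (r-1) - \tfrac{\bar U}{\xi} + O(1) \;=\; 2m\,D_{\mw{ag},0}(\vp_1) - (r-1) + O(1),
\]
after writing $\bar F/\xi = 1 + \bar U/\xi$ and invoking the definition \eqref{eq:repul_wg_agd} of $D_{\mw{ag},0}$.

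The angular-repulsive bound \eqref{eq:repul_wg_agc} from Lemma \ref{cor:wg} with $l = 0$, namely $D_{\mw{ag},0}(\vp_1) \leq -\mu_1\la y\ra^{-1}$, then supplies a uniformly negative top-order damping of order $-m\mu_1\la y\ra^{-1}$, which overwhelms the $O(1)$ terms once $m$ is large enough. The lower-order remainder $\cR_{A,m}$ is controlled via the profile decay \eqref{eq:dec_U} and weighted interpolation, as in \eqref{eq:lin_est3}; the commutator errors $\cE_{\e,m}$ generated by Lemmas \ref{lem:norm_equiv_coe}--\ref{lem:non_IBP} are absorbed into $\int |\AA|^2\vp_A$ using $\vp_g \les \vp_A$ from Theorem \ref{thm:coerA0}. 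Choosing $\e_m$ sufficiently small in \eqref{norm:Xk_A} to balance top- and low-order contributions then yields \eqref{eq:coer_estA} with $\lam > 0$ independent of $m \geq m_0$. The principal anticipated obstacle is the careful bookkeeping of the commutator identities relating $\D^m$, $\na^2\D^{m-1}$, and $\pa_\xi\na\D^{m-1}$; this is handled by the same tool lemmas developed for Theorem \ref{thm:coer_est}, and the absence of any $\S$-coupling in \eqref{eq:lin_A} means that only the $l=0$ variants of the repulsive properties from Lemma \ref{cor:wg} are needed here.
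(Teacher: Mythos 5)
Your proposal is correct and follows essentially the same route as the paper: apply $\D^m$ to $\cL_A$, decompose into transport/dissipative/stretching/remainder parts, use Lemmas \ref{lem:norm_equiv_coe}--\ref{lem:non_IBP} to reduce to $\G_{A,m}$ and $\G_{A,m,(\xi)}$, invoke the $l=0$ repulsive bounds of Lemma \ref{cor:wg} (no cross terms since $\cL_A$ contains no $\bar\S$), and glue the top-order estimate to the weighted $L^2$ coercivity of Theorem \ref{thm:coerA0} by taking $\e_m$ small. The only addition is your observation that $\D^m\AA$ remains of the form $A_m(\xi)\ee_{\th}$, which is true but not needed and not used by the paper.
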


It is clear from the proofs of Theorems \ref{thm:coer_est} and \ref{thm:coerA} that the estimates 
in both theorems hold true for smaller $\e_m$. By choosing $\lam$ smaller and $m_0$ larger in both theorems, we can assume that the parameters $\e_m, \lam, m_0$  are the same in both Theorems. 

\begin{proof}[Proof of Theorem \ref{thm:coerA}]



The weighted $H^{2m}$ estimates with weights $\vp_{2m}^2 \vp_g$ are similar to those for $\cL$ in Section \ref{sec:thm_coer_pf}. In analogy to the derivation in \eqref{eq:lin_Hk}, we apply $\Delta^m$ to $\cL_A$ defined in \eqref{eq:lin_A} and the resulting expression into a transport, damping, stretching, and remainder term; to avoid redundancy we do not spell out these details. Analogously to the bounds \eqref{eq:lin_est1}, 
\eqref{eq:lin_est2D}, \eqref{eq:lin_est2}, \eqref{eq:lin_est3}, and recalling~\eqref{eq:kappa3}, we obtain 
\begin{equation*}
\int \D^m \cL_A (\AA) \cdot \D^m \AA \vp_{2m}^2 \vp_g d y
  \leq \int 
  D_{A, 2m}
   \vp_{2m}^2 \vp_g
+ C_{m, \e} \int |\AA|^2 \vp_g ,
\end{equation*}
where $D_{A, 2m}$ is defined as 
\[
\bal
D_{A, 2m}  &= ( - (r-1) + \bar C_A \la \xi \ra^{-\kappa_3} + \e ) \G_{A, m}
+ 2 m H_{A, 1},  \\
H_{A, 1} &=  \B( (\xi + \bar U) \f{\pa_{\xi} \vp_1}{\vp_1} -   \f{\bar F}{\xi}  \B) \G_{A, m}- D_{\xi} ( \f{\bar F}{\xi} ) \G_{A,m,(\xi)} , \\
\G_{A, m} &= |\na^2 \D^{m-1} \AA|^2 ,\quad \G_{A, m, (\xi)} = | \pa_{\xi} \na \D^{m-1} \AA|^2 .
\eal
\]


Compared to \eqref{eq:lin_est1}, \eqref{eq:lin_est2}, in the above estimates, we do not need to estimate cross terms \eqref{eq:lin_cross1}, \eqref{eq:lin_cross2}, which contribute to the bounds $\al \bar \S | \f{\pa_{\xi} \vp_1}{\vp_1} |$ in \eqref{eq:lin_est1} and $2 m \al |\pa_{\xi} \bar \S|$ in \eqref{eq:lin_est2D}. In fact, $\cL_A$ does not involve $\bar \S$ (see \eqref{eq:lin_A}). Using Lemma \ref{cor:wg} \eqref{eq:repul_wg_ag} with $ l  = 0$ and following the derivations in \eqref{eq:coer_est_H1}, we obtain
\[ 
H_{A,1} 
\leq 
 \B( (\xi + \bar U) \f{\pa_{\xi} \vp_1}{\vp_1} - \f{\bar F}{\xi}  \B) (\G_{A, m} -\G_{A, m, (\xi)}) +  \B( (\xi + \bar U) \f{\pa_{\xi} \vp_1}{\vp_1} - \f{\bar F}{\xi} - D_{\xi} \f{\bar F}{\xi} \B) \G_{A, m,(\xi)}
\leq - \mu_1 \la y\ra^{-1} \G_{A, m}.
\]

Choosing $\e = \f{r-1}{2}$ and $m_0$ large enough and using $\G_{A, m} \geq \f{1}{4} |\D^m A|^2$, 
for any $ m \geq m_0$, we establish 
\[
 D_{A, 2m} \leq \B( - \f{r-1}{2} + ( \bar C_A -2 m\mu_1) \la \xi \ra^{-1} \B) \G_{A, m}
 \leq  - \f{r-1}{2} \G_{A, m} \leq -\f{r-1}{8} |\D^m \AA|^2. 
\]
The remaining steps follow the argument at the end of the proof of Theorem \ref{thm:coer_est}
and by combining the above estimates with Theorem \ref{thm:coerA0}. We omit them and conclude the proof.
\end{proof}


Theorem \ref{thm:coerA} imply the following stability estimate for this special class of perturbation. 
\begin{prop}\label{prop:semiA}
Let $\cX^m$ and $\cX^m_A$ be the norms defined in \eqref{norm:Xk}, \eqref{norm:Xk_A}, respectively. For $ m \geq  m_0  $, we have 
\[
|| e^{\cL s} (\AA , 0) ||_{\cX^m} \les_m || e^{\cL s} (\AA , 0) ||_{\cX^m_A}
\les_m e^{-\lam s} || \AA ||_{\cX^m}. 
\]
\end{prop}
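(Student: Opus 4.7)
The plan is to exploit the fact that on the invariant subspace $V = \{(A(\xi)\ee_\th, 0) : A \text{ radial}\}$, the full linearized operator $\cL$ decouples and reduces to $\cL_A$ from \eqref{eq:lin_A}, so that the $\cX^m_A$-decay provided by Theorem~\ref{thm:coerA} can be transferred into an $\cX^m$-decay estimate.

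The first step is to verify the invariance of $V$ and identify the reduced dynamics by inspecting \eqref{eq:lin_axi}. With initial data $U^R = 0$, $\S = 0$, and $A$ independent of $\th$, the right-hand sides of the $U^R$- and $\S$-equations vanish identically (the only $A$-dependent term in the $\S$-equation, $\f{1}{\xi}\pa_\th A$, is zero by radial symmetry). Hence $U^R$ and $\S$ stay identically zero and $A$ evolves by the reduced scalar equation, which upon multiplying by $\ee_\th$ coincides with $\pa_s \AA = \cL_A \AA$. Consequently $e^{\cL s}(\AA, 0) = (e^{\cL_A s}\AA, 0)$, and Theorem~\ref{thm:coerA} together with a standard Gronwall argument applied to $\f{1}{2}\f{d}{ds}\|e^{\cL_A s}\AA\|_{\cX^m_A}^2$ gives the decay $\|e^{\cL_A s}\AA\|_{\cX^m_A} \leq e^{-\lam s}\|\AA\|_{\cX^m_A}$.

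The first inequality of the proposition, $\|f\|_{\cX^m} \lesssim_m \|f\|_{\cX^m_A}$ for arbitrary $f$, is immediate: the top-order parts of the two norms are built on comparable weights of order $\la y\ra^{4m - 2 - \kp_1}$, while $\vp_g \lesssim \vp_A$ globally by Theorem~\ref{thm:coerA0}. The main obstacle is the reverse initial comparison $\|\AA\|_{\cX^m_A} \lesssim_m \|\AA\|_{\cX^m}$, which is needed to replace $\|\AA\|_{\cX^m_A}$ by $\|\AA\|_{\cX^m}$ on the right-hand side. The difficulty is that $\vp_A \asymp |y|^{-\b_1}$ is singular at the origin while $\vp_g \asymp 1$ there, so the $\cX^m_A$-norm weights behavior near $y = 0$ that $\cX^m$ does not see.

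I would overcome this obstacle by exploiting the structural constraint that $\ee_\th$ is undefined at $y = 0$, which forces $A(0) = 0$ for any continuous $\AA = A(\xi)\ee_\th$. Since $m \geq m_0$ is large, $\vp_{2m}^2 \vp_g \asymp_m 1$ near $0$ and Lemma~\ref{lem:norm_equiv} yields $\|\AA\|_{H^{2m}(B(0,1))} \lesssim_m \|\AA\|_{\cX^m}$, and Sobolev embedding in two dimensions then gives $\|\na \AA\|_{L^\infty(B(0,1))} \lesssim_m \|\AA\|_{\cX^m}$. Combined with $\AA(0) = 0$, a Taylor expansion furnishes the pointwise bound $|\AA(y)| \lesssim_m |y|\,\|\AA\|_{\cX^m}$ on $B(0,1)$, so that
\[
\int_{|y|\leq 1} |\AA|^2 \vp_A \, dy \lesssim_m \|\AA\|_{\cX^m}^2 \int_0^1 \xi^{3-\b_1}\, d\xi \lesssim \|\AA\|_{\cX^m}^2,
\]
which converges since $\b_1 < 4$. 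On $|y| > 1$ one has $\vp_A \asymp \vp_g$ so the remaining contribution is absorbed into $\|\AA\|_{\cX^m}^2$ directly. Chaining these three ingredients — invariance plus coercive decay in $\cX^m_A$, the easy comparison $\|\cdot\|_{\cX^m} \lesssim \|\cdot\|_{\cX^m_A}$, and the Hardy-type reverse comparison on $V$ — closes the proposition.
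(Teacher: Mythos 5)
Your proposal is correct and follows essentially the same route as the paper: invariance of the radial-swirl subspace reducing $\cL$ to $\cL_A$, the coercive estimate of Theorem~\ref{thm:coerA} for decay in $\cX^m_A$, and the two-sided norm comparison using $\AA(0)=0$ together with the $C^1$ embedding and $\vp_A\asymp\vp_g$ for $|y|\geq 1$. The only cosmetic difference is that the paper packages your ``reverse comparison'' step as the separate Lemma~\ref{norm:equiv}, whose proof is exactly your Taylor-expansion-plus-integrability argument near the origin.
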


Before we prove Proposition \ref{prop:semiA}, we have the following equivalence between the norms $\cX_A^m$ and $\cX^m$.

\begin{lemma}\label{norm:equiv}
For $m \geq 3$ and any $f \in \cX^m$ with $f(0)=0$, we have
$
 || f ||_{\cX^m} \les_m  || f ||_{\cX_A^m}  \les_m || f ||_{\cX^m}$.
\end{lemma}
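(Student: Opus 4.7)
The plan is to exploit the fact that the two inner products in \eqref{norm:Xk} and \eqref{norm:Xk_A} share identical top-order weighted $L^2$ terms involving $\D^m f$, so the equivalence reduces to comparing the lower-order weighted $L^2$ norms $\int |f|^2 \vp_g$ and $\int |f|^2 \vp_A$. The easy direction $\| f \|_{\cX^m} \les_m \| f \|_{\cX_A^m}$ follows immediately from the pointwise inequality $\vp_g \les \vp_A$ provided by Theorem \ref{thm:coerA0}, and does not even require $f(0) = 0$ or $m \geq 3$.

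For the reverse inequality, I would use that $\vp_g \asymp \vp_A$ on $\{|y| \geq 1\}$, so the only new content is to bound the contribution from the region $|y| < 1$, where $\vp_A \asymp |y|^{-\b_1}$ with $\b_1 \in (3,4)$. First I would observe that the weights $\vp_{2m}^2 \vp_g$ and $\vp_g$ are bounded above and below by positive $m$-dependent constants on $B(0,2)$, so finiteness of the $\cX^m$-norm yields $\| \D^m f \|_{L^2(B(0,2))} + \| f \|_{L^2(B(0,2))} \les_m \| f \|_{\cX^m}$. Standard interior elliptic regularity for the iterated Laplacian then promotes this to $\| f \|_{H^{2m}(B(0,1))} \les_m \| f \|_{\cX^m}$.

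The key point is the Sobolev embedding in dimension two: $H^{2m}(B(0,1)) \hookrightarrow C^1(B(0,1))$ whenever $2m > 2$, which holds comfortably for $m \geq 3$. Combined with the hypothesis $f(0) = 0$, a first-order Taylor expansion gives the pointwise bound $|f(y)| \leq |y| \, \| f \|_{C^1(B(0,1))} \les_m |y| \, \| f \|_{\cX^m}$ for $|y| \leq 1$. Plugging this into the near-origin integral yields
\[
\int_{|y| < 1} |f|^2 |y|^{-\b_1} \, dy \les_m \| f \|_{\cX^m}^2 \int_0^1 r^{3 - \b_1} \, dr ,
\]
and the latter integral converges precisely because $\b_1 < 4$, i.e.\ $3 - \b_1 > -1$, in two spatial dimensions.

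I do not anticipate a genuine obstacle here: both the interior elliptic regularity estimate for $\D^m$ and the Sobolev embedding $H^{2m} \hookrightarrow C^1$ in two dimensions are classical once $m \geq 3$, and $m$-dependent constants are allowed by the $\les_m$ notation in the statement. The role of each hypothesis is transparent: the vanishing condition $f(0) = 0$ converts the singularity of $\vp_A$ near the origin into a harmless factor of $|y|$, while the upper bound $\b_1 < 4$ (which is what is actually extracted in Theorem \ref{thm:coerA0}) makes the resulting weight $|y|^{2 - \b_1}$ integrable against two-dimensional Lebesgue measure.
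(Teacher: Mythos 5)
Your proposal is correct and follows essentially the same route as the paper: both directions reduce to comparing the weighted $L^2$ parts, the easy direction uses $\vp_g \les \vp_A$, and the hard direction uses the local $C^1$ control together with $f(0)=0$ to gain the factor $|y|$ that makes $|y|^{2-\b_1}$ integrable in $\R^2$ since $\b_1<4$. The only (harmless) difference is that you re-derive the local $C^1$ bound via interior elliptic regularity for $\D^m$ plus the classical Sobolev embedding, whereas the paper simply invokes its weighted embedding Lemma \ref{lem:prod}.
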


\begin{proof}

From the definitions of $\cX_{A}^m, \cX^m$ \eqref{norm:Xk}, \eqref{norm:Xk_A}, we know that 
$\cX_{A}^m$ is stronger than $\cX^m$. For  $ f \in \cX^m, m \geq 3$, using the embedding in Lemma \ref{lem:prod}, we obtain $f \in C^1(B(0, 1))$. 
Since $f(0) = 0$, we get $|f(y)| \les |y|, |y| \leq 1$, which along with $f \in \cX^m$
and $\vp_A(y) \asymp \vp_g(y)$ for $|y| \geq 1$ (see Theorem \ref{thm:coerA0}) yields
\[
 || f \vp_A^{1/2} ||_{L^2} \les_m || f ||_{\cX^m} + || f \vp_g^{1/2} ||_{L^2} 
 \les_m || f ||_{\cX^m}. 
\]
Since $\cX_A^m$ and $\cX^m$ \eqref{norm:Xk}, \eqref{norm:Xk_A} only differ by the weighted $L^2$ part, we prove the equivalence.
\end{proof}

Now, we are ready to prove Proposition \ref{prop:semiA}. 

\begin{proof}[Proof of Proposition \ref{prop:semiA}]
For initial data $(\UU, \S) = (\AA, 0), \AA = A(\xi ) \ee_{\th} \in \cX^m$, from Proposition \ref{prop:semi} and the above discussions of the symmetry, we obtain the solution $\AA(s) = e^{\cL s} (\AA, 0) \in \cX^m$. Since $\AA \in C^1$ by embedding in Lemma \ref{lem:prod} and is axisymmetric, we get $\AA(0) = 0$. Using the dissipative estimates in Theorem \ref{thm:coerA} and the equivalence in Lemma \ref{lem:norm_equiv}, we establish 
\[
 || e^{\cL s }(\AA, 0) ||_{\cX^m} 
 \les_m  || e^{\cL s }(\AA, 0) ||_{\cX_A^m} 
 \les_m e^{-\lam s} || (\AA, 0) ||_{\cX_A^m}  \les_m e^{-\lam s} || (\AA, 0) ||_{\cX^m}  . 
\]
We conclude the proof.
\end{proof}

\subsection{Perturbation supported in the far-field}\label{sec:pertb_far}

We have the following decay estimates for perturbation supported in the far-field, which was first developed in \cite{chen2024Euler} for perturbation with radial symmetry. 


\begin{proposition}\label{prop:far_decay}
Let $R_4$ be defined in Proposition \ref{prop:compact}. Consider the linearized equations \eqref{eq:linlin} from initial data $\VV_0 = (\UU_0, \S_0)$ with $\supp(\UU_0), \supp(\S_0) \subset B( 0, R)^c$ for some $R > 4 R_4> \xi_s $. For any $m \geq m_0$, the solution $\VV(s)$ satisfies 
\[
\supp(\VV(s)) \subset B( 0, 4 R_4)^c, \quad  || e^{\cL s} \VV_0||_{\cX^m} \leq e^{-\lam s} || \VV_0 ||_{\cX^m}.
\]
\end{proposition}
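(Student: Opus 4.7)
The proposition follows from a two-step argument: first propagate the support of $\VV(s) = e^{s\cL}\VV_0$ using a localized $L^2$ energy identity for the symmetric hyperbolic system $\pa_s \VV = \cL \VV$, and then apply Theorem~\ref{thm:coer_est}, whose residual term $\bar C \int_{|y|\le R_4}(\cdot)\,dy$ will be known to vanish on $\VV(s)$, leaving a purely dissipative bound in $\cX^m$. The crucial structural input is the joint outgoing property of the profile for $\xi > \xi_s$ provided by \eqref{eq:rep22} and \eqref{eq:rep2}, together with the quantitative hypothesis $R > 4R_4 > \xi_s$.

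\textbf{Step 1 (support propagation).} Choose a smooth radial cutoff $\chi(\xi)$ with $\chi \equiv 1$ on $[0, 4R_4]$, $\chi \equiv 0$ on $[R, \infty)$, and $\chi' \le 0$, and set $E(s) = \int (|\UU|^2 + \S^2)\chi(|y|)\,dy$. Differentiating and using \eqref{eq:linc}--\eqref{eq:lind}, the zero-order and stretching contributions are bulk terms of size $O(E(s))$ by the profile decay \eqref{eq:dec_U}. Integrating by parts the transport $-(y+\bar\UU)\cdot\nabla$ and the acoustic coupling $-\alpha\bar\S(\nabla\S\cdot\UU + \nabla\cdot\UU\,\S)$ produces, besides further bulk terms of size $O(E(s))$, a boundary flux
\[
\int \chi'(\xi)\Bigl[\tfrac{\xi + \bar U}{2}(|\UU|^2 + \S^2) + \alpha\bar\S\, U^R\S\Bigr] dy,\qquad U^R \teq \UU\cdot\ee_R.
\]
Decomposing $\UU = U^R\ee_R + U^\perp$, the bracketed quantity splits as $\tfrac{\xi+\bar U}{2}|U^\perp|^2 + q(U^R,\S)$, where $q$ is the quadratic form on $\R^2$ whose symmetric matrix has diagonal entries $\tfrac{\xi+\bar U}{2}$ and off-diagonal $\tfrac{\alpha\bar\S}{2}$, hence eigenvalues $\tfrac{\xi + \bar U \pm \alpha\bar\S}{2}$. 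By \eqref{eq:rep22}, $\xi + \bar U \ge 0$, so the $|U^\perp|^2$-term is nonnegative; and on $\supp(\chi')\subset[4R_4,R]\subset(\xi_s,R]$ the eigenvalues are both nonnegative thanks to \eqref{eq:rep2}. Multiplying by $\chi' \le 0$ renders the boundary flux nonpositive. Thus $\dot E \le CE$, and since $E(0) = 0$ by the support hypothesis, Grönwall gives $E \equiv 0$; the Sobolev embedding $\cX^m \hookrightarrow C^0$ (for $m \ge m_0$) then yields $\supp(\VV(s)) \subset B(0, 4R_4)^c$ pointwise.

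\textbf{Step 2 (decay) and main obstacle.} Since $\VV(s)$ vanishes on $B(0,4R_4) \supset B(0,R_4)$, the residual term in \eqref{eq:coer_est} is zero, so $\langle \cL \VV(s), \VV(s)\rangle_{\cX^m} \le -\lam\|\VV(s)\|_{\cX^m}^2$. Applying this to $\tfrac{1}{2}\tfrac{d}{ds}\|\VV(s)\|_{\cX^m}^2 = \langle \cL\VV(s),\VV(s)\rangle_{\cX^m}$ and using Grönwall gives $\|\VV(s)\|_{\cX^m} \le e^{-\lam s}\|\VV_0\|_{\cX^m}$. The main subtlety is Step 1: the cancellation of the boundary flux depends essentially on the two distinct outgoing conditions -- transport $\xi + \bar U > 0$ from \eqref{eq:rep22} and slow acoustic characteristic $\xi + \bar U - \alpha\bar\S > 0$ from \eqref{eq:rep2} -- both holding on $\supp(\chi')$. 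Without the geometric assumption $4R_4 > \xi_s$, the slow acoustic characteristic would point inward near $\{\xi = \xi_s\}$ and energy could leak into $B(0, 4R_4)$, invalidating the support propagation.
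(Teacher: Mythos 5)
Your proof is correct and follows essentially the same route as the paper: a localized $L^2$ energy identity with a decreasing radial cutoff, nonpositivity of the boundary flux via the outgoing conditions \eqref{eq:rep22} and \eqref{eq:rep2} on $\supp(\chi')\subset(\xi_s,R]$ (your eigenvalue diagonalization is equivalent to the paper's Cauchy--Schwarz step), and Gr\"onwall to propagate the vanishing on $B(0,4R_4)$. The only cosmetic difference is in Step 2, where you invoke Theorem~\ref{thm:coer_est} directly with a vanishing remainder term, while the paper notes that $\cK_m \VV(s)=0$ and quotes the contraction bound \eqref{eq:dissp_semi} for $e^{s\cD_m}$ --- both rest on the same coercive estimate.
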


The proof is the same as that in \cite{chen2024Euler} for radially symmetric perturbation. 
For completeness, we attach the proof in Appendix \ref{sec:add_pf}.

\subsection{Decay estimates of $e^{\cL s}$ }\label{sec:semi_grow}

Based on Proposition \ref{prop:semi}, using operator theories from Corollary 2.11, Chapter IV \cite{engel2000one} for the growth bound, and Theorem 2.1, Chapter XV, Part IV (Page 326) \cite{GoGoKa2013} for the spectral projection, we can perform the following decomposition of $\cX^m$ and the spectral $\s(\cL)$. Note that the functional framework does not require any symmetry assumption of $(\UU, \S)$. The arguments presented below is the same as those in \cite[Section 3.5]{chen2024Euler}. We refer more discussions to \cite{chen2024Euler}. Below, we summarize the results.

Recall the parameter $\lam$ from Theorem \ref{thm:coer_est}, Proposition \ref{prop:compact},  Theorem \ref{thm:coerA}, and \eqref{eq:spec}. Due to \eqref{eq:spec},  the set 
\beq\label{eq:sig_eta}
\s_{\eta} \teq \s( \cL) \cap  \{ z : \Re(z) > - \eta \}, \mathrm{\quad where \ } 
 \eta = \f{4}{5} \lam < \lam ,
\eeq
only consists of finite many eigenvalues of $\cL$ with finite multiplicity. We can decompose $X = \cX^m$ into the stable part $\cX_s^m$ and unstable part $\cX_u^m$
\beq\label{eq:dec_X}
  \cX^m = \cX^m_{u} \oplus \cX^m_s, \quad \s( \cL|_{\cX^m_s}) = \s(\cL) \backslash \s_{\eta}
\subset \{ z: \Re(z) \leq -\eta \}, 
  \quad \s( \cL|_{X_u}) = \s_{\eta}.
\eeq
The space $\cX^m_u$ has finite dimension and can be decomposed as follows 
\beq\label{eq:dec_Xu}
\cX^m_u = \bigoplus_{ z \in \s_{\eta}} \ker( (z- \cL)^{\mu_z}), \quad \mu_z < \infty, \quad |\s_{\eta}| < +\infty.
\eeq

We have the following estimates of the semigroup in these two spaces. For $f$ in the stable part, we have
\bseq\label{eq:decay_stab} 
\beq
 || e^{\cL s } f ||_{\cX^m}   \leq C_m e^{ - \eta_s s } || f ||_{\cX^m}, \quad \forall f \in \cX^m_s, 
 \eeq
 where $\eta_s$ is defined as 
 \beq
 \quad  \eta_s = \f{3}{5} \lam <  \f{4}{5} \lam =  \eta .
 \eeq
\eseq
For $f$ in the unstable parts, we have
\beq\label{eq:decay_unstab}
  || e^{-\cL s} f ||_{\cX^m}  \leq  C_m e^{\eta s} || f ||_{\cX^m}.
   \quad \forall f \in \cX^m_u , \quad s > 0 .
\eeq

\subsubsection{Smoothness of unstable directions}\label{sec:smooth_unstab}

The property that the unstable part $\cX_u^m$ \eqref{eq:dec_Xu} is spanned by smooth functions follow from the following Lemma proved in \cite[Lemma 3.9]{chen2024Euler}.

\begin{lemma}\label{lem:smooth}
Let $\{ X^i\}_{i \geq 0}$ be a sequence of Banach spaces with $X^{i+1} \subset X^{i}$ for all $i \geq 0$.
Assume that for any $i \geq i_0$ we can decompose the linear operator 
$\cA \colon D(\cA) \subset X^i \to X^i$ as 
 $\cA = \cD_i + \cK_i$, where the linear operators $\cD_i$ and $\cK_i$ satisfy
\beq\label{eq:smooth_ass}
\cD_i : D(\cA) \subset X^i \to X^i , \quad  \cK_i :  X^{i-1}  \to  X^i,
\quad \{ z \in {\mathbb C}\colon \Re(z) > - \lam \} \subset \rho_{\mw{res}}(\cD_i).
\eeq
Here, $\rho_{\mw{res}}(\cdot)$ denotes the resolvent set of an operator and $\lambda>0$ is independent of $i\geq i_0$. Fix $n\geq 0$ and $z \in {\mathbb C}$ with $\Re(z) > -\lam$. Assume that the functions $ f_0, \ldots , f_n  \in  X^{i_0}$ satisfy 
\[
(z - \cA) f_0 = 0,
\quad (z - \cA) f_{i} = f_{i-1}, \quad \mbox{ for } \quad 1 \leq i \leq n.
 \]
Then, we have $ f_0, \ldots , f_n  \in X^{\infty} \teq \cap_{i \geq 0} X^i$. 
\end{lemma}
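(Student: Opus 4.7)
The plan is to establish the regularity $f_k \in X^\infty$ by a two-level induction: an outer induction on $k \in \{0,1,\ldots,n\}$, and for each $k$ an inner induction on $j \geq 0$ that promotes $f_k$ from $X^{i_0+j}$ to $X^{i_0+j+1}$. The driving mechanism is a bootstrap based on the smoothing factor $\cK_i \colon X^{i-1} \to X^i$. Rewriting the equation $(z - \cA) f_k = f_{k-1}$ (with the convention $f_{-1} := 0$) as
\begin{equation*}
(z - \cD_{i+1}) f_k = \cK_{i+1} f_k + f_{k-1},
\end{equation*}
the right-hand side gains one unit of regularity over $f_k$ itself, since $\cK_{i+1} f_k \in X^{i+1}$ whenever $f_k \in X^i$, and $f_{k-1}$ will already have been placed in $X^\infty$ by the outer induction.

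For the base case $k=0$, I will start from the inner inductive hypothesis $f_0 \in X^{i_0+j}$ and set $i := i_0+j$. The smoothing hypothesis gives $\cK_{i+1} f_0 \in X^{i+1}$, and the resolvent hypothesis $z \in \rho_{\mathrm{res}}(\cD_{i+1})$ makes $(z - \cD_{i+1})^{-1}$ bounded on $X^{i+1}$, so I can define
\begin{equation*}
h := (z - \cD_{i+1})^{-1} \cK_{i+1} f_0 \in X^{i+1} \subset X^i,
\end{equation*}
reducing the inner step to the identification $h = f_0$. Both $h$ and $f_0$ lie in $X^i$, and using the consistency of the decomposition $\cA = \cD_i + \cK_i$ across $i$ (so that $\cD_{i+1}$ and $\cK_{i+1}$ agree with $\cD_i$ and $\cK_i$ on their common domains inside $X^i$), the element $h$ satisfies $(z - \cD_i) h = \cK_i f_0$ in $X^i$, which is also the equation solved by $f_0$ coming from $(z-\cA)f_0 = 0$. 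The invertibility of $z - \cD_i$ on $X^i$ then forces $h = f_0$, so $f_0 \in X^{i+1}$; iterating in $j$ yields $f_0 \in X^\infty$.

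The outer induction step is identical in spirit. Assuming $f_0,\ldots,f_{k-1} \in X^\infty$, the bootstrap for $f_k$ exploits the fact that $\cK_{i+1} f_k + f_{k-1}$ already lies in $X^{i+1}$ as soon as $f_k \in X^i$, since $f_{k-1} \in X^\infty \subset X^{i+1}$. Inverting $z - \cD_{i+1}$ and applying the same uniqueness argument in $X^i$ upgrades $f_k$ from $X^i$ to $X^{i+1}$, and iterating in $j$ gives $f_k \in X^\infty$.

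The main technical obstacle I anticipate is the operator-consistency step used to identify $h$ with $f_0$ (and later $f_k$): I must verify that the family $\{(\cD_i,\cK_i)\}_{i \geq i_0}$ really arises from a single ambient pair of operators, so that the equation $(z - \cD_{i+1}) h = \cK_{i+1} f_0$ can legitimately be viewed inside the larger space $X^i$, and that the resolvents $(z-\cD_{i+1})^{-1}$ and $(z-\cD_i)^{-1}$ coincide on elements of $X^{i+1}$ regarded as elements of $X^i$. In the intended applications to Proposition \ref{prop:semi} and the compact perturbation $\cK_m$ from Proposition \ref{prop:compact}, this consistency is built into the construction of the nested spaces $\cX^m$ and of the operators, so once it is made precise the remainder of the abstract argument should go through without further issue.
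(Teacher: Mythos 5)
The paper does not actually prove this lemma (it cites \cite[Lemma 3.9]{chen2024Euler}), so I assess your argument on its own terms. Your overall scheme --- outer induction on the chain index $k$, inner bootstrap promoting $f_k$ from $X^i$ to $X^{i+1}$ via $h \teq (z-\cD_{i+1})^{-1}(\cK_{i+1}f_k + f_{k-1})$, driven by the one-level smoothing of $\cK_{i+1}$ and the resolvent bound for $\cD_{i+1}$ --- is the standard and correct skeleton. The gap sits exactly at the step you flagged: identifying $h$ with $f_k$. You close it by positing that $\cD_{i+1},\cK_{i+1}$ agree with $\cD_i,\cK_i$ on $X^{i+1}$, and you assert this consistency is ``built into the construction'' in the application. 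It is not: in Section \ref{sec:smooth_unstab} the lemma is invoked with $\cK_i=\cK_m$ (for $m=i$) from Proposition \ref{prop:compact}, and each $\cK_m$ is obtained by Riesz representation of a fixed bilinear form $\int \chi f\cdot g$ with respect to the $m$-dependent inner product $\la\cdot,\cdot\ra_{\cX^m}$; hence $\cK_i f\neq\cK_{i+1}f$ in general, and $\cD_i=\cL-\cK_i$, $\cD_{i+1}=\cL-\cK_{i+1}$ are genuinely different operators. Using only $\cD_i+\cK_i=\cA=\cD_{i+1}+\cK_{i+1}$, the relation $(z-\cD_{i+1})h=\cK_{i+1}f_0$ yields $(z-\cD_i)h=\cK_i h+\cK_{i+1}(f_0-h)$, which equals your claimed $\cK_i f_0$ only when $(\cK_i-\cK_{i+1})(f_0-h)=0$, i.e.\ essentially only when $h=f_0$ --- the identification is circular.

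What survives without any consistency assumption is $(z-\cD_{i+1})(f_k-h)=0$, where $f_k-h$ is a priori only in $X^i$. The hypothesis $\{\Re(z)>-\lam\}\subset\rho_{\mw{res}}(\cD_{i+1})$ concerns $\cD_{i+1}$ as an operator on $X^{i+1}$ and therefore gives injectivity of $z-\cD_{i+1}$ only on its domain inside $X^{i+1}$, not on the coarser space $X^i$ where $f_k-h$ lives; reshuffling indices (e.g.\ working with $\cD_i$ and promoting $X^{i-1}\to X^i$) merely relocates the same mismatch by one level. This one-level-down injectivity is the actual crux of the lemma and has to be supplied by a separate argument; your write-up replaces it with a consistency hypothesis that is neither part of the statement nor true in the intended application, so the identification $h=f_k$ --- and with it the entire bootstrap --- is not justified as written.
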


To describe the regularity of the elements of $\ker( (z- \cL)^{\mu_z})$ \eqref{eq:dec_Xu}, we apply Lemma \ref{lem:smooth} with $(\cA,  \{ X^i \}_{i\geq 0} )$ = 
$(\cL, \cX^i)$ and the decomposition $\cL = \cD_m + \cK_m$ for any $m \geq m_0 \geq 6$, where $\cK_m$ is constructed in Proposition \ref{prop:compact}. 
Using Lemma \ref{lem:Xm_chain}, Proposition \ref{prop:compact}, and \eqref{eq:spec}, 
we verify the assumption on $X^i$ and \eqref{eq:smooth_ass}  in Lemma \ref{lem:smooth}. We fix $z \in \s(\cL) \cap \{ z : \Re(z) > -\eta \}$, where we recall $-\eta > -\lam$, and fix $g \in \ker( (z- \cL)^{\mu_z} ) \subset \cX_u^{m}$ for some $\mu_z < \infty$ constructed in \eqref{eq:dec_X}, \eqref{eq:dec_Xu}. We apply Lemma \ref{lem:smooth} with  $f_i = (z- \cL)^{ \mu_z- i -1} g, 0 \leq i \leq \mu_z-1$ to obtain $f_i \in \cX^{\infty}= \cap_{m \geq m_0} \cX^m$. Since $\cX^{\infty}  = \cap_{m \geq m_0} \cX^m \subset C^{\infty}$ from Lemma \ref{lem:prod}, we conclude from \eqref{eq:dec_Xu} that $\cX_u^{m}$ are spanned by smooth functions.

\section{Nonlinear stability and vorticity blowup}\label{sec:non}

The goal of this section is to prove Theorem~\ref{thm:blowup}, by constructing global solutions $(\UU,\S)$ to \eqref{eq:euler_ss} and estimating the vorticity along the trajectory. To construct the global solutions, we follow the finite rank perturbation of the linearized equations developed in \cite{ChenHou2023a} and the framework based on a fixed point argument in \cite{chen2024Euler}. Throughout this section we fix $m\geq m_0$, where $m_0$ is sufficiently large, as in Theorem~\ref{thm:coer_est}.

\subsection{Decomposition of the solution}\label{sec:decom_init}

 We will use $\VV = (\UU,  \S )$ to denote the nonlinear solution to~\eqref{eq:euler_ss}. As in~\eqref{eq:lin}, \eqref{eq:non}, we denote the perturbation to the stationary profile as 
\[
\td \VV \teq (\td \UU,\td \S) \teq (\UU - \bar \UU, \S - \bar \S ),
\]
and recall that this perturbation solves \eqref{eq:lin}--\eqref{eq:non}. We shall further decompose the perturbation\footnote{We emphasize that the $\td \cdot_1$ or $\td \cdot_2$ sub-index denote different parts of the perturbation, they  {\em do not represent} Cartesian coordinates.} as $\td \VV = \td \VV_1 + \td \VV_2$, with $\td \UU = \td \UU_1 + \td \UU_2$, $\td \S = \td \S_1 + \td \S_2$, so that
\beq\label{eq:init}
\bal
\VV= (\UU,  \S ) 
= \td \VV_1 + \td \VV_2 + \bar \VV, \quad 
\td \VV_i = (\td \UU_i, \td \S_i ), \quad 
\bar \VV =  (\bar \UU, \bar \S ),
 \eal
 \eeq
 and $\td \VV_i$ solves the following equations 
\begin{subequations}
\label{eq:non_V}
\begin{align}
 & \pa_s \td \VV_1   = \cL (\td \VV_1) 
 - \cK_m(\td \VV_1) + \cN(\td \VV) ,
 \label{eq:non_Va} \\
 & \pa_s  \td \VV_2  = \cL ( \td \VV_2) 
 + \cK_m( \td \VV_1), 
 \label{eq:non_Vb}
\end{align}
where $\cK_m$ is constructed in Proposition \ref{prop:compact}, and the linear operators 
$\cL = (\cL_U, \cL_{\S})$ are defined in \eqref{eq:lin}, and nonlinear operators 
$\cN = (\cN_U, \cN_{\S})$ in \eqref{eq:non}. The part $\td \VV_2$ is used to capture the unstable parts. 
\end{subequations}

Let $\Pi_s, \Pi_u$ be the projections of $\cX^m$ onto $\cX^m_s, \cX^m_u$, respectively.   
To handle the unstable part and obtain initial data with nontrivial vorticity and compact support, we construct $\td \VV_2$ as follows. 
\bseq\label{eq:V2_form2}
\begin{align}
 \td \VV_2(s)  & \teq e^{\cL (s-\sin)}(  \AA, 0) + 
\td \VV_{2, s}(s) - \td  \VV_{2, u}(s) + e^{\cL (s-\sin)}( \td \VV_{2, u}(\sin ) (1 - \chi(y / (8 R_4) )) ), \label{eq:V2_form2a}  \\
  \td \VV_{2, s}(s) & = \int_{\sin}^s e^{\cL(s -\tau)} \Pi_s \cK_m( \td \UU_1,\td  \S_1)( \tau) d \tau,   \\
   \td \VV_{2, u}(s) & = \int_s^{\infty} e^{\cL(s - \tau)} \Pi_s \cK_m( \td \UU_1, \td \S_1)(\tau) d \tau ,\label{eq:V2_form2c} 
\end{align}
\eseq
where $\AA = A(\xi) e_{\th} $ is the initial swirl velocity to be chosen,  
$(\AA, 0)$ denotes $(\UU, \S)$ with $\UU = \AA$ and $\S = 0$, and $\chi$ is some smooth cutoff function with $\chi(y) = 1, |y| \leq 1, \chi(y) = 0, |y| \geq 2$. 

For our later fixed point argument, we codify the term in $\td \VV_2$ \eqref{eq:V2_form2a} depending on $(\td \UU_1, \td \S_1)$ as a map $\cT_2$
\beq\label{eq:T2}
 \cT_2( \td U_1, \td \S_1) \teq 
 \td \VV_{2, s}(s) - \td  \VV_{2, u}(s) + e^{\cL (s-\sin) }( \td \VV_{2, u}( \sin) (1 - \chi(y / (8 R_4) )) ).
 \eeq

There are a few parts in $\td \VV_2$ \eqref{eq:V2_form2}. The main idea is that  we apply the 
semigroup $e^{\cL s}$ forward-in-time on the stable piece of $\td \VV_{2}$, and backward-in-time on the unstable part, which is a usual technique for constructing unstable manifolds. We use $e^{\cL (s -\sin)}(  \AA, 0)$ to construct nontrivial initial vorticity, and $e^{\cL (s-\sin) }( \td \VV_{2, u}(\sin) (1 - \chi(y / (8 R_4) )) )$ to obtain initial data with compact support. Indeed, by definition of $\td \VV_2$ and $\chi$, we have
\beq\label{eq:V2_init}
    \td \VV_2( \sin )=  (\AA, 0) - \td \VV_{2, u}( \sin ) \chi(y / (8 R_4)) ,
    \quad  \supp( \td \VV_{2, u}( \sin ) \chi(y / (8 R_4))) \subset( B(0, 16 R_4)).
\eeq
We will choose  $\AA$ with compact support to obtain that $\td \VV_2( \sin) $ has a compact support.

At the linear level of the decomposition \eqref{eq:non_V}, $\td \VV_2$ is decoupled from  \eqref{eq:non_Va} for $\td \VV_1$. Such a decomposition and representation \eqref{eq:V2_form2} allows us to first perform energy estimates to obtain stability estimates of $\td \VV_1$, and then obtain stability estimates of $\td \VV_2$ using the decay estimates of $e^{\cL s}$ in Section \ref{sec:decay}. We construct global solution to the nonlinear equations \eqref{eq:non_V} by treating the nonlinear terms perturbatively and using a fixed point argument. This method has been first introduced in \cite{ChenHou2023a} and developed in \cite{chen2024Euler}.

\subsection{Functional setting}

We introduce some space $\cW^{m+1}$ for closing the nonlinear estimates. Our goal is to perform both weighted $H^{2m}, H^{2m+2}$ estimates on \eqref{eq:non_V} using the 
\textit{same compact operator} $\cK_m$ and the
\textit{same projections} $\Pi_s, \Pi_u$ \eqref{eq:V2_form2} appearing in \eqref{eq:V2_form2}. 
In particular, we \textit{do not} change $\cK_m$ to $\cK_{m+1}$ for weighted $H^{2m+2}$ estimates. 

Fix an arbitrary $ m \geq m_0$. For some $\mu_{m+1} > 0$ to be chosen,  using Theorem \ref{thm:coer_est}, items (c) and (d) in Proposition \ref{prop:compact}, and $||f ||_{\cX^0} \les || f ||_{\cX^m}$ from Lemma \ref{lem:Xm_chain}, we obtain 
\[
\bal
   \la  (\cL & - \cK_m) f , f \ra_{\cX^m}   + \mu_{m+1} \la  (\cL - \cK_m) f ,  f \ra_{\cX^{m+1}}  \\
   & \leq - \lam || f ||_{\cX^m}^2 + \mu_{m+1} ( \la \cL f , f \ra_{\cX^{m+1}}   
+ || \cK_m f ||_{\cX^{m+1}} || f ||_{\cX^{m+1}} ) \\
& \leq -\lam || f ||_{\cX^m}^2 +  \mu_{m+1} ( - \lam || f ||_{\cX^{m+1} }^2
+ \bar C || f ||_{\cX^0}^2 
+  C_m  || f ||_{\cX^{0} }|| f ||_{\cX^{m+1}} ) \\
& \leq (-\lam + \mu_{m+1} \td C_m  ) || f ||_{\cX^m}^2 - \f{9}{10} \mu_{m+1} \lam || f ||_{\cX^{m+1} }^2
\eal
\]
for some $\td C_m > 0$, where we have used $C_m ||f ||_{\cX^0} || f||_{\cX^{m+1}}
\leq \f{1}{10} \lam  || f||_{\cX^{m+1}}^2 +  C_m^{\pr} || f||_{\cX^m}^2$ in the last inequality. Choosing $ \mu_{m+1} $ small enough in terms of $m$, we can obtain the coercive estimates 
\[
 \la  (\cL  - \cK_m) f , f \ra_{\cX^m}   + \mu_{m+1} \la  (\cL - \cK_m) f ,  f \ra_{\cX^{m+1}}
 \leq - \f{9}{10} \lam (  || f ||_{\cX^m}^2 + \mu_{m+1} || f ||_{\cX^{m+1}}^2 ).
\]
 In light of the above estimates, we can define the Hilbert space $\cW^{m+1}$ as 
\beq\label{norm:Wk}
\la f, g \ra_{\cW^{m+1}} \teq 
 \la f , g \ra_{\cX^m}
+  \mu_{m+1}  \la f, g \ra_{\cX^{m+1}} 
 , \quad || f ||_{\cW^{m+1}}^2 = \la f , f \ra_{\cW^{m+1}},
\eeq
and can rewrite the coercive estimate in this space
\beq\label{eq:coer_Wk}
  \la (\cL - \cK_m ) f , f \ra_{\cW^{m+1}} \leq - \lam_1 || f||^2_{ \cW^{ m+1 }}, \quad 
  \lam_1 = \f{9}{10 } \lam.
\eeq

It is clear that the norms $\cW^{m+1}$ and $\cX^{m+1}$ \eqref{norm:Xk} are equivalent.

\subsection{Nonlinear stability and proof of Theorem \ref{thm:blowup}}\label{sec:non_stab}

We recall from \eqref{eq:sig_eta}, \eqref{eq:decay_stab}, \eqref{eq:coer_Wk},  \eqref{eq:decay_stab}  the decay rates $\eta_s, \eta, \lam_1$
\beq\label{eq:decay_para}
(\eta_s, \eta, \lam_1) = ( \f{3}{5}, \f{4}{5}, \f{9}{10}) \lam,\quad  \eta_s < \eta < \lam_1 < \lam.
\eeq

Our goal is to prove the following theorem for nonlinear stability.

\begin{theorem}[Nonlinear stability]\label{thm:non}
Fix $m \geq m_0$. Let $R_4$ be chosen in Theorem \ref{thm:coer_est} and 
$\d_Y = \d^{2/3}, \d_A = \d^{5/9}$. There exists a sufficiently small $\d_0$, a large $R_{c, 0} > (16 R_{5})^2 + 1$ and a constant $\bar C_m > 0$ such that for any $\d < \d_0$ and $R_c > \max( R_{c, 0}, \d^{-4})$, the following holds true. For any initial data $\td \VV_{1,in} =  (\td \UU_1(\sin), \td \S_1(\sin) )$ and $\AA= A(\xi) \ee_{\th}$ smooth enough 
\footnote{We require the $\cX^{m+2}$-regularity of $\td \VV_{1,in}, \AA$, a space stronger than $\cW^{m+1}$ and $\cX^{m+1}$, in order to obtain the local-in-time existence of a $\cX^{m+2}$-solution; in turn, this allows us to justify a few estimates, e.g.~\eqref{eq:coer_Wk} for $(\td \UU_1, \td \S_1)$ which requires $\cL(\td \UU_1, \td \S_1) \in \cX^{m+1}$. Note that this regularity requirement is only \textit{qualitative}, and we only use Theorem~\ref{thm:non} with an $C^{\infty}$ initial perturbation (see~\eqref{eq:thm_init3}) in order to prove Theorem~\ref{thm:blowup}. The only {\em quantitative} assumption on the initial data is given by~\eqref{eq:thm_ass}. Here, our reasoning and argument to justify the estimates, e.g., \eqref{eq:coer_Wk}, are the same as those in \cite[Sections 4.3, 4.4]{chen2024Euler}, and we refer to it for more details.} 
to ensure $\td \VV_{1,in}, \AA \in  \cX^{m+2}$ and small enough to ensure
\beq\label{eq:thm_ass}
\bga
 ||(\td \UU_1(\sin), \td \S_1(\sin))||_{\cW^{m+1}}  < \d, \quad ||\AA||_{\cX^{m+2}} < \d_A, \\
 \supp( \td \UU_1(\sin) + \AA + \bar \UU )  \cup  \supp(\td \S_1(\sin) + \bar \S- \S_{\infty}) \subset B(0, R_c^{1/2}) ,
\ega
\eeq
for some constant $\S_{\infty} > 0$, there exists a global solution 
$\td \VV_1$ to \eqref{eq:non_V} from initial data $\td \VV_{1, in}$ and a global solution $\td \VV_2$ to \eqref{eq:V2_form2} given by \eqref{eq:V2_form2}, satisfying the bounds
\bseq\label{eq:non_decay}
\begin{align}
   ||( \td \UU_1(s), \td \S_1(s))||_{\cW^{m+1}}  & < 2 \d e^{- \lam_1 (s-\sin) } , \\
 || ( \td \UU_2(s), \td \S_2(s))  ||_{\cX^{m+2}} & \les_m \d_A e^{-\eta_s (s-\sin)}, \\
   || \td \VV_{2, u}(s) ||_{\cX^{m+2}} & \les_m \d e^{- \eta_s (s-\sin) }, \label{eq:non_decayc} \\ 
  e^{- s} \supp( \UU(s) ) \cup \supp( \S(s) - e^{ - (r-1) (s-\sin)} \S_{\infty} ) &  <
\bar C_m R_c^{1/2} <  R_c / 2 , \label{eq:non_decay_supp}
\end{align}
\eseq
for all $s \geq s_{in}$. We emphasize that we cannot prescribe the initial data $\td \VV_{2, in}
= (\td \UU_2(\sin), \td \S_2(\sin) )$; rather it is constructed by \eqref{eq:V2_form2} (together with $\td \VV_1$) to lie in a finite co-dimension subspace of $\cW^{m+2}$. 
\end{theorem}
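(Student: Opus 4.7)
The plan is to establish Theorem \ref{thm:non} by a fixed-point argument in a Banach space that simultaneously tracks the exponentially decaying perturbation $\td\VV_1$ and the prescribed representation of $\td\VV_2$, following the framework of \cite{chen2024Euler} adapted to the present non-radial setting. Introduce the spaces
\[
Y_1 \teq \{ f \in C([\sin,\infty); \cW^{m+1}) : \|f\|_{Y_1} \teq \sup_{s\geq \sin} e^{\lam_1(s-\sin)}\|f(s)\|_{\cW^{m+1}} < \infty\},
\]
\[
Y_2 \teq \{ g \in C([\sin,\infty); \cX^{m+2}) : \|g\|_{Y_2} \teq \sup_{s\geq \sin} e^{\eta_s(s-\sin)}\|g(s)\|_{\cX^{m+2}} < \infty\},
\]
and consider the closed ball $\mathcal{B} \subset Y_1\times Y_2$ of radii $2\d$ and $\bar C_m\d_A$. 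Define $\Phi=(\Phi_1,\Phi_2)$ so that $\Phi_1(\td\VV_1,\td\VV_2)$ is the mild solution of \eqref{eq:non_Va} with initial data $\td\VV_{1,in}$ and nonlinear source $\cN(\td\VV_1+\td\VV_2)$, while $\Phi_2(\td\VV_1)$ is given by the explicit formula \eqref{eq:V2_form2} applied to $\td\VV_1$. A global $\cX^{m+2}$-valued solution of \eqref{eq:non_Va} exists locally in time by standard symmetric-hyperbolic theory (as in \cite[Sec.~4.3]{chen2024Euler}), so the map $\Phi$ is well defined on $\mathcal{B}$ provided the a priori bounds below hold.

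For the $\td\VV_1$ estimate, the coercive bound \eqref{eq:coer_Wk} for $\cL-\cK_m$ yields
\[
\tfrac{1}{2}\tfrac{d}{ds}\|\td\VV_1\|_{\cW^{m+1}}^2 \leq -\lam_1\|\td\VV_1\|_{\cW^{m+1}}^2 + |\la \cN(\td\VV),\td\VV_1\ra_{\cW^{m+1}}|.
\]
The purely quadratic pieces of $\cN$ (see \eqref{eq:non}) are bounded in $\cW^{m+1}$ by $C\|\td\VV\|_{\cW^{m+1}}^2 \les \d^2 e^{-2\lam_1(s-\sin)}$ via the product/embedding estimates encoded in Lemma \ref{lem:prod} and the closedness of $\cX^{m+1}$ under multiplication at the chosen regularity. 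The error $\cE_d$ in \eqref{eq:euler_ssb} is estimated using the support bound \eqref{eq:non_decay_supp}: on $\supp(\UU)$ one has $R_c+e^{-s}y_1 \geq R_c/2$, hence $\|\cE_d\|_{\cW^{m+1}} \les R_c^{-1}\|\td\VV\|_{\cW^{m+1}} \les \d^5 e^{-\lam_1(s-\sin)}$ since $R_c>\d^{-4}$. A Grönwall argument then closes the bound $\|\td\VV_1(s)\|_{\cW^{m+1}} \leq 2\d e^{-\lam_1(s-\sin)}$. For the four pieces of $\Phi_2$ in \eqref{eq:V2_form2a}, I would bound them in $\cX^{m+2}$ separately: the first term via Proposition \ref{prop:semiA} (giving $\les_m e^{-\lam(s-\sin)}\|\AA\|_{\cX^{m+2}}$); $\td\VV_{2,s}$ via the stable decay \eqref{eq:decay_stab} combined with the smoothing property (c) of $\cK_m$ in Proposition \ref{prop:compact}, producing $\les\d\int_\sin^s e^{-\eta_s(s-\tau)}e^{-\lam_1(\tau-\sin)}d\tau \les \d e^{-\eta_s(s-\sin)}$ since $\lam_1>\eta_s$; the finite-dimensional backward unstable part $\td\VV_{2,u}$ via \eqref{eq:decay_unstab}, giving $\les_m \d e^{-\lam_1(s-\sin)}$ which implies \eqref{eq:non_decayc}; and the final piece, whose initial data is supported outside $B(0,8R_4)$, via Proposition \ref{prop:far_decay}, giving decay $e^{-\lam(s-\sin)}$. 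Summing yields $\|\Phi_2\|_{Y_2} \les_m \d_A + \d \leq \bar C_m \d_A$ provided $\d\ll \d_A=\d^{5/9}$. Contraction on $\mathcal{B}$ follows by running the same estimates on differences, with smallness coming from the sizes $\d, \d_A, R_c^{-1}$.

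The main obstacle is the support estimate \eqref{eq:non_decay_supp}, which sustains the $R_c^{-1}$-smallness used throughout the fixed-point argument. Since the profile $\bar\UU$ is not compactly supported, the full velocity $\UU=\bar\UU+\td\UU$ does not transport along a clean characteristic flow, so one cannot localize the solution via an ODE for trajectories. Instead I would follow the approach previewed in Section \ref{sec:supp}: introduce a radially symmetric cutoff $\chi_R(|y|)$ with $R(s) \asymp R_c^{1/2}$ chosen to move outward, and bound a time-dependent energy $\int (|\td\UU|^2+|\td\S|^2) (1-\chi_R)\vp_g\, dy$. Using the outgoing property $\xi+\bar U>0$ of \eqref{eq:rep2}, the coercive structure exploited in Theorem \ref{thm:coer_est} together with the already-established $\cW^{m+1}$ decay of $\td\VV_1$, one can force this exterior energy to vanish, proving that $\td\VV$ remains supported in $\{|y|\les R_c^{1/2}\}$ in self-similar variables, i.e.\ in $\{|\xc|\les e^{-s/r} R_c^{1/2}\}$ in physical variables. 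Translating back via \eqref{eq:axi_recenter} and using $e^{-s/r}\leq 1$ on $[\sin,\infty)$ then yields \eqref{eq:non_decay_supp} with the denominator $R_c+\xc_1 \geq R_c/2$, closing the loop with the nonlinear estimate on $\cE_d$ above.
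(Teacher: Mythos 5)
Your overall architecture (an energy estimate for $\td\VV_1$ combined with Duhamel/semigroup bounds for the four pieces of $\td\VV_2$, plus a separate support lemma to tame $\cE_d$) is the same as the paper's, but two steps as written would fail. The first is the contraction. You claim it "follows by running the same estimates on differences" in the same ball $\mathcal{B}\subset Y_1\times Y_2$, with $Y_1$ carrying the top regularity $\cW^{m+1}$. But the difference of transport terms, e.g. $\td\UU_{1,a}\cdot\na\td\UU_{1,a}-\td\UU_{1,b}\cdot\na\td\UU_{1,b}$, contains $(\td\UU_{1,a}-\td\UU_{1,b})\cdot\na\td\UU_{1,a}$, whose top-order contribution $\D^{m+1}(\cdots)\ni(\td\UU_{1,a}-\td\UU_{1,b})\cdot\na\D^{m+1}\td\UU_{1,a}$ requires $\td\UU_{1,a}\in\cX^{m+2}$, which is one derivative more than the ball provides. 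This is the standard non-Lipschitz-in-the-top-norm phenomenon for quasilinear hyperbolic systems, and it is precisely why the paper sets up the fixed point for $\td\VV_1$ alone (with $\td\VV_2$ slaved to the input via $\cT_2$), proves boundedness in the $\cW^{m+1}$-based norm $Y_1$, but runs the contraction in the strictly weaker $\cX^{m}$-based norm $Y_2$ (see the paper's \eqref{norm:fix} and the remark after Proposition \ref{prop:contra}); the smoothing $\cK_m:\cX^0\to\cX^{m+3}$ then lets one recover the higher regularity of $\td\VV_2$ from the weak norm of the input. Your proposal, which measures both boundedness and contraction at the same regularity, has no room for this loss.

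The second gap is the support estimate. The exterior energy you propose, $\int(|\td\UU|^2+|\td\S|^2)(1-\chi_R)\vp_g$, is an energy of the \emph{perturbation}, which is not compactly supported: outside $\supp(\UU)$ one has $\td\UU=-\bar\UU\neq 0$, and $\bar\UU,\bar\S$ decay only algebraically, so this quantity cannot vanish and the argument does not localize anything. The correct quantity is the full solution measured against the far-field state, $\int\big(|\UU|^2+|\S-\S_\infty e^{-(r-1)(s-\sin)}|^2\big)\chi_{a(s)}$, as in Lemma \ref{lem:supp}. Moreover a cutoff radius $R(s)\asymp R_c^{1/2}$ fixed in self-similar variables cannot work: the drift $y+\UU$ is outgoing with radial speed $\gtrsim\kp|y|$ by \eqref{eq:rep22}, so the self-similar support necessarily grows like $e^{s}$; the provable (and sufficient) statement is $e^{-s}S(s)\les R_c^{1/2}$, obtained by letting $a(s)$ solve $\pa_s a/a\geq\xi^{-1}(\xi+U^R+\al|\S|)$, i.e. $a(s)\sim e^{s}R_c^{1/2}$. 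Your claim that the self-similar support stays in $\{|y|\les R_c^{1/2}\}$ (hence physical support shrinking like $e^{-s}R_c^{1/2}$) is stronger than what is true and not achievable by the outgoing-cutoff argument. Finally, note that your order of reasoning (use \eqref{eq:non_decay_supp} to bound $\cE_d$, use that to get decay of $\td\VV_1$, use that decay to prove the support bound) must be organized as a genuine double bootstrap, as the paper does via the assumption \eqref{eq:lem_boot_supp} together with \eqref{eq:non_boot}, rather than invoked sequentially.
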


The support condition in \eqref{eq:thm_ass} is equivalent to that for $(\UU, \S - \S_{\infty})$
due to \eqref{eq:V2_init} and $R_c^{1/2} > 16 R_4$.
Due to \eqref{eq:non_decay_supp}, \eqref{eq:axi_recenter}, \eqref{eq:var_ss}, 
in the physical space, 
$\supp(u^r, u^z)$ remains uniformly away from the axis $r= 0$.

\begin{remark}\label{rem:data}
The initial data $(\uu_0,\s_0)$ in Theorem \ref{thm:blowup} is obtained from Theorem~\ref{thm:non} and the decomposition~\eqref{eq:init} at time $s=\sin$. In light of Theorem~\ref{thm:non}, \eqref{eq:init}, and 
\eqref{eq:V2_init},  we identify the space $Z_2$ mentioned in Remark~\ref{rem:intro_init_data} with an open ball in the weighted Sobolev space $\cX^{m+2}$ defined in~\eqref{norm:Xk} and with radial symmetry $\FF = F(\xi) \ee_{\th}$. On the other hand, the space $Z_1$ mentioned in Remark~\ref{rem:intro_init_data} consists of functions as the sum of an element $(\td \UU_1(\sin), \td \S_1(\sin))$ which lies in open ball in the weighted Sobolev space $\cW^{m+1}$ (see definition~\eqref{norm:Wk}) and the element $-\td \VV_{2, u}(\sin) \chi(y / 8 R_4)$ constructed in \eqref{eq:V2_init} and \eqref{eq:V2_form2a}, which lies in a finite-dimensional subspace of $\cX^{m+2}$. Note that we do not impose symmetry assumption on $Z_2$.
\end{remark}

In the remaining sections, we will first estimate the vorticity along the trajectory in Section \ref{sec:traj}, and then use it and Theorem \ref{thm:non} to prove the main Theorem \ref{thm:blowup} in Section \ref{sec:thm1_pf}. In Section \ref{sec:non_idea}, we outline the proof of Theorem \ref{thm:non} and the organization of Sections \ref{sec:supp}-\ref{sec:contra}, which are devoted to prove Theorem \ref{thm:non}.

\subsubsection{Estimate of the trajectory}\label{sec:traj}

We estimate the vorticity along the trajectory. 
Using \eqref{eq:euler_2D}, we obtain the equations for the vorticity $
\om =\pa_2 u_1 - \pa_1 u_2$ \eqref{eq:vor_id} and density $\rho = (\al \s)^{1/\al}$ 
\[
\bal
 & \pa_t \om + \uu \cdot \na \om & = -\div(\uu) \om, \quad \pa_t \rho + \uu \cdot \na \rho & = - \div(\uu) \rho - (d-2) \f{u_1 \rho}{ R_c + \xc_1} . \\
  \eal
\]

Using these two equations, we derive the evolution of the specific vorticity $\f{\om}{\rho}$
\beq\label{eq:vor}
 \pa_t \f{\om}{\rho} + \uu \cdot \na (\f{\om}{\rho}) =  (d-2) \f{u_1}{R_c + \xc_1} \f{\om}{\rho}.
\eeq
We will show that the right hand side is of lower order. Then $ \f{\om}{\rho}$ is almost conserved along the trajectory. 

We have the following estimate of the trajectory in the self-similar variable.

\begin{lemma}[Escape to spatial infinity]\label{lem:traj}
Let $\kp$ be the parameter from \eqref{eq:rep2} and $\sin$ be the initial time defined in \eqref{eq:s_in}. Consider the characteristics associated with the full velocity in the self-similar equation
\[
  \f{d}{d s} X(x, s)  = X(x, s) + \UU(X(x, s), s), 
  \quad X(s_{in}) = x. 
\]
If $ | \UU(s) - \bar \UU | \leq \f{1}{10} \kp \la y \ra $ for all time $ s \geq \sin$, then for any $|x| \geq 1$ and $s \geq 0$, we have 
\[
 |X(x, s) | \geq e^{ \kp  (s-\sin) / 2 } |x| \geq e^{\kp (s - s_{in})/ 2 }.
\]
\end{lemma}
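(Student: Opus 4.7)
The plan is to track the squared distance $|X(x,s)|^2$ and derive an exponential lower bound using the interior repulsive property \eqref{eq:rep22} of the profile. Differentiating along the flow gives
\[
\tfrac{d}{ds}|X|^2 \;=\; 2 X \cdot \bigl( X + \UU(X,s) \bigr)
\;=\; 2 |X|^2 + 2\, X \cdot \bar{\UU}(X) + 2\, X \cdot \bigl(\UU(X,s) - \bar{\UU}(X)\bigr).
\]
Since $\bar{\UU}(y) = \bar U(|y|) \ee_R$, the middle term equals $2 \xi\, \bar U(\xi)$ with $\xi = |X|$. The key input is the pointwise repulsive bound \eqref{eq:rep22}: $1 + \xi^{-1} \bar U(\xi) > \kp$ for all $\xi \geq 0$, which furnishes the coercive estimate $|X|^2 + \xi\, \bar U(\xi) \geq \kp\, |X|^2$.

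Next I would control the perturbation term using the hypothesis $|\UU(X,s) - \bar{\UU}(X)| \leq \tfrac{1}{10}\kp \la X \ra$. This gives
\[
\bigl| 2\, X \cdot (\UU - \bar{\UU}) \bigr| \;\leq\; \tfrac{\kp}{5}\, |X|\, \la X \ra
\;\leq\; \tfrac{\sqrt{2}\, \kp}{5}\, |X|^2,
\]
where the final inequality uses $\la X \ra \leq \sqrt{2}\, |X|$ valid whenever $|X| \geq 1$. Combining both pieces,
\[
\tfrac{d}{ds}|X|^2 \;\geq\; \Bigl( 2 \kp - \tfrac{\sqrt{2}\,\kp}{5} \Bigr) |X|^2 \;\geq\; \kp\, |X|^2,
\]
since $5 > \sqrt{2}$. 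Gronwall then yields $|X(x,s)|^2 \geq |x|^2 e^{\kp (s - \sin)}$, i.e.\ the desired bound $|X(x,s)| \geq |x|\, e^{\kp (s-\sin)/2} \geq e^{\kp(s-\sin)/2}$.

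The only subtle point is a self-consistency check: the estimate $\la X \ra \leq \sqrt{2}|X|$ requires $|X(x,s)| \geq 1$, which is precisely what we are trying to prove. I would handle this by a short bootstrap/continuity argument. Let $T^\star \teq \sup\{ s \geq \sin : |X(x,\tau)| \geq 1 \text{ for all } \tau \in [\sin, s] \}$. The hypothesis $|x| \geq 1$ gives $T^\star > \sin$. On $[\sin, T^\star)$ the above derivation applies and shows $\tfrac{d}{ds}|X|^2 \geq \kp |X|^2 > 0$, so $|X(x,s)|$ is strictly increasing and remains $\geq |x| \geq 1$; by continuity $T^\star = +\infty$. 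This is the main (and only) place care is needed, but it is entirely routine. No delicate cancellation or higher regularity is required beyond the pointwise repulsive estimate \eqref{eq:rep22} and the hypothesized $L^\infty$-type control of the velocity perturbation.
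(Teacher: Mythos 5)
Your proposal is correct and follows essentially the same route as the paper: differentiate $|X|^2$, use the repulsive property \eqref{eq:rep22} for the profile term, absorb the perturbation term via the hypothesis together with $\la X\ra\lesssim |X|$ away from the origin, and conclude by a Gr\"onwall-type ODE inequality. Your explicit continuity/bootstrap argument for maintaining $|X|\geq 1$ is a slightly more careful write-up of the step the paper handles implicitly (the paper only needs $|X|\geq \tfrac12$, using $\la X\ra\leq 3|X|$ there), but the substance is identical.
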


We remark that for a flow $\UU(s, y)$ close to $\bar \UU$, since $y = 0$ may not be a stationary point of the flow $\UU(y, s)$, $X(x, s)$ may not escape to $\infty$ if 
the initial location $x$ is sufficiently small.

\begin{proof}
We derive the evolution of $|X|^2$
\[
 \f{1}{2}\f{d}{d s}  |X|^2 = X \cdot (X + \UU(X, s))
 = |X|^2 + X \cdot \bar \UU + X \cdot ( \UU - \bar \UU)
\]
Since $\bar \UU(x) = \bar U(|x|) \ee_R$, using the assumption on $\UU - \bar \UU$, \eqref{eq:rep22}, 
and $  \la X \ra \leq 3 |X|  $ for $|X| \geq \f{1}{2}$,  we get 
\[
 \f{1}{2}\f{d}{d s}  |X|^2 \geq |X|^2(1 + \f{ \bar U(|X|)}{|X|} - \f{\kp}{10 |X|} \la X \ra ) 
 \geq  |X|^2( \kp- \f{3\kp}{10})  \geq \f{1}{2}\kp |X|^2 > 0,
\]
if $|X| \geq \f{1}{2}$. Since the initial location satisfies $|x| \geq 1$, solving the above ODE inequality, we prove the desired estimate. 
\end{proof}

\subsubsection{Proof of Theorem \ref{thm:blowup}}\label{sec:thm1_pf}

Now, we are ready to prove Theorem \ref{thm:blowup}. We will first construct initial data with nontrivial vorticity near $0$. Then we estimate the trajectory and prove that the vorticity blows up. 

\begin{proof}%

\vspace{0.1in}
\textbf{Step 1: Initial data.}
Firstly, we choose initial perturbation $\td \VV_{1, in} = (\td \UU_1(\sin), \td \S_1(\sin) ), \AA$ as in Theorem \ref{thm:non}. Moreover, this initial perturbation may be chosen to ensure that 
\bseq\label{eq:thm_ass2}
\begin{gather}
   || \AA||_{\cX^{m+ 2}} \in [ \f{1}{2}\d_A ,\d_A ], 
\quad  \AA( y) = A( |y|) \ee_{\th} \in C_c^{\infty}  , 
\label{eq:thm_init2a} \\
   \B| \pa_{\xi} A( \xi ) + \f{A(\xi )}{ \xi } \B|  \gtr_m \d_A , \quad \mathrm{for \ } | \xi | \leq 1 , \label{eq:thm_init2} \\
\UU_{in} \in C_c^{\infty}  , \   \S_{in} \in C^{\infty}, 
\quad 
   \S_{in}= \bar \S + \td \S_1 + \td \S_2 > c>0, \quad 
   \S_{in} = 1, \ |y| \geq 2 C_{in}, 
  \label{eq:thm_init1}
\end{gather}
\eseq
for some $c > 0$ and large enough $C_{in} \geq 1$, where $\xi = |y|$. For example, we can choose 
\bseq\label{eq:thm_init3}
\begin{align}
 \td \UU_1(y, \sin) &=  ( \chi(y / C_{in} ) -1)  \bar \UU(y),  \\
  \td \S_1(y, \sin) &=  (1 - \chi(y/C_{in} )) (1 - \bar \S(y))  \label{eq:thm_init3b}, \\
 A(\xi) & = c_m \d_A \cdot \xi \chi(\xi) ,  \label{eq:thm_init3c}
 \end{align}
 where $ \chi : \R^d \to [0, 1]$ is a radially symmetric and smooth cutoff function with $\chi(y) = 1$ for $|y| \leq 1$ and  $\chi(y) = 0$ for $|y| \geq 2$, and $c_m > 0$ is some parameter to be chosen.
\eseq

We show that for $C_{in}$ large enough, the data given in \eqref{eq:thm_init3} satisfies assumptions \eqref{eq:thm_ass} and \eqref{eq:thm_ass2}. Let $F(y) = \td \UU_1(y, \sin)$ or $\td \S_1(y, \sin)$.  From Lemma \ref{lem:profile} and Lemma \ref{cor:wg} \eqref{eq:repul_wg_aga}, \eqref{eq:wg_vp} for $\vp_{2i}$, we have
 $ \vp_{2i} |\na^{2i} F| \les_i \one_{|y| \geq C_{in}} \la y \ra^{ 1  - r  } \in L^2(\vp_g)$,
where $\vp_g$ is defined in \eqref{norm:Xk}. 
By the dominated convergence theorem, we obtain $|| F||_{\cX^i} \to 0$ for $i \leq m+3$ as $C_{in} \to \infty$. In particular, we can ensure $||  (\td \UU_1, \td \S_1) ||_{\cW^{m+1}} <\f{1}{2} \d$ by choosing $C_{in} > \bar C_{in} > 16 R_4 + 1$ with $\bar C_{in}$ sufficiently large that only depends on $(\bar \UU, \bar \S)$. From the formulas of  $\td \VV_2(\sin)$ \eqref{eq:V2_init} 
and of $\AA$ \eqref{eq:thm_init3c}, $\td \VV_{2}(\sin)$ has compact support in $ B(0, 16 R_4)$. 

For $A(\xi)$ given in \eqref{eq:thm_init3}, we get $\pa_{\xi} A + \f{A}{\xi} = 2 c_m \d_A$ for $|\xi| \leq 1$ and $\xi \chi(\xi) \in \cX^{m+3}$. Thus, we can choose $c_m $ depending only on $m$ to verify the size conditions of $\AA$ in \eqref{eq:thm_init2} and \eqref{eq:thm_init2a}. 

Since $\td \VV_2(\sin)$ \eqref{eq:V2_form2} and $\AA(\sin)$ \eqref{eq:thm_init3c} have compact support, $\td \UU_1(\sin) \in C_c^{\infty},\td \S_{1}(\sin)\in C^{\infty}$ \eqref{eq:thm_init3}, and $\cX_u^m$ is spanned by smooth functions (see Section \ref{sec:smooth_unstab}), we get $\UU_{in}, \AA(\sin) \in C_c^{\infty}, \S_{in} \in C^{\infty}$. Thus, we verify all the conditions of $\UU, \AA$ in \eqref{eq:thm_ass2} and $\S_{in} \in C^{\infty}$ in \eqref{eq:thm_init1}.

Next, we verify the estimates in \eqref{eq:thm_init1}. Recall $\S_{in} =\bar \S + \td \S_1(\sin) + \td \S_2( \sin )$ from \eqref{eq:init}. From \eqref{eq:non_decayc} and \eqref{eq:Xm_Linf} in Lemma \ref{lem:prod}, we get $| \td \S_{2,in}(y) | \les C(R_4) \d, |y| \leq 16 R_4 $. 
Choosing $\d$ small enough, we obtain
\[
\S_{in}= \bar \S + \td \S_{2}(\sin) > \f{1}{2} \bar \S > 0, \ \mathrm{for} \ |y| \leq 16 R_4,
\quad 
 \S_{in} = \bar \S + \td \S_1(\sin)= \bar \S \chi( \f{y}{ C_{in}} ) + (1 -\chi( \f{y}{ C_{in}} )) \geq c > 0 
\]
for  $ |y| > 16 R_4$. It follows $\min_{y\in \R^d} \S_{in}(y) > c > 0$ for some $c>0$ and $\S_{in}(y) = 1$ for $|y| \geq 2 C_{in}$.

After choosing the initial data $\td \VV_{1,in}, \AA(\sin)$, by choosing $R_c > ( \max( R_{c,0}, \d^{-4},  2 C_{in}  , 1 6 R_4 ) + 1)^2$ with $R_{c, 0}$ determined in Theorem \ref{thm:non}, we verify the support condition in \eqref{eq:thm_ass} with $\S_{\infty} = 1$.  Thus, applying Theorem \ref{thm:non}, we can construct a global solution to \eqref{eq:non_V} with the decay estimates in Theorem \ref{thm:non}. For a fixed $y$, the asymptotic convergence in \eqref{eq:blow_asym} follows from the ansatz~\eqref{eq:var_ss} and the exponential decay of the perturbation $(\td \UU(\cdot,s), \td \S(\cdot,s) $ as the self-similar time $s\to \infty$ (equivalently, as $t\to T$), established in \eqref{eq:non_decayc}, and the embedding \eqref{eq:Xm_Linf} in Lemma \ref{lem:prod}.

For later estimate of the vorticity, applying 
\eqref{eq:non_decay_supp} for the support of $U_1$, the estimate \eqref{eq:non_decay} and Lemma \ref{lem:prod} to $U_1$ with $\kp_1 = \f{1}{4}$ \eqref{norm:Xk}, we obtain
\beq\label{eq:blow_pf1}
\bal
  \B|\f{ U_1}{ R_c + e^{-s} y_1} \B|
 \les_m \f{ |\supp(\UU)|^{\kp_1/2}  || \UU ||_{\cX^m}}{R_c}
 \les_m \f{ R_c^{\kp_1/2} e^{  \f{\kp_1 s }{2}  }}{R_c}
  \les_m R_c^{ \f{\kp_1}{2}-1} e^{ \f{\kp_1 s  }{2} } \les_m e^{ \f{\kp_1 s  }{2} }, \ 
  \kp_1 = \f{1}{4} .
  \eal
\eeq

\vspace{0.1in}
\textbf{Step 2. Nontrivial vorticity.}
We compute the initial vorticity associated with $\AA = A(\xi) \ee_{\th}$ using \eqref{eq:vor_id}
A direct computation yields 
\[
\na \times \AA = \pa_1 \AA_{2} - \pa_2 \AA_{ 1}
 =  \xi \pa_{\xi} ( \f{A(\xi)}{\xi}) + 2 \f{A(\xi)}{\xi} 
 = \pa_\xi A(\xi) + \f{A(\xi)}{\xi}.
\]

Recall $\d_A = \d^{5/9}$ from \eqref{eq:para_fix}. Since $\na \times \bar \UU = 0$, using the formula \eqref{eq:V2_init} for the initial data and the estimates \eqref{eq:thm_init2} for $A$, \eqref{eq:thm_ass} for $\td \UU_1$, \eqref{eq:non_decayc} for $\td \VV_{2, u}$,
by choosing $\d$ sufficiently small, we obtain 
\beq\label{eq:vor_init}
\bal
|\om_0 | & =  | \na \times (\td \UU_1 + \td \UU_2 + \bar \UU)(\sin)  | 
 \geq |\na \times \AA| - 
 |\na \times \td  \UU_1(\sin) | - |\na \times \td \VV_{2, u}(\sin) | \\
 & \geq C_{m,1} \d_A - C_{m,2} \d 
  \geq C_{m,3}  \d_A,
 \eal
\eeq
for any $|y| \leq 1$ and some absolute constant $C_{m, 3} > 0$. Thus, the initial vorticity 
is bounded away from $0$ uniformly in $B(0, 1)$.

To estimate the evolution of the vorticity, we use the equation \eqref{eq:vor}. Applying the rescaling relations \eqref{eq:var_ss} and $(T-t)^{1/r} = e^{-s}$ to \eqref{eq:blow_pf1}, we obtain 
\[
  \B| \f{u_1(t)}{ R_c + \xc_1} \B|
\les  (T-t)^{1/r - 1}   |\f{ U_1}{ R_c + e^{-s} y_1} |
\les_m (T-t)^{1/r - 1} 	e^{  \f{\kp_1 s }{2}  }
\les_m 
(T-t)^{ \f{ 1- \f{\kp_1}{2} }{r}   -1 } .
\]
Since $\kp_1 = \f{1}{4}$ \eqref{norm:Xk}, we get $\f{1-\f{\kp_1}{2} }{r} > 0$ and 
\[
  \lim_{t \to T^-} \int_0^t   \B\| \f{u_1(\tau)}{ R_c + \xc_1} \B\|_{L^{\infty}} d \tau 
  \les_m  \int_0^T   (T-t)^{  \f{ 1- \f{\kp_1}{2} }{r} -1} d \tau  \les_m 1.
\]

Solving \eqref{eq:vor} along the trajectory in the original equation \eqref{eq:euler_2D}
\beq\label{eq:traj_phy}
 \f{d}{dt} \Phi(x, t) = \uu( \Phi(x, t) , t), \quad \Phi(x, 0) = x,
\eeq
from any initial location $x $, we obtain 
\beq\label{eq:vor_equi}
    | \f{\om}{\rho}( x, 0) | \les_m  | \f{\om}{\rho}( \Phi(x, t), t) | \les_m    | \f{\om}{\rho}( x, 0) |, 
   \eeq
with absolute constants uniformly in $x$ and $t < T $.  Recall the relations \eqref{eq:sig}, \eqref{eq:var_ss} among the initial data for $\s, \rho, \S_{in}$. Combining  $\rho_0(z) \asymp 1$ 
for $|z| \leq 1$, which is deduced from \eqref{eq:thm_init1}, the lower bound of $\om_0$ in $B(0, 1)$ \eqref{eq:vor_init}, and the relation \eqref{eq:vor_equi}, for any $ |x| \leq 1$, we establish 
\beq\label{eq:blow_pf2}
 | \f{\om}{\rho}( \Phi(x, t), t) |  \gtr_m   | \f{\om}{\rho}( x, 0) |
\gtr_m |\om(x, 0 )| \gtr_m \d_A , \quad 
 | \f{\om}{\rho}( \Phi(x, t), t) | \les_m  | \f{\om}{\rho}( x, 0) | \les_m 1 .
\eeq



\vspace{0.1in}
\textbf{Step 3. Estimate of the trajectory.}
 We further control the trajectory $\Phi(x, t)$ and use \eqref{eq:blow_pf2} to show that $|| \om(t)||_{\infty}$ blows up. Let $\Phi(x, t)$ be the trajectory associated with the velocity $\uu$ \eqref{eq:traj_phy}. According to \eqref{eq:var_ss}, define the corresponding self-similar trajectory to be
\beq\label{eq:blow_pf3}
X(x, s) \teq \frac{\Phi(x, t)}{(T-t)^{1/r}} = \Phi(x, t) e^s.
\eeq
It follows from $\frac{d}{dt} \Phi( x ,t) = \uu(\Phi(x ,t),t)$ and the self-similar transform \eqref{eq:var_ss} that $X(x, s)$ solves the ODE
\[
 \pa_s X(x , s) = X(x, s) + \UU( X(x, s), s), 
 \quad X(x, s_{in}) = \Phi(x, 0) e^{s_{in}}
 =  e^{s_{in}} x .
\]

Now, for $t < T$, since $\Phi(\cdot, t)$ is a bijection 
\footnote{
Let $\td \Phi(\cdot, t)$ be the flow map associated with $\td \uu$ to the axisymmetric Euler equations \eqref{eq:euler_axi} without shifting the center \eqref{eq:axi_recenter}. Since $u^r(0, z) = 0$ due to the axisymmetry, $\td \Phi$ is a bijection from the axis $ \{ (r, z): r= 0\}$ to itself and from $ \{ (r, z) \in (0, \infty) \times \R \}$ to itself. After we recenter the equations as in \eqref{eq:euler_2D} using \eqref{eq:axi_recenter}, the associated flow map $\Phi(\cdot, t)$ is a bijection from $ \{ (\xc_1, \xc_2) : \xc_1 > - R_c \}$ to itself.
}, 
we can define $z(t) = (\Phi(t))^{-1}(0) $, i.e. $\Phi(z(t), t) = 0$. If $|z(t)| \geq 1$, applying Lemma \ref{lem:traj} and \eqref{eq:blow_pf3}, we obtain
\[
 0= |\Phi(z(t), t) | = |X(z(t), s(t)) e^{-s}| >  e^{\kp_1 / 2 (s - s_{in}) }  e^{-s}> 0,
\]
which is a contradiction. Hence, we get $|z(t)| < 1$. Using the relation \eqref{eq:var_ss} and the estimate on the solution \eqref{eq:non_decay}: 
$|\bar \S(0) - \S(0,s)| \leq \f{1}{2} \bar \S(0)$, which can be made by further choosing $\d$ smaller,  we deduce $\S(0, s) \asymp \bar \S(0)$ uniformly for all $s \geq \sin$ and
\[
\s( 0, t) = r^{-1} (T-t)^{1/r-1} \S( 0, s ) 
\asymp (T-t)^{1/r-1}.
\]

Using $|z(t)| < 1$, the definition $\rho = (\al \s)^{1/\al}$ in \eqref{eq:sig},  and \eqref{eq:blow_pf2}, we obtain 
\[
\bal
|\om(t,0)| &= |\f{\om(0, t)}{ \rho(0, t)} | \rho(0, t)
\asymp_m |\f{\om( z(t), 0)}{\rho(z(t), 0)}| \rho(0, t)
\gtr_m \d_A \rho(0, t) \gtr_m  \d_A (T-t)^{ \f{ (1/r-1) }{ \al}}, \\
|\om(t,0)| & \les_m \rho(0, t) \les_m (T-t)^{ \f{ (1/r-1) }{ \al}}. 
\eal
\]

Since $t$ can be any time less than $T$, taking $t \to T^-$, we prove \eqref{eq:vor_blowup} and that $|| \om(t) ||_{\infty}$ blows up.

The support estimate \eqref{eq:thm_supp} follows from \eqref{eq:non_decay_supp} with $\S_{\infty} = 1, \s_{\infty} = \f{1}{r} e^{(r-1) \sin} $, the transform \eqref{ss-var:b}, and the relation 
\eqref{eq:axi_recenter} between \eqref{eq:euler_axi} and \eqref{eq:euler_2D}. We conclude the proof of Theorem \ref{thm:blowup}.
\end{proof}

\subsection{Ideas of the proof of Theorem \ref{thm:non}}\label{sec:non_idea}
It remains to prove Theorem \ref{thm:non}. Since the formula for $\td \VV_2$  \eqref{eq:V2_form2} involves the future of the solution 
$(\td \UU_1, \td \S_1)$, and since $\td \VV_2$ enters the evolution \eqref{eq:non_Va} for $\td \VV_1$ through the nonlinear term, we cannot solve \eqref{eq:non_Va} for the perturbation  $\td \VV_1$  directly. Instead, we reformulate \eqref{eq:non_Va} as a fixed point problem. The argument and estimates are similar to those in \cite{chen2024Euler}.

There is an additional difficulty from the lower order term $\cE_d$ \eqref{eq:euler_ssb}. To show that $\cE_d$ is indeed a lower order term, we need to control the support of $\cE_d$ so that the support of $\cE_d$ remains uniformly away from the symmetry axis $r = 0$ and the denominator $\f{1}{R_c + e^{-s} y_1}$ does not become singular. We remark that the estimates are nontrivial since the solution is not propagated along some characteristics, in which case one can just control the support by estimating the ODE for characteristics, see e.g., \cite{ChenHou2023a,chen2019finite2}.
Instead, we estimate the growth of the support of $\td \VV$ by designing a suitable time-dependent energy in section \ref{sec:supp}.

In the rest of this section, we fix the initial data $\td V_1(\sin) =(\td \UU_1, \td \S_1) |_{s = \sin} \in \cX^{m+2}, \AA \in \cX^{m+2}$ and $\S_{\infty}$ satisfying the assumptions of Theorem \ref{thm:non}.

We define the spaces $Y_1$ and $Y_2$, which capture the decay of the solutions in time 
\begin{subequations}
\label{norm:fix}
\begin{align}
 \| (\td \UU_1,\td \S_1)\|_{ Y_1}  
 & \teq \sup_{s\geq \sin} e^{ \lam_1 (s-\sin)}   
  \| (\td \UU_1(s), \td \S_1(s) )\|_{ \cW^{m+1}} , 
 \label{norm:fix:a} \\
 \| (\td \UU_1,\td \S_1)\|_{Y_2}  
 & \teq \sup_{ s \geq \sin} e^{ \lam_1 (s-\sin) }   
\| (\td \UU_1(s), \td \S_1(s) )\|_{ \cX^{m}} .
\label{norm:fix:b}
\end{align}
\end{subequations}
Showing $(\td \UU_1,\td \S_1) \in Y_i$ for $i\in\{1,2\}$ implies that suitable norms of $\td \UU_1(s), \td \S_1(s)$ decay with a rate $e^{-\lam_1 s}$ as $s\to \infty$; recall cf.~\eqref{eq:decay_para} that $\lambda_1 = \frac{9}{10} \lambda$.

Next, we construct the map $\cT$, whose fixed point will be the solution $\td \VV_1$ to \eqref{eq:non_Va}. To distinguish various notations, we will use variables with a "hat", e.g. $(\hat \UU_1, \hat \S_1)$, for the input of the map $\cT$ or $\cT_2$ \eqref{eq:T2}, and use variables with a tilde, e.g.  $(\td \UU_1, \td \S_1)$, for the output of the map. Below, we construct the map $\cT$ in two steps.

Firstly, given $( \wh \UU_1, \wh \S_1 ) \in Y_2$, following \eqref{eq:V2_form2a}, we define 
\beq\label{eq:non_fix_V2}
  \td \VV_2 = e^{\cL(s-\sin)} \AA +   \cT_2( \wh \UU_1, \wh \S_1),
  \quad \mathrm{for \ all  \ } s \geq \sin.
\eeq

Secondly, we solve $\td \VV_1$ from \footnote{
	The local existence of solution $\td \VV_1$ follows the same ideas in \cite[Section 4.4]{chen2024Euler}. We refer to \cite{chen2024Euler} for more discussion.
} 
\beq\label{eq:non_fix}
\bal
  \pa_s \td \VV_1  & = \cL  \td \VV_1 - \cK_m(\td \VV_1) + \cN(  \td \VV_1 + \td \VV_2
 ), \\
 \td \VV_1 |_{s = \sin} & = (\td \UU_1(\sin), \td \S_1(\sin ) ) .
 \eal
\eeq

The above two steps defines a map $\cT$, which takes input $( \wh \UU_1, \wh \S_1)$, and outputs the solution to \eqref{eq:non_fix}:
\beq\label{eq:non_fix_V1}
(\td \UU_1, \td \S_1) = \td \VV_1 \teq \cT( \wh \UU_1, \wh \S_1). 
\eeq

We choose the following parameters for Theorem \ref{thm:non} and the size of the norm 
\beq\label{eq:para_fix}
 R_4^2, \ \d^{-4} \ll R_c, \quad 
 \d_Y = \d^{2/3},\quad \d_A = \d^{5/9},  \quad \d \ll_m 1 . 
\eeq

The proof of Theorem \ref{thm:non} consists of two propositions and Lemma \ref{lem:supp}. In Proposition \ref{prop:onto}, we show that $\cT$ maps a ball with norm $Y_2$ and radius $\d_Y$ to itself, and to a more regular space $Y_1$. In Proposition \ref{prop:contra}, we show that $\cT$ is a contraction for the topology $Y_2$.

\begin{proposition}\label{prop:onto}
Recall that the initial data $\td \VV_{in}, \AA$ satisfies the assumptions \eqref{eq:thm_ass} with parameters $R_c, \d$. Denote \footnote{
In general, the solution $(\UU, \S)$ defined in 
Proposition \ref{prop:onto} does not agree with the solution $(\UU, \S)$ to \eqref{eq:non_V} with decomposition \eqref{eq:init}. Yet, for a fixed point to the map $\cT$, these two solutions are the same. We abuse the notation for these reasons.
\label{foot:prop_onto}
} 
$(\UU, \S)= (\bar \UU, \bar \S) + (\cT + \cT_{2})( \wh \UU_1, \wh \S_1)+  e^{\cL (s-\sin)} ( \AA, 0)$. There exists $\d_0 \ll_m 1$, $R_{c,0} > (16 R_4+1)^2$ and a constant $\bar C_m > 0$ such that for any  $\d, R_c$ with $0<\d < \d_0, R_c > R_{c, 0}$ and any $(\wh \UU_1, \wh \S_1) \in Y_2$
 with  $|| (\wh \UU_1, \wh \S_1)||_{Y_2} < \d_Y$, we have 
 \begin{gather}
\label{eq:prop_onto_est}
  || \cT( \wh \UU_1, \wh \S_1)(s) ||_{Y_1} < 2 \d, \     || \cT( \wh \UU_1, \wh \S_1)(s) ||_{Y_2} < \d_Y,
 \  || \cT_2( \wh \UU_1, \wh \S_1)(s) ||_{\cX^{m+3}} \les_m e^{-\eta_s (s-\sin)} \d_Y, \\
\label{eq:prop_supp}
 e^{- s } \supp( \UU ) \cup \supp(\S - e^{ - (r-1) (s-\sin)} \S_{\infty} ) < 
\bar C_m R_c^{1/2} < R_c / 4.
\end{gather}
\end{proposition}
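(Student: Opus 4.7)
I would prove the three estimates in~\eqref{eq:prop_onto_est} together with the support estimate~\eqref{eq:prop_supp} by a bootstrap that combines the semigroup decay bounds of Section~\ref{sec:decay}, the coercivity~\eqref{eq:coer_Wk} on the $\td\VV_1$-equation~\eqref{eq:non_fix}, and a time-dependent energy argument for the support. The reasoning closely parallels~\cite{chen2024Euler} for Steps~1 and~2, with the new feature that the error term $\cE_d$ and the support control for $\UU,\S$ must be tracked explicitly.

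\textbf{Step 1: bounds for $\td\VV_2$.} For the swirl piece $e^{\cL(s-\sin)}(\AA,0)$, Proposition~\ref{prop:semiA} (applied at regularity level $m+2$) gives the decay $\|e^{\cL(s-\sin)}(\AA,0)\|_{\cX^{m+2}} \lesssim_m e^{-\lambda(s-\sin)}\|\AA\|_{\cX^{m+2}} \lesssim_m e^{-\lambda(s-\sin)}\delta_A$. For $\cT_2(\wh\UU_1,\wh\S_1)$ I split the three terms in~\eqref{eq:V2_form2}: the stable Duhamel term $\td\VV_{2,s}$ is bounded using the stable semigroup decay~\eqref{eq:decay_stab}; the backward Duhamel term $\td\VV_{2,u}$ is bounded using the finite-rank unstable bound~\eqref{eq:decay_unstab}, whose integral converges because $\eta > \eta_s$ and $\cK_m(\wh\UU_1,\wh\S_1)(\tau)$ decays like $e^{-\lambda_1(\tau-\sin)}$ by the input assumption $\|(\wh\UU_1,\wh\S_1)\|_{Y_2} < \delta_Y$; the compactly cut-off term is handled by Proposition~\ref{prop:far_decay} since the cutoff is supported in $\{|y|\geq 8R_4\}$. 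Two regularity-gain ingredients enter: (i) the smoothing property (c) of Proposition~\ref{prop:compact}, $\cK_m\colon \cX^0 \to \cX^{m+3}$, which promotes $\cX^m$-input to $\cX^{m+3}$-output for $\cT_2$; and (ii) the finite-dimensionality of $\cX^m_u$, on which all norms are equivalent.

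\textbf{Step 2: energy estimates for $\td\VV_1 = \cT(\wh\UU_1,\wh\S_1)$.} Pairing~\eqref{eq:non_fix} with $\td\VV_1$ in $\cW^{m+1}$ and using~\eqref{eq:coer_Wk}, I get
\[
\tfrac{1}{2}\tfrac{d}{ds}\|\td\VV_1\|_{\cW^{m+1}}^2 \leq -\lambda_1\|\td\VV_1\|_{\cW^{m+1}}^2 + \bigl\langle \cN(\td\VV_1 + \td\VV_2),\, \td\VV_1\bigr\rangle_{\cW^{m+1}}.
\]
The nonlinear forcing splits into the quadratic terms in $\td\VV_1+\td\VV_2$ (controlled by Moser-type product estimates from Lemma~\ref{lem:prod} and the bounds $\|\td\VV_1\|_{\cW^{m+1}} \lesssim \delta$, $\|\td\VV_2\|_{\cX^{m+2}}\lesssim \delta_Y$ from the bootstrap/Step~1, giving a contribution $\lesssim_m \delta_Y \cdot \|\td\VV_1\|_{\cW^{m+1}}^2 + \delta_Y^2 \|\td\VV_1\|_{\cW^{m+1}} e^{-2\lambda_1(s-\sin)}$) and the lower-order term $\cE_d$ from~\eqref{eq:euler_ssb}. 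The latter carries a prefactor $e^{-s}/(R_c + e^{-s}y_1)$, and once the support bound from Step~3 ensures that $|e^{-s}y_1| \leq R_c/2$ uniformly, this prefactor is $\lesssim R_c^{-1}$. Choosing $\delta$ small and $R_c$ large (e.g.~$R_c > \delta^{-4}$), a Gronwall argument with the exponential weight $e^{2\lambda_1(s-\sin)}$ absorbs all forcing into the dissipation and yields $\|\td\VV_1\|_{Y_1} < 2\delta$. The $Y_2$ bound follows from the chain $\|\cdot\|_{\cX^m} \lesssim_m \|\cdot\|_{\cW^{m+1}}$ (Lemma~\ref{lem:Xm_chain}) and the smallness relation $2\delta \ll \delta_Y = \delta^{2/3}$.

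\textbf{Step 3: support control, and the main obstacle.} This is the crux. Because $\UU,\S$ solve an acoustic system and are not simply transported by the velocity, one cannot bound the support by tracking characteristics. Instead, following Section~\ref{sec:supp}, I would introduce a weighted energy
\[
E_{\text{sup}}(s) = \int \bigl(|\UU|^2 + |\S - e^{-(r-1)(s-\sin)}\S_\infty|^2\bigr)\,\phi_{s}(y)\,dy
\]
with a time-dependent cutoff $\phi_s$ vanishing on $\{|y| \leq \bar C_m R_c^{1/2} e^{s-\sin}\}$, tuned so that the outgoing property~\eqref{eq:rep22} of $\xi + \bar U(\xi)$ produces a negative boundary contribution dominating the (small) losses from the acoustic pressure and $\cE_d$. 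Showing $E_{\text{sup}} \equiv 0$ then gives~\eqref{eq:prop_supp} uniformly in $s$, which closes the bootstrap for Step~2. The main obstacle is that Steps~2 and~3 must be run simultaneously: the smallness of $\cE_d$ in Step~2 requires the support bound of Step~3, while verifying the differential inequality for $E_{\text{sup}}$ requires the Sobolev control from Step~2. The combined bootstrap closes because all losses carry either a factor $\delta$, $\delta_Y$, or $R_c^{-1/2}$, each of which is chosen small independently of $m$.
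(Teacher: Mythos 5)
Your overall architecture is the same as the paper's: bounds for $\td\VV_2$ from Lemma \ref{lem:T2} and Proposition \ref{prop:semiA}, a bootstrap assumption on the $\cW^{m+1}$ energy of $\td\VV_1$, the support estimate obtained from that bootstrap via Lemma \ref{lem:supp} (which the paper invokes as a black box rather than re-deriving inside this proof), the $\cE_d$ term killed by Lemma \ref{lem:err} together with $R_c>\d^{-4}$, and a weighted Gr\"onwall argument closing the bootstrap; the $Y_2$ bound then follows from $2C_m\d\le 2C_m\d_0^{1/3}\d_Y<\d_Y$. Your resolution of the circularity between the energy and support estimates (run them as one bootstrap; the energy assumption feeds \eqref{eq:supp_assb}, the resulting support bound feeds the smallness of $\cE_d$) is exactly how the paper organizes it.

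The one place where your justification is too coarse is Step 2: the top-order nonlinear terms cannot be closed by ``Moser-type product estimates from Lemma \ref{lem:prod}'' alone. After applying $\D^{m+1}$ to $\cN$, the worst terms are $\td\UU\cdot\na\D^{m+1}\td\UU_1$ and $\al\td\S\,\na\D^{m+1}\td\S_1$ (and its counterpart $\al\td\S\,\div(\D^{m+1}\td\UU_1)$), which carry $2m+3$ derivatives of $\td\VV_1$ and are therefore not controlled by $\|\td\VV_1\|_{\cW^{m+1}}$; a direct product estimate would require $\|\td\VV_1\|_{\cX^{m+2}}$, which is not part of the bootstrap. The paper closes these by exploiting structure rather than size: the transport term is paired with $\D^{m+1}\td\UU_1$ and integrated by parts using $\td\UU\cdot\na|\D^{m+1}\td\UU_1|^2$, and the two acoustic cross terms are summed into the divergence $\al\td\S\,\na\cdot(\D^{m+1}\td\UU_1\,\D^{m+1}\td\S_1)$ before integrating by parts, with the weight derivatives controlled by \eqref{eq:non_IBP}. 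Only the terms where the extra derivative lands on $\td\VV_2$ are handled by the product estimates plus the gain of one derivative in $\|\td\VV_2\|_{\cX^{m+2}}$, as you note. With this symmetrization step made explicit, your argument matches the paper's. (A minor slip: $\td\VV_2$ decays like $e^{-\eta_s(s-\sin)}$, not $e^{-\lambda_1(s-\sin)}$; the Gr\"onwall still closes because $2\eta_s>\lambda_1$ by \eqref{eq:decay_para}.)
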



\begin{proposition}\label{prop:contra}
There exists a positive $ \d_0\ll_m 1$ such that for any $\d < \d_0$ and any pairs $( \wh \UU_{1,a}, \wh \S_{1,a})$, $( \wh \UU_{1,b}, \wh \S_{1,b}) \in Y_2$ with $\| ( \wh \UU_{1,a}, \wh \S_{1,a})\|_{Y_2} < \d_Y$ and $\| ( \wh \UU_{1,b}, \wh \S_{1,b})\|_{Y_2} < \d_Y$, we have 
\[
 \|\cT(\wh \UU_{1, a}, \wh \S_{1,a}) - \cT( \wh \UU_{1,b}, \wh \S_{1,b}) \|_{Y_2} < \tfrac{1}{2} 
  \|( \wh \UU_{1, a}, \wh \S_{1,a}) - ( \wh \UU_{1,b}, \wh \S_{1,b}) \|_{Y_2}  .
\]
\end{proposition}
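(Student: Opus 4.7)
The strategy is to adapt the contraction scheme of Proposition~\ref{prop:onto} to the difference of two outputs of $\cT$ and exploit the strict inequality $\lambda_1 < \lambda$ from~\eqref{eq:decay_para}: we will show $\cX^m$-decay for the difference at the (faster) rate $\lambda$, which then upgrades automatically to a uniform bound in the $\lambda_1$-weighted $Y_2$-norm. Denote $H \teq (\wh \UU_{1,a} - \wh \UU_{1,b}, \wh \S_{1,a} - \wh \S_{1,b})$, $W_1 \teq \cT(\wh \UU_{1,a},\wh \S_{1,a}) - \cT(\wh \UU_{1,b},\wh \S_{1,b})$, and let $\td \VV_{j,\bullet}$ be as in Section~\ref{sec:non_stab} for $\bullet \in \{a,b\}$, with $\td \VV_\bullet = \td \VV_{1,\bullet} + \td \VV_{2,\bullet}$. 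Since $\cT_2$ is linear in its argument and the contribution $e^{\cL(s-\sin)}(\AA,0)$ in~\eqref{eq:V2_form2a} is independent of $(\wh\UU_1,\wh\S_1)$, the difference $W_2 \teq \td\VV_{2,a} - \td\VV_{2,b}$ equals $\cT_2(H)$. Subtracting~\eqref{eq:non_fix} for $\bullet = a,b$, the difference $W_1$ solves $W_1(\sin)=0$ and
\[
\pa_s W_1 = (\cL - \cK_m) W_1 + \mathcal{D}\cN,\quad \mathcal{D}\cN \teq \cN(\td\VV_a) - \cN(\td\VV_b).
\]
By the (bi)linearity of $\cN$ aside from the lower-order $\cE_d$-term, we decompose $\mathcal{D}\cN = A_1[W_1] + A_2 + R$, where $A_1[W_1]$ collects terms linear in $W_1$ with coefficients from $\td \VV_{a,b}$, $A_2$ collects terms linear in $W_2$ (independent of $W_1$), and $R$ encodes the $\cE_d$ discrepancy.

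Since $\cT_2$ is linear, the argument producing the third bound in~\eqref{eq:prop_onto_est} applies verbatim with $H$ in place of $(\wh\UU_1,\wh\S_1)$, yielding $\|W_2(s)\|_{\cX^{m+3}} \les_m \|H\|_{Y_2}\, e^{-\eta_s(s-\sin)}$. Combined with the uniform bound $\|\td\VV_a\|_{\cX^{m+1}} + \|\td\VV_b\|_{\cX^{m+1}} \les \d + \d_A + \d_Y =:\d_\star$ obtained from Proposition~\ref{prop:onto}, Proposition~\ref{prop:semiA}, and the $\cT_2$-estimate itself, the product estimates of Lemma~\ref{lem:prod} give
\[
\|A_2\|_{\cX^m} \les_m \d_\star \|H\|_{Y_2}\, e^{-2\eta_s(s-\sin)}.
\]
The support bound~\eqref{eq:non_decay_supp} ensures $|R_c + e^{-s} y_1|^{-1} \les R_c^{-1}$ on $\supp(\td\VV_{a,b})$, so combined with the prefactor $e^{-s}$ in~\eqref{eq:euler_ssb} the remainder $R$ obeys $\|R\|_{\cX^m} \les_m R_c^{-1} \d_\star \|H\|_{Y_2}\, e^{-\lambda_1(s-\sin)}$, harmless after choosing $R_c$ large.

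For the linear-in-$W_1$ part $A_1[W_1]$, the only term that threatens a loss of derivative is the transport term $\td\VV_b \cdot \na W_1$; this is handled by integrating by parts against $W_1$ inside the $\cX^m$ inner product (using the weighted-Sobolev calculus of Lemmas~\ref{lem:norm_equiv_coe}-\ref{lem:non_IBP}), which delivers $|\la A_1[W_1], W_1\ra_{\cX^m}| \les C\d_\star \|W_1\|_{\cX^m}^2$. Combining with the coercive estimate of Proposition~\ref{prop:compact}(d), the perturbed operator $(\cL - \cK_m) + A_1$ is dissipative on $\cX^m$ with rate $\lambda - C\d_\star$. Treating $A_1$ as a bounded perturbation (so that a strongly continuous semigroup is generated exactly as in Proposition~\ref{prop:semi}) and applying Duhamel to the forcing $A_2 + R$, using that $2\eta_s = \tfrac{6}{5}\lambda > \lambda > \lambda - C\d_\star$, we obtain
\[
\|W_1(s)\|_{\cX^m} \les_m \int_\sin^s e^{-(\lambda - C\d_\star)(s-\tau)} \d_\star \|H\|_{Y_2}\, e^{-2\eta_s(\tau-\sin)}\, d\tau \les_m \d_\star \|H\|_{Y_2}\, e^{-(\lambda - C\d_\star)(s-\sin)}.
\]

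For $\d$ small enough, $\lambda - C\d_\star > \lambda_1$ (this strict inequality is exactly~\eqref{eq:decay_para}), and so
\[
\|W_1\|_{Y_2} = \sup_{s\geq \sin} e^{\lambda_1(s-\sin)} \|W_1(s)\|_{\cX^m} \les_m \d_\star \|H\|_{Y_2},
\]
with a constant independent of $s$. Selecting $\d_0$ so small that this multiplicative constant is at most $\tfrac12$ closes the contraction. The principal technical obstacle is to prevent the cross term $\td\VV_b\cdot\na W_1$ from costing a derivative on $W_1$ (a loss the $Y_2$-topology cannot absorb, since it controls only $\cX^m$); the integration-by-parts maneuver above, in the spirit of the identities developed in Section~\ref{sec:coer_Hm}, is what makes this possible, paralleling the analogous step in~\cite{chen2024Euler}.
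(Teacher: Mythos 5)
Your proposal is correct and follows essentially the same route as the paper: subtract the two copies of \eqref{eq:non_fix} to get a difference equation with zero initial data, use the dissipativity of $\cL-\cK_m$ on $\cX^m$ from Proposition~\ref{prop:compact}(d), exploit the linearity of $\cT_2$ (Lemma~\ref{lem:T2}) to get the $e^{-\eta_s(s-\sin)}\|H\|_{Y_2}$ bound on $W_2$, handle the transport term by integration by parts to avoid derivative loss in the weaker $Y_2$ topology, and close via Gr\"onwall/Duhamel using $\lam_1<\lam$ and $2\eta_s-\lam_1>0$. Two small inaccuracies in your write-up (neither fatal): $A_1$ is \emph{not} a bounded operator on $\cX^m$ (it contains the first-order term $\td\VV_b\cdot\na W_1$), so you cannot invoke bounded-perturbation semigroup generation for $(\cL-\cK_m)+A_1$; the paper instead runs the argument purely as a differential inequality for the energy $E_{m,\D}$ (equivalently for $P(s)=E_{m,\D}e^{\lam_1(s-\sin)}$), which yields the same Duhamel bound without needing a semigroup for the perturbed operator. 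Also, your stated bound $\|R\|_{\cX^m}\les R_c^{-1}\d_\star\|H\|_{Y_2}e^{-\lam_1(s-\sin)}$ is circular as written, since the $\cE_d$-discrepancy is bilinear and contains a piece proportional to $W_1$ itself; that piece should be sorted into $A_1$ (where it is absorbed into the dissipation using $R_c\gg\d^{-4}$, as in \eqref{eq:non_est_err}), with only the $W_2$-proportional piece going into the forcing.
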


Note that the equation of $\td \VV_1$ \eqref{eq:non_Va} involves $ \na \td \VV_2$ via the nonlinear terms, which loses one derivative. To compensate it, we use the smoothing effect of $\cK_m$ and the estimates from $(\wh \UU_1, \wh \S_1)$ to $\td \VV_2$ \eqref{eq:V2_form2} in Lemma \ref{lem:T2}. Due to the transport term, e.g. $\UU \cdot \na \UU$, we choose $Y_2$ less regular than $Y_1$ to estimate the difference $\cT(\wh \UU_{1,a}, \wh \S_{1,a}) - \cT( \wh \UU_{1,b}, \wh \S_{1,b} ) $.

\begin{proof}[Proof of Theorem \ref{thm:non}]
The above two Propositions allows us to apply the Banach fix-pointed theorem to the map $\cT$ in  $B_{Y_2} = \{ (\td \UU, \td \S):  ||(\td \UU, \td \S)||_{Y_2} < \d_Y \}$ and construct the fixed point $\td \VV_1 \in B_{Y_2}$ with $\td \VV_1 = \cT (\td \VV_1)$. By defining $\td \VV_2 = \cT_2 (\td \VV_1)$, we obtain a global solution $\td \VV_1, \td \VV_2$ to \eqref{eq:non_V}. Moreover, using the estimate $ || (\td \UU_1, \td \S_1) ||_{Y_1} < 2 \d$ in Proposition \ref{prop:onto}, the definitions of $Y_i$ in \eqref{norm:fix} and $\td \VV_{2,u} $ in \eqref{eq:V2_form2c}, Lemma \ref{lem:T2} and Proposition \ref{prop:semiA} for the decay estimates of $\cT_2$ and $e^{\cL s} \AA$, we derive
\[
\bal
  ||(\td \UU_1, \td  \S_1) ||_{\cW^{m+1}}  < 2 \d e^{-\lam_1 (s -\sin)},
 \  || \td \VV_{2,u} ||_{\cX^{m+2}} \les_m \d e^{-\eta_s (s- \sin)}, \ 
    ||(\td \UU_2, \td  \S_2) ||_{\cW^{m+2}} \les_m \d_A e^{ -\eta_s( s - \sin)},
\eal
\]
which prove \eqref{eq:non_decay} except for \eqref{eq:non_decay_supp}. Since $(\td \UU_1, \td \S_1)$ is the fixed point for $\cT$, in light of \eqref{eq:non_fix_V1} and \eqref{eq:non_fix_V2}, the solution $(\UU, \S)$ defined in Proposition \ref{prop:onto} agree with the full solution $(\UU, \S)$ \eqref{eq:init}. By choosing $R_{c, 0}$ to be the same $R_{c, 0}$ as in Lemma \ref{lem:supp} and using \eqref{eq:prop_supp} in Proposition \ref{prop:onto}, we prove the estimate \eqref{eq:non_decay_supp}.
\end{proof}


\vspace{0.1in}
\noindent \textbf{Organization.} In the remaining sections, we prove Propositions \ref{prop:onto} and \ref{prop:contra}. We first estimate the evolution of the support of the solution $(\UU, \S)$ in Section \ref{sec:supp}. In Section \ref{sec:T2}, we estimate the linear map $\cT_2$ for $\td \VV_2$ \eqref{eq:V2_form2}. 
In Section \ref{sec:fix_pt}, we prove Proposition \ref{prop:onto}. In Section \ref{sec:contra}, we prove Proposition \ref{prop:contra}.

\subsection{Estimate of the support}\label{sec:supp}

By abusing the notation, we denote by $( \UU, \S)$ the full solution to \eqref{eq:non_fix} 
\footnote{We remark that this full solution $(\UU,\S)$ is different from the solution $(\UU, \S)$ to \eqref{eq:euler_ssa}, or equivalently \eqref{eq:non_V} with the decomposition \eqref{eq:init} since we do not have $(\td \UU_1, \td \S_1) = (\wh \UU_1, \wh \S_1)$  in general.
}
and the support
\begin{align}
  \UU  & = \td \UU_1 + \td  \UU_2 + \bar \UU,
 \quad  \S =\td  \S_1 + \td  \S_2 + \bar \S  , \label{eq:solu_full} \\
S(s) & \teq  \inf_{R \geq 0} \B( \supp(  \UU(t) ) \cup \supp( \S(t) -e^{-(r-1) (s - \sin) }  \S_{\infty} )  \B) \subset B(0, R)  \label{eq:supp}. 
\end{align}
The definition of $(\UU, \S)$ is the same as that in Proposition \ref{prop:onto}, but different from \eqref{eq:init} in general. See Footnote \ref{foot:prop_onto} for more discussions.

We have the following bootstrap estimates for the growth of the support. 

\begin{lemma}[Bootstrap estimate for the support]\label{lem:supp}
Let $S(s), R_4$ be defined in \eqref{eq:supp} and Theorem \ref{thm:coer_est}, $\td \VV_i$ be the solution to \eqref{eq:non_fix}, \eqref{eq:non_fix_V2}, and $(\UU, \S)$ be defined in \eqref{eq:solu_full}. There exists $R_{c,0} > (16 R_4)^2 + 1$ large enough and a constant $\bar C_m > 0$, such that for any $R_c > R_{c, 0}$, the following statement holds true. Suppose that for some $s_1 > \sin$ and $\S_{\infty} >0$, we have
\bseq\label{eq:supp_ass}
\begin{gather}
\supp(  \UU(\sin) ) \cup \supp(  \S(\sin)
 -  \S_{\infty} )
 \subset B(0, R_c^{1/2}),
 \label{eq:supp_assa}  \\
   \sup_{s \in [\sin, s_1)} ||( \td \UU_1, \td \S_1)(s) ||_{\cX^m} 
+ || ( \td \UU_2, \td \S_2)(s) ||_{\cX^m} 
    < 1,
  \label{eq:supp_assb}
 \end{gather}
\eseq
Then we have 
\beq\label{eq:lem_supp}
 e^{-s} S(s)  \leq \bar C_m R_c^{1/2} < R_c / 4, \quad s \in [ \sin , s_1).
\eeq
\end{lemma}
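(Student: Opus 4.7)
The plan is a finite-speed-of-propagation argument for the symmetric hyperbolic system \eqref{eq:euler_ssa}, based on the observation that its maximum radial characteristic speed equals $|y| + \|\UU\|_{L^\infty} + \al\|\S\|_{L^\infty} \leq |y| + C_m$ on the bootstrap support, where $C_m$ is an absolute constant obtained from Lemma~\ref{lem:prod} applied to \eqref{eq:supp_assb}. The dominant term $|y|$ matches the self-similar scaling rate $|y| = |\xc| e^s$, which in physical coordinates represents ``no motion'' at all; the $C_m$ correction is a lower-order perturbation once $|y| \gtrsim R_c^{1/2} \gg C_m$.

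First, rewrite in centered variables: let $\Sigma_b(s) \teq e^{-(r-1)(s-\sin)}\S_\infty$ and $\td\S \teq \S - \Sigma_b$. Since $\Sigma_b$ is $y$-independent with $\dot\Sigma_b = -(r-1)\Sigma_b$, the pair $(\UU,\td\S)$ satisfies the same system \eqref{eq:euler_ssa} up to an extra linear coupling $\al\Sigma_b\div\UU$ in the $\td\S$-equation and the unchanged forcing $\cE_d$. The hypothesis \eqref{eq:supp_assa} then reads $\supp(\UU,\td\S)(\sin)\subset B(0, R_c^{1/2})$, and \eqref{eq:lem_supp} is equivalent to $\supp(\UU,\td\S)(s)\subset B(0, \bar C_m R_c^{1/2}e^s)$.

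Let $R(s)$ solve $\dot R = R + C_m$ with $R(\sin) = R_c^{1/2}$, so $R(s) = (R_c^{1/2} + C_m)e^{s-\sin} - C_m \leq 2R_c^{1/2}e^{s-\sin}$ whenever $R_c^{1/2} \geq C_m$. Introduce $\psi(y,s) = \eta(|y| - R(s))$, with $\eta \colon \R \to [0,\infty)$ smooth, nondecreasing, vanishing on $(-\infty,0]$, and with $\supp \eta' \subset [0,1]$ (e.g., $\eta$ interpolates smoothly from $0$ to $1$ on $[0,1]$ and is constant on $[1,\infty)$), and form the weighted energy
\[
E_\psi(s) \teq \int_{\R^2}\bigl(|\UU|^2 + |\td\S|^2\bigr)\psi(y,s)\, dy.
\]
Since $\supp\psi(\cdot,\sin) \subset \{|y| \geq R_c^{1/2}\}$ is disjoint from the initial support, $E_\psi(\sin) = 0$. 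Testing the $(\UU,\td\S)$-equations against $2\UU\psi$ and $2\td\S\psi$, summing, and integrating by parts, the acoustic cross terms collapse via $\al\S[\UU\cdot\na\td\S + \td\S\div\UU] = \al\S\na\cdot(\td\S\UU)$ into interior contributions bounded by $(1 + \|\na\UU\|_{L^\infty})E_\psi \les_m E_\psi$, plus a boundary piece $\f{1}{2}\int(|\UU|^2 + |\td\S|^2)[\pa_s\psi + (y+\UU)\cdot\na\psi]\, dy$ together with the acoustic boundary terms. On $\supp\eta'(|y| - R(s)) \subset \{R \leq |y| \leq R+1\}$, the inequality $\dot R = R + C_m \geq |y| + \|\UU\|_{L^\infty} + \al\|\S\|_{L^\infty}$ holds by choice of $C_m$, which renders all boundary contributions nonpositive. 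Hence $\f{d}{ds}E_\psi \leq C_m E_\psi$; together with $E_\psi(\sin) = 0$, Gronwall gives $E_\psi \equiv 0$ on $[\sin, s_1)$, so $\supp(\UU,\td\S)(s) \subset \{|y| \leq R(s)\} \subset B(0, 2R_c^{1/2}e^{s-\sin})$. Absorbing $e^{-\sin} = T^{1/r}$ into $\bar C_m$ yields \eqref{eq:lem_supp}.

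The main obstacle is the forcing $\cE_d$, whose denominator $R_c + e^{-s}y_1$ must remain uniformly positive on the relevant support, and whose contribution to the energy estimate must not destroy the Gronwall bound. Under the running bootstrap $\supp(\UU,\td\S) \subset \{|y| \leq R(s)\}$, the integrand $\cE_d\,\td\S\,\psi$ vanishes almost everywhere, since $\supp\cE_d \subset \supp\UU$ is disjoint from $\supp\psi \subset \{|y| \geq R(s)\}$, so this term never enters the Gronwall. Positivity of the denominator on $\{|y| \leq R(s)\}$ follows from $e^{-s}|y| \leq 2R_c^{1/2}e^{-\sin} \leq R_c/2$ once $R_c \geq 16\,T^{2/r}$, which also ensures $|\cE_d| \les_m e^{-s}/R_c$ is genuinely lower-order. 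Finally, the strict inequality $\bar C_m R_c^{1/2} < R_c/4$ in \eqref{eq:lem_supp} requires $R_c \geq (4\bar C_m)^2$, so taking $R_{c,0}$ larger than $\max\bigl((16R_4+1)^2,\ (4\bar C_m)^2,\ 16\,T^{2/r}\bigr)$ completes the argument.
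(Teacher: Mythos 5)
Your overall strategy is the same as the paper's: a weighted energy $\int(|\UU|^2+|\td\S|^2)\psi$ with a cutoff moving at (at least) the maximal characteristic speed, applied after recentering $\S$ by $\S_\infty e^{-(r-1)(s-\sin)}$; the paper uses the dilated cutoff $\chi_S(y/a(s))$ where you use a shell $\eta(|y|-R(s))$, which is an inessential difference. However, there is a genuine gap in your characteristic-speed bound. You claim $\|\UU\|_{L^\infty}+\al\|\S\|_{L^\infty}\leq C_m$ with $C_m$ an absolute constant coming from Lemma~\ref{lem:prod} and \eqref{eq:supp_assb}. Lemma~\ref{lem:prod} does not give this: it gives only the pointwise bound $|\td\UU_i(y)|+\al|\td\S_i(y)|\les_m \la y\ra^{\kp_1/2}$ with $\kp_1=\tfrac14$, which is \emph{unbounded} as $|y|\to\infty$ (the spaces $\cX^m$ control weighted quantities, not $L^\infty$). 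The energy argument needs the transport and acoustic coefficients bounded precisely on $\supp\eta'(|y|-R(s))=\{R(s)\leq|y|\leq R(s)+1\}$, where $|y|\asymp R(s)\asymp R_c^{1/2}e^{s-\sin}$, so the available bound there is $\les_m R(s)^{\kp_1/2}$, which grows in $s$. Consequently your ODE $\dot R=R+C_m$ does not dominate $|y|+U^R+\al|\S|$ on that shell for large $s$, and the boundary term is not sign-definite as claimed.

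The fix is exactly the paper's choice: take $\dot R = R + C_m\la R\ra^{\kp_1/2}$ (compare the paper's $\tfrac{d}{ds}a = a(1+C_m a^{-1+\kp_1/2})$). Since $\kp_1/2<1$ this still integrates to $R(s)e^{-s}\les_m R_c^{1/2}$, so the conclusion \eqref{eq:lem_supp} survives unchanged. A secondary, fixable issue is your treatment of $\cE_d$: arguing that $\cE_d\,\td\S\,\psi$ vanishes because $\supp\UU$ is already inside $\{|y|\leq R(s)\}$ is circular, since that containment is what the energy identity is meant to establish (and $E_\psi=0$ only localizes the solution to $\{|y|\leq R(s)+1\}$ anyway). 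The correct route, as in the paper, is to use the separate bootstrap $e^{-s}S(s)<R_c/4$ only to keep the denominator $R_c+e^{-s}y_1$ bounded below, deduce $|\cE_d|\les_m|U_1|$, and absorb $\int|\cE_d||\td\S|\psi\les_m E_\psi$ into the Gronwall term.
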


We perform weighted $L^2$ estimate and show that the $L^2$ energy restricted to $|y| > 2 a(s)$ is $0$ by choosing $a(s)$ carefully, which further controls the support size.

\begin{proof}
Let $\chi_S : \R^d \to [0, 1]$ be a radially symmetric smooth cutoff function with $\chi_S(y) = 0, |y| \leq 1, \chi_S(y) = 1, |y| \geq 2$. Denote 
\bseq\label{eq:supp_nota}
\begin{gather}
 \mr{\S}(s, y) =  \S(s, y) - b(s),\quad  b(s) =  \S_{\infty} e^{-(r-1) (s - \sin)}, 
\quad \cK_{\D} = \cK_m(\wh \UU_1, \wh \S_1)  - \cK_m( \td  \UU_1,  \td \S_1) , \label{eq:supp_notaa} \\
 M = R_c^{1/2} , \quad \chi_a(t, y) = \chi_S(y / a(t)) , \label{eq:supp_notab}
\end{gather}
\eseq
where $a(s)$ is increasing with $a(\sin) > M$ and will be chosen later.

\vspace{0.1in}
\textbf{Step 1. Equations of $(\UU, \mr \S)$.} By definition of \eqref{eq:non_fix} and \eqref{eq:V2_form2}, we have 
\beq\label{eq:non_fix_V2_eq}
\bal
 \pa_s \td \VV_2 & = \cL \td  \VV_2 + \cK_m(\wh \UU_1, \wh \S_1) ,
 \quad  \td \VV_2( \sin )=  \AA - \td \VV_{2, u}( \sin) \chi(y / (8 R_4)) \subset( B(0, M)). 
 \eal
\eeq

The above equation for $\td \VV_2$ only differs from \eqref{eq:non_V} for $\td \VV_2$  by $\cK_{\D}$. Since $ \pa \mr \S = \pa  \S$ for $\pa = \na, \pa_s$, combining \eqref{eq:non_fix_V2_eq} and \eqref{eq:non_fix}, 
we obtain the equations for $(\UU, \S) = \bar \VV + \td \VV_1 + \td \VV_2$ \eqref{eq:solu_full} similar to \eqref{eq:euler_ss} 
\bseq\label{eq:supp_EE0}
\beq
\bal
 \pa_s  \UU + (r-1)  \UU + (y + \UU ) \cdot \na   \UU + \alpha  \S \na \mr \S  &=  \cK_{\D, U},  \\
 \pa_s ( \mr \S + b(t)) + (r-1) ( \mr \S + b(t)) + (y + \UU) \cdot \na \mr \S + \alpha 
 \S \cdot \div(  \UU)  & = \cK_{\D, \S} + \cE_d , \\
\eal
\eeq
where $\cK_{\D, U}, \cK_{\D, \S}$ are the $U, \S$ component of $\cK$, and $\cE_d$ is given by 
\beq\label{eq:supp_EE0b}
 \cE_d = - \al (d-2) \f{ e^{-s}  U_1 \S }{  R_c +  e^{-s} y_1 } .
\eeq
\eseq
We keep using $ \S $ instead of $b(t) + \mr \S$ in the terms $\alpha   \S \na \mr \S, \alpha  \S \cdot \div(  \UU), \cE_d$. Since  $\f{d}{dt} b(t) + (r-1) b(t) =0$, we can rewrite the equation for $\mr \S$ as follows 
\[
 \pa_s  \mr \S  + (r-1)  \mr \S  + (y +  \UU) \cdot \na \mr \S + \alpha  \S \cdot \div(  \UU)   = \cK_{\D, \S} + \cE_d . 
\]

\vspace{0.1in}
\textbf{Step 2. Estimate of the functional.}
We impose the following bootstrap assumption on the support size 
\beq\label{eq:lem_boot_supp}
 		 e^{-(s - \sin)} S(s) < R_c / 4 .
\eeq
Using the continuity of the solution (see Remark \ref{rem:supp}), the assumption \eqref{eq:supp_assa} on $S(\sin)$, and $R_c / 4 > R_c^{1/2}$ for $R_c >16$, we obtain that \eqref{eq:lem_boot_supp} holds true for $s \in [\sin, \sin + \d]$ with some small $\d > 0$. 

Below, we show that \eqref{eq:lem_boot_supp} can be improved. We estimate 
\[
I(s) \teq \int (|  \UU|^2 + \mr \S^2 ) \chi_a(s, y) ,
\]
and show that $I(s) = 0$ by choosing $a(s)$ for $\chi_S$ in \eqref{eq:supp_notab} appropriately. 

From item (a) in Proposition \ref{prop:compact}, we have $\supp( \cK_m( \td \UU_1,  \td \S_1)), \supp( \cK_m( \wh \UU_1, \wh \S_1 ) \subset B(0, 8 R_4)$. Since $a(s) \geq 16 R_4$ for any $s \geq \sin$, we get $\chi_a(s, y) = 0$ for $|y| \leq 16 R_4$ and $\chi_a(s, y) \cdot \cK_{\D} = 0$ (see definition of $\cK_{\D}$ from \eqref{eq:supp_notaa}). Thus, $\cK_{\D}$ vanishes in the estimate of $I(s)$, and we obtain 
\beq\label{eq:supp_EE}
\bal
\f{1}{2}\f{d}{ds} I(s) & = \int 
 - \B( (y +  \UU ) \cdot \na \UU \cdot  \UU  
+ (y +  \UU ) \cdot \na \mr \S \cdot  \mr \S 
+ \al  \S ( \div( \UU) \cdot  \mr \S +  \UU \cdot \na \mr \S ) \B) \chi_a \\
 & \quad + 
\f{1}{2} \B(|  \UU|^2 + | \mr \S|^2 \B)  \pa_s \chi_a +  \cE_d \mr \S \chi_a
- (r-1) (|  \UU|^2 + | \mr \S|^2 ) \chi_a
 \teq J_1 + J_2 + J_3 + J_4.
 \eal
\eeq

\noindent \textbf{Estimate of $J_1$.}
To estimate $J_1$, we apply integration by parts. If the derivative $\na$ acts on $ y +  \UU, \mr \S$, we use $\na \mr \S = \na  \S$ from \eqref{eq:supp_notaa} and bound it using the energy. 
For $J_1$, we estimate
\[
\bal
J_1 &= \int \f{1}{2} \na \cdot ( ( y+  \UU) \chi_a ) (|  \UU|^2 + | \mr \S|^2) 
+ \na  (\al   \S \chi_a) \cdot (  \UU \mr \S) \\
&\leq  C_m  || ( \UU,  \S||_{ \dot W^{1, \infty}} I(s)
+ \int  \f{1}{2} (y +  \UU) \cdot \na \chi_a (|  \UU|^2 + | \mr \S|^2) 
+ \al  \S \na \chi_a  \cdot (  \UU \mr \S) .
\eal
\]
where $\| f \|_{\dot W^{1, \infty}} \teq || \na f ||_{L^{\infty}}$. Since $\chi_a$ is radially symmetric and $\pa_{\xi} \chi_a \geq 0$, using 
$|p q| \leq \f{1}{2}(p^2 + q^2)$,  we obtain 
\[
\f{1}{2} (y +  \UU) \cdot \na \chi_a (|  \UU|^2 + | \mr \S|^2) 
+ \al  \S \na \chi_a  \cdot (  \UU \mr \S)
\leq \f{1}{2} ( ( \xi +  U^R ) \pa_{\xi} \chi_a
+ \al|  \S|  \pa_{\xi} \chi_a  ) (|  \UU|^2 + | \mr \S|^2)  ,
\]
where $  U^R = \UU \cdot \ee_R$. Combining the above two estimates, we establish 
\beq\label{eq:supp_J1}
J_1  \leq  C_m  || ( \UU,  \S||_{ \dot W^{1, \infty}} I(s)
+ \int \f{1}{2} ( \xi +  U^R + \al |\S| ) \pa_{\xi} \chi_a 
 (|  \UU|^2 + | \mr \S|^2) .
\eeq

\noindent
\textbf{Estimate of $J_3$.} Next, we estimate $J_3$ defined in \eqref{eq:supp_EE}. Recall $\cE_d$ from \eqref{eq:supp_EE0b}.
Applying Lemma \ref{lem:prod} for $\S$ within the support of $U_1$, the assumption $e^{- s  } S(s) < \f{R_c}{4}, R_c > 1$ \eqref{eq:lem_boot_supp}, and $\kp_1/ 2 < 1$, we obtain 
\beq\label{eq:supp_J3_est1}
|  U_1 \S | \les |U_1| \cdot ||  \S||_{ \cX^m} S(s)^{\kp_1/2},
\quad 
|\cE_d| \les  || \S ||_{ \cX^m}  \f{ S(s)^{\kp_1/2} e^{-s} }{R_c}| U_1|
\les ||  \S ||_{ \cX^m} | U_1|.
\eeq

Applying the above estimates to $J_3$ defined in \eqref{eq:supp_EE}, we yield 
\beq\label{eq:supp_J3}
J_3 \les || \S ||_{ \cX^m}  I(s).
\eeq

\noindent \textbf{Estimate of $J_2$.} Next, we estimate $J_2$ in \eqref{eq:supp_EE}. 
Recall $\chi_a$ from \eqref{eq:supp_notab}. Note that 
\[
\pa_s \chi_a = - \xi \cdot \f{\pa_s a}{a^2} (\pa_{\xi} \chi_S)( \f{y}{a} ),  \quad 
\pa_{\xi} \chi_a = a^{-1} (\pa_{\xi} \chi_S)( \f{y}{a}) ,
\quad \pa_s \chi_a 
= - \xi \f{\pa_s a}{a} \cdot \pa_{\xi} \chi_a.
\]

We want to choose
\beq\label{eq:ODE_a}
 \f{ \pa_s  a(s) }{a(s)}\geq \f{1}{ \xi } ( \xi + U^R + \al | \S|),  \quad \xi = |y|, \ y  \in \supp( \pa_{\xi} \chi_a), \quad a(0) = 2  M,
\eeq
and then use the above formula of $\pa_s \chi_a$, and the estimates of $J_1$ \eqref{eq:supp_J1} and $J_2$ \eqref{eq:supp_EE} to obtain 
\beq\label{eq:supp_J12}
J_1 + J_2 \leq  C_m  || ( \UU, \S||_{ \dot W^{1, \infty}} I(s) 
+  \f{1}{2} \int ( \xi + U^R + \al |\S| - \xi \f{\pa_s a}{a} ) \pa_{\xi} \chi_a ( |\UU^2| + |\mr \S|^2 ) 
\leq C_m  || ( \UU, \S||_{\dot W^{1, \infty}} I(s) .
\eeq

For $J_4$ in \eqref{eq:supp_EE}, we have $J_4=-(r-1) I(s)$. Combining \eqref{eq:supp_J3}, \eqref{eq:supp_J12}, the assumption on the initial support \eqref{eq:supp_assa}, and the embedding \eqref{eq:Xm_Linf}, we obtain 
\beq\label{eq:supp_ODE}
 \f{d}{d s} I(s) \leq C_m  ( || ( \UU,  \S||_{ \cX^m} + 1 ) I(s) ,
 \quad I(\sin) = 0.
\eeq

Using the assumption \eqref{eq:supp_assb}, and the decomposition \eqref{eq:solu_full}, we obtain 
$|| ( \UU,  \S)(s) ||_{ \cX^m} \les_m 1 $ for $s \in [\sin, s_1]$. Using Gronwall's inequality, we obtain that for $s \in [\sin, s_1]$, we have $I(s) = 0 $ and $ \UU = 0, \mr \S = 0$ for $|y| \geq 2 a(s)$. Thus, $a(s)$ controls the support size $S(s) \leq 2 a(s)$. 

It remains to choose $a(s)$ to achieve \eqref{eq:ODE_a}. Using the assumption \eqref{eq:supp_assb} on $\td \UU_i, \td \S_i$, Lemma \ref{lem:prod}, and the decay estimates on $\bar \UU, \bar \S$ \eqref{eq:dec_U}, we obtain 
\[
  | \UU(y)| + \al | \S(y)| \les_m \la y \ra^{\kp_1/2},
\]
where $\kp_1 = \f{1}{4}$ \eqref{norm:Xk}. Since for $y \in  \supp( \pa_{\xi} \chi(y / a(s)))$, we get $R = |y| \in [a(s), 2 a(s)]$, we only need to choose $a(s)$ with
\[
\f{d}{d s} a(s) = a(s) ( 1  + C_m a(s)^{-1 + \kp_1/2}), \quad a(\sin) = 2 M , 
 \]
 which is equivalent to 
 \[
 \f{d}{d s} a(s) e^{-s} = C_m a(s)^{\kp_1/2}, \quad  \f{d}{d s} (a(s) e^{-s})^{\kp_1/2} = e^{-s (1 -\kp_1/2)} C_{m,2} .
 \]

Clearly, the ODE has a global solution with 
\beq\label{eq:lem_boot_supp2}
 a(s) e^{- s}  \leq  ( C_{m, 3} + (a(\sin) e^{-\sin})^{\kp_1/2} )^{2/\kp_1}
 \leq C_{m, 4}   + 2  a(\sin ) e^{-\sin} 
 \leq C_{m , 4 } + C R_c^{1/2}  \leq C_{m , 5} R_c^{1/2} .
\eeq

By choosing $R_{c, 0}$ sufficiently large, so that $R_{c, 0}^{1/2} > 16 C_{m, 5}$, we get $a(s) e^{-s} < R_c / 8$ for $R_c > R_{c, 0}$. Since $S(s) \leq 2 a(s)$, we close the bootstrap assumption \eqref{eq:lem_boot_supp}. Using \eqref{eq:lem_boot_supp2}, we conclude the proof.
\end{proof}

\begin{remark}\label{rem:supp} 
In the above proof, the bootstrap assumption \eqref{eq:lem_boot_supp} is only used  
\textit{qualitatively} to bound the denominator in $\cE_d$ \eqref{eq:supp_EE0b}, \eqref{eq:supp_J3_est1}. In fact we only need that $S(s)$ does not grow to $\infty$ in a very short time. In practice, for initial data $\td \VV_{1,in}, \AA$ satisfying \eqref{eq:thm_ass}, one can first construct local solution $\td \VV_i$ to \eqref{eq:non_fix}, \eqref{eq:non_fix_V2} following \cite[Section 4.3]{chen2024Euler} by regularizing the equations and  $\cE_d$ as $\cE_{d, \e} = - \al (d-2) \f{ e^{-s}  U_1 \S }{  ( (R_c +  e^{-s} y_1)^2 + \e^2 )^{1/2} } $ for $\e > 0$.  Since the denominator in $\cE_{d, \e}$ is bounded from below by $\e$ , we can repeat the above proof of Lemma \ref{lem:supp} without assuming \eqref{eq:lem_boot_supp}. The only difference in the estimate of the support is that we get $\e$-dependent constants in the estimates \eqref{eq:supp_J3_est1}, \eqref{eq:supp_J3}, which further results in the estimate \eqref{eq:supp_ODE} with a constant $C_{m,\e}$. Yet, we still obtain $I(s) = 0$ for $s \in [\sin, s_1]$ via Gr\"onwall, which further implies the estimate \eqref{eq:lem_supp}. 
This estimate \eqref{eq:lem_supp} is independent of the regularization parameter $\e > 0$. Using the standard Picard iteration and approximation, one can construct the local solution $\td \VV_1$ and $\td \VV_2$ \eqref{eq:non_fix_V2} satisfying \eqref{eq:supp_assb} and use the above argument to further obtain that the solution $(\UU, \S)$ obtained by \eqref{eq:solu_full} satisfies the support condition \eqref{eq:lem_supp}.
\end{remark}

\subsection{Estimates on $\cT_2$}\label{sec:T2}

We have the following decay estimates and smoothing effects for $\cK_m$.
\begin{lemma}\label{lem:decay_Km}
Recall the decomposition of $\cX^m$ \eqref{eq:dec_X}. Let $\Pi_s, \Pi_u$ be the projection of $\cX^m$ into the stable part $\cX^m_s$ and unstable part $\cX^m_u$, respectively. For any $f \in \cX^{m}$ 
and all $s \geq \sin$, we get 
\[
\bal
 || e^{\cL (s- \sin)} \Pi_s \cK_m f ||_{\cX^{m+3}} & \les_m  e^{-\eta_s (s-\sin)}  || f ||_{\cX^{m}}, \\
 || e^{ - \cL (s-\sin)} \Pi_u \cK_m f ||_{\cX^{m+3}} 
& \les_m  e^{\eta (s-\sin)}  || f ||_{\cX^{m}}. 
\eal
\]
\end{lemma}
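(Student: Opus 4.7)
My proof plan is to combine the enhanced smoothing property of $\cK_m$ (item (c) of Proposition~\ref{prop:compact}) with the semigroup decay estimates from Section~\ref{sec:semi_grow} applied at the higher regularity level $\cX^{m+3}$. The overall structure is: first, use smoothing to lift $f\in\cX^m$ into $\cX^{m+3}$ via $\cK_m f$; then apply the stable/unstable decay of $e^{\pm\cL s}$ on $\cX^{m+3}$.

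Specifically, I would first observe that, by Lemma~\ref{lem:Xm_chain}, $\|f\|_{\cX^0}\les_m \|f\|_{\cX^m}$, so item (c) of Proposition~\ref{prop:compact} yields
\begin{equation*}
\|\cK_m f\|_{\cX^{m+3}} \les_m \|f\|_{\cX^0} \les_m \|f\|_{\cX^m}.
\end{equation*}
Next, I would apply the spectral decomposition and decay estimates developed in Section~\ref{sec:semi_grow} at the higher regularity level $\cX^{m+3}$ (rather than at $\cX^m$). Since Theorem~\ref{thm:coer_est}, Proposition~\ref{prop:compact}, and Proposition~\ref{prop:semi} all hold for every $m'\geq m_0$ with the \emph{same} $\lambda,\eta,\eta_s$ (only the implicit constants depend on $m'$), the construction of $\sigma_\eta$, of the decomposition $\cX^{m+3}=\cX^{m+3}_s\oplus\cX^{m+3}_u$, and of the bounds \eqref{eq:decay_stab}, \eqref{eq:decay_unstab} are all available with the same decay rates at level $m+3$. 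Moreover, the unstable subspaces at different levels coincide (as the generalized eigenspaces associated with $\sigma_\eta$ are finite-dimensional and consist of smooth functions by Section~\ref{sec:smooth_unstab}), so $\Pi_s,\Pi_u$ in the statement are the same operators at level $m$ and at level $m+3$, hence bounded on $\cX^{m+3}$. Applying \eqref{eq:decay_stab} and \eqref{eq:decay_unstab} at level $m+3$ to $g\teq\cK_m f\in\cX^{m+3}$ yields
\begin{equation*}
\|e^{\cL(s-\sin)}\Pi_s \cK_m f\|_{\cX^{m+3}} \les_m e^{-\eta_s(s-\sin)}\|\cK_m f\|_{\cX^{m+3}}\les_m e^{-\eta_s(s-\sin)}\|f\|_{\cX^m},
\end{equation*}
and analogously for the unstable part with rate $e^{\eta(s-\sin)}$.

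The only nontrivial point is verifying that the spectral projections $\Pi_s,\Pi_u$ are compatible across the regularity levels $\cX^m\subset\cX^{m+3}$ so that one may unambiguously apply the $\cX^{m+3}$-decay bounds to the quantity $\Pi_s\cK_m f$ defined via the $\cX^m$-decomposition. This is genuine but mild: the unstable part $\cX^m_u$ is a finite direct sum of generalized eigenspaces $\ker((z-\cL)^{\mu_z})$ associated with the same finite set $\sigma_\eta\subset\sigma(\cL)$ (independent of the level), and these generalized eigenvectors lie in $\cX^{\infty}$ by the smoothness result of Section~\ref{sec:smooth_unstab}. Consequently $\cX^m_u=\cX^{m+3}_u$ as subspaces, and $\Pi_u$ restricted to $\cX^{m+3}$ agrees with the $\cX^{m+3}$-projection; $\Pi_s=I-\Pi_u$ inherits the same compatibility. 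Once this is in place the estimates are immediate from the enhanced smoothing of $\cK_m$ and the semigroup bounds established in Section~\ref{sec:semi_grow}.
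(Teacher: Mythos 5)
Your proof is correct and follows essentially the route the paper intends (the paper omits the proof, citing \cite[Lemma 4.5]{chen2024Euler}, but describes it as resting on exactly the ingredients you use: the enhanced smoothing $\cK_m:\cX^0\to\cX^{m+3}$ from Proposition~\ref{prop:compact}(c), the nestedness $\|f\|_{\cX^0}\les\|f\|_{\cX^m}$, and the stable/unstable semigroup bounds \eqref{eq:decay_stab}, \eqref{eq:decay_unstab} applied at level $m+3$). You also correctly identify and dispose of the one genuine subtlety — the compatibility of $\Pi_s,\Pi_u$ across the levels $\cX^m\supset\cX^{m+3}$ — via the finite-dimensionality and $\cX^\infty$-regularity of the unstable subspace from Section~\ref{sec:smooth_unstab} (equivalently, via the restriction property of the resolvent in the contour-integral definition of the spectral projection).
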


A similar result is proved in \cite[Lemma 4.5]{chen2024Euler}, whose proof is based on the semigroup estimates \eqref{eq:decay_stab}, \eqref{eq:decay_unstab}, the energy estimates of $\cL$ in Theorem \ref{thm:coer_est}, and does not further require any symmetry assumption. In particular, the proof is the same as that of \cite[Lemma 4.5]{chen2024Euler}, and we omit the proof.

The following Lemma for $\cT_2$ is the same as \cite[Lemma 4.6]{chen2024Euler}. 
\begin{lemma}\label{lem:T2}
For $( \wh \UU_1,  \wh \S_1) \in Y_2$ and any $s \geq \sin$,we have 
\[
 ||\cT_2( \wh \UU_1, \wh \S_1 )(s) ||_{\cX^{m+ 3}} \les_m e^{-\eta_s (s - \sin)} 
 ||(\wh \UU_1, \wh \S_1)||_{Y_2}. 
\]
\end{lemma}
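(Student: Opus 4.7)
The plan is to split $\cT_2(\wh \UU_1, \wh \S_1)$ into its three constituent pieces according to the definition~\eqref{eq:T2} and to bound each of them in the stronger norm $\cX^{m+3}$ via the smoothing estimates of Lemma~\ref{lem:decay_Km}, cashed against the decay budget recorded in the $Y_2$-norm. The key observation is the order of rates~\eqref{eq:decay_para}, namely $\eta_s < \eta < \lam_1 < \lam$, which makes every time integral below converge.

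For the forward (stable) piece $\td \VV_{2,s}(s)$, Lemma~\ref{lem:decay_Km} gives
$\|e^{\cL(s-\tau)} \Pi_s \cK_m f\|_{\cX^{m+3}} \les_m e^{-\eta_s(s-\tau)} \|f\|_{\cX^m}$, while the $Y_2$-control yields $\|(\wh \UU_1,\wh \S_1)(\tau)\|_{\cX^m} \leq e^{-\lam_1(\tau-\sin)} \|(\wh \UU_1,\wh \S_1)\|_{Y_2}$. Integrating the product $e^{-\eta_s(s-\tau)} e^{-\lam_1(\tau-\sin)}$ over $\tau \in [\sin, s]$ and using $\lam_1 > \eta_s$ gives a bound of order $(\lam_1-\eta_s)^{-1} e^{-\eta_s(s-\sin)} \|(\wh \UU_1,\wh \S_1)\|_{Y_2}$, which is the target rate.

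For the backward (unstable) piece $-\td \VV_{2,u}(s)$, with the projection understood as $\Pi_u$, I would use the second bound in Lemma~\ref{lem:decay_Km}: rewriting $e^{\cL(s-\tau)} = e^{-\cL(\tau-s)}$ for $\tau \geq s$, one has $\|e^{\cL(s-\tau)} \Pi_u \cK_m f\|_{\cX^{m+3}} \les_m e^{\eta(\tau-s)} \|f\|_{\cX^m}$. Since $\lam_1 > \eta$, the integral over $\tau \in [s,\infty)$ of $e^{\eta(\tau-s)} e^{-\lam_1(\tau-\sin)}$ converges and gives the stronger rate $e^{-\lam_1(s-\sin)}$, which is absorbed into $e^{-\eta_s(s-\sin)}$. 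Specializing this estimate at $s = \sin$ also yields $\|\td \VV_{2,u}(\sin)\|_{\cX^{m+3}} \les_m \|(\wh \UU_1,\wh \S_1)\|_{Y_2}$.

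For the third piece $e^{\cL(s-\sin)}\bigl(\td \VV_{2,u}(\sin)(1-\chi(y/(8R_4)))\bigr)$, the cutoff makes the argument supported in $\{|y| \geq 8 R_4\}$, which lies strictly outside the ball $B(0, 4 R_4)$ relevant for Proposition~\ref{prop:far_decay}. Multiplication by the smooth cutoff $1-\chi(\cdot/(8R_4))$ is a bounded operation on $\cX^{m+3}$ (a standard Leibniz check against the polynomial weight $\vp_{2(m+3)}^2 \vp_g$), so the above control on $\td \VV_{2,u}(\sin)$ in $\cX^{m+3}$ is preserved. Proposition~\ref{prop:far_decay}, applied at level $m+3$, then delivers decay at rate $e^{-\lam(s-\sin)}$, again stronger than the required $e^{-\eta_s(s-\sin)}$. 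Summing the three contributions yields the lemma. I do not expect a real obstacle here; the only point deserving care is that $\Pi_u$ is intended in the definition of $\td \VV_{2,u}$ so that the backward-semigroup bound of Lemma~\ref{lem:decay_Km} applies, and that the separations $\lam_1 - \eta_s > 0$, $\lam_1 - \eta > 0$ are uniform in $m$, which they are by~\eqref{eq:decay_para}.
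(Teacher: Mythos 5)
Your proof is correct and follows exactly the route the paper indicates (the paper defers to \cite[Lemma 4.6]{chen2024Euler}, whose argument is precisely the splitting of $\cT_2$ into $\td \VV_{2,s}$, $\td \VV_{2,u}$, and the far-field piece, bounded via Lemma~\ref{lem:decay_Km} and Proposition~\ref{prop:far_decay} with the convergent time integrals you compute from $\eta_s < \eta < \lam_1$). Your remark that $\Pi_u$ is the intended projection in~\eqref{eq:V2_form2c} is also the right reading of what is evidently a typo in the displayed definition.
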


The proof of \cite[Lemma 4.6]{chen2024Euler} is based on the decay estimates in Lemma \eqref{lem:decay_Km} for $\td \VV_{2, u}, \td \VV_{2, s}$ defined in \eqref{eq:V2_form2}, and decay estimates in Proposition \ref{prop:far_decay} for $e^{\cL (s-\sin)}( \td \VV_{2, u}(\sin) (1 - \chi(y / (8 R_4) )) )$. It does not require any symmetry assumption.
Thus, the proof is the same as that of \cite[Lemma 4.5]{chen2024Euler}, and we omit the proof.

\subsection{Proof of Proposition \ref{prop:onto}}\label{sec:fix_pt}

Before proving Proposition \ref{prop:onto}, we establish a few estimates. 
\subsubsection{Product estimates} 
We have the following product estimates for the nonlinear terms. 
\begin{lemma}\label{lem:non_main}
Let $\cN_{\UU}, \cN_{\S}$ be the nonlinear terms defined in \eqref{eq:non}.
For any $ k \geq m_0$, we have 
\[
\bal
 \B|\B| \vp_{2k} \vp_g^{1/2}  \B( \D^k \cN_{\UU}( \td \UU, \td \S ) 
+\td  \UU \cdot \na \D^k \td \UU + \al \td \S \na \D^k \td \S
 \B) \B|\B|_{L^2} \les_k || (\td \UU, \td \S) ||_{\cX^k}^2, \\ 
    \B|\B| \vp_{2k}  \vp_g^{1/2} \B( \D^k \cN_{\S}( \td \UU,\td \S )   + \td \UU \cdot \na \D^k \td \S + \al \td \S \div (\D^k \td \UU)
   \B) ||_{L^2} \les_k || ( \td \UU, \td \S) \B|\B|_{\cX^k}^2 . \\
   \eal
\]
\end{lemma}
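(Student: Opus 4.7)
The key observation is that the expressions in parentheses are commutators in which the top-order terms have already been subtracted out. Concretely,
\begin{align*}
\D^k \cN_{\UU} + \td \UU \cdot \na \D^k \td \UU + \alpha \td \S \na \D^k \td \S
&= -\bigl(\D^k(\td \UU \cdot \na \td \UU) - \td \UU \cdot \na \D^k \td \UU\bigr) \\
& \qquad - \alpha\bigl(\D^k(\td \S \na \td \S) - \td \S \na \D^k \td \S\bigr),
\end{align*}
and similarly for $\cN_{\S}$, modulo the lower-order term $\cE_d$. The plan is to expand each of these commutators by the Leibniz rule (Lemma \ref{lem:leib}) into a finite sum of cross terms of the form $\pa^{I_1} f \cdot \pa^{I_2} (\na g)$ with $|I_1|+|I_2|=2k$. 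Two extreme cases occur: (i) $|I_2|=2k$, which cancels against the subtracted term; and (ii) $|I_1|=2k$, which leaves $\D^k f \cdot \na g$ with $\na g$ requiring only an $L^\infty$ bound. All other terms have $1 \leq |I_1| \leq 2k-1$, so derivatives are genuinely distributed between the two factors.

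To bound each cross term in $L^2(\vp_{2k}^2 \vp_g)$, I would split the weight $\vp_{2k}\vp_g^{1/2} \asymp \la y \ra^{2k - 1 - \kp_1/2}$ between the two factors according to the derivative count: $\la y \ra^{|I_1|+a}$ on the first factor and $\la y \ra^{|I_2|+1 - \kp_1/2 - a - 1}$ on the second, for a suitable $a$. The factor carrying fewer derivatives is placed in a weighted $L^\infty$ via the embedding in Lemma \ref{lem:prod} (which in $d=2$ with $k \geq m_0 \geq 6$ gives $\|\la y \ra^{j+a}\pa^j f\|_{L^\infty} \les_k \|f\|_{\cX^k}$ for all $j$ in the relevant range), while the factor carrying more derivatives is placed in the weighted $L^2$ controlled by $\|\cdot\|_{\cX^k}$ via the interpolation Lemma \ref{lem:interp_wg} and the equivalence between $\D^m$ and $\na^{2m}$ norms (Lemma \ref{lem:norm_equiv}). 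This is exactly the bookkeeping carried out in \cite[Section 4.5]{chen2024Euler} for radial perturbations, and it extends verbatim to the non-radial case since the inner products in $\cX^k$ and the operator $\D^k$ are written in Cartesian coordinates.

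The additional lower-order term $\cE_d = -\alpha(d-2) e^{-s} U_1 \S / (R_c + e^{-s} y_1)$ appearing in $\cN_{\S}$ is handled separately and more cheaply: using the support bound \eqref{eq:lem_supp} from Lemma \ref{lem:supp}, we have $|R_c + e^{-s} y_1| \gtr R_c$ uniformly on $\supp U_1$, so the denominator behaves like a constant. Applying $\D^k$ to $\cE_d$ and expanding via Leibniz, the prefactor $e^{-s}/R_c$ is small and a direct product estimate bounds the result by $\les_k R_c^{-1}\|(\UU,\S)\|_{\cX^k}^2$, which is dominated by the desired $\|(\td\UU,\td\S)\|_{\cX^k}^2$ after absorbing the profile contribution $(\bar\UU,\bar\S)$.

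The main obstacle is the weight bookkeeping: one must check that for every allowed split $(|I_1|,|I_2|)$ there is enough room in the $\kp_1 > 0$ slack of $\vp_g$ to place one factor in $L^\infty$ via the embedding and the other in the $\cX^k$ norm via interpolation. Since $k \geq m_0$ is large and $d=2$, the embedding exponents are comfortably satisfied, so no new analytical ingredient is required beyond what is already in \cite{chen2024Euler}.
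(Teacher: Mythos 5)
Your proposal is correct and follows essentially the same route as the paper: identify the parenthesized expressions as Leibniz commutators, expand into cross terms $|\na^i \td\UU|\cdot|\na^{2k+1-i}\td\UU|$ with $1\leq i\leq 2k$, and bound each in $L^2(\vp_{2k}^2\vp_g)$ by a weighted product estimate --- your hand-rolled weight-splitting plus embedding/interpolation is precisely the content of \eqref{eq:Xm_prod} in Lemma \ref{lem:prod}, which the paper invokes directly. The only cosmetic difference is that the paper treats $\cE_d$ in a separate Lemma \ref{lem:err} (and later applies the present lemma to $\cN_\S-\cE_d$), whereas you absorb it here with the same support-based argument; either bookkeeping is fine.
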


\begin{proof}
We estimate  a typical term $\td \UU \cdot \na \td \UU$ in $\cN_{\UU}$ \eqref{eq:non}. 
Using the Leibniz rule, recalling the definitions of the weights $\vp_{2k}$ in Lemma \ref{cor:wg}  and $\vp_g$ (see \eqref{norm:Xk}), and using Lemma~\ref{lem:prod}, we have
\[
\bal
  \B\| \vp_{2k} \vp_g^{1/2} \B( \D^k( \td \UU \cdot \na \td \UU) - \td  \UU \cdot \na \D^k \td \UU \B) \B\|  
\les_k \sum_{ 1 \leq i \leq 2 k} \B| \B| \vp_{2k} \vp_g^{1/2} |\na^i \td \UU| \cdot |\na^{2k + 1-i} \td \UU | \B| \B|_{L^2} \les_k || \td  \UU ||_{\cX^m}^2.   \\
\eal
\]
Other terms in $\cN_U, \cN_{\S}$ (see \eqref{eq:non} )are estimated similarly. 
\end{proof}

We have the following estimate for the additional terms $\cE_d$ in \eqref{eq:euler_ss}
\beq\label{eq:err}
\cE_d(\UU, \S) =  - \al(d-2) \f{e^{-s} U_1 \S }{R_c +e^{-s} y_1}.
\eeq
\begin{lemma}\label{lem:err}
Suppose that $R_c>1$ and $\supp(\UU) \in B(0, S_u)$ with $S_u$ satisfying $S_u > 1, e^{-s} S_u < R_c / 2$. For any $m \geq 2$, we have 
\[
|| \cE_d(\UU, \S) ||_{\cX^m} \les_m  ( \f{e^{-s}}{R_c})^{1/2} || \UU||_{\cX^m} || \S ||_{\cX^m}
\]
\end{lemma}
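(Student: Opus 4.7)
The plan is to isolate the small prefactor $e^{-s}/R_c$ in $\cE_d$ and reduce the bound to a weighted product estimate. Write
\[
\cE_d = -\al(d-2)\, \f{e^{-s}}{R_c} \cdot f(y) \cdot U_1 \cdot \S, \qquad f(y) \teq \f{1}{\,1 + e^{-s} y_1 / R_c\,}.
\]
The support hypothesis $e^{-s} S_u < R_c/2$ together with $\supp(\UU) \subset B(0,S_u)$ forces $|e^{-s} y_1/R_c| < 1/2$ on $\supp(U_1)$, so $|f| \le 2$ there; moreover, since only $\pa_1$-derivatives of $f$ are nonzero, $|\pa^\beta f| \les_{|\beta|} (e^{-s}/R_c)^{|\beta|} \les_{|\beta|} 1$ uniformly on $\supp(U_1)$. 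Because every term in the Leibniz expansion of $\D^m(f\,U_1\,\S)$ carries a factor of $U_1$ or one of its derivatives, these pointwise bounds on $f$ will be the only thing needed about $f$.

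First I would handle the zeroth-order piece of the $\cX^m$ norm: the Sobolev embedding $\cX^m \hookrightarrow L^\infty$ from Lemma~\ref{lem:prod} (valid since $m \ge 2$) together with $|f|\les 1$ on $\supp(U_1)$ yields $\|\cE_d\|_{L^2(\vp_g)} \les (e^{-s}/R_c) \|U_1 \S\|_{L^2(\vp_g)} \les_m (e^{-s}/R_c) \|\UU\|_{\cX^m}\|\S\|_{\cX^m}$. For the top-order piece, expand
\[
\D^m(f\,U_1\,\S) = \sum_{|\beta|+|\gamma|+|\rho|=2m} c_{\beta\gamma\rho}\; \pa^\beta f \cdot \pa^\gamma U_1 \cdot \pa^\rho \S.
\]
Each summand is supported in $\supp(U_1)$, so $|\pa^\beta f|\les 1$ there, and the remaining two-factor product with $|\gamma|+|\rho|\le 2m$ is controlled in $L^2(\vp_{2m}^2\vp_g)$ by $\|\UU\|_{\cX^m}\|\S\|_{\cX^m}$ through the weighted product estimate of Lemma~\ref{lem:prod}.

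Summing the two contributions gives $\|\cE_d\|_{\cX^m} \les_m (e^{-s}/R_c)\,\|\UU\|_{\cX^m}\|\S\|_{\cX^m}$, which is strictly stronger than the claimed bound because $e^{-s}/R_c < 1/(2S_u) < 1$ implies $e^{-s}/R_c \le (e^{-s}/R_c)^{1/2}$, so the lemma follows. The argument is essentially routine once the factorization is in place; the main obstacle is only bookkeeping, namely verifying that the Leibniz expansion distributes the $2m$ derivatives in a way that Lemma~\ref{lem:prod} can absorb — this works because any derivative landing on $f$ costs nothing (it is $O(1)$ on $\supp(U_1)$) and thus effectively reduces the number of derivatives that must be split between $\UU$ and $\S$.
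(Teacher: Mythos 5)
There is a genuine gap at the step where you invoke Lemma~\ref{lem:prod} to control the two-factor products $\pa^\gamma U_1\,\pa^\rho\S$ in $L^2(\vp_{2m}^2\vp_g)$ "by $\|\UU\|_{\cX^m}\|\S\|_{\cX^m}$". The product estimate \eqref{eq:Xm_prod} only controls $\|\na^i f\,\na^j g\,\la y\ra^{i+j-\b}\vp_g^{1/2}\|_{L^2}$ for $\b\geq \kp_1/2$, i.e.\ it always comes with a deficit of $\la y\ra^{\kp_1/2}$ relative to the weight $\vp_{2m}^2\vp_g\asymp\la y\ra^{4m}\vp_g$ appearing in the $\cX^m$ norm (for the top-order term one needs $\b=0$). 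Equivalently, $\cX^m$ is not an algebra here: the embedding \eqref{eq:Xm_Linf} is $|f(y)|\les\|f\|_{\cX^m}\la y\ra^{\kp_1/2}$, with \emph{growth}, not a plain $L^\infty$ bound, so the product of two $\cX^m$ functions generically grows like $\la y\ra^{\kp_1}$ and fails to lie in $\cX^m$ by exactly a factor $\la y\ra^{\kp_1/2}$. Your claimed intermediate bound $\|\cE_d\|_{\cX^m}\les_m (e^{-s}/R_c)\|\UU\|_{\cX^m}\|\S\|_{\cX^m}$, with the full first power of the prefactor, is therefore not established (and is not what the lemma asserts).

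The missing idea — and the reason the lemma is stated with the exponent $1/2$ rather than $1$ — is to spend part of the small prefactor to buy the missing decay. On $\supp(U_1)\subset B(0,S_u)$ one has $1\les S_u^{\kp_1/2}\la y\ra^{-\kp_1/2}\les (e^sR_c)^{\kp_1/2}\la y\ra^{-\kp_1/2}$, using $S_u< \tfrac12 e^sR_c$. This converts the pointwise bound into
\[
|\cE_d|\les \B(\f{e^{-s}}{R_c}\B)^{1-\kp_1/2}\,|U_1\S|\,\la y\ra^{-\kp_1/2},
\]
and the function $\la y\ra^{-\kp_1/2}U_1\S$ \emph{is} now amenable to Lemma~\ref{lem:prod} (with $\b=\kp_1/2$), after distributing derivatives over the three factors and using $|\na^i\la y\ra^{-\kp_1/2}|\les_i\la y\ra^{-\kp_1/2-i}$. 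Since $1-\kp_1/2=3/4>1/2$, this yields the stated bound. Your treatment of $f(y)=(1+e^{-s}y_1/R_c)^{-1}$ and its derivatives on the support is fine and harmless; the weight bookkeeping for the $U_1\S$ product is where the argument breaks.
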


\begin{proof}
For $ y \in \supp(\UU)$, we have $\la y \ra \les S_u$, which along with the assumption $ S_u < \f{1}{2} e^{s } R_c $ 
imply
\[
|\cE_d | \les  \f{ e^{-s} S_u^{\kp_1/2}}{R_c} |  U_1 \S | \la y \ra^{- \f{ \kp_1}{2} }
\les \f{ e^{-s} (e^s R_c)^{\kp_1/2} }{R_c} 
 | U_1 \S | \la y \ra^{ - \f{ \kp_1}{2} }
 \les  (\f{e^{-s}}{R_c})^{1-\kp_1/2}  | U_1 \S |  \la y \ra^{ -\f{ \kp_1}{2} }.
\]
Applying the product rule to Lemma \ref{lem:prod} to $ \la y \ra^{-\kp_1/2} \UU_1 \S$ and using $\na^i \la y \ra^{-\kp_1/2} \les_i \la y \ra^{-\kp_1/2-i}$ , we prove
\[
 || \cE_d(\UU, \S) ||_{\cX^m} \les_m  (\f{e^{-s}}{R_c})^{1 -\kp_1/2} || \UU ||_{\cX^m}
  || \S ||_{\cX^m}
  \les_m  (\f{e^{-s}}{R_c})^{1/2} || \UU ||_{\cX^m}
  || \S ||_{\cX^m} ,
\]
where we have used $1 - \f{ \kp_1}{2} = \f{3}{4} > \f{1}{2}$ \eqref{norm:Xk} in the last inequality.

\subsubsection{Proof of Proposition \ref{prop:onto}}


Next, we prove Proposition \ref{prop:onto}, which is similar to that in \cite{chen2024Euler}. 
Recall the notations from the beginning of Section~\ref{sec:non}, in particular~\eqref{eq:init}, and \eqref{eq:T2}. From the assumption of Proposition~\ref{prop:onto}, we assume  $|| ( \wh \UU_1, \wh \S_1 ) ||_{Y_2} < \d_Y$ for some $\d_Y$ small to be determined

Define $\td \VV_2$ using \eqref{eq:non_fix_V2}, and then define $\td \VV_1$ as the solution of~\eqref{eq:non_fix}. Denote
\begin{subequations}\label{eq:non_E}
\begin{align}
  \td \VV_2 & =  \cT_2( \wh \UU_1, \wh \S_1 ) + e^{\cL(s - \sin)} (\AA, 0), 
\quad  \td \VV = \td \VV_1 +  \td \VV_2, \quad  \td \UU = \td \UU_1 +  \td \UU_2, \quad \td \S = \td \S_1 + \td  \S_2 , \\
  \wh E & = \| ( \wh \UU_1, \wh \S_1) \|_{Y_2} < \d_Y, \label{eq:non_E_hat} \\ 
 E_{m+1}& (s)  \teq  \| (\td \UU_1(s), \td \S_1(s)) \|_{\cW^{m+1}}, \label{eq:non_Ec}  \\
  \cE(s)  & = E_{m+1}(s) + \|  \td \VV_2(s) \|_{\cX^{m+2}}.
\end{align}
\end{subequations}

We will estimate the energy $E_{m+1}$. From the equivalence between $\cX^{m+1}$ and $\cW^{m+1}$ (see \eqref{norm:Xk} and \eqref{norm:Wk}), and the definition of $E_{m+1}$ \eqref{eq:non_Ec}, we get $|| (\td \UU_1, \td \S_1) ||_{\cX^{m+1}} \les_m  E_{m+1}$, which implies 
\beq\label{eq:non_err}
|| \td \VV_2 ||_{\cX^{m+2}} \les \cE, 
\quad || \td \VV_1 ||_{ \cX^{m+1}} \les E_{m+1},  \quad || \td \VV ||_{\cX^{m+1}} \les_m || \td \VV_1||_{\cX^{m+1}} 
 + || \td \VV_2||_{\cX^{m+1}}  \les_m \cE .
\eeq


Applying Lemma \ref{lem:T2} for $\cT_2$, Proposition \ref{prop:semiA} for $e^{\cL s} (\AA,0)$
, 
and using $\wh E $ \eqref{eq:non_E}, 
we obtain 
\[
|| \td \VV_2(s) ||_{\cX^{m+2}} \leq || \cT_2( \wh \UU_1, \wh \S_1)(s)||_{\cX^{m+2}} 
+ || e^{\cL (s-\sin)} (\AA,0) ||_{\cX^{m+2}}
\les_m \e^{-\eta_s (s-\sin)} \hat E +  e^{-\lam (s-\sin)}
|| \AA||_{\cX^{m+2}} .
\]
Recall $\hat E < \d_Y$ from \eqref{eq:non_E}, the parameters $\d_Y = \d^{2/3} , \d_A = \d^{5/9}$ with $\d \ll 1$ from \eqref{eq:para_fix}, $\eta_s < \lam$ from \eqref{eq:decay_para}, and  $|| \AA ||_{\cX^{m+2}} < \d_A$ from the assumption in Theorem \ref{thm:non}. From the above estimate, we further obtain 
\beq\label{eq:non_est_V2}
|| \td \VV_2(s) ||_{\cX^{m+2}} \les_m \d^{2/3} e^{-\eta_s(s-\sin)} 
+ \d^{5/9} e^{-\lam(s -\sin)}  
\les_m \d^{5/9} e^{- \eta_s (s -\sin)}   .
\eeq

Next, we prove $|| (\td \UU_1, \td \S_1)||_{Y_1} < 2 \d$. In view of $Y_1$ \eqref{norm:fix}, we impose the following bootstrap assumption
\beq\label{eq:non_boot}
E_{m+1}(s) <2  e^{-\lam_1 (s-\sin)} \d. 
\eeq
Since $E_{m+1}(\sin) < \d$ due to the assumption \eqref{eq:thm_ass}, by continuity of $\td \VV_1$, \eqref{eq:non_boot} holds at least for a short time.

Next, we verify the assumptions in Lemma \ref{lem:supp}. Under \eqref{eq:non_boot}, using \eqref{eq:non_est_V2}, for any $\d < \d_0$ with $\d_0 \ll_m 1$ sufficiently small, we obtain $ || \td \VV_1 ||_{\cX^m} + || \td \VV_2 ||_{\cX^m} < 1$. 
Recall the definition of the full solution $( \UU, \S)$ \eqref{eq:solu_full}. 
Since $16 R_4 \leq R_{c,0}^{1/2}< R_c^{1/2} $ from the assumption of Theorem \ref{thm:non}, the formula of $\td \VV_2(\sin ) = (\AA, 0) - \td \VV_{2,u}(\sin) \chi(y / (8 R_4))$ \eqref{eq:V2_form2a}, \eqref{eq:V2_init}, and the assumption on the initial data \eqref{eq:thm_ass}, we obtain 
\beq\label{eq:boot_supp2}
\bal
 \supp( \UU ) \cup \supp( \S - \S_{\infty}) 
 & \subset  \supp( \td \UU_1 + \AA +\bar \UU ) \cup \supp( \td \S_1 + \bar \S - \S_{\infty}) 
 \cup B(0, 16 R_4) \\
 &\subset B(0,  \max( R_c^{1/2}, 16 R_4) ) = B(0, R_c^{1/2}) .
 \eal
\eeq
Thus, under the bootstrap assumption \eqref{eq:non_boot}, using Lemma \ref{lem:supp}, we deduce 
\beq\label{eq:prop_supp2}
  e^{-s} S(s) < \bar C_m R_c^{1/2} < R_c / 4.
\eeq

Performing $\cW^{m+1} $ estimates on \eqref{eq:non_fix} for $\td \VV_1$, we get 
\bseq\label{eq:non_Y1}
\beq
   \f{1}{2} \f{d}{d s}   || (\td \UU_1, \td  \S_1) ||_{\cW^{m+1}}^2 
  = I_{\cL} + I_{\cN}  , \\
 \eeq
where the linear part $I_{\cL}$ and nonlinear part $I_{\cN}$ are given by 
\begin{align}
 I_{\cL}  &= \la (\cL - \cK_m) ( \td \UU_1, \td \S_1 ), ( \td \UU_1, \td \S_1) \ra_{\cW^{m+1}}  , \\
  I_{\cN}  &= \la \cN_U(\td \VV), \td \UU_1 \ra_{\cW^{m+1}}
 + \la \cN_{\S}(\td \VV), \td \S_1 \ra_{\cW^{m+1}} . 
 \end{align}
\eseq

For the linear part, using \eqref{eq:coer_Wk} and the notation \eqref{eq:non_Ec}, we obtain 
\beq\label{eq:non_lin}
\bal
 I_{\cL} \leq & - \lam_1 || (\td \UU_1, \td \S_1) ||_{\cW^{m+1}}^2
 = -\lam_1 E_{m+1}^2. 
 \eal
\eeq


\vspace{0.1in}
\paragraph{\bf{Estimate of nonlinear terms.}}
We treat the nonlinear terms perturbatively. Our goal is to obtain
\beq\label{eq:non_est}
 |I_{\cN}| \les_m \cE^2 E_{m+1} +   e^{-s/2} \d^2 E_{m+1}.
\eeq

To estimate $\cN_{\al}$ with $\al = U, \S$, we focus on the terms containing most derivatives. Using Lemma \ref{lem:non_main} and Cauchy-Schwarz inequality, and $\cE$ \eqref{eq:non_err} to absorb the error terms, we get 
\[
\bal
  I_{\cN, U} &=  \int \vp_g \vp_{2m+2}^2 \D^{m+1} \cN_{\UU}( \td \VV)  \cdot \D^{m+1} \td \UU_1  \\
 &  = - \int \vp_g \vp_{2m+2}^2 \B( \td  \UU  \cdot 
  \na \D^{m+ 1}  \td \UU   
  + \al \td  \S  \na \D^{m+1} \td \S  \B) 
  \D^{m+ 1}  \td \UU_{1}   
+ O_m( \cE^2  E_{m+1} ).
 \eal
\]

Recall the weight $\vp_{k} = \vp_1^k$ and its estimates: $ \vp_1 \asymp \la y \ra$ and $\vp_1 \gtr 1$ from Lemma \ref{cor:wg}. Using Lemma \ref{lem:prod} with $(f, g, i, j,m, n) \rsarrow (F, G,  2m + 3,0, m+2, m + 1)$, we get 
\[
 || \vp_{2m+2} \vp_g^{1/2} G  \na \D^{m+1}  F ||_{L^2} \les_m 
 || \la y \ra^{2m+2} \vp_g^{1/2} G  \na \D^{m+1}  F ||_{L^2} 
\les_m || G ||_{\cX^{m+1}} ||  F ||_{\cX^{m+2}} \les_m \cE^2,
\]
for $G =\td \UU$ or $\td \S$, and  $ F = \td  \UU_2, \td \S_2 $ \eqref{eq:non_err}. Thus, combining the above two estimates, we further estimate $I_{\cN, U}$ as follows 
\[
\bal
I_{\cN, U} & = 
- \int \vp_g \vp_{2m+2}^2  \B(  \td \UU  \cdot 
 \na  \D^{m+ 1} \td \UU_1   
  + \al  \td \S  \na \D^{m+1} \td \S_1   \B)  \D^{m+1} \UU_1 + O_m( \cE^2 E_{m+1}),
\eal
\]

Next, we estimate the transport term $\td \UU  \cdot \na \D^{m+ 1} \td  \UU_1 $. using Lemma \ref{cor:wg} for $\vp_k$, definition of $\vp_g$ \eqref{norm:Xk}, and \eqref{eq:non_err}, we obtain
\beq\label{eq:non_IBP}
\bal
\quad  | \na (\vp_k^2 \vp_g) | & \les_k \vp_k^2 \vp_g \la y \ra^{-1}, \\
 | \na (\vp_k^2 \vp_g F) |
& \les_k \vp_k^2 \vp_g ( | F \la y \ra^{-1}| + |\na F| )
\les_k \vp_k^2 \vp_g \cE, 
\eal
\eeq
for $F \in \{  \td \UU,\td \S \}$. Using the identity  $\td  \UU \cdot \na F \cdot F =\f{1}{2} \td \UU \cdot \na |F|^2$ and integration by parts, we get 
\[
 \B| \int \vp_g\vp_{2m+2}^2 \td \UU \cdot \na \D^{m+1} \td \UU_1  \D^{m+1} \td \UU_1 \B| 
 \les_m \cE \int \vp_{2m+2}^2 \vp_g |\D^{m+1} \td \UU_1|^2 \les \cE^2 E_{m+1}. 
\]

For the inner product $\la \cdot ,\cdot \ra_{\cX^m}$ involving less derivatives in $\la \cdot , \cdot \ra_{\cW^{m+1}}$ (see \eqref{norm:Wk}), using Lemma \ref{lem:prod} and \eqref{eq:non_err}, we get
\[
 || \cN_U( \td \VV ) ||_{\cX^m} \les_m || \td \VV ||_{\cX^{m+1}}^2 
 \les_m  \cE^2.
\]

Recall the inner products $\cW^{m+1}$ \eqref{norm:Wk}, $\cX^{m+1}$ \eqref{norm:Xk}. Combining the above estimates, we establish
\beq\label{eq:non_Nu}
\bal
  \la \cN_U(\td \VV), \td \UU_1 \ra_{\cW^{m+1}} 
 =& \e_{m+1} \mu_{m+1} I_{\cN, U}
+ O_m (  || \cN_U( \td \VV ) ||_{\cX^m} || \td \UU_1 ||_{\cX^m} ) \\
 = & - \e_{m+1} \mu_{m+1}  \int \vp_g \vp_{2m+2}^2  \B( 
   \al \td \S  \na \D^{m+1} \td \S_1  \B) \D^{m+1}  \td \UU_1     + O_m( \cE^2 E_{m+1}), 
\eal
\eeq
where $\e_{m+1}$ comes from the definition of $\cX^{m+1}$ \eqref{norm:Xk} and $\mu_{m+1}$ from that of $\cW^{m+1}$ \eqref{norm:Wk}. 

Next, we estimate $\cN_{\S}$ \eqref{eq:non}. For $\cE_d$, under the bootstrap assumption \eqref{eq:non_boot}, \eqref{eq:prop_supp2} implies that the assumption in Lemma \ref{lem:err} is satisfied. Applying Lemma \ref{lem:err} and $R_c > \d^{-4}$ \eqref{eq:para_fix}, we obtain 
\[
|| \cE_d ||_{\cX^{m+1}} \les_m e^{-s/2} R_c^{-1/2} || (\bar \UU , \bar \S) + \td \VV ||_{\cX^{m+1}}^2
\les_m (\cE+1)^2 e^{-s/2} \d^2
\les_m \cE^2 + e^{-s/2} \d^2,
\]
 where the last inequality follows from $e^{-s/2} $ for $s\geq \sin$ and that $\sin$ \eqref{eq:s_in} is an absolute constant. It follows 
\beq\label{eq:non_low}
\la \cE_d, \td \S_1 \ra_{\cW^{m+1}} \les_m ( e^{-s/2} \d^2 + \cE^2) E_{m+1}.
\eeq

For $\cN_{\S} - \cE_d$ (see the formula in \eqref{eq:non}), we estimate it using derivations similar to those of $\cN_{U}$. We get 
\beq\label{eq:non_Ns}
\bal
\la \cN_{\S}( \td \VV ) - \cE_d, \td \S_1 \ra_{\cW^{m+1}} 
& = -\e_{m+1} \mu_{m+1} 
 \int \vp_g \vp_{2m+2}^2    \al  \td \S \div( \D^{m+1} \td \UU_1)  \D^{m+1} \td \S_1   + O_m( \cE^2 E_{m+1}) .
 \eal
 \eeq

Next, we combine the estimates \eqref{eq:non_low}, \eqref{eq:non_Nu}, \eqref{eq:non_Ns}, and  estimate the remaining nonlinear terms in \eqref{eq:non_Nu}, \eqref{eq:non_Ns} together. 
Using the identities
\[
\bal
  \al \td \S \div(\D^{m+1} \td \UU_1) \D^{m+1} \td \S_1 
 + \al \td \S \na \D^{m+1} \td  \S_1 \cdot \D^{m+1} \td \UU_1
  & = \al \td \S \na \cdot ( \D^{m+1} \td \UU_1 \cdot   \D^{m+1} \td \S_1  ),
 \eal
\]
integration by parts, and the estimates \eqref{eq:non_IBP}, we establish 
\[
\bal
  |\la \cN_U, \td \UU_1 \ra_{\cW^{m+1}}
 +  \la \cN_{\S}, \td \S_1 & \ra_{\cW^{m+1}}  
  \les_m  \int  |\na( \vp_{2m+2}^2 \vp_g   \S_{tot}) |  \cdot |\D^{m+1} \td \UU_1 \D^{m+1} \td \S_1|
 + ( \cE^2 +  e^{- s/2} \d^2)  E_{m+1} \\
  & \les_m  \cE  \int \vp_{2m+2}^2 \vp_g |\D^{m+1} \td \UU_1 \D^{m+1} \td \S_1|   + \cE^2  E_{m+1}  \les_m ( \cE^2 +  e^{ -s/2} \d^2) E_{m+1},
 \eal
\]
and prove \eqref{eq:non_est}.

\vspace{0.1in}
\paragraph{\bf{Conclusion of the proof.}}
Combining \eqref{eq:non_lin} and \eqref{eq:non_est}, we establish the energy estimates 
\[
 \f{1}{2}\f{d}{d s} E_{m+1}^2 \leq -\lam_1 E_{m+1}^2 +C_m \cE^2 E_{m+1}
  + C_m  e^{-s/2} \d^2 E_{m+1}.
 \]

Using $\cE^2 \les E_{m+1}^2 + || \td \VV_2 ||_{\cX^{m+2}}^2$ from \eqref{eq:non_E} and the above estimate, we obtain
 \[
   \f{d}{d s} E_{m+1} \leq -\lam_1 E_{m+1} + C_m( E_{m+1}^2 + || \td \VV_2 ||_{\cX^{m+2}}^2 + e^{-s/2} \d^2 ) .
 \]
Consider the quantity $E_{\lam}(s) = E_{m+1}(s) e^{\lam_1 (s-\sin)}$. Since $\d < \d_0 <1$, $\eta_s < \lam <\f{1}{2}$ (see \eqref{eq:decay_para} and the bound of $\lam$ in Theorem \ref{thm:coer_est}) and $\sin$ \eqref{eq:s_in} is absolute, using the estimate of $\td \VV_2$ \eqref{eq:non_est_V2} and the above estimate, we obtain
\[
   \f{d}{d s }  E_{\lam}(s) \leq C_m e^{-\lam_1 (s-\sin)} E_{\lam}(s)^2 
    + C_m e^{- ( 2 \eta_s -\lam_1) (s-\sin)  } \d^{10/9}.
\]

Using the assumption in Theorem \ref{thm:non}, we have $E_{\lam}(\sin) = E_{m+1}(\sin) < \d $. For $\d < \d_0$, since $2 \d_s - \lam_1 > 0 $ (due to \eqref{eq:decay_para}), using the bootstrap assumption \eqref{eq:non_boot}, we deduce 
\[
E_{\lam}(s) \leq  \d + C_m \B( \int_{\sin}^{\infty} e^{-\lam_1 (s-\sin)} 4 \d^2 + 
  e^{- (2 \eta_s - \lam_1) (s-\sin)} \d^{10/9} d s \B)
  < \d + C_m ( \d^2 + \d^{10/9}) < ( 1 + C_m \d_0^{1/10} ) \d,
\]
Choosing $\d_0 \ll_m 1$ small enough so that $1 + C_m \d_0^{1/10}<2$, we deduce from the above estimate that for any $\d < \d_0$, the bootstrap assumption \eqref{eq:non_boot} is closed. 
We complete the nonlinear estimates and obtain \eqref{eq:non_boot}, \eqref{eq:prop_supp2} for all $s \geq \sin$.


The estimate \eqref{eq:prop_supp2} implies \eqref{eq:prop_supp} in Proposition \ref{prop:onto}. The first estimates in Proposition \ref{prop:onto} follows from \eqref{eq:non_boot} and the definitions of $E_{m+1}$ \eqref{eq:non_Ec} and $Y_2$ \eqref{norm:fix}. The last estimate in  Proposition \ref{prop:onto} follows from Lemma \ref{lem:T2}. Recall $\d_Y = \d^{2/3}$ from \eqref{eq:para_fix}. Since $Y_1$ is stronger than $Y_2$ \eqref{norm:fix}, using 
\[
  || \cT( \wh \UU_1, \wh \S_1) ||_{Y_2} \leq C_m   || \cT( \wh \UU_1, \wh \S_1) ||_{Y_1}
  \leq  C_m 2 \d \leq  ( 2 C_m \d_0^{1/3}) \d_Y,
\]
and by further choosing $\d_0$ with $2 C_m \d_0^{1/3}<1$, we prove the second estimate in Proposition \ref{prop:onto}. 
\end{proof}

\subsection{Proof of Proposition \ref{prop:contra}}\label{sec:contra}

Since the proof essentially follows the proof of \cite[Proposition 4.4 in Section 4.4.3]{chen2024Euler}, we only sketch some estimates, e.g. the proof of \eqref{eq:non_est_dif}.

By assumption of Proposition \ref{prop:contra}, let  $\wh \UU_{1, \ell},\wh \S_{1, \ell} \in Y_2$ such that $\wh E_{\ell} = || \wh \UU_{1, \ell}, \wh \S_{1, \ell} ||_{Y_2} < \d_Y$ for $\ell = a, b$. We use the subscript $a, b$ to denote two different solutions, and adopt the notations 
introduced in \eqref{eq:non_E}, e.g. $\cE_{\ell} , E_{m+1,\ell}$ for the energies. 
We construct $\td \VV_{2, \ell}$ using \eqref{eq:non_fix_V2}, $\td \VV_{1,\ell}$ using \eqref{eq:non_fix_V1}, and further denote $\td \VV_{\ell}$
\beq\label{eq:solu_diff}
 \td \VV_{2, \ell} = \cT_2( \wh \UU_{1,\ell}, \wh \S_{1,\ell} ) 
 + e^{ \cL( s -\sin)} (\AA, 0),
 \quad \td \VV_{1,\ell} = \cT(\wh \UU_{1,\ell}, \wh \S_{1,\ell} ) ,
 \quad \td \VV_{\ell} =  \td \VV_{1, \ell} +  \td \VV_{2, \ell}
\eeq
for $\ell = a, b$.  Additionally, we denote the difference of two solutions by a $\D$-sub-index:
\beq\label{eq:nota_dif}
\bga
\wh \VV_{1,\D} = ( \wh \UU_{1, \D}, \wh \S_{1,\D})  = ( \wh \UU_{1, a}, \wh \S_{1,a}) - ( \wh \UU_{1,b}, \wh \S_{1,b}), \quad \td \VV_{i, \D} = \td \VV_{i, a } -\td  \VV_{i, b}, i = 1,2 ,  \\
   \cN_{\al, \D} = \cN_{\al}( \td \VV_{ a} ) - \cN_{\al}(\td \VV_{ b}) ,\quad  \mathrm{ for \ } \al = U, \S , \\
   E_{m, \D} =  || (\td \UU_{1,\D}, \td \S_{1,\D})  ||_{\cX^m}  , \quad  \cE_{\D} = E_{m, \D} + || \td \VV_{2, \D}||_{\cX^{m+2}} .
 \ega
\eeq

By definition, we have 
\beq\label{eq:prop}
\bal
 & || \td \VV_{ \D} ||_{\cX^m} \les_m || \td \VV_{1, \D} ||_{\cX^m}
 + || \td \VV_{2, \D} ||_{\cX^m} \les_m \cE_{\D},
 \eal
\eeq

We prove Proposition \ref{prop:contra} by performing $\cX^m$ energy estimates of \eqref{eq:non_fix} on $\td \VV_{1,\D}$.  Following \eqref{eq:non_Y1}, \eqref{eq:non_lin} and using item (d) in Proposition \ref{prop:compact} for $\cL -\cK_m$, we obtain 
\beq\label{eq:non_Y2}
\bal
   \f{1}{2} \f{d}{d s}  || (\td \UU_{1,\D}, \td \S_{1,\D}) ||_{\cX^{m}}^2  
  & =  \la (\cL - \cK_m) (\td  \UU_{1,\D}, \td \S_{1,\D} ), ( \td \UU_{1,\D}, \td \S_{1,\D}) \ra_{\cX^{m}} 
 + I_{\cN, \D}  \\
 & \leq - \lam E_{m,\D}^2  + I_{M, \D} + I_{\cE, \D},  \\
 \eal
 \eeq
 where $I_{\cN,\D} = I_{M, \D} + I_{\cE, \D}$ are defined as follows 
 \[
   I_{M,\D}  = \la \cN_{U, \D}, \td \UU_{1,\D} \ra_{\cX^{m}}
 + \la \cN_{\S, \D} - \cE_{d, \D}, \td \S_{1,\D} \ra_{\cX^{m}},
 \quad I_{\cE, \D} = \la \cE_{d, \D}  , \td  \S_{1,\D} \ra_{\cX^{m}},
\]
 and $I_{M,\D}$ is the main nonlinear term, and $I_{\cE, \D}$ is the lower order nonlinear term.

We treat the nonlinear terms perturbatively, and our goal is to establish 
\beq\label{eq:non_est_dif}
|I_{M, \D}| \les_m  (\cE_{a} + \cE_b) \cE_{\D} E_{m,\D},
\quad I_{\cE, \D} \les_m e^{-s/2} \d E_{m, \D}^2 .
\eeq

The estimate of $I_{M, \D}$ in \eqref{eq:non_est_dif} is the same as that of $I_{\cN,\D}$ in \cite[Section 4.4.3]{chen2024Euler} with $\AA = 0$ and follows from standard energy estimates and integration by parts like the proof of \eqref{eq:non_est}, we omit the proof. We note that in the proof of these estimates, we only use the algebraic structure of $\D^m \cN_{\al}$ and do not require symmetry assumption on $(\UU, \S)$.

Recall the definition of $ \UU, \S$ from \eqref{eq:solu_full}, and $\cE_d$ from \eqref{eq:err}. Next, we estimate $I_{\cE, \D}$, which is not covered by \cite{chen2024Euler}. 
From Proposition \ref{prop:onto}, the solution $ \UU_{1,\ell}, \ell = a, b$ satisfies the support estimate \eqref{eq:prop_supp} for all time. Thus, $ \UU_{1, \D}, \UU_{1, \ell}$ satisfy the support assumption in Lemma \ref{lem:err}. Since $\cE_{d}( \UU, \S)$ \eqref{eq:err} is bilinear in $U_1, \S$, using Lemma \ref{lem:err}, we obtain 
\[
\bal
||\cE_{d, \D} ||_{\cX^m} & =  || \cE_{d}( \UU_a,  \S_a) 
 - \cE_{d}( \UU_b,  \S_b) ||_{\cX^m}
 =  || \cE_{d}( \UU_a -  \UU_b,  \S_a) 
 + \cE_{d}(  \UU_b,  \S_a -  \S_b) ||_{\cX^m} \\
& \les_m   ( \f{e^{-s}}{ R_c} )^{1/2} || (  \UU_{\D} ,  \S_{\D}) ||_{\cX^m} 
(  || (  \UU_a,   \S_a)||_{\cX^m} 
+ || (  \UU_b,   \S_b)||_{\cX^m}    ).
\eal
\]

 Recall $\cE_{\ell}$ from \eqref{eq:non_E} and its estimates \eqref{eq:non_err}. Using the decomposition \eqref{eq:solu_full}, we yield
\[
 (  \UU_{\D} ,  \S_{\D}) = \td \VV_{1,\D} + \td \VV_{2,\D},
 \quad  || ( \UU_{\ell} , \S_{\ell}) ||_{\cX^m} 
 \les_m || (\bar  \UU,  \bar \S) ||_{\cX^m} 
 + || \td \VV_{1,\ell} + \td \VV_{2,\ell} ||_{\cX^m} \les_m \cE_{\ell} + 1 .
 \]

 Since we require $\d^{-2} \ll R_c$ from \eqref{eq:para_fix}, we obtain 
 \[
||\cE_{d, \D} ||_{\cX^m} \les_m 
e^{-s/2} \d \cdot E_{m, \D} (\cE_a + \cE_b + 1).
 \]

Applying Lemma \ref{lem:T2} and the estimate of $\td \VV_{1, \ell}
= \cT(\wh \UU_{1,\ell}, \wh \S_{1,\ell} )$ \eqref{eq:prop_onto_est} in Proposition \ref{prop:onto} to $E_{m+1, \ell}, \ell= a, b$, we obtain 
\beq\label{eq:non_est_cE}
\bal
\cE_{\ell} &= E_{m+1, \ell} + || \cT_2( (\wh \UU_{1, \ell}, \wh \S_{1, \ell} ) ||_{\cX^{m+2}}
\leq 2 \d e^{- \lam_1 (s-\sin)} + C_m e^{-\eta_s (s-\sin)} || (\wh \UU_{1, \ell}, \wh \S_{1, \ell} ) ||_{Y_2}\\
&\leq 2 \d e^{- \lam_1 (s-\sin)} + C_m e^{-\eta_s (s-\sin)}  \d_Y 
\leq C_m e^{-\eta_s (s-\sin) } \d_Y  \les_m 1, 
\eal
\eeq
where we have used $\eta_s < \lam_1$ \eqref{eq:decay_para} and $\d_Y > \d$ \eqref{eq:para_fix}.
Applying the above estimates to $I_{\cE, \D}$ and using $s \geq \sin$ with $\sin$ \eqref{eq:s_in} being an absolute constant, we deduce
\beq\label{eq:non_est_err}
  | I_{\cE, \D}| \les_m ||\cE_{d, \D} ||_{\cX^m} || \td \S_{1, \D} ||_{\cX^m} 
\les_m e^{-s/2} \d \cdot  E^2_{m, \D} (\cE_a + \cE_b + 1) 
\les_m 
\d \cdot  E^2_{m, \D}.
\eeq

\vspace{0.1in}
\paragraph{\bf{Conclusion of the proof.}}
Combining the estimates \eqref{eq:non_Y2}, \eqref{eq:non_est_dif},  \eqref{eq:non_est_err}
\eqref{eq:decay_para}, we establish 
\[
 \f{1}{2}\f{d}{d s} E_{m, \D}^2 \leq - \lam E_{m, \D}^2 + 
 C_m (\cE_a + \cE_b) \cE_{\D} E_{m, \D} 
 + C_m \d E_{m, \D}^2 .
\]
Since $\lam_1 < \lam$, taking $\d_0$ small enough so that $C_m \d_0 < \lam - \lam_1$ and replacing $\cE_{\D}$ \eqref{eq:nota_dif}, we further obtain
\beq\label{eq:non_EE}
 \quad  \f{d}{d s} E_{m, \D} \leq - \lam_1 E_{m, \D} + 
 C_m (\cE_a + \cE_b)( E_{m, \D} + || \td \VV_{2,\D} ||_{\cX^{m+2}} ) ).
\eeq

Denote 
\[
P(s) = E_{m, \D}(s) e^{\lam_1 (s- \sin)} , \quad  Q = || (\wh \UU_{1, \D}, \wh \S_{1, \D}) ||_{ Y_2}.
\]

Since $ \td  \VV_{2, \ell}= \cT_2( \wh \UU_{1,\ell}, \wh \S_{1, \ell}) + e^{\cL (s-\sin)} (\AA, 0)$ (see \eqref{eq:solu_diff}), the initial data $\AA$ is the same for $\ell = a, b$,  and $\cT_2$ is linear, using Lemma \ref{lem:T2}, we obtain 
\[
 || \td \VV_{2, \D}||_{\cX^{m+2}} 
=|| \cT_2( \wh \VV_{1,\D}) ||_{\cX^{m+2}}
 \les_m   e^{-\eta_s (s-\sin)} || (\wh \UU_{1, \D}, \wh \S_{1, \D}) ||_{ Y_2}
 \les_m  e^{- \eta_s (s-\sin)} Q.
\]

Combining the above estimates and applying \eqref{eq:non_est_cE} for $\cE_{\ell}$ and $ \eta_s > 2 \eta_s - \lam_1 > 0$ due to \eqref{eq:decay_para}, we derive the ODEs for $P(s)$
\[
\bal
\f{d}{d s} P(s) 
&  \leq e^{\lam_1 (s-\sin)} C_m(\cE_a + \cE_b)  
(E_{m, \D} + || \td \VV_{2,\D} ||_{\cX^{m+2}} )\\
 & \leq C_m \d_Y( e^{-\eta_s (s - \sin)}P(s) 
+ e^{ - (2 \eta_s - \lam_1) (s - \sin)}  Q) 
 \leq C_m \d_Y e^{ - (2 \eta_s - \lam_1) (s - \sin)}( P(s) + Q).
\eal
\]
Since the initial data are the same for $(\td \UU_{1,\ell}, \td \S_{1,\ell} )$. we have $P(\sin) = 0$.  Choosing $\d_0$ small enough, using $\d_Y = \d^{2/3}, \d < \d_0 $, and solving the ODEs for $P(s) +Q$, we obtain 
\[
P(s) + Q \leq Q \exp( \int_{\sin}^{s} C_m \d_Y \e^{- (2 \eta_s - \lam_1) ( \tau-\sin) } d \tau )
\leq \exp( C_m \d_Y) Q < \f{3}{2} Q,
\]
for any $s \geq \sin$. Recall the definition of $Y_2$ from \eqref{norm:fix} and $E_{m, \D}$ from \eqref{eq:nota_dif}. We establish
\[
P(s) < \f{1}{2} Q, \quad \mathrm{ which \ is \ equivalent \ to } \quad || \td \UU_{1,\D}, \td \S_{1,\D} ) ||_{Y_2} < \f{1}{2} 
|| (\wh \UU_{1, \D}, \wh \S_{1, \D}) ||_{ Y_2} .
\]
We conclude the proof of Proposition \ref{prop:contra}.

\appendix

\section{Proof of angular repulsive property}\label{sec:angle_rep }

In this section, we prove \eqref{eq:rep_ag}. Firstly, we recall the properties of autonomous ODE for the profile from \cite{merle2022implosion1}. Denote 
\beq\label{eq:ODE_nota}
W =  - \f{\bar U(\xi)}{\xi}, \quad
S =  \al \f{ \bar \S(\xi) }{ \xi}, 
\quad x(\xi) = \log \xi, \quad l = \f{1}{\al} =\f{2}{\g-1},
\quad W_e = \f{l(r-1) }{d} ,
\eeq
with $d=2$.	
{\em Here, the quantities $(W, S, l)$ are the same as $(w, \s, l)$ in \cite{merle2022implosion1}.} The notation in~\eqref{eq:ODE_nota} is only used in this appendix, and it does not affect the rest of the paper.


From \cite[Section 2]{merle2022implosion1}, we have the following ODEs for $(W, S)$
\bseq\label{eq:ODE}
\beq\label{eq:ODEa}
  \f{d W}{d x} = - \f{\D_1}{\D} , \quad  \f{d S}{d x} = - \f{\D_2}{\D}, \\
\eeq
where $\D, \D_1, \D_2$ are defined as 
\begin{align}
  \D &= (1-W)^2 - S^2, \\
  \D_1 &= (W- W_1(S))( W- W_2(S) )(W- W_3(S)), \label{eq:ODEc}  \\
  \D_2 &= \f{S(l+d-1)}{l} (W- W_2^{-}(S)( W - W_2^+(S) ), 
\end{align}
\eseq
where $ \{ W_i(S) \}_{i=1}^3$ are the roots of the cubic polynomial $\D_1$ and $W_2^{\pm}$ are the roots for the quadratic polynomial $\D_2$.  From \cite[Lemma 2.3]{merle2022implosion1} for the roots of $W_i$ of $\D_1$, we have the following properties 
\beq\label{eq:ODE_prop}
\bal
 W_1(S) \leq 0 < W_e < W_2(S) \leq 1 < r \leq W_3(S) , \quad \forall S > 0.
\eal
\eeq

Using the relation \eqref{eq:ODE_nota} and the ODE \eqref{eq:ODEa}, we have 
\[
 - \pa_{\xi} \bar U + \f{\bar U}{\xi}
= - \xi \f{d}{d \xi} ( \f{\bar U}{\xi} ) = \f{d}{d x}W = -\f{\D_1}{\D} .
\]
 To obtain \eqref{eq:rep_ag}, using \eqref{eq:rep1}, we only need to show 
 \beq\label{eq:dwdx}
  0 \leq  \f{\bar U}{\xi}  - \pa_{\xi} \bar U , \mathrm{\quad   or \ equivalently \quad }  0 \leq -\f{\D_1}{\D}
\eeq
for $\xi \in [0, \xi_s]$. Denote
\footnote{
We use $(Q_S, Q_W)$ to denote the coordinates of a point $Q$ in the system of $(S, W)$.
} 
by $P_2 = (P_{2, S}, P_{2, W} )$ and $P_3 =(P_{3, S}, P_{3, W} )$ the sonic points, where $\D = \D_1 = \D_2 = 0$. 
 Note that the sonic point $S = P_{2, S}$ corresponds to $\xi = \xi_s$ with $\xi_s$ appearing in Lemma \ref{lem:profile}. From \cite[Lemma 2.6]{merle2022implosion1}, we have
\beq\label{eq:rel_loc}
P_{3, S} \leq P_{2, S}. 
\eeq

We have the following property of $W(S)$ for $S > P_{2, S}$ proved in  \cite[Lemma 3.1]{merle2022implosion1}.
\begin{lemma}\label{lem:inner}
Assuming \eqref{r-range}. We have $W(S) \in C^{\infty}( P_{2, S}, \infty)$ and 
\[
\bga
  W_2^-(S) < W(S) < W_2(S), \quad \forall S > P_{2, S},  \\
 W(S) = W_e + \f{W_e(W_e-1)(W_e - r)}{ d + 2} \f{1}{S^2} + O(S^{-4}), 
\quad \mathrm{as \ } S \to \infty.
\ega
\]

For $S_0> P_{2, S}$ large enough,  the curve $(S, W(S))$ with $S>S_0$ corresponds to the 
spherically symmetric profile $( \bar \S(|y|), \bar \UU(y))$ (see Theorem \ref{thm:merle:implosion}) defined on $|y| \in [0, y_0]$ for some $y_0>0$.
\end{lemma}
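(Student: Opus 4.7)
Lemma \ref{lem:inner} is essentially the statement of \cite[Lemma 3.1]{merle2022implosion1} specialized to $d=2$, so the plan is to reproduce the phase-plane analysis from that paper. The argument proceeds in three steps: (i) construct a local smooth solution of the autonomous system \eqref{eq:ODEa} through the sonic point $P_2 = (P_{2,S}, P_{2,W})$, where $\Delta = \Delta_1 = \Delta_2 = 0$; (ii) extend this solution globally into the interior region $S > P_{2,S}$ with the two-sided barrier estimate; and (iii) analyze the behavior of $W$ as $S \to \infty$ and identify the trajectory with the smooth spherical profile near $\xi = 0$.

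For step (i), I would expand $W(S)$ as a formal Taylor series in $(S - P_{2,S})$ at $P_2$ and determine its coefficients by substituting into $dW/dS = \Delta_1/\Delta_2$ and matching order by order; a compatibility condition at an intermediate order forces $r$ to lie in the discrete admissible set $\{r_k\}$ of Theorem \ref{thm:merle:implosion}. For step (ii), I would integrate $dW/dS = \Delta_1/\Delta_2$ starting from $P_2$ and show that the curves $W = W_2^\pm(S)$ act as barriers: they are nullclines of $\Delta_2$, and a sign analysis of $\Delta_1$ along them (combined with the fact that at $P_2$ the trajectory lies precisely between $W_2^-$ and $W_2$ via the double-root structure of $\Delta_2$ at the sonic point) prevents the trajectory from crossing either curve, giving $W_2^-(S) < W(S) < W_2(S)$ on all of $(P_{2,S}, \infty)$.

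For step (iii), I would substitute the ansatz $W(S) = W_e + a_2/S^2 + a_4/S^4 + \cdots$ into $dW/dS = \Delta_1/\Delta_2$, expand in powers of $1/S$ using the large-$S$ asymptotics of the roots $W_1, W_2, W_3, W_2^\pm$, and match coefficients. Only even powers of $1/S$ survive, reflecting the $C^\infty$ smoothness of the eventual profile in $\xi^2$, and the leading correction $a_2 = W_e(W_e - 1)(W_e - r)/(d+2)$ emerges at the first nontrivial order. The correspondence with a smooth spherical profile on a neighborhood $\xi \in [0, y_0]$ then follows by inverting the relations $\xi = \alpha \bar \Sigma / S$ and $\bar U = -\xi W$; the $C^\infty$ regularity at $\xi = 0$ is guaranteed by the even-power expansion together with the identification $W_e = (r-1)/(2\alpha) = -\partial_\xi \bar U(0)$ from \eqref{eq:rep3}.

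The main obstacle is step (i), the construction of a genuinely $C^\infty$ trajectory through the degenerate sonic point $P_2$: the formal Taylor series is explicit but its convergence is delicate, and the admissibility of $r$ needed to ensure convergence is precisely the reason the exceptional set $\Gamma$ appears in Theorem \ref{thm:merle:implosion}. This is the technical core of Sections 3--4 of \cite{merle2022implosion1}, to which a detailed treatment would defer.
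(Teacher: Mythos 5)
The paper offers no proof of this lemma: it is imported verbatim from \cite[Lemma 3.1]{merle2022implosion1} (specialized to $d=2$), and the surrounding text says exactly that. Your proposal correctly recognizes the provenance and sketches the reference's phase-plane argument, deferring the genuinely hard step --- the construction of a $C^\infty$ trajectory through the degenerate sonic point $P_2$ --- to \cite{merle2022implosion1}, which is in effect what the paper itself does, so there is no gap relative to the paper. Two small inaccuracies in your sketch are worth flagging, though neither matters since they sit inside the deferred step: the upper barrier in the conclusion is $W_2(S)$, a root of the cubic $\Delta_1$, not $W_2^+(S)$, a root of the quadratic $\Delta_2$ (the two barriers are nullclines of \emph{different} factors: on $W=W_2(S)$ one has $dW/dS=\Delta_1/\Delta_2=0$, while on $W=W_2^-(S)$ the field is vertical); and in \cite{merle2022implosion1} the discreteness of the admissible speeds $\{r_k\}$ arises from the shooting/matching problem in the \emph{exterior} region $S<P_{2,S}$, whereas the Taylor recursion at $P_2$ is responsible for the exceptional set $\Gamma$ of adiabatic exponents, so your attribution of both to a compatibility condition at $P_2$ conflates two distinct mechanisms.
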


Now, we are ready to prove \eqref{eq:dwdx}.

\begin{proof}[Proof of \eqref{eq:dwdx}]

Since $\lim_{\xi \to 0} \f{\bar \S(\xi)}{\xi} = \infty, \lim_{ \xi \to \infty} \f{\bar \S}{\xi} =0$, from Lemma \ref{lem:inner}, the solution curve $(S, W(S)), S \geq P_{2,S}$ corresponds to $(S(x), W(x))$ with $x \in (-\infty, x_s]$, where $x_s = \log \xi_s$ (see \eqref{eq:ODE_nota}). To prove \eqref{eq:dwdx}, using \eqref{eq:ODE}, we only need to show 
\beq\label{eq:dwdx_pf1}
\D(S, W(S)) < 0, \quad  \D_1(S, W(S)) > 0, \quad S >  P_{2, S}. 
\eeq

Using Lemma \ref{lem:inner}, we know $ \D < 0$ for $S > S_0$ with $S_0 > P_{2,S}$ large enough. Since the solution is $C^{\infty}$, from \eqref{eq:ODEa}, the solution curve $(S, W(S))$ can only pass the sonic line $\D = 0$ at the triple roots $P_2$ or $P_3$. For $S > P_{2, S}$, from \eqref{eq:rel_loc}, we get $S > P_{2, S} \geq P_{3, S}$, which implies $(S, W(S)) \neq P_2, P_3 $. 
Thus, by continuity, we get extend the validity of the inequality $\D(S, W(S)) < 0$ for $S > S_0$ to $S > P_{2, S}$.  

For the second inequality in \eqref{eq:dwdx_pf1}, from the order of $W_i(S)$ in \eqref{eq:ODE_prop} and \eqref{eq:ODEc},  we only need to show 
\beq\label{eq:dwdx_pf2}
W_1(S) < W(S) < W_2(S), \quad  \mathrm{for \ } S >  P_{2, S}.
\eeq

The inequality $W(S) < W_2(S)$ follows from Lemma \ref{lem:inner}. Next, we show $W(S) > W_e>0$.  Using the asymptotics in Lemma \ref{lem:inner}, 
$ W_e (W_e - 1) (W_e - r) > 0$ from \eqref{eq:ODE_prop}, and the continuity of $W(S)$, there exists $\e >0$ small and $c, c_1$ large enough with $P_{2, S} < c < c_1$  such that 
\beq\label{eq:ODE_contra0}
W(S) \geq W_e, \  \forall S \geq c,  \quad W(c) = W_e + 2 \e, \quad W(c_1) = W_e + \e. 
\eeq

If there exists $c_2 \in ( P_{2, S}, c)$ such that $W(c_2) = W_e + \e $, we assume that $c_2$ is the largest one in $[c_2, c]$ with such a property. Then we get 
\beq\label{eq:ODE_contra1}
W(S) \geq W_e > 0, \quad S \in [c_2, \infty).
\eeq

Using Mean value theorem, we get $ \f{d W}{d S}(c_3) = 0$ for some $c_3 \in  [c_2, c_1]$. From \eqref{eq:ODE}, we yield 
\[
\D_1(c_3, W(c_3)) = \D_2 \f{ d W}{d S} \B|_{S=c_3} = 0, 
\] 
which implies $W(c_3) = W_i(c_3)$ for some $i \in \{1,2,3\}$ due to \eqref{eq:ODEc}. 
From Lemma \ref{lem:inner} and the order of $W_i$ \eqref{eq:ODE_prop}, we get $W(c_3)<W_2(c_3) \leq W_3(c_3)$. Thus, we must obtain $W(c_3) = W_1(c_3) \leq 0$, which along with $c_3 \geq c_2$ contradicts \eqref{eq:ODE_contra1}. Therefore, we yield $W(S) > W_e + \e$ for $S \in (P_{2,S}, c]$,
which along with \eqref{eq:ODE_contra0} implies $W(S) \geq W_e > 0 \geq W_1(S ) $ for $S > P_{2, S}$. We prove \eqref{eq:dwdx_pf2}, which  further implies  \eqref{eq:dwdx_pf1}.
\end{proof}

\section{Functional inequalities}

In this appendix, we first present a few estimates and embedding inequalities in Appendix \ref{app:embed} , which are similar to those in \cite[Appendix C]{chen2024Euler}. 
Then we prove Lemmas \ref{lem:non_IBP}, \ref{lem:norm_equiv_coe} 
in Appendix \ref{sec:norm_dif}. In Appendix \ref{sec:add_pf}, for the sake of completeness, we attach the proof of Proposition \ref{prop:far_decay}.

\subsection{Main terms and embedding inequalities}\label{app:embed}

We use the following Lemma to compute the main terms in the $H^{2m}$ estimates, which is similar to \cite[Lemma C.1]{chen2024Euler} and \cite[Lemma A.4]{buckmaster2022smooth}, but we do not assume radial symmetry for $g, G$.

\begin{lemma}\label{lem:leib}
Let $f$ be a radially symmetric scalar function over $\R^d$ and $\FF(y) = F(|y|) \ee_R = (F_1, .., F_d)$ be a radially symmetric vector field over $\R^d$. Denote $\xi = |x|$. For any function $g$ and vector $G$, which are not necessary radially symmetric, we have
\bseq
\begin{align}
   | \D^m( \FF \cdot \na g )- \FF \cdot \na \D^m g - 2 m \f{ F}{ \xi } \cdot \D^m g
   - 2 & m  \xi \pa_\xi  (\f{F}{\xi}) \pa_{ \xi \xi} \D^{m-1} g | \notag \\
  &  \les_m  \sum_{ 1 \leq j \leq 2 m - 1 } | \na^{2m +1-j} \FF| \cdot |\na^{j} g | , 
 \label{eq:lei_Fg} \\
  |\D^m (f \na g ) - f \na \D^m g - 2 m \pa_\xi f \pa_\xi \na \D^{m-1} g  | 
 & \les_m  \sum_{ 1 \leq j \leq 2 m - 1 } | \na^{2m +1-j} f| \cdot |\na^{j} g| , 
 \label{eq:lei_fg} \\
  | \D^m( f \div(G ) - f \div( \D^m G) - 2 m \pa_\xi f  \pa_\xi \div (\D^{m-1} G)| 
 & \les_m \sum_{ 1 \leq j \leq 2 m - 1 } | \na^{2m +1- j} f| \cdot |\na^{j} G|  .
 \label{eq:lei_fG}
 \end{align}
 \eseq
\end{lemma}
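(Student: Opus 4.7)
The plan is to derive all three identities from the same two building blocks: the Leibniz-type expansion $\Delta^m(fh) = \sum_{|\alpha|=m} c_\alpha \sum_{\beta \le 2\alpha} \binom{2\alpha}{\beta} \pa^\beta f \, \pa^{2\alpha-\beta} h$ with $c_\alpha = \binom{m}{\alpha}$, together with the commutator rules $[\Delta, M_h] = 2\na h \cdot \na + \Delta h$ for scalar multiplication and $[\Delta, D_R] = 2\Delta$ for the Euler field $D_R = y \cdot \na = \xi \pa_\xi$. Radial symmetry of $f$ (or of the scalar $F$) will then collapse every factor $\na f \cdot \na$ to $\pa_\xi f\, \pa_\xi$ via $\na f = \pa_\xi f\, \ee_R$, turning the Cartesian main terms into the radial form displayed in the lemma.

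I would begin with \eqref{eq:lei_fg}, which is the template. In the Leibniz expansion of $\Delta^m(fh)$ the $|\beta|=0$ contribution is $f \Delta^m h$. For $|\beta|=1$ a direct bookkeeping using the identity $2\alpha_i c_\alpha = 2m\, c_{\alpha-e_i}$ reassembles $\sum_i \pa_i f \sum_\alpha 2\alpha_i c_\alpha \pa^{2\alpha-e_i} h$ into the clean expression $2m\, \na f \cdot \na \Delta^{m-1} h$. Setting $h = \na g$ componentwise and invoking radiality of $f$ converts this into the stated main term $2m \pa_\xi f\, \pa_\xi \na \Delta^{m-1} g$. The $|\beta| \ge 2$ contributions produce a remainder of the form $\sum_{2 \le k \le 2m} |\na^k f|\, |\na^{2m+1-k} g|$, which matches the right-hand side of \eqref{eq:lei_fg} after reindexing $j = 2m+1-k$. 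Identity \eqref{eq:lei_fG} follows by the same argument applied to the scalar $\div G$, using that $\div$ commutes with $\Delta$ so that $\pa_\xi \Delta^{m-1}(\div G) = \pa_\xi \div(\Delta^{m-1} G)$.

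For \eqref{eq:lei_Fg} the key reduction is $\FF \cdot \na g = (F/\xi)\, D_R g$, which lets me apply the scalar product formula with $h = F/\xi$ and target $u = D_R g$. Iterating $[\Delta, D_R] = 2\Delta$ gives $\Delta^m D_R g = D_R \Delta^m g + 2m \Delta^m g$, and the same identity at level $m-1$ combined with $\pa_\xi(\xi \pa_\xi v) = \pa_\xi v + \xi \pa_{\xi\xi} v$ produces $\pa_\xi \Delta^{m-1} D_R g = (2m-1)\pa_\xi \Delta^{m-1} g + \xi \pa_{\xi\xi} \Delta^{m-1} g$. Substituting back, and using $(F/\xi)\, D_R g = \FF \cdot \na g$, generates exactly the three main terms $\FF \cdot \na \Delta^m g$, $2m(F/\xi)\Delta^m g$, and $2m\, \xi \pa_\xi(F/\xi)\, \pa_{\xi\xi} \Delta^{m-1} g$.

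The only genuine obstacle is absorbing the residual $2m(2m-1)\pa_\xi(F/\xi)\pa_\xi \Delta^{m-1} g$ generated by the $[\Delta, D_R]$ commutator, together with the $|\beta| \ge 2$ tail from $\Delta^m((F/\xi) \cdot D_R g)$, into $\sum_{1 \le j \le 2m-1} |\na^{2m+1-j}\FF|\, |\na^j g|$. This reduces to the pointwise bounds $|\pa_\xi^k(F/\xi)| \lesssim_k |\na^k \FF|$, which follow by induction from the Cartesian formula $\pa_i \FF_k = (F/\xi)\delta_{ik} + \pa_i(F/\xi) y_k$; smoothness of $\FF = F(\xi)\ee_R$ at the origin forces $F/\xi$ to be smooth in $\xi^2$, so no singularity is introduced at $\xi = 0$. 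This isolation of the radial dissipative term $2m\, \xi \pa_\xi(F/\xi) \pa_{\xi\xi} \Delta^{m-1} g$ is precisely what will drive the coercive estimate in Theorem \ref{thm:coer_est} through the $D_\xi(\bar F/\xi)\pa_{\xi\xi}\Delta^{m-1}$ contribution, which is why it is displayed separately from the lower-order remainder in the statement.
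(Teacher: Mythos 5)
Your treatment of \eqref{eq:lei_fg} and \eqref{eq:lei_fG} is exactly the paper's: isolate the $|\beta|=0,1$ terms of the Leibniz expansion of $\D^m$, reassemble the $|\beta|=1$ piece into $2m\,\na f\cdot\na\D^{m-1}(\cdot)$, and use $\na f=\pa_\xi f\,\ee_R$ to radialize it. For \eqref{eq:lei_Fg}, however, you take a genuinely different route. The paper never factors $\FF\cdot\na g=(F/\xi)D_Rg$; it keeps the Cartesian Leibniz term $2m\,\pa_iF_j\,\pa_i\pa_j\D^{m-1}g$ and converts it using $\pa_iF_j=\f{x_ix_j}{\xi}\pa_\xi(\f{F}{\xi})+\d_{ij}\f{F}{\xi}$ together with $x_ix_j\pa_i\pa_j=\xi^2\pa_\xi^2$ and $\d_{ij}\pa_i\pa_j=\D$. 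This splits \emph{exactly} into the two displayed main terms, with no extra residual, and the $|\beta|\geq 2$ tail is automatically of the admissible form $|\na^{2m+1-j}\FF|\,|\na^jg|$ because the coefficients are literal entries of $\na^k\FF$. Your $[\D,D_R]=2\D$ route is correct in its algebra but pays twice: it produces the residual $2m(2m-1)\pa_\xi(F/\xi)\pa_\xi\D^{m-1}g$, and every remainder coefficient is a derivative of the quotient $F/\xi$ rather than of $\FF$, so you must separately relate the two.

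That is where your argument has a concrete error. The stated bound $|\pa_\xi^k(F/\xi)|\les_k|\na^k\FF|$ is false: for $k=0$ it reads $|F/\xi|\les|\FF|$, which fails at the origin since $|\FF|=|F(\xi)|\sim\xi$ while $F/\xi\to F'(0)$; for $k=1$, taking $F(\xi)=\xi^3$ gives $\pa_\xi(F/\xi)=2\xi$ but $|\na\FF|\sim\xi^2$ near $0$. Moreover, even where such a bound held, pairing $|\na\FF|$ with $|\na^{2m-1}g|$ is not an admissible remainder term — the sum in \eqref{eq:lei_Fg} only allows $|\na^{2m+1-j}\FF|\,|\na^jg|$ with $1\leq j\leq 2m-1$, so the factor multiplying $|\na^{2m-1}g|$ must be $|\na^2\FF|$. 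The correct statement, which is what your residual actually requires, is the shifted bound $|\na^k(F/\xi)|\les_k|\na^{k+1}\FF|$; this does hold for $d\geq2$ (e.g.\ for $k=0,1$ by contracting $\pa_iF_j=\f{x_ix_j}{\xi}\pa_\xi(F/\xi)+\d_{ij}\f{F}{\xi}$ and its gradient against $\d_{ij}$ and $x_ix_j/\xi^2$ and taking differences, with the origin handled by the parity of $F/\xi$), and one also needs $|\na^k(F/\xi)|\,|y|\les|\na^k\FF|+|\na^{k-1}(F/\xi)|$ to absorb the $y$-factor coming from $D_R$ in the $|\beta|\geq2$ tail. With these corrected estimates your proof closes, but as written the key auxiliary inequality is off by one derivative and does not deliver remainder terms of the required form; the paper's direct Cartesian computation avoids the issue entirely.
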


\begin{proof}
Below, we use the standard summation convention on repeated indices. Using Leibniz rule, we obtain 
\[
 |\D^m (\FF \cdot \na g)-  \FF \cdot \na \D^m g  - 2 m \pa_i F_j \pa_j \pa_i \D^{m-1} g|
 \les_m  \sum_{1 \leq j \leq 2m -1} |\na^{2m+1 - j} \FF | \cdot |\na^j g|.
\]
We further simplify $2 m \pa_i F_j \pa_j \pa_i \D^{m-1} g$. Since $\FF$ is a radially symmetric vector, we obtain 
\[
\pa_i F_j = \pa_i ( \f{F(\xi)}{\xi} x_j ) 
= \f{x_i x_j}{\xi} \pa_\xi ( \f{F}{ \xi } ) + \d_{ij} \f{F}{ \xi } .
\]

Using the identity 
\[
  x_i x_j \pa_i \pa_j q
 = ( (x_i \pa_i) (x_j \pa_j) - x_i \pa_i  ) q
 = ( (\xi \pa_{\xi})^2 - \xi \pa_{\xi} ) q
 = \xi^2 \pa_\xi^2 q ,   \quad \d_{ij} \pa_i \pa_j q = \D q,
\]
for any function $q$, we prove \eqref{eq:lei_Fg}. 

For \eqref{eq:lei_fg}, using Leibniz rule, we obtain 
\[
 |\D^m ( f  \na g)-  f \na \D^m g  - 2 m \pa_i f \pa_i \na \D^{m-1} g|
  \les_m  \sum_{1 \leq j \leq 2m -1} |\na^{2m+1 - j} f | \cdot |\na^j g|.
\]

Since $f$ is a radially symmetric function, we get
 \[
 \pa_i f \pa_i \na \D^{m-1} g = \f{x_i}{\xi} \pa_\xi f \pa_i \na \D^{m-1} g  
 = \pa_\xi f \pa_\xi  \na \D^{m-1} g  .
 \]

For \eqref{eq:lei_fG}, we estimate
\[
 |\D^m ( f  \div(G))-  f \div( \D^m G )  - 2 m \pa_i f \pa_i \div( \D^{m-1} G)|
  \les_m  \sum_{1 \leq j \leq 2m -1} |\na^{2m+1 - j} f | \cdot |\na^j g| .
\]

Since $f$ is a radially symmetric function, using the identity
\[
\pa_i f = \f{x_i}{\xi} \pa_\xi f , \quad 
\pa_i f \pa_i \div( \D^{m-1} G)
= \pa_\xi f \pa_\xi \div( \D^{m-1} G),
\]
we complete the proof.
\end{proof}

The following lemmas have been proved in \cite[Lemmas C.2, C.3, C.4]{chen2024Euler}. Note that one does not assume radial symmetry for the function in \cite[Lemmas C.2, C.3, C.4]{chen2024Euler} and the following lemmas.

\begin{lemma}\label{lem:interp_wg}
Let $\d_1 \in (0, 1], \d_2 \in \R$. For integers $n\geq 0$ and sufficiently smooth functions $f$ on $\R^d$, we denote
\beq\label{eq:interp_J}
 \b_j \teq 2 j \d_1 + \d_2, 
 \quad  I_n \teq \int |\na^n f |^2 \la x \ra^{\b_n} d n, 
 \quad J_n \teq I_n + I_0.  
 \eeq
where we let $\la x \ra = (1 + |x|^2)^{1/2}$. 
Then, for $n<m$ and for any $\e>0$, there exists a constant  $C_{\e,n,m} = C(\e,n,m,\d_1,\d_2,d)>0$ such that 
\begin{subequations}
\label{eq:interp_convex}
\begin{equation}
I_n \leq \e I_m + C_{\e,n,m} I_0. 
\label{eq:interp_convex:a}
\end{equation}  
Moreover, for $p < r < q$ we have the following interpolation inequality on $\R^d$:
\begin{equation}
J_r \leq C_{p,q,r} J_p^{\al} J_q^{1-\al}, 
\qquad \mbox{where} \qquad
\al = \tfrac{q-r}{q-p},
\label{eq:interp_convex:b}
\end{equation}
\end{subequations}
for some constant $C_{p,q,r} = C(p,q,r,\d_1,\d_2,d)>0$.
\end{lemma}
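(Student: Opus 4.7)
The plan is to prove part (a) first, through a consecutive interpolation inequality of the form $I_n \leq \varepsilon I_{n+1} + C_\varepsilon I_{n-1}$ obtained by one integration by parts, then iterate to reach any $I_m$; part (b) will follow by extracting a sharper log-convexity statement from the Cauchy--Schwarz step that precedes Young's inequality.

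The structural fact I would exploit is the arithmetic identity $\beta_n = \tfrac{1}{2}(\beta_{n-1} + \beta_{n+1})$, immediate from $\beta_j = 2j\delta_1 + \delta_2$. Starting from $I_n = \int |\nabla^n f|^2 \langle x\rangle^{\beta_n}\,dx$ and moving one derivative from the second $\nabla^n f$ onto the other factor (with no boundary contribution, by the qualitative decay of $f$; this may be justified by truncation and density), I would get schematically
\[
I_n \;=\; -\int \nabla^{n-1} f \cdot \nabla^{n+1} f \, \langle x\rangle^{\beta_n}\,dx \;-\; \int \nabla^{n-1} f \cdot \nabla^n f \cdot \nabla \langle x\rangle^{\beta_n}\,dx.
\]
Splitting $\langle x\rangle^{\beta_n} = \langle x\rangle^{\beta_{n-1}/2}\langle x\rangle^{\beta_{n+1}/2}$ and applying Cauchy--Schwarz, the first term is bounded by $I_{n-1}^{1/2} I_{n+1}^{1/2}$. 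For the second, $|\nabla \langle x\rangle^{\beta_n}| \lesssim \langle x\rangle^{\beta_n - 1}$; the assumption $\delta_1 \in (0,1]$ is then exactly what makes $\beta_n - 1 \leq \beta_n - \delta_1 = \beta_{n-1}/2 + \beta_n/2$, so the same kind of weight split gives a bound of order $I_{n-1}^{1/2} I_n^{1/2}$. Young's inequality and absorbing the $I_n$-piece into the left-hand side yield the consecutive bound $I_n \leq \varepsilon I_{n+1} + C_\varepsilon I_{n-1}$.

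From here, part (a) follows by $m-n$ iterations: insert the consecutive bound into itself, with a geometrically decreasing sequence of small parameters, so that the coefficient of $I_m$ stays below the prescribed $\varepsilon$ and the repeatedly spawned lower-order pieces are collapsed back into a single $C_{\varepsilon,n,m} I_0$ by another round of the same estimate at lower levels (terminating when the derivative count hits zero). For part (b), the Cauchy--Schwarz step before invoking Young already gives $I_n \lesssim I_{n-1}^{1/2} I_{n+1}^{1/2} + I_{n-1}$; adding $I_0$ and using (a) to estimate $I_{n-1} \lesssim J_{n+1}$, I obtain $J_n^2 \lesssim J_{n-1}\, J_{n+1}$, i.e., discrete log-convexity of $n \mapsto \log J_n$ up to an additive constant. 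For integer endpoints $p<r<q$, this immediately yields $J_r \leq C\, J_p^\alpha J_q^{1-\alpha}$ with $\alpha = (q-r)/(q-p)$; if real indices are needed, one extends via standard real/complex interpolation between two integer endpoints of the underlying weighted $L^2$ Sobolev scale.

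The main technical obstacle I anticipate is not any single step but the combinatorial bookkeeping: contracting multi-indices so that all the IBP error terms fall into one of the two schematic forms above, and being honest about the dependence of $C_{\varepsilon,n,m}$ on the various parameters ($d$, $\delta_1$, $\delta_2$, $n$, $m$) as one iterates the consecutive inequality. The hypothesis $\delta_1 \leq 1$ is tight for this proof and its role should be flagged explicitly; if it failed, the weight-derivative term would not close against $I_n$ and an additional loss would have to be absorbed elsewhere.
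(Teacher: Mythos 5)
Your proposal is correct, and it is essentially the standard argument for this kind of weighted Gagliardo--Nirenberg statement; note that the paper itself does not prove Lemma~\ref{lem:interp_wg} but cites it from \cite[Lemma C.2]{chen2024Euler}, where the proof proceeds exactly along your lines (one integration by parts, the weight split $\langle x\rangle^{\beta_n}=\langle x\rangle^{\beta_{n-1}/2}\langle x\rangle^{\beta_{n+1}/2}$ exploiting $\beta_n=\tfrac12(\beta_{n-1}+\beta_{n+1})$, absorption of the weight-derivative term using $\delta_1\le 1$, iteration for \eqref{eq:interp_convex:a}, and discrete log-convexity of $J_n$ for \eqref{eq:interp_convex:b}). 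Two small remarks: the indices $p<r<q$ are integers here, so your closing aside about real/complex interpolation is unneeded; and in the iteration for \eqref{eq:interp_convex:a} the cleanest bookkeeping is to first upgrade the consecutive bound to $I_k\le\varepsilon I_{k+1}+C_\varepsilon I_0$ by induction on $k$ (absorbing $I_k$ on the left, which uses the qualitative finiteness of all $I_k$), and then telescope upward from $n$ to $m$ with a product of small parameters --- your description is loose on this point but the idea is right.
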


\begin{lemma}\label{lem:norm_equiv}
Let $\d_1 \in (0, 1], \d_2 \in \R$, and define $\b_n = 2 n \d_1 + \d_2$. Let $\td \vp_{n}$ be a weight satisfying the pointwise properties $\td \vp_{n}(y) \asymp_n \la y \ra^{\b_{n}}$ and $|\na \td \vp_{n}(y)| \les_n \la y \ra^{\b_{n}-1}$. Then, for any $\e > 0$ and $n\geq 0$, there exists a constant $C_{\e,n} = C(\e,n,\d_1,\d_2,d)>0$ such that\footnote{Throughout the paper we denote by $|\nabla^k f|$ the Euclidean norm of the $k$-tensor $\nabla^k f$, namely,  $|\nabla^k f| = (\sum_{|\alpha|=k} |\partial^\alpha f|^2 )^{1/2}$.}
\[
 \int |\na^{2n} f|^2 \td \vp_{2n}
 \leq (1 + \e)\int  | \D^n f |^2  \td \vp_{2n} + 
 C_{\e, n} \int |f|^2 \la y \ra^{\b_0},
\]
for any function $f$ on $\R^d$ which is sufficiently smooth and has suitable decay at infinity.
\end{lemma}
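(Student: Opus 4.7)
My plan is to establish the lemma via repeated integration by parts, reducing $\int |\Delta^n f|^2 \tilde\varphi_{2n}$ to $\int |\nabla^{2n} f|^2 \tilde\varphi_{2n}$ modulo error terms of strictly lower differential order, which will then be absorbed via the interpolation inequality in Lemma \ref{lem:interp_wg}. The base case $n=0$ is immediate from $\tilde\varphi_0 \asymp \langle y \rangle^{\beta_0}$, so the real content is for $n \geq 1$.

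The key identity (which I would prove first, for sufficiently smooth rapidly decaying $f$) is an integration-by-parts lemma: for any ordered tuples $(i_1,\dots,i_{2n})$ and $(j_1,\dots,j_{2n})$ of indices in $\{1,\dots,d\}$,
\[
\int \partial_{i_1}\cdots \partial_{i_{2n}} f \cdot \partial_{j_1} \cdots \partial_{j_{2n}} f \cdot \tilde\varphi_{2n} \, dy
= \int \partial^{\mu} f \cdot \partial^{\nu} f \cdot \tilde\varphi_{2n} \, dy + \mathcal{R}(f),
\]
for certain multi-indices $\mu,\nu$ with $|\mu|=|\nu|=2n$ determined by the pairing, and where $\mathcal{R}(f)$ is a finite sum of integrals of the form $\int \partial^{\sigma} f \cdot \partial^{\tau} f \cdot \partial^{\rho}\tilde\varphi_{2n} \, dy$ with $|\sigma|+|\tau| \leq 4n-1$ and $|\rho|\geq 1$. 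This follows by transferring a derivative $\partial_{i_k}$ from the first factor across to the second, producing one term with the derivative on $f$ and one term with the derivative on $\tilde\varphi_{2n}$.

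Next, I would expand
\[
\int |\Delta^n f|^2 \tilde\varphi_{2n}
= \sum_{i_1,\dots,i_n,j_1,\dots,j_n} \int \partial_{i_1i_1}\cdots\partial_{i_ni_n} f \cdot \partial_{j_1j_1}\cdots\partial_{j_nj_n} f \cdot \tilde\varphi_{2n},
\]
and, in each summand, apply the integration-by-parts identity iteratively to transfer one copy of each index $i_k$ across to the second factor, turning the expression into $\int \partial_{i_1\cdots i_n j_1\cdots j_n} f \cdot \partial_{i_1\cdots i_n j_1\cdots j_n} f \cdot \tilde\varphi_{2n}$ modulo error terms $\mathcal{R}$. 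After summing over all index tuples and comparing with the expansion of $\int |\nabla^{2n} f|^2 \tilde\varphi_{2n}$ as a sum over ordered tuples, the two leading terms match identically, yielding
\[
\int |\nabla^{2n} f|^2 \tilde\varphi_{2n} \,dy = \int |\Delta^n f|^2 \tilde\varphi_{2n} \, dy + \sum_{\ell} \int \partial^{\sigma_\ell} f \cdot \partial^{\tau_\ell} f \cdot \partial^{\rho_\ell} \tilde\varphi_{2n} \, dy,
\]
with $|\sigma_\ell|+|\tau_\ell| \leq 4n-1$ and $|\rho_\ell|\geq 1$.

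To conclude, I would estimate each error integral by Cauchy–Schwarz, using $|\partial^{\rho}\tilde\varphi_{2n}(y)| \les_n \langle y\rangle^{\beta_{2n}-|\rho|}$ (which follows inductively from the two pointwise assumptions on $\tilde\varphi_{2n}$). Since $\beta_j = 2j\delta_1+\delta_2$ is concave in $j$ with slope $2\delta_1 \leq 2$, the weight produced by an error with $|\sigma|+|\tau| = 4n-1-k$ and $|\rho|=k+1$ is comparable to $\langle y\rangle^{\beta_{|\sigma|}+\beta_{|\tau|}}$ up to harmless factors, which is precisely the form handled by Lemma \ref{lem:interp_wg}. Applying \eqref{eq:interp_convex:a} with the integer $|\sigma|$ or $|\tau|$ strictly less than $2n$ absorbs each such error into $\varepsilon\int |\nabla^{2n} f|^2\tilde\varphi_{2n} + C_{\varepsilon,n}\int |f|^2 \langle y\rangle^{\beta_0}$, yielding the claimed bound. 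The main obstacle I anticipate is the combinatorial bookkeeping in the iterated swap of Step 3: to recover the full quadratic form $\int|\nabla^{2n}f|^2\tilde\varphi_{2n}$ one must verify that every ordered $2n$-tuple appears the correct number of times after summation, and one must track carefully that every commutator term arising from the swaps lands one derivative onto the weight $\tilde\varphi_{2n}$ (and not one short), so that the hypothesis $|\nabla\tilde\varphi_{2n}|\les \langle y\rangle^{\beta_{2n}-1}$ can indeed compensate the lost derivative on $f$.
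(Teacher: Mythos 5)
Your overall strategy — iterated integration by parts to convert $\int |\D^n f|^2 \td\vp_{2n}$ into $\int |\na^{2n}f|^2\td\vp_{2n}$ plus lower-order errors, then Cauchy--Schwarz and the interpolation inequality \eqref{eq:interp_convex:a} to absorb the errors — is exactly the argument used for the companion Lemma \ref{lem:norm_equiv_coe} in Appendix \ref{sec:norm_dif} (the paper itself only cites \cite{chen2024Euler} for Lemma \ref{lem:norm_equiv}, but the technique is identical with $B\equiv 1$). However, there is one genuine flaw in your error analysis. You allow remainder integrals $\int \pa^{\sigma}f\cdot\pa^{\tau}f\cdot\pa^{\rho}\td\vp_{2n}$ with $|\rho|\geq 1$ arbitrary, and you assert that $|\pa^{\rho}\td\vp_{2n}(y)|\les_n \la y\ra^{\b_{2n}-|\rho|}$ ``follows inductively from the two pointwise assumptions.'' It does not: the hypotheses control only $\td\vp_{2n}$ and $\na\td\vp_{2n}$ pointwise, and no bound on $\na^2\td\vp_{2n}$ (or even its existence) can be deduced from bounds on a function and its gradient. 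In the actual application the weight is $\vp_1^{4m}\vp_g$ with $\vp_1$ produced by Lemma \ref{cor:wg}, which likewise only guarantees $\vp_1\asymp\la y\ra$ and $|\na\vp_1|\les 1$, so second derivatives of the weight are genuinely unavailable with the required uniformity.

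The repair is the bookkeeping discipline used in the proof of Lemma \ref{lem:norm_equiv_coe}: once a derivative lands on the weight, \emph{stop} integrating that term by parts. Transferring the indices one at a time ($\pa_{i_1}$ off the first factor, then $\pa_{j_1}$ off the second, then $\pa_{i_2}$, etc.) ensures that every error term produced has exactly one derivative on $\td\vp_{2n}$ and exactly $4n-1$ derivatives on $f$ split as $2n$ and $2n-1$, i.e.
\[
|\cE| \les_n \int |\na^{2n}f|\cdot|\na^{2n-1}f|\cdot|\na\td\vp_{2n}|
\les_n \B(\int |\na^{2n}f|^2\la y\ra^{\b_{2n}}\B)^{1/2}\B(\int|\na^{2n-1}f|^2\la y\ra^{\b_{2n}-2}\B)^{1/2},
\]
and $\b_{2n}-2\leq \b_{2n-1}$ precisely because $\d_1\leq 1$, so Lemma \ref{lem:interp_wg} applies. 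Finally, note that after bounding $|\cE|\leq \e\int|\na^{2n}f|^2\td\vp_{2n}+C_{\e,n}\int f^2\la y\ra^{\b_0}$ you still need to move the $\e$-term to the left-hand side and divide by $1-\e$ to reach the stated form with prefactor $1+\e$; this is routine but should be said.
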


\begin{lemma}\label{lem:prod}
Let $\kp_1 = \f{1}{4}$ be the parameter defined in \eqref{norm:Xk}. Let $m \geq d/2$.
For $f : \R^d \to \R$ which lies in $\cX^m$ and $0 \leq i \leq 2m - d $, we have
\beq\label{eq:Xm_Linf}
 |\na^i f (y)| \les_m  \| f \|_{\cX^m}  \la y \ra^{-i + \frac{\kp_1}{2}},
\eeq
pointwise for $y \in \R^d$.
Moreover, for any $i\leq 2m$, $j \leq 2n$, with $ i \leq 2m - d$ or $ j \leq 2n -d$, and for any $\b \geq \kp_1/2$, we have 
\beq\label{eq:Xm_prod}
 \| \na^i f \; \na^{j} g \; \la y \ra^{i+j -\b  } \vp_g^{1/2} \|_{L^2} 
\les_{m, n,\b} \| f \|_{\cX^m} \| g \|_{\cX^n}.
\eeq
\end{lemma}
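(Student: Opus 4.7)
The plan is to first upgrade the $\cX^m$-control of $\D^m f$ to weighted $L^2$ control of every $\na^j f$, $0 \leq j \leq 2m$, and then to extract the pointwise decay \eqref{eq:Xm_Linf} via a rescaled Sobolev embedding on dyadic annuli. Combining the definition \eqref{norm:Xk} of $\cX^m$ with Lemma \ref{lem:norm_equiv}, applied with $\d_1 = 1$, $\d_2 = -2 - \kp_1$ and $\td\vp_{2m} = \vp_{2m}^2 \vp_g$, yields $\int |\na^{2m} f|^2 \la y\ra^{4m - 2 - \kp_1}\, dy \les \|f\|_{\cX^m}^2$. Interpolating this top-order bound against the low-order $L^2$ contribution through Lemma \ref{lem:interp_wg} then produces
\[
I_j \teq \int |\na^j f|^2 \la y\ra^{2j - 2 - \kp_1}\, dy \les_{m,j} \|f\|_{\cX^m}^2, \qquad 0 \leq j \leq 2m.
\]

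To prove \eqref{eq:Xm_Linf} at a point $y$ with $R = |y| \geq 1$, I will rescale to a fixed dyadic annulus. Set $g(w) \teq f(Rw)$ on $A_1 \teq \{1/2 < |w| < 2\}$, and denote $A_R = \{R/2 < |y| < 2R\}$. A change of variables together with the pointwise equivalence $\la y\ra \asymp R$ on $A_R$ gives, for every $k$ with $i + k \leq 2m$,
\[
\int_{A_1} |\na_w^{i+k} g|^2 \, dw = R^{2(i+k) - d} \int_{A_R} |\na^{i+k} f|^2 \, dy \les_{m,i,k} R^{2 + \kp_1 - d} \|f\|_{\cX^m}^2,
\]
a bound uniform in $k$. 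The hypothesis $i \leq 2m - d$ guarantees that we may pick $m' > d/2$ with $i + m' \leq 2m$; applying the Sobolev embedding $H^{m'}(A_1) \hookrightarrow L^\infty(A_1)$ to $\na^i g$ and undoing the rescaling via $|\na^i f(y)| = R^{-i} |\na_w^i g(w)|$ produces
\[
|\na^i f(y)| \les_m R^{-i + 1 + \kp_1/2 - d/2} \|f\|_{\cX^m}.
\]
In dimension $d = 2$ this is exactly $\la y\ra^{-i + \kp_1/2} \|f\|_{\cX^m}$; for $d \geq 3$ the bound is strictly stronger, so \eqref{eq:Xm_Linf} holds in all cases $d \geq 2$. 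The bounded region $|y| \les 1$ is handled by the same Sobolev embedding on a fixed ball around the origin.

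With \eqref{eq:Xm_Linf} in hand, the product estimate \eqref{eq:Xm_prod} reduces to applying the pointwise bound on the factor whose derivative order satisfies the Sobolev constraint. Assuming $i \leq 2m - d$ and substituting $\vp_g = \la y\ra^{-2 - \kp_1}$,
\[
\| \na^i f\, \na^j g\, \la y\ra^{i+j - \b} \vp_g^{1/2}\|_{L^2}^2 \les_m \|f\|_{\cX^m}^2 \int |\na^j g|^2 \la y\ra^{2j - 2\b - 2}\, dy .
\]
The hypothesis $\b \geq \kp_1/2$ ensures $2j - 2\b - 2 \leq 2j - 2 - \kp_1$, so the integral on the right is dominated by $I_j \les \|g\|_{\cX^n}^2$ from the first step; the symmetric case $j \leq 2n - d$ is handled by swapping $f$ and $g$. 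The main technical obstacle is securing the sharp exponent $\kp_1/2$ in the pointwise decay: a naive Sobolev embedding on a unit ball centered at $y$ only produces $\la y\ra^{-i + 1 + \kp_1/2} \|f\|_{\cX^m}$, which is one power of $|y|$ too weak to close the product estimate at the threshold $\b = \kp_1/2$. The rescaling to the fixed annulus $A_1$ is precisely what extracts the dimension-dependent factor $R^{-d/2}$ from the change of measure and recovers the claimed sharp exponent.
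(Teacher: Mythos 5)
Your proof is correct. The paper itself does not prove this lemma — it defers to \cite[Lemmas C.2--C.4]{chen2024Euler} — so there is no in-paper argument to compare against; your route (Lemma \ref{lem:norm_equiv} plus the interpolation Lemma \ref{lem:interp_wg} to get $\int |\na^j f|^2 \la y\ra^{2j-2-\kp_1} \les_m \|f\|_{\cX^m}^2$ for all $j\leq 2m$, followed by a rescaled Sobolev embedding on dyadic annuli to extract the sharp exponent $\la y\ra^{-i+\kp_1/2}$, and then the pointwise bound fed into the weighted $L^2$ product estimate using $\b\geq\kp_1/2$) is the standard and correct way to establish it, and your closing remark about why the annulus rescaling (rather than a unit-ball Sobolev estimate) is needed for the sharp decay is accurate.
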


\subsection{Proof of Lemmas \ref{lem:norm_equiv_coe}, \ref{lem:non_IBP}}\label{sec:norm_dif}

In this section, we prove Lemmas \ref{lem:norm_equiv_coe}, \ref{lem:non_IBP} by applying integration by parts appropriately.

\begin{proof}[Proof of Lemma \ref{lem:norm_equiv_coe} ]

Recall the assumptions of the weights $\psi_i$ and $B$ from Lemma \ref{lem:norm_equiv_coe}. Our goal is to prove 
\[
   \B| \int B(y) (|  \D^{n} f|^2 - |\na^{2i} \D^{n-i} f|^2 ) \psi_{2n}  \B|
\leq \e \int |\D^n f | \psi_{2n} + C(\e, n, \d) \int f^2 \psi_0.
\]
for $0\leq i \leq n$. We focus on the case of $i = n$. Other cases are similar.

We adopt the notations from \eqref{eq:interp_J} and use the standard summation convention on repeated indices. Using integration by parts on $\pa_{i_1}$, we get 
\[
\bal
M_n & \teq \int |\D^n f|^2 B(y)\psi_{2 n} 
=  \sum_{i_1, .. i_n}    \sum_{j_1, j_2,.. j_n} \int  \pa_{i_1}^2
\pa_{i_2}^2.. \pa_{i_n}^2 f
\cdot 
 \pa_{j_1}^2 \pa_{j_2}^2.. \pa_{j_n}^2 f  B \psi_{2 n} \\
& = - \int \sum_{i_1, ..i_n}    \sum_{i_1, ..i_n}
\pa_{i_1}\pa_{i_2}^2.. \pa_{i_n}^2 f 
 \cdot \B( \pa_{i_1} \pa_{j_1}^2 .. \pa_{j_n}^2 f 
\cdot B \psi_{2n} +  \pa_{j_1}^2 .. \pa_{j_n}^2 f  \pa_{i_1} ( B \psi_{2n} ) \B)
\teq I + II.
\eal
\]

The term $II$ (the second term in the bracket) involves a derivative of the weight or the function $B$ and we do not further perform integration by parts for it. For the first term, we further perform integration by parts on $\pa_{j_1}^2$ to obtain 
\[
I = \int \sum_{i_1, ..i_n}  \sum_{j_1, .., j_n}
\pa_{ i_1 j_1} \pa_{i_2}^2.. \pa_{i_n}^2 f  \cdot 
\pa_{ i_1 j_1} \pa_{j_2}^2.. \pa_{j_n}^2 f  \cdot  B \psi_{2n}
+ \pa_{ i_1 } \pa_{i_2}^2.. \pa_{i_n}^2 f  \cdot \pa_{ i_1 j_1} \pa_{j_2}^2.. \pa_{j_n}^2 f  \pa_{j_1} ( B \psi_{2n} ) 
= I_1 + I_2. 
\]

For $I_2$, again, we do not further apply integration by parts. We iteratively integration by 
with respect to $\pa_{i_l}^2, \pa_{j_l}^2$ with $l \in \{ 2,3.., n \} $, to obtain 
\beq\label{eq:norm_equiv_pf1}
M_n = \int |\na^{2n} f|^2 B \psi_{2n}  + \cE, 
\quad 
 \cE \leq C_n \int |\na^{2n} f | \cdot |\na^{2n-1} f| \cdot |\na ( B \psi_{2n} )| .
\eeq

Recall the notations $I_n, J_n$ from \eqref{eq:interp_J} and the following assumptions from Lemma \ref{lem:norm_equiv_coe} 
\[
|B(y) | \les C(b) ,\quad  |\na B| \les C(b) \la x \ra^{-1}, \quad \psi_{2n} \asymp_n \la x \ra^{\b_{2n}},
\quad |\na \psi_{2n}| \les_n \la x \ra^{\b_{2n} - 1} .
\]
 We obtain 
\[
\bal
|\na ( B \psi_{2n} )| 
& \les C(b) ( |\na \psi_{2n}| + \la x \ra^{-1} \psi_{2n}  )
\les C(b)  \la x \ra^{\b_{2n}-1}
\les C(b)  \la x \ra^{ (\b_{2n} - 1)/2}
| \psi_{2n}|^{1/2}  \\
&\les C(b)   \la x \ra^{ \b_{2n-1}/2} | \psi_{2n}|^{1/2} .
\eal
\]
Using Cauchy-Schwarz inequality and Lemma \ref{lem:interp_wg}, for any $\e > 0$, we obtain 
\beq\label{eq:norm_equiv_pf2}
|\cE| \leq C_n I_{2n}^{1/2} I_{2n-1}^{1/2}
\leq  \e I_{2n} + C_{n, \e} I_{2n-1}
\leq  2 \e I_{2n} + C_{n, \e} I_0.
\eeq
Since $\e > 0$ is arbitrary, combining \eqref{eq:norm_equiv_pf1}, \eqref{eq:norm_equiv_pf2}, 
we prove \eqref{eq:norm_dif_ff}. Inequality \eqref{eq:norm_dif_fg} follows from polarization and \eqref{eq:norm_dif_ff}.
\end{proof}

Next, we prove Lemma \ref{lem:non_IBP}. 

\begin{proof}[Proof of Lemma \ref{lem:non_IBP}]

We focus on the proof of \eqref{eq:non_IBP_RR}. The proofs of \eqref{eq:non_IBP_Ria}, \eqref{eq:non_IBP_Rib} are similar.

Denote $\xi =|y|$. Recall the assumptions on $B(y) \in C^1$: $|B(y)| \les C(b) \min(|y|, 1), C(b)|\na B| \les \la y \ra^{-1}$. 
Note that the repeated indices on $j$ appearing below \textit{does not} denote summation over $j$.

Firstly, for any $\phi \in C^1$, we show that
\beq\label{eq:norm_dif_pf1}
\B| \int B(y) \pa_{\xi \xi} f \pa_{jj} g \phi - \int B(y) \pa_\xi \pa_j f \cdot \pa_\xi \pa_j g \phi \B|
\leq C(b,\phi) \int ( |\na f| \cdot |\na^2 g| + |\na g| \cdot |\na^2 f| ) \phi \la y \ra^{-1}.
\eeq

Our idea is to perform integration by parts for $\pa_{\xi \xi}$ and $\pa_{jj}$. If a derivative falls on $B, \phi$, we treat the term as a lower order terms since the total number of derivatives on $f$ or $g$ in such a term is less than $4$. Using integration by parts, we get 
\[
\bal
M & \teq \int  \pa_{\xi \xi} f  \pa_{jj} g  B(y) \phi 
= \int \int_{s \in S^{d-1}}  \pa_{\xi \xi} f \pa_{jj}  g\cdot B( y) \phi \xi^{d-1} d \xi d s
= - \int  \pa_\xi f  \f{1}{\xi^{d-1}} \pa_\xi( \xi^{d-1} B \pa_{jj} g \phi ) d y  \\
& = - \int \pa_\xi f \pa_\xi \pa_{jj} g \cdot  B \phi +  \pa_\xi f \pa_{jj} g  \f{1}{ \xi^{d-1}} \pa_\xi (\xi^{d-1} B \phi ) d y \teq I + II.
\eal
\]
where as usual $S^{d-1}$ is the unit sphere in $\R^d$ and we have $d y = \xi^{d-1} d \xi d s$.

For any function $q$, using $\pa_\xi =\sum_i \f{ y_i}{\xi} \pa_i $, we commute the derivative as follows 
\beq\label{eq:commut}
\pa_i \pa_\xi q - \pa_\xi \pa_i q 
= \sum_l  \pa_i ( \f{y_l}{ \xi } \pa_l q) - \pa_\xi \pa_i q 
= \sum_l  \B( \f{1}{ \xi } \d_{i l}  \pa_l q - \f{y_i y_l}{ \xi^3} \pa_l q  \B)
= \f{1}{ \xi } ( \pa_i q - \f{y_i}{ \xi } \pa_\xi q ). 
\eeq

Therefore, for the first term $I$ in the above, we further decompose 
\[
I = -   \int \pa_\xi f \pa_j \pa_\xi \pa_j g \cdot B \phi -  \int \pa_\xi f  [\pa_\xi, \pa_j] \pa_j g \cdot  B \phi = I_1 + I_2 .
\]

Next, we perform integration by parts for $I_1$ to obtain 
\[
\bal
I_1 & =  \int \pa_j \pa_\xi f \cdot \pa_\xi \pa_j g \cdot B \phi
+ \int \pa_\xi f  \pa_\xi \pa_j g \cdot \pa_j( B \phi)  \\
& = \int \pa_\xi \pa_j f \cdot \pa_{\xi}\pa_j g B \phi
+ [\pa_j, \pa_\xi] f \cdot \pa_\xi \pa_j g \cdot B \phi
+ \int \pa_\xi f  \pa_\xi \pa_j g \cdot \pa_j( B \phi)  \teq I_{1, 1} + I_{1, 2} + I_{1, 3}.
\eal
\]

We show that $I_{1,2}, I_{1, 3}, I_2, II$ are of lower order terms. These terms either contain a commutator, like $I_{1,2}, I_{1, 3}, I_2$ or a derivative acting on the weight or $B$, like $II$. We first estimate the coefficients in these terms. Using the assumptions of $B$ and $\phi$: $|B| \les C(b) \min(1, |y|), |\na B| \les C(b) \la y \ra^{-1}, |\na \phi| \les \phi \la y \ra^{-1}$, we obtain 
\[
\bal
 & |\na (B \phi)| \les |\na B| \phi + |B \na \phi| \les C(b) \cdot \phi \la y \ra^{-1}, \quad 
 \f{1}{ \xi } |B \phi| \les  C(b) \cdot \phi \la y \ra^{-1},  \\
 &  |\f{1}{ \xi^{d-1}} \pa_\xi ( \xi^{d-1} B \phi)|
\les C(b) \B( \f{1}{\xi} | B \phi|
+ |\na (B \phi)| \B) \les  C(b) \phi \la y \ra^{-1}  .
\eal
\]

Using the above estimates and \eqref{eq:commut}, we prove \eqref{eq:norm_dif_pf1}:
\[
|I_{1, 2}| + |I_{1, 3}| + |II| + |I_2|
\les \int ( |\na f| \cdot |\na^2 g| + |\na g| \cdot |\na^2 f| ) \phi \la y \ra^{-1}.
\]

Next, we prove the estimate in Lemma \ref{lem:non_IBP}. Applying \eqref{eq:norm_dif_pf1} with 
$f = \D^{n-1} F, g = \D^{n-1} G, \phi = \psi_{2 n}$ and $j = 1,2,3.., d$, and then Lemma \ref{lem:interp_wg} for the error term, we obtain 
\[
 \int B(y) \pa_\xi^2 \D^{n-1} F \D^n G \psi_{2 n}
= \int  B(y) \pa_\xi \na \D^{n-1} F \cdot \pa_\xi \na \D^{n -1} G  \psi_{2 n} + \cE , 
\]
where $\cE$ is the error term and satisfies 
\[
 |\cE | \leq  C(n , b) \int ( |\na^2 \D^{n-1} F | |\na \D^{ n -1} G| + |\na^2 \D^{n -1} G| |\na \D^{n -1} F|  )  \psi_{2 n} \la y \ra^{-1}. 
\]

From the assumption of $\psi_i$ in Lemma \ref{lem:non_IBP}, we get
\[
 \psi_{2 n} \la y \ra^{-1} \les_n \psi_{2 n}^{1/2} \la y \ra^{ (\b_{2 n}-1)/2}
 \les_n \psi_{2 n}^{1/2}   \la y \ra^{ \b_{2 n-1}/2} .
\]
Using the above estimates, the Cauchy-Schwarz inequality and Lemmas \ref{lem:interp_wg}, \ref{lem:norm_equiv}, we prove 
\[
|\cE| \leq \e \int (|\D^n F|^2 + |\D^n G|^2) \psi_{2 n}
 + C(\e, n, \d, b) \int (|F|^2 + |G|^2) \psi_0,
\]
which is \eqref{eq:non_IBP_RR}. The proofs of \eqref{eq:non_IBP_Ria}, \eqref{eq:non_IBP_Rib} are similar.
\end{proof}

\subsection{Proof of Proposition \ref{prop:far_decay}}\label{sec:add_pf}

 The following proof is the same as that in \cite{chen2024Euler} for the radially symmetric case, which also applies to the non-radially symmetric setting. We present the proof for  completeness.

\begin{proof}
Our goal is to show  that $\UU(s, y),\S(s, y)= 0$ for any $y \in B(0, 4 R_4), s \geq 0$. 
 Then we prove $(\cL - \cK_m) \VV(s) = \cL \VV(s)$ using Proposition \ref{prop:compact} Item (a), and further prove Proposition \ref{prop:far_decay} using \eqref{eq:dissp_semi}. 



Let $\chi$ be a radially symmetric cutoff function with $\chi(y) = 1$ for $|y| \leq 4 R_4$, $\chi(y)= 0 $ for $|y| \geq R > 4 R_4$, and $\chi(|y|)$ decreasing in $|y|$. Recall the decomposition of $\cL$ from \eqref{eq:lin_Hk} with $m=0$. Our goal is to show that the energy $E(s) = \int (|\UU(s)|^2 + |\S(s)|^2 ) \chi$ vanishes for any $s \geq 0$, which implies that $\UU, \S$ vanishes in $B(0, 4 R_4)$. Since $E(0) = 0$ by the assumption of the support of the initial data, it suffices to show that $ \f{d}{d s} E (s)= 0$.  Performing the $L^2$ estimates on $E(s)$ 
using the linear equation \eqref{eq:lin} and following the proof of Theorem \ref{thm:coer_est} in Section \ref{sec:coer_Hm}, we obtain 
\beq\label{eq:supp_decay_EE}
\bal
  \int   ( \cT_U \cdot \UU  + \cT_{\S} \cdot \S & )  \chi   
  =  -\int \B( (y + \bar \UU) \cdot \na  \UU \cdot  \UU  
+ (y + \bar \UU) \cdot \na  \S \cdot \S   + \al  \bar \S \cdot \na  \S \cdot \UU
+ \al \bar \S \div( \UU)  \S 
\B) \chi  \\
& = \int \f{1}{2} \na \cdot \B( (y + \bar U ) \chi \B) 
(|\UU|^2 + \S^2)
+ \na (\al \bar \S \chi) \cdot \UU \S  \\
 & \leq \int  \f{1}{2} \B(  (y + \bar \UU) \cdot \na \chi
 + \chi \na \cdot( y + \bar \UU)  \B) ( |\UU|^2  + \S^2)  
 + \al \bar \S |\na \chi| |\UU \S| 
 + \al |\na \bar \S| |\UU \S| \chi . 
\eal
\eeq

We focus on the term involving $|\na \chi|$. Since $\chi$ is radially symmetric, we get $(y + \bar \UU) \cdot \na \chi = (\xi + \bar U) \pa_{\xi} \chi$. Using Cauchy-Schwarz inequality, 
$ \pa_{\xi} \chi(y) = 0$ for $|y| \leq 4 R_4$,  $\pa_{\xi} \chi \leq 0$ globally, and that 
\eqref{eq:rep2} implies $ \xi + \bar U - \al \bar \S > 0$  for $\xi = |y| > \xi_s$ (hence for $\xi > 4 R_4$), we get 
\[
\bal
 \f{1}{2} \B(  (y + \bar \UU) \cdot \na \chi  \B) ( |\UU|^2  + \S^2) 
 + \al \bar \S |\na \chi| |\UU \S| 
& \leq \f{1}{2}\B( (\xi + \bar U ) \pa_{\xi}\chi + \al \bar \S |\pa_{\xi} \chi| \B)  ( |\UU|^2  + \S^2) \\
& \leq \f{1}{2}(\xi + \bar U  -\al \bar \S) \pa_{\xi}\chi  ( |\UU|^2  + \S^2)  \leq 0.
\eal
\]


The remaining contributions in the estimate of $ \f{d}{d s} E(s)$ are from the $\chi$-terms in \eqref{eq:supp_decay_EE} and from $\cD_U, \cD_{\S}$, $\cS_U, \cS_{\S}$ with $m=0, \cR_{U, m}=0, \cR_{\S, m} = 0$ \eqref{eq:lin_Hk}. Using \eqref{eq:dec_U}, we can bound them as 
\[
\bal
&\tfrac 12 \chi \div( y + \bar \UU)  ( |\UU|^2  + \S^2) + \al  \chi |\na \bar \S| |\UU \S|
- (r -1) \chi ( |\UU|^2 + \S^2) 
- \chi (\UU\cdot\na\bar \UU \cdot \UU + \UU\cdot \na \bar \S \, \S)  \\
&\leq C \chi ( |\UU|^2  + \S^2)
\eal
\]
for some sufficiently large $C>0$ (depending on $\alpha, r, \bar \UU, \bar \S$). Thus, we obtain 
\[
\f{1}{2} \f{d}{d s} \int (|\UU|^2 + \S^2 )\chi 
= 
\int  ( \cL_U \cdot \UU  + \cL_{\S} \cdot \S ) \chi d y 
\leq C \int (|\UU|^2 + \S^2 )\chi  ,
\]
which implies via Gr\"onwall that $\int (|\UU(s)|^2 + \S(s)^2 )\chi  = 0$ for all $s\geq0$. The claim follows.
\end{proof}

\section*{Acknowledgments}
The work of J.C.~has been supported in part by the NSF Grant DMS-2408098 and by the Simons Foundation. He is also grateful to Vlad Vicol for some stimulating discussion at the early stage of the work.

\bibliographystyle{plain}
\bibliography{selfsimilar.bib}

\end{document}